\begin{document}


\title{On the Morrison-Kawamata dream space and its applications}

\subjclass[2020]{14J45, 14E30}

\keywords{Mori dream space, Morrison-Kawamata dream space, Morrison-Kawamata cone conjecture, nef cone, movable cone, effective cone, Mori chamber decomposition, boundedness of moduli spaces}

\begin{abstract}
We develop the theory of Morrison-Kawamata dream spaces, which axiomatizes varieties (not necessarily of Calabi-Yau type) that satisfy the Morrison-Kawamata cone conjecture. Using this theory, we establish the generic deformation invariance of various cones and apply it to the boundedness problem of algebraic varieties.
\end{abstract}

\author{Sung Rak Choi}
\address[Sung Rak Choi]{Department of Mathematics, Yonsei University, 50 Yonsei-ro, Seodaemun-gu, Seoul 03722, Republic of Korea}
\email{sungrakc@yonsei.ac.kr}

\author{Xingying Li}
\address[Xingying Li]{Department of Mathematics, Southern University of Science and Technology, 1088 Xueyuan Road, Shenzhen 518055, China} \email{xingyinglimath@gmail.com}

\author{Zhan Li}
\address[Zhan Li]{Department of Mathematics, Southern University of Science and Technology, 1088 Xueyuan Road, Shenzhen 518055, China} \email{lizhan@sustech.edu.cn, lizhan.math@gmail.com}

\author{Chuyu Zhou}
\address[Chuyu Zhou]{School of Mathematical Sciences, Xiamen University, Siming South Road 422, Xiamen, Fujian 361005, China}
\email{chuyuzhou@xmu.edu.cn, chuyuzhou1@gmail.com}

\maketitle

\setcounter{tocdepth}{2} 

\tableofcontents

\section{Introduction}\label{sec: intro}

This paper works over the field of complex numbers $\bC$.

We develop the theory of Morrison-Kawamata dream spaces, providing an axiomatic framework to characterize varieties that satisfy the Morrison-Kawamata cone conjecture (cf.  \cite{Mor93, Mor96, Kaw97, Tot09}). These spaces can also be viewed as a way to ``glue'' the local theory of Mori dream spaces under the action of birational automorphisms. Morrison-Kawamata dream spaces include Mori dream spaces (in particular, Fano type varieties), Calabi-Yau type varieties (under the assumption of the cone conjecture and the good minimal model conjecture), and many other varieties that are neither of Mori dream space type nor of Calabi-Yau type:
\[
\begin{array}{ccc}
\left\{\text{Fano type varieties}\right\}  &\subset& \{\text{Mori dream spaces}\}  \\
\cap && \cap \\
 \left\{\substack{\textstyle\text{Calabi-Yau type varieties}\\ \textstyle\text{satisfying the cone conjecture and}\\ \textstyle\text{ the good minimal model conjecture}}\right\}&\subset&  \{\text{Morrison-Kawamata dream spaces}\}.
\end{array}
\] It is remarkable that many properties of Mori dream spaces and Calabi-Yau type varieties can be extended under this unified framework, and the corresponding proofs become more conceptual. It is desirable that Morrison-Kawamata dream spaces serve as natural generalizations of Calabi-Yau type varieties in the study of their birational geometry, just as Mori dream spaces play an analogous role for varieties of Fano type.

To motivate the definition of Morrison-Kawamata dream spaces, we first recall the notion of Mori dream spaces. Please see Section \ref{sec: pre} for the meaning of the notation used in the sequel.

\begin{definition}[Mori dream space {\cite[1.10]{HK00}}]\label{def: MDS}
We will call a normal projective variety $X$ a Mori dream space provided the following hold:
\begin{enumerate}[label=(\roman*)]
    \item $X$ is $\mathbb{Q}$-factorial and $\mathrm{Pic}(X)_{\mathbb{Q}} = N^1(X)$,
    \item $\Nef(X)$ is the affine hull of finitely many semi-ample line bundles, and
    \item there is a finite collection of small $\bQ$-factorial modifications $f_i : X \dashrightarrow X_i$ such that each $X_i$ satisfies (ii) and $\mathrm{Mov}(X)$ is the union of the $f_i^*(\Nef(X_i))$.
\end{enumerate}
\end{definition}

If one wishes to retain the features of a Mori dream space while allowing birational actions, one may require the following conditions:

\begin{enumerate}[label=(\alph*)]
\item $X$ is $\mathbb{Q}$-factorial,
\item there exists a rational polyhedral cone $\Pi\subset \Mov(X)$ such that $\PsAut(X) \cdot \Pi = \Mov(X)$,
\item there is a finite collection of small $\bQ$-factorial modifications $f_i : X \dashrightarrow X_i, 1 \leq i \leq l$ such that $\Pi \subset \cup_{i=1}^lf_i^*(\Nef(X_i))$ and each $\Pi \cap f_i^*(\Nef(X_i))$ is a rational polyhedral cone, and
\item $f_{i,*}D$ is semi-ample for each effective $\bQ$-Cartier divisor $D$ with $[D]\in \Pi \cap f_i^*(\Nef(X_i))$.
\end{enumerate}

The guiding principle is that locally (i.e., inside $\Pi$), the variety behaves like a Mori dream space. The conditions (a)--(d) above are weaker than the corresponding ones in the definition of a Mori dream space. Indeed, a variety $X$ satisfying conditions (a)--(d) may have $h^1(X, \Oo_X) > 0$ (cf. Definition \ref{def: MDS} (i)), since we aim to include varieties with trivial canonical divisors. Moreover, we do not require that every small $\bQ$-factorial modification is again a Mori dream space (cf. Definition \ref{def: MDS} (ii)).

From the perspective of axiomatizing the Morrison-Kawamata cone conjecture, we introduce the following notion of Morrison-Kawamata dream spaces:

\begin{restatable}[Morrison-Kawamata dream fiber space]{definition}{MKDspaces}\label{def: MKD spaces}
Suppose that $X \to T$ is a projective morphism between normal quasi-projective varieties. Then $X/T$ is called a Morrison-Kawamata dream fiber space (MKD fiber space) if
    \begin{enumerate}
    \item $X$ is a $\bQ$-factorial variety,    
     \item every effective $\bR$-Cartier divisor admits a good minimal model$/T$,
        \item there exists a rational polyhedral cone $\Pi \subset \Mov(X/T)$ such that $\PsAut(X/T) \cdot \Pi = \Mov(X/T)$, and
        \item $\Eff(X/T)$ satisfies the local factoriality of canonical models$/T$.
    \end{enumerate}
    
    If $T$ is a closed point, then $X$ is called a Morrison-Kawamata dream space (MKD space).
\end{restatable}

For the definition of local factoriality of canonical models, see Definition \ref{def: local factoriality}. In the sequel, when there is no ambiguity, we also refer to an MKD fiber space simply as an MKD space.

This notion of a Morrison-Kawamata dream space is abstracted from our study of the Morrison-Kawamata cone conjecture in \cite{LZ25, Li26}. Although, at first sight, it does not appear to be a direct analogue of Mori dream spaces with birational group actions, namely conditions (a)--(d) above, Theorem \ref{thm: two def are same} shows that the two notions are indeed equivalent, up to mild variants, at least in the absolute setting.

\begin{theorem}\label{thm: two def are same}
Let $X$ be a normal projective variety. Assume that every effective $\bR$-Cartier divisor admits a minimal model. Then $X$ is an MKD space if and only if it satisfies the following conditions:
\begin{enumerate}[label=(\alph*)]
\item $X$ is $\mathbb{Q}$-factorial,
\item there exists a rational polyhedral cone $\Pi\subset \bMov^e(X)$ such that $\PsAut(X) \cdot \Pi = \bMov^e(X)$,
\item there is a finite collection of small $\bQ$-factorial modifications $f_i : X \dashrightarrow X_i, 1 \leq i \leq l$ such that $\Pi \subset \cup_{i=1}^lf_i^*(\Nef(X_i))$ and each $\Pi \cap f_i^*(\Nef(X_i))$ is a rational polyhedral cone, and
\item $f_{i,*}D$ is semi-ample for each effective $\bR$-Cartier divisor $D$ with $[D]\in \Pi \cap f_i^*(\Nef(X_i))$.
\end{enumerate}
\end{theorem}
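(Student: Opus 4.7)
The plan is to prove both implications, noting that $(1)\Leftrightarrow(a)$ is immediate. The substance of the statement is a translation between a global condition formulated over the whole effective cone (the MKD axioms) and a local description on a rational polyhedral fundamental domain $\Pi$ together with a $\PsAut(X)$-action (conditions (a)--(d)). The standing hypothesis that any effective $\bR$-Cartier divisor admits a minimal model will be used chiefly in the backward direction to upgrade local semi-ampleness data to a \emph{good} minimal model structure on the whole effective cone.

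For the forward direction, I would begin from the fundamental domain $\Pi$ given by axiom (3) of Definition \ref{def: MKD spaces} and run a $D$-MMP for classes $[D]\in \Pi$ using axiom (2). Because every effective $\bR$-divisor already admits a \emph{good} minimal model, this MMP terminates at small $\bQ$-factorial modifications $f:X\dashrightarrow X'$ such that $f_*D$ is semi-ample, and the local factoriality of canonical models in axiom (4) ensures that the resulting birational model depends only on which Mori chamber of $\Pi$ the class $[D]$ lives in. Since $\Pi$ is rational polyhedral, only finitely many chambers meet it; this yields the SQMs $f_1,\dots,f_l$ of (c), and semi-ampleness (d) is then automatic from the good minimal model property. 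Condition (b) is a repackaging of (3), once one reconciles $\Mov(X/T)$ with $\bMov^e(X)$ by using (2) and the fact that a movable class is the limit of effective movable ones.

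For the converse I would first extract axiom (3) directly from (b), with only minor care needed to pass between $\Mov$ and $\bMov^e$. To establish axiom (2), I would treat classes inside $\Pi$ first: by (c) such a class lies in some $f_i^*(\Nef(X_i))$, and by (d) its pushforward is semi-ample, so $f_i$ is a good minimal model of $D$. For an arbitrary effective divisor $D$, the hypothesis of the theorem provides some minimal model, whose class is movable; applying a suitable element of $\PsAut(X)$ as in (b) transports it into $\Pi$, where the previous step applies, and the good minimal model so obtained descends through the automorphism. Axiom (4) is then a direct consequence: local factoriality of canonical models on each $\Pi\cap f_i^*(\Nef(X_i))$ is exactly what (d) records, and the $\PsAut(X)$-action spreads this to all of $\Eff(X)$.

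The main obstacle I expect is not any single step but the careful bookkeeping between the cones $\Mov(X)$, $\bMov^e(X)$, and $\Eff(X)$, since MKD phrases the structure on the full effective cone while (a)--(d) only describe behavior on (the closure of) the movable effective cone. Bridging this gap cleanly requires a Zariski-type reduction argument—given an effective $D$, replace it by a birational model on which the movable part of $D$ generates the relevant chamber—and a verification that $\PsAut(X)$-equivariance is preserved throughout. Ensuring that the "minimal model" promoted to "good minimal model" by property (d) is indeed the one produced by the MMP, rather than an a priori different birational competitor, is the technical heart of the argument.
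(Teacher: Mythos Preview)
Your forward direction is essentially the paper's argument: the chamber decomposition of $\Pi$ (Theorem~\ref{thm: Shokurov poly for minimal models}) together with Lemma~\ref{lem: movable give iso in codim 1} gives the finitely many small $\bQ$-factorial modifications $f_i$ for (c), and (b), (d) drop out of Definition~\ref{def: MKD spaces} once one notes $\bMov^e(X)=\Mov(X)$.

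In the backward direction your treatment of axiom~(2) is on the right track but imprecise at the key step. For an arbitrary effective $D$ the minimal model $h:X\dashrightarrow X'$ does \emph{not} make $[D]$ movable; what lies in $\bMov^e(X)$ is the auxiliary divisor $B\coloneqq p_*(q^*D')$ where $D'=h_*D$ and $p,q$ resolve $h$. One then applies (b) to $B$, obtains a good minimal model of $g_*B$ inside $\Pi$ via (c)--(d), and uses a negativity-lemma comparison (Lemma~\ref{lem: common mm}) to conclude that $D'$ itself is semi-ample. Your phrase ``whose class is movable'' elides this construction.

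The genuine gap is your argument for axiom~(4). You write that the $\PsAut(X)$-action ``spreads'' local factoriality from $\Pi$ to all of $\Eff(X)$, but $\PsAut(X)\cdot\Pi=\bMov^e(X)$ by (b), which is strictly smaller than $\Eff(X)$ in general: the translates of $\Pi$ never see the non-movable part of the effective cone. The paper handles this via Corollary~\ref{cor: local factoriality for Pi}, whose proof is substantially more involved than a spreading argument. One first runs the implication $(1)\Rightarrow(2)\Rightarrow(3)$ of Theorem~\ref{thm: 3 equivalences} using only local factoriality on $\Pi$, thereby constructing rational polyhedral cones $Q_{j,k}\subset\Eff(X)$ (built from pullbacks of nef cones of contractions \emph{together with their exceptional divisors}, see \eqref{eq: P_j, theta} and \eqref{eq: Q}) whose $\PsAut(X)$-translates cover $\Eff(X)$. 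Each $Q_{j,k}$ satisfies local factoriality by Proposition~\ref{prop: condition (4)}. The final step uses the Siegel property (\cite[Theorem~3.8]{Loo14}, requiring non-degeneracy of $\Eff(X)$, hence the restriction to the absolute case) to show that any rational polyhedral cone $P\subset\Eff(X)$ meets only finitely many translates $\gamma\cdot Q_{j,k}$, so $P$ inherits local factoriality from a finite cover. Your ``Zariski-type reduction'' hint does not capture this mechanism; the exceptional divisors and the Siegel finiteness are both essential and absent from your sketch.
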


Theorem \ref{thm: two def are same} should also serve as the definition of MKD fiber spaces, that is, in the relative setting. However, due to certain combinatorial issues when the movable cone is degenerate, we are unable to show that this relative version is equivalent to Definition \ref{def: MKD spaces}.

We remark that alternative formulations of what may be called MKD fiber spaces are possible, which are not necessarily equivalent to Definition~\ref{def: MKD spaces}. Furthermore, certain pathological phenomena may arise under the general notion adopted here, mainly due to the lack of the Cone Theorem (see \cite[Theorem 3.7]{KM98}). First, there is no analogue of the boundedness of lengths of $D$-negative extremal rays, which motivates the local factoriality assumption for canonical models in Definition \ref{def: MKD spaces} (4). Secondly, numerical triviality does not necessarily imply linear triviality for $D$-negative extremal contractions. To overcome this difficulty, \cite{KKL16} introduces the concept of a gen divisor; in our framework, we instead assume that every effective divisor admits a good minimal model.

Under the above definition, we develop the theory of MKD fiber spaces in the sequel.

First, we establish the chamber structure of minimal models (cf. \cite{Sho96, SC11, KKL16, LZ25}).

\begin{theorem}[Shokurov polytope for minimal models]\label{thm: Shokurov poly for minimal models}
Let $X/T$ be a normal $\bQ$-factorial variety. Let $\mathcal C \subset {\rm CDiv}(X)_\Rr$ be a cone generated by finitely many effective Cartier divisors. Suppose that every effective $\bR$-Cartier divisor in $\mathcal C$ admits a good minimal model$/T$, and $\mathcal C$ satisfies the local factoriality of canonical models$/T$. Then $\mathcal C$ can be written as a disjoint union of finitely many relatively open rational polyhedral cones 
\[
\mathcal C = \bigsqcup_{i=0}^m \mathcal C_i,
\]
such that for any effective $\bR$-Cartier divisors $B, D \in \mathcal C_i$, whenever $X \dto Y/T$ is a weak minimal model of $B$, it is also a weak minimal model of $D$.
\end{theorem}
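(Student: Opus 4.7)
The strategy follows the Shokurov polytope paradigm (cf.\ \cite{Sho96, SC11, KKL16, LZ25}): to each birational contraction $\pi\colon X \dashrightarrow Y/T$ arising as a weak minimal model of some divisor in $\mathcal{C}$, associate a subcone $\mathcal{C}(\pi)\subset \mathcal{C}$ consisting of divisors for which $\pi$ is a weak minimal model. I would show that each $\mathcal{C}(\pi)$ is rational polyhedral, that only finitely many distinct chambers occur, and then take a common refinement to obtain the required disjoint decomposition into relatively open cones.

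\textbf{Step 1 (Rational polyhedrality of chambers).} Fix $D\in \mathcal{C}$. By hypothesis there is a good minimal model $\pi_D\colon X\dashrightarrow Y_D/T$, and since it is good, $\pi_{D,*}D$ is semi-ample$/T$ and hence induces a canonical model $g_D\colon Y_D\to Z_D/T$ with $\pi_{D,*}D=g_D^*H$ for an ample class $H$. Set
\[
\mathcal{C}(\pi_D):=\{D'\in \mathcal{C}\mid \pi_D \text{ is a weak minimal model of } D'\}.
\]
Membership is encoded by two conditions: $(i)$ the $\pi_D$-non-positivity of $D'$, which is a finite collection of linear inequalities on the coefficients of $D'$ against the generators of $\mathcal{C}$; and $(ii)$ the nefness$/T$ of $\pi_{D,*}D'$. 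The local factoriality of canonical models$/T$ implies that, in a neighborhood of $\pi_{D,*}D$, nef classes on $Y_D$ are precisely the pullbacks $g_D^*(\cdot)$ of elements of $\Nef(Z_D/T)$ near the ample class $H$; the latter cone is rational polyhedral near an ample class. Combining $(i)$ and $(ii)$ therefore carves out a rational polyhedral subcone $\mathcal{C}(\pi_D)\subset \mathcal{C}$.

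\textbf{Step 2 (Finiteness and disjoint refinement).} By hypothesis, $\mathcal{C}$ is covered by the cones $\mathcal{C}(\pi)$ as $\pi$ ranges over all weak minimal models of divisors of $\mathcal{C}$. Different chambers have disjoint relative interiors, because the relative interior of a chamber $\mathcal{C}(\pi)$ determines $\pi$ up to the obvious equivalence (via the canonical model of any big divisor in its interior, which is unique). Intersecting with a compact rational polytope cross-section of $\mathcal{C}$ and invoking a Noetherian argument together with Step~1, only finitely many chambers $\mathcal{C}(\pi_1),\ldots,\mathcal{C}(\pi_N)$ occur. Their common refinement — intersecting each chamber with the complements of the others and then stratifying by faces — yields a finite disjoint decomposition $\mathcal{C}=\bigsqcup_{i=0}^m \mathcal{C}_i$ into relatively open rational polyhedral cones on which the incidence pattern $\{j:D\in \mathcal{C}(\pi_j)\}$ is constant. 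In particular, any $\pi$ that is a weak minimal model of one divisor $B\in \mathcal{C}_i$ is a weak minimal model of every $D\in \mathcal{C}_i$, as required.

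\textbf{Main obstacle.} The principal difficulty is Step~1 — specifically, the rational polyhedrality of each $\mathcal{C}(\pi_D)$. In the MKD framework the Cone Theorem is not available, so $\Nef(Y_D/T)$ need not be globally rational polyhedral, and condition $(ii)$ could a priori cut out a transcendental region. The local factoriality of canonical models is engineered exactly to overcome this: it converts the nefness condition on $Y_D$ into a rational polyhedral condition inherited from $\Nef(Z_D/T)$ near an ample class. The subtle point is to verify that this rational polyhedrality persists not merely at $\pi_{D,*}D$ but on a full neighborhood capturing the entire chamber; synchronizing this with the combinatorial condition $(i)$ on $\pi_D$-exceptional components is the technical heart of the argument, and likely requires arguing inductively on faces of $\mathcal{C}(\pi_D)$ to propagate the rational polyhedrality outward from the interior.
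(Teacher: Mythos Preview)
Your chamber approach runs into a genuine circularity at Step~1. You need the locus $\{D'\in\mathcal{C}:\pi_{D,*}D'\ \text{is nef}/T\}$ to be rational polyhedral, which amounts to $\pi_{D,*}\mathcal{C}\cap\Nef(Y_D/T)$ being rational polyhedral. In the paper this is exactly Theorem~\ref{thm: Shokurov poly for nef}, and it is \emph{deduced from} the present theorem, not an input to it. Local factoriality tells you that canonical models of nearby divisors dominate $Z_D$; it does not give a polyhedral description of $\Nef(Y_D/T)$, and your proposed ``inductive propagation along faces'' is not a mechanism for converting one into the other. Step~2 also has gaps: the chambers $\mathcal{C}(\pi)$ are closed (nefness and non-positivity are closed conditions), so compactness of a cross-section does not yield a finite subcover unless you know each $D$ lies in the \emph{interior} of its chamber, which fails whenever $\pi_{D,*}D$ lands on $\partial\Nef(Y_D/T)$. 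And weak minimal models are not unique, so the claim that distinct chambers have disjoint interiors is false.

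The paper's proof takes a completely different route: induction on $\dim P$ where $\mathcal{C}=\Cone(P)$. If some $\Delta_0\in P$ has $\Delta_0\equiv 0/T$, then weak minimal models are constant along rays through $\Delta_0$, so one cones the inductively known decomposition of $\partial P$ over $\Delta_0$. For general $\Delta_0$, pass to a good minimal model $X\dashrightarrow X'$ and replace the base $T$ by the canonical model $Z'$ of $\Delta_0$; then $\Delta_0'\equiv 0/Z'$, and the previous case applies over $Z'$. Local factoriality is used, via Lemma~\ref{lem: inductive step}, to transfer the hypotheses to $X'/Z'$ after shrinking, and a short argument using ampleness of the class on $Z'$ upgrades weak minimal models over $Z'$ back to weak minimal models over $T$. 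No polyhedrality of any nef cone is ever assumed.
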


The following version of the chamber structure for nef cones is analogous to the case of Calabi-Yau fiber spaces described in \cite[Theorem 2.7]{LZ25}. A statement for log pairs was proved in \cite[\S 6.2, First Main Theorem]{Sho96} (see also \cite[Proposition 3.2]{Bir11}) without assuming the existence of good minimal models. However, the existence of good minimal models is essential in the setting of MKD fiber spaces.

\begin{theorem}[Shokurov polytope for nef cones]\label{thm: Shokurov poly for nef}
Let $X/T$ be a normal $\bQ$-factorial variety. Let $\mathcal P \subset {\rm CDiv}(X)_\Rr$ be a cone generated by finitely many effective Cartier divisors. Suppose that every effective $\bR$-Cartier divisor in $\mathcal P$ admits a good minimal model$/T$, and $\mathcal P$ satisfies the local factoriality of canonical models$/T$. Then
\[
 \mathcal N_{\mathcal P}\coloneqq \{D \in \mathcal P \mid D \text{~is nef over~} T\}
 \] is a rational polyhedral cone.
\end{theorem}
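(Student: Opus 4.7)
The plan is to reduce to Theorem~\ref{thm: Shokurov poly for minimal models} and then invoke the convexity of the nef cone. First, I would apply that theorem directly to $\mathcal P$ (the hypotheses match those given here), obtaining a decomposition
\[
\mathcal P = \bigsqcup_{i=0}^{m} \mathcal C_i
\]
into finitely many relatively open rational polyhedral cones such that any two effective $\bR$-Cartier divisors in the same $\mathcal C_i$ share the same collection of weak minimal models$/T$.

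The key observation is that, because $X$ is $\bQ$-factorial, the identity map $\mathrm{id}_X\colon X \to X$ is a weak minimal model$/T$ of an effective $\bR$-Cartier divisor $D$ if and only if $D$ is nef$/T$. Hence $D \in \mathcal N_{\mathcal P}$ if and only if $\mathrm{id}_X$ is a weak minimal model$/T$ of $D$. By the chamber property from the previous step, whether $\mathrm{id}_X$ serves as a weak minimal model depends only on the index $i$. Setting
\[
I \coloneqq \{\, i \mid \mathrm{id}_X \text{ is a weak minimal model}/T \text{ of every } D \in \mathcal C_i \,\},
\]
one obtains $\mathcal N_{\mathcal P} = \bigsqcup_{i \in I} \mathcal C_i$.

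Finally, I would deduce rational polyhedrality from convex geometry. Since $\mathcal N_{\mathcal P} = \mathcal P \cap \Nef(X/T)$ is an intersection of closed convex cones, it is itself closed and convex. Combining this with the identification above yields $\mathcal N_{\mathcal P} = \bigcup_{i \in I} \overline{\mathcal C_i}$, a finite union of rational polyhedral cones. Let $\mathcal Q$ be the rational polyhedral cone generated by the (finitely many, rational) extremal rays of the $\overline{\mathcal C_i}$ for $i \in I$; the convexity of $\mathcal N_{\mathcal P}$ forces $\mathcal Q \subset \mathcal N_{\mathcal P}$, while the reverse inclusion is immediate from the definition of $\mathcal Q$, so $\mathcal N_{\mathcal P} = \mathcal Q$ is rational polyhedral. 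The conceptual crux of the argument is the identification of $\mathcal N_{\mathcal P}$ as a union of chambers; once that is secured, the rest is routine convex-geometric bookkeeping, so I do not anticipate a substantial obstacle beyond verifying that ``being a weak minimal model'' for the identity map is compatible with the chamber structure from Theorem~\ref{thm: Shokurov poly for minimal models} in the manner invoked above.
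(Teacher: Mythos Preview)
Your proposal is correct and follows essentially the same approach as the paper: apply Theorem~\ref{thm: Shokurov poly for minimal models} to $\mathcal P$, observe that $D$ is nef$/T$ if and only if $\mathrm{id}_X$ is a (weak) minimal model$/T$ of $D$, and conclude that $\mathcal N_{\mathcal P}$ is the union of the closures of those chambers meeting the nef locus, hence the rational polyhedral cone they generate. The paper's write-up is slightly terser but the logic is identical.
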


We use $\Gamma_B(X/T)$ and $\Gamma_A(X/T)$ to denote the images of the pseudo-automorphism group $\PsAut(X/T)$ and the automorphism group $\Aut(X/T)$ under the natural group homomorphism 
\[
\rho: \PsAut(X/T, \Delta) \to {\rm GL}(N^1(X/T)).
\]
See Section~\ref{subsec: Movable cones and ample cones} for further explanation of the notation. The following extends the main result of \cite{GLSW26} to MKD fiber spaces.

\begin{theorem}\label{thm: 3 equivalences}
    Let $X/T$ be a normal $\bQ$-factorial variety. Assume that every effective $\bR$-Cartier divisor admits a good minimal model$/T$. Suppose that $\Eff(X/T)$ satisfies the local factoriality of canonical models$/T$. Then the following statements are equivalent:
    \begin{enumerate}
    \item $X/T$ is an MKD space.
    \item $\Nef^e(X'/T)$ admits a rational polyhedral fundamental domain under the action of $\Gamma_A(X'/T)$ for any small $\bQ$-factorial modification $X \dto X'/T$, and
        \[
        \{Y/T \mid X \dto Y/T \text{~is a birational contraction}\}
        \] is a finite set up to isomorphism of $Y/T$. 
        \item There is a rational polyhedral cone $P\subset \Eff(X/T)$ such that $\Gamma_B(X/T) \cdot P = \Eff(X/T)$. In particular, $\Eff(X/T)_+$ admits a weak rational fundamental domain under the action of $\Gamma_B(X/T)$. 
    \end{enumerate}
\end{theorem}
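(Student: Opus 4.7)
The plan is to prove the cyclic chain (3) $\Rightarrow$ (1) $\Rightarrow$ (2) $\Rightarrow$ (3), using Theorems~\ref{thm: Shokurov poly for minimal models} and~\ref{thm: Shokurov poly for nef} as the main ingredients.

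For (3) $\Rightarrow$ (1), set $\Pi \coloneqq P \cap \Mov(X/T)$. The hypotheses already supply $\bQ$-factoriality, the existence of good minimal models, and the local factoriality of canonical models, so only the covering condition of Definition~\ref{def: MKD spaces}(3) remains. For any $D \in \Mov(X/T) \subset \Eff(X/T)$, write $D = g \cdot D_0$ with $g \in \Gamma_B(X/T)$ and $D_0 \in P$; since $\PsAut(X/T)$ preserves $\Mov(X/T)$, the class $D_0 = g^{-1} D$ lies in $P \cap \Mov(X/T) = \Pi$, giving $\PsAut(X/T) \cdot \Pi = \Mov(X/T)$. To see $\Pi$ is rational polyhedral, apply Theorem~\ref{thm: Shokurov poly for minimal models} to $P$: the resulting finitely many chambers $P \cap f_i^* \Nef(X_i/T)$ exhibit $\Pi$ as a finite union of rational polyhedral subcones, hence rational polyhedral by convexity.

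For (1) $\Rightarrow$ (2), start from the cone $\Pi$ in the MKD definition. Applying Theorem~\ref{thm: Shokurov poly for minimal models} to $\Pi$ gives finitely many SQM chambers $\Pi \cap f_i^* \Nef(X_i/T)$; since $\PsAut(X/T) \cdot \Pi = \Mov(X/T)$, every SQM of $X$ is $\PsAut$-equivalent to one of the $X_i$, yielding finitely many isomorphism classes of SQMs. For birational contractions, every Mori chamber of $\Eff(X/T)$ has its movable part inside $\Mov(X/T)$ and thus meets some $\Gamma_B$-translate of $\Pi$; enlarging $\Pi$ to a rational polyhedral cone in $\Eff(X/T)$ by adjoining the finitely many prime exceptional divisors associated to the SQM chambers, and applying Theorem~\ref{thm: Shokurov poly for minimal models} to the enlargement, gives finitely many isomorphism classes of contraction targets. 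For the fundamental-domain assertion, fix an SQM $X'$. After verifying that MKD descends along SQMs, $X'/T$ comes equipped with its own cone $\Pi' \subset \Mov(X'/T)$. Set $\mathcal F \coloneqq \Pi' \cap \Nef^e(X'/T)$, rational polyhedral by Theorem~\ref{thm: Shokurov poly for nef}. For $D \in \Nef^e(X'/T)$, write $D = \varphi^* D_0$ with $\varphi \in \PsAut(X'/T)$ and $D_0 \in \Pi'$; the nefness of $D$ on $X'$ forces $D_0$ to lie in the nef chamber of some SQM $X''$ of $X'$ meeting $\Pi'$, and by enumerating the finitely many such $X''$ and combining with $\Aut(X'/T)$-translates one obtains a rational polyhedral fundamental domain for $\Gamma_A(X'/T)$.

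For (2) $\Rightarrow$ (3), construct $P$ in two stages. For each of the finitely many SQM classes $X'_j$ from (2), fix an SQM $g_j : X \dashrightarrow X'_j$ and pull back the fundamental domain $\mathcal F_j \subset \Nef^e(X'_j/T)$ to a rational polyhedral cone $\tilde{\mathcal F}_j \subset \Mov^e(X/T)$; the convex hull $P_0$ of $\bigcup_j \tilde{\mathcal F}_j$ is rational polyhedral, and $\Gamma_B(X/T) \cdot P_0 = \Mov^e(X/T)$ because under the identification $N^1(X) \cong N^1(X'_j)$ the subgroup $\Gamma_A(X'_j/T)$ of $\Gamma_B(X/T)$ already covers $g_j^* \Nef^e(X'_j/T)$ from $\tilde{\mathcal F}_j$, and $\Gamma_B$ sweeps out the $\PsAut(X/T)$-orbit of each SQM nef cone. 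For each of the finitely many birational contraction classes $\psi_k : X \dashrightarrow Y_k$, adjoin the cone spanned by the finitely many prime exceptional divisors of $\psi_k$; the resulting rational polyhedral $P$ then satisfies $\Gamma_B(X/T) \cdot P = \Eff(X/T)$ by decomposing each effective class into a movable part (handled by $P_0$) and an exceptional part.

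The main obstacle is the fundamental-domain step in (1) $\Rightarrow$ (2): showing every nef class on $X'$ lies in a single $\Gamma_A(X'/T)$-translate of $\mathcal F$ rather than merely in a $\PsAut(X'/T)$-translate of $\Pi'$. This hinges on identifying precisely those pseudo-automorphisms that preserve the chamber $\Nef(X'/T)$ (exactly the elements whose image is in $\Gamma_A(X'/T)$) and combining them with the finitely many nef chambers inside $\Pi'$ via Shokurov-polytope arguments. A secondary technical point is the descent of MKD along small $\bQ$-factorial modifications, which is essential for reducing the fundamental-domain assertion for general SQMs back to the case $X' = X$.
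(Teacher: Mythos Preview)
Your cyclic scheme $(3)\Rightarrow(1)\Rightarrow(2)\Rightarrow(3)$ matches the paper's, and your $(3)\Rightarrow(1)$ is essentially the paper's argument in compressed form (the paper makes explicit the point you elide: a Shokurov chamber of $P$ meeting $\Mov(X/T)$ lies entirely in $\Mov(X/T)$, because the corresponding minimal model is an isomorphism in codimension $1$). Your description of the fundamental-domain step in $(1)\Rightarrow(2)$ is also on the right track and close to the paper's: one decomposes $\Pi$ into Shokurov chambers, observes that any translate meeting $\Amp(X'/T)$ lies in $\Nef^e(X'/T)$, and shows that two pseudo-automorphisms whose translates of a given chamber both meet $\Amp(X'/T)$ differ by an element of $\Aut(X'/T)$.

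There is, however, a genuine gap in your $(2)\Rightarrow(3)$. Your plan is to take $P=\Cone(P_0,\ \text{prime exceptional divisors of the }\psi_k)$ and argue that $\Gamma_B(X/T)\cdot P=\Eff(X/T)$ by decomposing an effective class $D$ as (movable) $+$ (exceptional) and handling the two pieces separately. The problem is that membership in $\Gamma_B\cdot P$ requires a \emph{single} $\gamma\in\Gamma_B$ with $\gamma^{-1}D\in P$. If you choose $\gamma$ to move the movable part into $P_0$, there is no reason $\gamma^{-1}$ should send the exceptional part into the span of the finitely many $\psi_k$-exceptional divisors: an arbitrary $\gamma\in\PsAut(X/T)$ (or even an arbitrary element of $\Aut(X_j/T)$ after passing to an SQM) does not permute a fixed finite set of prime exceptional divisors. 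The paper's fix is exactly the missing idea: one works on $X_j$, identifies $\theta^*\Nef^e(Y/T)$ as a face of $\Nef^e(X_j/T)$, and applies Proposition~\ref{prop: fundamental domain for surface} to get a rational polyhedral fundamental domain for that face under the \emph{stabiliser} ${\rm Stab}_{\theta^*\Nef^e(Y/T)}\Gamma_A(X_j/T)$. Elements of this stabiliser descend to $\Aut(Y/T)$ and hence permute the $\theta$-exceptional divisors, so the same group element simultaneously moves the movable and the exceptional part into $P_{j,\theta}=\Cone(P'_{j,\theta},\ \theta\text{-exceptional})$.

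A second, smaller issue is your finiteness argument in $(1)\Rightarrow(2)$. Enlarging $\Pi$ by ``adjoining the finitely many prime exceptional divisors associated to the SQM chambers'' and re-running Theorem~\ref{thm: Shokurov poly for minimal models} is vague and does not obviously bound the isomorphism classes of targets. The paper's approach is cleaner: given a birational contraction $u:X\dashrightarrow Y$, pull back an ample divisor on $Y$ to get a movable class $A$ on $X$; then some $\mu\in\PsAut(X/T)$ sends $[A]$ into a fixed Shokurov chamber $\Pi_l\subset\Pi$, and $u$ factors as $\tau\circ\tilde g_l\circ\mu$ where $\tilde g_l$ is the (fixed) weak minimal model for $\Pi_l$ and $\tau:\tilde Y_l\to Y$ is one of the finitely many contractions determined by faces of $(\tilde g_l)_*\Pi_l$.
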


The following two results demonstrate that MKD fiber spaces retain the key properties of Mori dream spaces.

\begin{theorem}\label{thm: bir contraction is MKD}
Let $X/T$ be an MKD fiber space. If $f: X \dto Y/T$ is a birational contraction with $Y$ a $\bQ$-factorial variety, then $Y/T$ is still an MKD fiber space.
\end{theorem}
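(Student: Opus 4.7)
The plan is to verify the four conditions of Definition~\ref{def: MKD spaces} for $Y/T$. Condition~(1), the $\bQ$-factoriality of $Y$, is assumed. The remaining conditions will be derived by pulling data back from $Y$ to $X$ along the birational contraction $f$, applying the MKD property of $X/T$, and descending the conclusion. The key structural input is the exact sequence
\[
0 \to \langle E_1, \ldots, E_k \rangle \to N^1(X/T) \xrightarrow{f_*} N^1(Y/T) \to 0,
\]
where $E_1, \ldots, E_k$ are the $f$-exceptional prime divisors (both $X$ and $Y$ being $\bQ$-factorial), split by the strict-transform section $f_*^{-1}$.

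For Condition~(2), given an effective $\bR$-Cartier divisor $D_Y$ on $Y$, I would set $D_X := f_*^{-1}D_Y + t(E_1+\cdots+E_k)$ with $t>0$ chosen so that the $E_i$ are forced to be contracted by any $D_X$-MMP (via a negativity-lemma style argument applied to the relative setup over $Y$). By the MKD property of $X/T$, $D_X$ admits a good minimal model $g\colon X \dto Z/T$; once the $E_i$ are contracted by $g$, the composition $h := g \circ f^{-1}\colon Y \dto Z$ is a birational contraction, and $h_*D_Y = g_*D_X$ is semi-ample$/T$. A standard check using the $D_X$-nonpositivity of $g$ shows $h$ is $D_Y$-nonpositive, so $Z$ is a good minimal model of $D_Y$ over $T$. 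Condition~(4), the local factoriality of canonical models on $Y$, follows from the same lift-and-descend template applied to canonical models of divisors in $\Eff(Y/T)$.

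For Condition~(3), the cleanest route is via Theorem~\ref{thm: 3 equivalences}(3): since Conditions~(2) and~(4) for $Y/T$ are now in hand, it suffices to produce a rational polyhedral cone $P_Y \subset \Eff(Y/T)$ with $\Gamma_B(Y/T) \cdot P_Y = \Eff(Y/T)$. A natural candidate is $P_Y := f_*(P_X)$, where $P_X \subset \Eff(X/T)$ is the cone supplied by Theorem~\ref{thm: 3 equivalences}(3) for $X$; this is rational polyhedral since $f_*$ is linear and surjective, and it lies in $\Eff(Y/T)$ since $f_*$ preserves effectivity. Given $[D_Y] \in \Eff(Y/T)$, I would apply the $X$-level covering to $f_*^{-1}D_Y$ plus a suitable $f$-exceptional correction, extract $\gamma_X \in \PsAut(X/T)$ and $[B_X] \in P_X$ with the correct pushforward, and then descend $\gamma_X$ to a pseudo-automorphism of $Y$ realizing $[D_Y]$ in the $\Gamma_B(Y/T)$-orbit of $f_*[B_X]$.

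The main obstacle is the descent of pseudo-automorphisms in Condition~(3): an arbitrary $\gamma_X \in \PsAut(X/T)$ need not descend to a pseudo-automorphism of $Y$, because $\gamma_X$ may permute or distort the set $\{E_1,\ldots,E_k\}$ in such a way that $f\circ\gamma_X\circ f^{-1}$ contracts divisors on $Y$. I expect to overcome this by restricting to the finite-index subgroup of $\PsAut(X/T)$ that stabilizes the span $\langle E_1,\ldots,E_k\rangle$ (equivalently, the kernel of $f_*$) and shows that this subgroup still covers the preimage $f_*^{-1}(\Eff(Y/T))$ modulo exceptional classes; verifying this reduction, together with justifying the coefficient choice of $t$ in Condition~(2) that forces the exceptional divisors to be contracted, constitutes the technical heart of the argument.
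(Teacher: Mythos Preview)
Your treatment of Conditions~(2) and~(4) is essentially the paper's approach: the paper first reduces to the case where $f$ is a morphism via Lemma~\ref{lem: factor bir contraction} and Lemma~\ref{lem: small Q-fact is MKD}, then takes $B = f^*D + \Exc(f)$ (your $t=1$, which suffices), runs a good minimal model $\theta\colon X \dto X'$ of $B$, and checks via the Nakayama--Zariski decomposition that the exceptional divisors are contracted so that $\theta\circ f^{-1}$ is a good minimal model of $D$. Likewise, local factoriality for $\Eff(Y/T)$ is deduced because the canonical model of $f^*D$ on $X$ factors through $Y$.

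The genuine gap is your route to Condition~(3). You aim for Theorem~\ref{thm: 3 equivalences}(3) and propose descending elements of $\PsAut(X/T)$ to $\PsAut(Y/T)$ by restricting to the stabilizer of $\langle E_1,\ldots,E_k\rangle$, claiming this has finite index. There is no reason for that: $\Gamma_B(X/T)$ acts on prime divisors by permutation, and the orbit of an $f$-exceptional $E_i$ can be infinite (already on K3 surfaces one sees infinite orbits of $(-2)$-curves), so neither the setwise nor the subspace stabilizer need be of finite index. Even within the subspace stabilizer, a pseudo-automorphism $\gamma_X$ stabilizing $\ker f_*$ in $N^1$ may still move the actual divisors $E_i$ to non-exceptional primes, and then $f\circ\gamma_X\circ f^{-1}$ contracts divisors on $Y$ and is not a pseudo-automorphism. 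Your proposed reduction does not close this.

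The paper avoids this obstruction entirely by verifying Theorem~\ref{thm: 3 equivalences}(2) instead of~(3). Finiteness of birational contractions from $Y$ is immediate since any such precomposes with $f$ to one from $X$. For the nef-cone fundamental domain, the paper observes that $f^*\Nef^e(Y/T)$ is a \emph{face} of $\Nef^e(X/T)=\Nef(X/T)_+$, invokes Proposition~\ref{prop: fundamental domain for surface} to get that $(f^*\Nef^e(Y/T),\,\mathrm{Stab}_{f^*\Nef^e(Y/T)}\Gamma_A(X/T))$ is of polyhedral type, and then descends \emph{automorphisms} (not pseudo-automorphisms): any $g\in\Aut(X/T)$ stabilizing this face sends $f^*(\text{ample})$ to something nef, so Lemma~\ref{lem: common mm}(2) forces $f\circ g\circ f^{-1}\in\Aut(Y/T)$. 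This is the missing idea in your plan; switching to the~(2)-characterization and working with automorphisms of the nef face is what makes the descent go through.
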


The next result concerns the minimal model program (MMP) for effective divisors on MKD fiber spaces.

\begin{theorem}\label{thm: MMP for MKD}
Let $X/T$ be an MKD fiber space. Then, for any effective $\bR$-Cartier divisor $D$, one can run a $D$-MMP$/T$ with scaling of an ample divisor, and this MMP terminates with a $D$-good minimal model$/T$. Moreover, every variety appearing in this MMP$/T$ is an MKD fiber space.
\end{theorem}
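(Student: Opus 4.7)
The plan is to realize the $D$-MMP$/T$ with scaling of $A$ by traversing the ray $\{D + tA\}_{t \geq 0}$ from large $t$ down to $t = 0$, using the Shokurov polytope decomposition of Theorem \ref{thm: Shokurov poly for minimal models} to guarantee finitely many steps, and Theorem \ref{thm: bir contraction is MKD} to propagate the MKD property through every step.

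Fix an ample divisor $A$ on $X$ and consider the cone $\mathcal C = \bR_{\geq 0}\langle D, A\rangle \subset \Eff(X/T)$. Since $X/T$ is MKD, every effective $\bR$-Cartier divisor in $\mathcal C$ admits a good minimal model$/T$ and $\mathcal C$ inherits local factoriality of canonical models. Applying Theorem \ref{thm: Shokurov poly for minimal models} to $\mathcal C$ yields a finite decomposition of $\mathcal C$ into relatively open rational polyhedral chambers on each of which the weak minimal model is constant. Thus the segment $\{D + tA : t \geq 0\}$ meets only finitely many chambers, producing a decreasing sequence of rational thresholds $t_0 > t_1 > \cdots > t_N \geq 0$ across which the weak minimal model jumps.

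Set $X_0 = X$, and inductively construct $X_{i+1}$ from $X_i$ as the weak minimal model$/T$ of $D_i + (t_i - \epsilon) A_i$ for small $\epsilon > 0$, where $D_i, A_i$ denote the strict transforms on $X_i$. By construction this is a birational contraction onto a $\bQ$-factorial variety, and it decomposes into classical MMP steps -- divisorial contractions and flips -- using the existence of extremal rays on the face of $\Nef(X_i/T)$ containing the class of $D_i + t_i A_i$, together with the local factoriality of canonical models. By Theorem \ref{thm: bir contraction is MKD}, $X_{i+1}$ is again an MKD fiber space, so the induction continues. At the final step $X_N$, the strict transform of $D$ is nef; combined with the good minimal model hypothesis on $X_N$, this forces the strict transform to be semi-ample$/T$, exhibiting $X_N$ as a $D$-good minimal model.

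The main technical obstacle will be the intermediate refinement: verifying that the birational contraction $X_i \dto X_{i+1}$ produced by crossing a single wall of the Shokurov decomposition really factors as a finite sequence of classical MMP steps (divisorial contraction or flip) rather than as a composite birational map with no such factorization. This should follow from combining local factoriality of canonical models (which supplies $\bQ$-factorial flip targets) with the inductive finiteness of the Shokurov decomposition applied on each $X_i$ in turn. Once this is in place, rationality of the nef thresholds, semi-ampleness at the terminal step, and propagation of the MKD property are all formal consequences of the theorems already established in the excerpt.
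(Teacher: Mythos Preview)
Your overall strategy---use the Shokurov polytope decomposition for finiteness and Theorem \ref{thm: bir contraction is MKD} to propagate the MKD property---matches the paper's in spirit, but the organization and several key steps differ substantially, and the gap you flag is real.

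The paper separates the argument into three distinct pieces. First, Proposition \ref{prop: extremal contractions and flips} establishes that for a non-nef effective $D$ on an MKD fiber space, a $D$-negative extremal contraction exists, and if it is small then the flip exists and the flipped variety is again MKD (and $\bQ$-factorial in the divisorial case). This is exactly the content of your ``main technical obstacle,'' and the paper's proof is not what you sketch: it does not factor a wall-crossing map into steps, but instead constructs a single extremal contraction by pushing $D+rA$ onto the boundary of $\Nef(X/T)$, then builds the flip by choosing a full-dimensional chamber of the Shokurov decomposition adjacent to the nef facet and invoking local factoriality to get a morphism to the canonical model of $D+rA$. The $\bQ$-factoriality of divisorial contraction targets is argued separately and nontrivially. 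Second, the paper defines the MMP with scaling procedurally, one extremal step at a time, using Proposition \ref{prop: extremal contractions and flips}. Third, termination is proved by contradiction: after reducing to flips only, the Shokurov decomposition of a cone containing $[D]$ and $[A]$ shows the set of nef thresholds is finite; then a second, separate argument (applying the decomposition to a cone containing $[D+r_lA]$ in its interior) shows that each threshold value can be repeated only finitely many times, since two models sharing an ample class would be isomorphic, contradicting strict $D$-negativity of a flip.

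Your single-pass construction, assigning one birational contraction per chamber wall, does not directly yield an MMP in the required sense: the map $X_i \dashrightarrow X_{i+1}$ across a wall need not be a single divisorial contraction or flip, and multiple extremal steps can share the same nef threshold. You would need to establish the content of Proposition \ref{prop: extremal contractions and flips} regardless, and then your termination argument would still be incomplete without the ``each threshold occurs finitely often'' step.
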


Unlike for Mori dream spaces, it is in general impossible to run MMPs for non-pseudo-effective divisors on MKD spaces. See Remark \ref{rmk: no MMP for non-effective div} for further discussion.

The following result constitutes the first step in applying MKD fiber spaces to moduli problems.

\begin{theorem}\label{thm: geometric MKD space}
Let $X$ be a fibration over $T$ such that the geometric generic fiber $X_{\bar \eta}$ is a klt MKD space. Then there exists a generically finite morphism $T' \to T$ such that, after shrinking $T'$, the following properties hold:
\begin{enumerate}
 \item $X_{T'}$ has klt singularities.
 \item The induced morphism $X_{T'} \to T'$ is an MKD fiber space.
 \item There exist natural isomorphisms 
 \[
 \Mov(X_{T'}/T') \simeq \Mov(X_{\bar\eta}) \quad \text{and} \quad \Eff(X_{T'}/T') \simeq \Eff(X_{\bar\eta}).
 \]
 \item $\Mov(X_{T'}/T'), \Eff(X_{T'}/T')$ are non-degenerate cones, and 
 \[
 \Mov(X_{T'}/T') = \Mov(X_{T'}/T')_+, \quad \Eff(X_{T'}/T') = \Eff(X_{T'}/T')_+.
 \] 
\end{enumerate}
Moreover, these properties remain valid for any generically finite morphism $T'' \to T$ factoring through $T' \to T$.
\end{theorem}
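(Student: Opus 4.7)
The plan is to extract the finite combinatorial data witnessing the MKD structure on $X_{\bar\eta}$, arrange a base change $T' \to T$ so that all of this data is defined over $K(T')$, and then spread it out over $T'$ after shrinking. By Theorem~\ref{thm: 3 equivalences} and Definition~\ref{def: MKD spaces}, the MKD structure on $X_{\bar\eta}$ is encoded by finitely many objects: a klt boundary $\Delta_{\bar\eta}$ witnessing the klt type assumption; finitely many effective Cartier divisors $D_1,\dots,D_s$ generating the rational polyhedral cone $\Pi_{\bar\eta}\subset \Mov(X_{\bar\eta})$; pseudo-automorphisms $\sigma_1,\dots,\sigma_r\in\PsAut(X_{\bar\eta})$ whose translates of $\Pi_{\bar\eta}$ cover $\Mov(X_{\bar\eta})$; small $\bQ$-factorial modifications $f_i\colon X_{\bar\eta}\dto X_{\bar\eta,i}$ realizing the Mori chamber decomposition inside $\Pi_{\bar\eta}$; a rational polyhedral cone $P_{\bar\eta}\subset\Eff(X_{\bar\eta})$ with $\Gamma_B(X_{\bar\eta})\cdot P_{\bar\eta}=\Eff(X_{\bar\eta})$; and the finite set of birational contractions from $X_{\bar\eta}$ provided by Theorem~\ref{thm: 3 equivalences}(2). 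All of these objects are defined over a finite extension $L/K(T)$; take $T'$ to be the normalization of $T$ in $L$.

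After shrinking $T'$, standard spreading out extends each of the above objects relatively: $(X_{T'},\Delta_{T'})\to T'$ with $\Delta_{T'}$ a relative $\bQ$-divisor; the $D_j$'s become effective relative Cartier divisors; the $\sigma_k$'s become elements of $\PsAut(X_{T'}/T')$; and the $f_i$'s become small $\bQ$-factorial modifications over $T'$. Property (1) follows from inversion of adjunction applied fiberwise (after shrinking) once $(X_{\bar\eta},\Delta_{\bar\eta})$ is klt. For (3) and (4), the restriction map $N^1(X_{T'}/T')\to N^1(X_{\bar\eta})$ is injective after further shrinking (vertical numerically trivial classes disappear) and surjective because the $D_j$'s span $\Pi_{\bar\eta}$ and $\PsAut(X_{T'}/T')\cdot\Pi$ already surjects onto all classes after restriction. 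The isomorphisms of $\Mov$ and $\Eff$ cones then follow, since effective divisors on $X_{\bar\eta}$ extend to effective divisors on $X_{T'}$ (possibly plus vertical components which are trivial in $N^1(X_{T'}/T')$ after shrinking), and vice versa; the same argument identifies movable classes. Non-degeneracy and the equalities $\Mov=\Mov_+$, $\Eff=\Eff_+$ follow from the corresponding statements for the klt type MKD space $X_{\bar\eta}$, which transfer through the isomorphism.

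For (2) we verify the four axioms of Definition~\ref{def: MKD spaces}. Condition (1) ($\bQ$-factoriality) spreads out from $X_{\bar\eta}$ to $X_{T'}$. Condition (3) is witnessed by the extended $\Pi\subset\Mov(X_{T'}/T')$ together with the extended pseudo-automorphisms; using the cone isomorphism of (3), the orbit of $\Pi$ under $\PsAut(X_{T'}/T')$ exhausts $\Mov(X_{T'}/T')$. For condition (2), any effective $\bR$-Cartier divisor $D$ on $X_{T'}$ restricts to an effective divisor $D_{\bar\eta}$ which admits a good minimal model $X_{\bar\eta}\dto Y_{\bar\eta}$; by Theorem~\ref{thm: Shokurov poly for minimal models} this model is one of the finitely many birational contractions already spread over $T'$, so the corresponding $X_{T'}\dto Y_{T'}/T'$ is the required good minimal model of $D$ after shrinking, with semiampleness of the pushforward descending from $X_{\bar\eta}$. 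Condition (4) on local factoriality of canonical models is obtained analogously: the finitely many canonical models on $X_{\bar\eta}$ witnessing local factoriality spread out to canonical models over $T'$, whose $\bQ$-factoriality is preserved after shrinking. The final sentence, stability under a further base change $T''\to T'$, is immediate: all data pulls back, and the cone isomorphisms and structural properties are preserved under faithfully flat base change to an open of $T''$.

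The main obstacle is the subtle interplay between the relative and generic-fiber versions of good minimal models and local factoriality of canonical models: one must ensure that an \emph{arbitrary} effective relative $\bR$-Cartier divisor on $X_{T'}$—not merely one extended from $X_{\bar\eta}$—admits a good minimal model over $T'$, and that the associated canonical model is $\bQ$-factorial in the MKD sense. The critical input is the cone isomorphism of (3), which reduces every relative divisor to its generic-fiber shadow up to classes that become trivial after shrinking $T'$; combined with the finiteness of chambers (Theorem~\ref{thm: Shokurov poly for minimal models}) this lets the finite set of spread-out small modifications and contractions serve uniformly as minimal/canonical models in families, which is precisely what lets the MKD axioms descend from $X_{\bar\eta}$ to $X_{T'}/T'$.
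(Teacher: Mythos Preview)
Your overall strategy—spread out the finite combinatorial data encoding the MKD structure from $X_{\bar\eta}$ to a model over $T'$—is exactly the paper's approach, and most of the pieces are handled correctly. However, there is one genuine misunderstanding and one underdeveloped step.

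The serious gap is in your treatment of condition~(4). You write that the spread-out canonical models have ``$\bQ$-factoriality preserved after shrinking'' and later speak of the canonical model being ``$\bQ$-factorial in the MKD sense.'' But \emph{local factoriality of canonical models} (Definition~\ref{def: local factoriality}) has nothing to do with $\bQ$-factoriality of the target variety. It is a statement about \emph{factorization of maps}: for $D$ in a rational polytope $P$, there is a neighborhood $U$ of $D$ in $P$ such that the canonical model of any $B\in U$ admits a morphism to the canonical model of $D$. The paper verifies this by noting that, on each model $Y_i$ obtained from the spread-out minimal models $f_i$, the pushforward chamber $(f_i)_*\Xi_i$ lies in $\Nef^e(Y_i/T')$, where local factoriality holds by Proposition~\ref{prop: condition (4)}(1); since the canonical model of $D$ with $[D]\in\Xi_i$ factors as $g\circ f_i$ for some contraction $g$ on $Y_i$, the property descends to $\Xi_i$, and then Corollary~\ref{cor: local factoriality for Pi} extends it from $\Pi$ to all of $\Eff(X_{T'}/T')$. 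Your argument does not address this mechanism at all.

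The second, more technical, point concerns condition~(2). You assert that semi-ampleness of the pushforward ``descends from $X_{\bar\eta}$,'' but this requires care for an \emph{arbitrary} effective representative $B$ with $[B]=[D]$, not just the spread-out $D$. The paper handles this by shrinking $T'$ so that it is smooth and $Y_i\to T'$ has no very exceptional divisors; then every vertical prime divisor on $Y_i$ is $\sim_{\bQ}0/T'$. Since $(B_i)_{\bar\eta}$ is semi-ample (good minimal models exist on $X_{\bar\eta}$), flat base change gives semi-ampleness of $(B_i)_\eta$, hence of $B_i$ over some open $U\subset T'$, and the triviality of vertical divisors upgrades this to semi-ampleness over all of $T'$. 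Your last paragraph gestures at the issue but does not supply this argument. (A smaller point: ``$\bQ$-factoriality spreads out'' is not automatic; the paper obtains it by taking a small $\bQ$-factorial modification via \cite{BCHM10} and observing it is an isomorphism generically because $X_{\bar\eta}$ is already $\bQ$-factorial.)
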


As a first application of the theory of MKD fiber spaces, we extend results on the deformation invariance of various cones from varieties of Fano type to MKD fiber spaces (cf. \cite{Wis91, Wis09, dFH11, HX15, Sho20, FHS24, CHHX25, CLZ25}). Such results play a crucial role in establishing the boundedness of moduli spaces of varieties (see \cite{HX15, HMX18, MST20, FHS24, CLZ25}, etc.) and the boundedness of complements (see \cite{Sho20, CHHX25}, etc.).

\begin{theorem}\label{thm: def of cone together}
Let $f: X \to T$ be a fibration. Suppose that $S \subset T$ is a Zariski dense subset such that for any $s \in S$, the fiber $X_s$ satisfies
\[
H^1(X_s, \mO_{X_s}) = H^2(X_s, \mO_{X_s}) = 0.
\]
Assume further that the geometric generic fiber $X_{\bar\eta}$ is a klt MKD space. Then, after a generically finite base change of $T$, there exists a non-empty Zariski open subset $T_0 \subset T$ such that for any Zariski open subset $U \subset T_0$ and $t\in U$, the natural map
\[
N^1(X_U/U) \to N^1(X_t), \quad [D] \mapsto [D|_{X_t}]
\] is an isomorphism which induces the isomorphisms
\[
\Nef(X_U/U) \simeq \Nef(X_t), \quad \Eff(X_U/U) \simeq \Eff(X_t), \quad \Mov(X_{U}/U) \simeq \Mov(X_t).
\] These isomorphisms also identify the Mori chamber decompositions of $ \Mov(X_{U}/U)$ and $ \Mov(X_t)$.

Moreover, the following statements hold:
\begin{enumerate}
\item For any $\bR$-divisor $\mD\in \Eff(X_U/U)$, a sequence of $\mD$-MMP$/U$ induces a sequence of $\mD|_{X_t}$-MMP of the same type for each $t\in U$. 
\item Conversely, for any $\bR$-divisor $D \in \Eff(X_t)$ with $t \in U$, any sequence of $D$-MMP on $X_t$ is induced by a sequence of $\mD$-MMP$/U$ on $X_U/U$ of the same type, where $\mD$ is an effective divisor satisfying $[\mD|_{X_t}] = [D]$.
\end{enumerate}
\end{theorem}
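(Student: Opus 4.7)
The plan is to first reduce to the situation where $X\to T$ becomes an MKD fiber space via Theorem~\ref{thm: geometric MKD space}, and then use the vanishing hypotheses $H^1=H^2=0$ to transfer the chamber structure of $\Mov(X_U/U)$ to each fiber $X_t$.

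First I would apply Theorem~\ref{thm: geometric MKD space} to obtain a generically finite base change $T'\to T$ and an open subset (still denoted $T'$) so that $X_{T'}/T'$ is an MKD fiber space with $X_{T'}$ klt, together with the identifications $\Mov(X_{T'}/T')\simeq \Mov(X_{\bar\eta})$ and $\Eff(X_{T'}/T')\simeq \Eff(X_{\bar\eta})$. Since $S\subset T$ is Zariski dense and $T'\to T$ is generically finite, the preimage of $S$ remains dense in $T'$, and the vanishings $H^1(X_t,\mathcal O_{X_t})=H^2(X_t,\mathcal O_{X_t})=0$ are preserved along this preimage. By upper semicontinuity of $h^i$, these vanishings then hold on a dense Zariski open subset $T_0\subset T'$.

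Next, over $T_0$ the relative Picard scheme has trivial tangent spaces ($H^1=0$) and no obstructions ($H^2=0$) at every fiber, so it is smooth and étale over $T_0$. After absorbing a further finite étale cover into the base change $T'\to T$, the sheaf of Néron--Severi groups becomes constant. For any Zariski open $U\subset T_0$ and $t\in U$, I would then verify that $N^1(X_U/U)\to N^1(X_t)$ is an isomorphism: injectivity uses rigidity of line bundles in families with étale Picard scheme, and surjectivity uses the lifting of line bundles from $X_t$ to $X_U$ afforded by this étale triviality. This is the standard deformation-of-divisors recipe used in \cite{dFH11, CLZ25}.

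The critical step is to transfer the Mori chamber decomposition. The MKD structure on $X_{T_0}/T_0$ provides finitely many SQMs $f_i\colon X_{T_0}\dashrightarrow Y_{i,T_0}/T_0$ whose nef cone pullbacks tile a rational polyhedral fundamental domain of $\Mov(X_{T_0}/T_0)$ under $\PsAut(X_{T_0}/T_0)$, and by Theorem~\ref{thm: MMP for MKD} every $\mathcal D$-MMP$/T_0$ terminates via these SQMs at a good minimal model. After shrinking $T_0$ further, each $Y_{i,T_0}\to T_0$ has klt fibers and each $f_i$ restricts fiberwise to an SQM $X_t\dashrightarrow Y_{i,t}$. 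Relative nefness and semiampleness on $Y_{i,T_0}/T_0$ descend to their absolute counterparts on $Y_{i,t}$ for every $t\in U$: this is where $H^1=H^2=0$ combine with the klt hypothesis and the MKD axioms (good minimal models for every effective divisor, local factoriality of canonical models) to force the fiberwise decomposition to coincide with the Mori chamber decomposition of $\Mov(X_t)$. From this, the $\Nef$, $\Eff$ and $\Mov$ cone isomorphisms follow.

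The main obstacle lies in this last step: simultaneously spreading out all the SQMs $f_i$ and verifying that their fiberwise restrictions remain SQMs whose nef and semiample structures match the ambient ones. The vanishing $H^2(X_t,\mathcal O_{X_t})=0$ is crucial, allowing line bundles and Mori contractions to deform in the spirit of \cite{Wis91, Wis09}, while the klt and MKD hypotheses ensure that fiberwise MMPs terminate and produce the same chamber boundaries. Once this chamber identification is in hand, assertions (1) and (2) follow because a step of an MMP with scaling corresponds combinatorially to crossing a wall between adjacent Mori chambers, so the chamber identification automatically transports each relative step to the fiber and conversely lifts each absolute step to a relative step of the same type.
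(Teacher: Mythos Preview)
Your overall reduction (via Theorem~\ref{thm: geometric MKD space}) and the $N^1$ identification are aligned with the paper. The genuine gap is in the step you call ``critical'': you assert that the fiberwise restrictions of the finitely many SQMs $f_i$ force the Mori chamber decompositions to coincide, and that the $\Nef$ and $\Eff$ isomorphisms then follow. This is circular. The easy inclusion $\Nef(Y_{i,U}/U)\subset\Nef(Y_{i,t})$ only gives that each relative chamber sits inside a fiberwise chamber; to conclude they are equal you would need to know a priori that the $f_{i,t}^*\Nef(Y_{i,t})$ are Mori chambers of $X_t$ with disjoint interiors, which presupposes precisely the $\Nef$ identification (for all models) you are trying to prove. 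Moreover, shrinking $T_0$ for the finitely many $f_i$ in a fundamental domain does not give a uniform $T_0$: for an arbitrary divisor $D$ you must act by some $\gamma\in\PsAut(X_{T_0}/T_0)$, and controlling where all the $\gamma$'s are isomorphisms requires more than a single shrinking step.

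The paper handles this in a different order. The $\Nef$ isomorphism is proved first (Proposition~\ref{prop: iso of nef}) via the \emph{generic nef cone} ${\rm GNef}(X/T)$ of Definition~\ref{def: generic nef cone} and Theorem~\ref{thm: cone for GNef}: if $D|_{X_t}$ is nef then $D_\eta$ is nef, so $[D]\in{\rm GNef}(X/T)$, and because $({\rm GNef}(X/T),\Gamma_{GA}(X/T))$ is of polyhedral type with $\Gamma_{GA}$ finitely generated, one can shrink $T$ once (for the generators and for a polyhedral $\Pi$) to force ${\rm GNef}=\Nef^e$ over a uniform $T_0$. The $\Eff$ isomorphism (Theorem~\ref{thm: MMP}\,(4)) is then obtained by a separate \emph{volume continuity} argument: if $[\mathcal D]$ sat on $\partial\bEff(X/T)$ with $[\mathcal D_t]$ big, running MMPs on nearby interior classes shows $\operatorname{vol}(\mathcal D_\eta)=\operatorname{vol}(\mathcal D_t)>0$, a contradiction. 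Only after $\Nef$ and $\Eff$ are in hand does the paper establish the MMP correspondence and the Mori chamber identification (Theorem~\ref{thm: mov}). Your proposal does not supply either of these two mechanisms, and without them the chamber argument does not close.
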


We remark that \cite{dFH11, HX15, FHS24, CHHX25} essentially rely on vanishing and extension theorems, which are not available in the general MKD setting. To establish Theorem \ref{thm: def of cone together}, we adopt a different perspective along the lines of \cite{CLZ25}  (it seems that \cite[\S 4]{Sho20} uses a similar method). Instead of considering deformations of a fixed $X_0$, we allow stratifications of the base. This relaxes the technical restrictions on $X_0$ while strengthening the resulting conclusions. For example, we establish the generic deformation invariance of nef cones, effective cones, movable cones, and Mori chamber decompositions, which do not hold without stratifications (see the discussion in \cite[\S 5]{FHS24}). This approach preserves the same strength in applications to boundedness problems through Noetherian induction.

The above results are established in Theorem \ref{thm: def of nef cone}, Lemma \ref{lem: uniform behavior in bc}, Theorem \ref{thm: MMP}, and Theorem \ref{thm: mov}. In fact, a slightly stronger statement holds: the conclusions remain valid for any MKD fiber space that is a birational contraction of $X/T$.

Together with \cite[Theorem~1.6]{Bir23}, this leads to the following folklore result.

\begin{theorem}\label{thm: bdd for rationally connected CY}
Let $\mathcal S_n$ be a set of rationally connected Calabi-Yau varieties of dimension $n$ with klt singularities. Assume that the Morrison-Kawamata cone conjecture holds for every $n$-dimensional rationally connected Calabi-Yau variety with klt singularities, and every effective $\bR$-Cartier divisor on such a variety admits a good minimal model. Then there exists a projective morphism $g$ between schemes of finite type such that for any $X \in \mathcal S_n$, the variety $X$ is isomorphic to some fiber of $g$.
\end{theorem}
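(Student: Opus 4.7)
The plan is to bootstrap from Birkar's boundedness result, which only bounds $\mathcal S_n$ up to isomorphism in codimension one, to genuine boundedness using two MKD inputs: the finiteness of birational contractions of an MKD space (Theorem~\ref{thm: 3 equivalences}(2)) and the deformation invariance of the Mori chamber decomposition (Theorem~\ref{thm: def of cone together}). The point is that every ``flop class'' of a klt rationally connected Calabi-Yau $n$-fold contains only finitely many $\bQ$-factorial birational models, and these models vary nicely in families.

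First, by \cite[Theorem~1.6]{Bir23}, there is a projective morphism $g' \colon \mathcal Y \to S$ between schemes of finite type such that every $X \in \mathcal S_n$ is isomorphic in codimension one to some fiber of $g'$. After replacing $\mathcal Y$ by a relative small $\bQ$-factorialization and stratifying $S$, we may assume $g'$ restricts over each irreducible stratum to a fibration whose geometric generic fiber is a $\bQ$-factorial klt Calabi-Yau variety, hence an MKD space by hypothesis. Since the required $g$ is allowed to have many connected components, it suffices, by Noetherian induction on $\dim S$, to produce for each stratum $S^\circ$ a finite family over a dense open $U \subset S^\circ$ whose fibers realize every $X \in \mathcal S_n$ whose codimension-one equivalence class is represented by $\mathcal Y_t$ for some $t \in U$; the complement $S^\circ \setminus U$ is then handled by induction.

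Fix such a stratum and let $\bar\eta$ be its geometric generic point. By Theorem~\ref{thm: 3 equivalences}(2) applied to $\mathcal Y_{\bar\eta}$, there are only finitely many birational contractions $\mathcal Y_{\bar\eta} \dashrightarrow Y_i$ up to isomorphism, $i=1,\dots,N$. By Theorems~\ref{thm: geometric MKD space} and~\ref{thm: def of cone together}, after a generically finite base change of $S^\circ$ we may shrink to a dense open $U$ on which each $Y_i$ spreads out to a relative birational contraction $\mathcal Y_U \dashrightarrow \mathcal Y^{(i)}_U/U$ that restricts fiberwise to the corresponding contraction on every closed fiber $t \in U$ --- this uses the ``moreover'' clause following Theorem~\ref{thm: def of cone together}, together with Theorem~\ref{thm: bir contraction is MKD} to keep each $\mathcal Y^{(i)}$ inside the MKD framework. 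An arbitrary $X \in \mathcal S_n$ whose codimension-one class is represented by $\mathcal Y_t$ with $t \in U$ is then, after passing to its small $\bQ$-factorialization, an SQM of $\mathcal Y_t$, and hence isomorphic to $\mathcal Y^{(i)}_t$ for some $i$; the non-$\bQ$-factorial $X$ is recovered from this SQM as a further birational contraction, which is also in our list.

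The main obstacle is the last paragraph: guaranteeing that the finite list of birational contractions of the geometric generic fiber deforms \emph{uniformly} over $U$, and that taking a small $\bQ$-factorialization of a member of $\mathcal S_n$ returns us to an honest SQM of $\mathcal Y_t$. The first is secured by the simultaneous deformation of all models asserted in Theorem~\ref{thm: def of cone together}, applied not just to $\mathcal Y_U/U$ but to each $\mathcal Y^{(i)}_U/U$, which is an MKD fiber space by Theorem~\ref{thm: bir contraction is MKD}; the second follows because any two $\bQ$-factorial klt Calabi-Yau varieties isomorphic in codimension one are related by a small birational map, which is a birational contraction in the sense used in Theorem~\ref{thm: 3 equivalences}. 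Assembling the finitely many $g^{(i)}$ produced on each stratum and taking their disjoint union yields the desired morphism $g$.
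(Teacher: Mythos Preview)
Your overall strategy matches the paper's: invoke \cite[Theorem~1.6]{Bir23} for boundedness up to isomorphism in codimension one, then use the MKD machinery (finiteness of birational models plus uniform deformation of the Mori chamber structure) to upgrade this to genuine boundedness via Noetherian induction. The paper packages the second step as Theorem~\ref{thm: bir to bdd}, whose proof is essentially your spreading-out argument combined with the finiteness of $\mathrm{b}(\mathcal Y_{T_0}/T_0)$.

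There is, however, a genuine gap where you assert that the geometric generic fiber $\mathcal Y_{\bar\eta}$ is ``a $\bQ$-factorial klt Calabi-Yau variety, hence an MKD space by hypothesis''. The hypothesis of the theorem only guarantees the cone conjecture and good minimal models for \emph{rationally connected} klt Calabi-Yau $n$-folds, and you never verify that $\mathcal Y_{\bar\eta}$ is rationally connected. This is not automatic from rational connectedness of the closed fibers, since the family need not be smooth. The paper devotes a full paragraph to this: it passes to a resolution $\tilde{\mathcal Y} \to \mathcal Y$, uses \cite[Corollary~1.6]{HM07} to get rational connectedness of the general closed fibers of $\tilde{\mathcal Y}$, and then runs an MRC-fibration argument with spreading out to conclude that $\tilde{\mathcal Y}_{\bar\eta}$ (hence $\mathcal Y_{\bar\eta}$) is rationally connected. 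Without this step you cannot invoke the standing hypotheses on $\mathcal Y_{\bar\eta}$, and the whole argument stalls.

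There are also smaller omissions the paper handles explicitly: a uniform $\epsilon$-lc bound on $\mathcal S_n$ via \cite[Theorem~1.1]{HMX14} is needed before \cite[Theorem~1.6]{Bir23} applies; one must check $K_{\mathcal Y} \sim_{\bQ} 0/T$ and that $\mathcal Y$ is klt after shrinking (the paper uses \cite[Corollary~1.8]{Bir23} plus upper semicontinuity); and the vanishing $H^1(\mathcal Y_s,\mathcal O_{\mathcal Y_s}) = H^2(\mathcal Y_s,\mathcal O_{\mathcal Y_s}) = 0$ required by Theorem~\ref{thm: def of cone together} should be recorded. These are routine once noticed, but the rational connectedness of $\mathcal Y_{\bar\eta}$ is the step that requires real work.
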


In the proofs of the above results, we study various cones equipped with suitable group actions. One of the key ingredients is the generic nef cone.

\begin{definition}\label{def: generic nef cone}
Let $X$ be a normal variety projective over $T$. Then
\[
{\rm GNef}(X/T) \coloneqq \{[D] \in \Eff(X/T) \mid [D_\eta] \in \Nef(X_\eta)\}
\] is a convex cone inside $N^1(X/T)$ which is called the generic nef cone.
\end{definition}

The generic nef cone encodes the information about contractions $X \dto Y/T$ which become morphisms after restricting to a non-empty open subset $U\subset T$. ${\rm GNef}(X/T)$ admits a natural group action by the generic automorphism group
\begin{equation}\label{eq: generic auto}
{\rm GAut}(X/T) \coloneqq \{g\in \PsAut(X/T) \mid g_\eta \in \Aut(X_\eta)\}.
\end{equation}
Beyond their intrinsic interest, these examples illustrate that the geometric problem at hand dictates which cones and group actions one should consider.

As before, let $\Gamma_{GA}(X/T)$ be the image of ${\rm GAut}(X/T)$ under the natural group homomorphism $\rho: \PsAut(X/T) \to {\rm GL}(N^1(X/T))$. Then the analogous Morrison-Kawamata cone conjecture holds for the generic nef cone of an MKD fiber space.

\begin{theorem}\label{thm: cone for GNef}
    Let $X/T$ be an MKD fiber space. 
    \begin{enumerate}
    \item There is a rational polyhedral cone $Q\subset {\rm GNef}(X/T)$ such that $\Gamma_{GA}(X/T) \cdot Q \supset {\rm GNef}(X/T)$. 
    \item ${\rm GNef}(X/T)_+$ admits a weak rational polyhedral fundamental domain under the action of $\Gamma_{GA}(X/T)$. 
    \item If ${\rm GNef}(X/T)$ is non-degenerate, then ${\rm GNef}(X/T)_+={\rm GNef}(X/T)$.
    \item The set \[
    \begin{split}
    \{Y_\eta \mid &X \dto Y/T \text{~is a map such that~} X_U \to Y_U/U \\&\text{~is a contraction morphism for some non-empty open subset~} U \subset T\}
    \end{split}
    \]
    is finite up to isomorphism of $Y_\eta$.
    \end{enumerate}
\end{theorem}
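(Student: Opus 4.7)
The plan is to analyze $\mathrm{GNef}(X/T)$ through the restriction map $\pi\colon N^1(X/T)\to N^1(X_\eta)$, whose image and kernel can each be controlled using the MKD structure of $X/T$. One has $\mathrm{GNef}(X/T) = \pi^{-1}(\Nef(X_\eta)) \cap \Eff(X/T)$, and the group $\Gamma_{GA}(X/T)$ preserves this preimage while factoring through the $\Aut(X_\eta)$-action on $\Nef(X_\eta)$ via $\pi$. The first task is to transfer the MKD structure from $X/T$ to the generic fiber $X_\eta$ (viewed as an absolute MKD space, possibly after base change to $\overline{\kappa(\eta)}$), so that Theorem~\ref{thm: 3 equivalences}(2) applied to $X_\eta$ supplies a rational polyhedral fundamental domain $F_\eta\subset\Nef^e(X_\eta)$ for the $\Aut(X_\eta)$-action, together with finiteness of birational contractions of $X_\eta$ up to isomorphism.

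For part (1), I would construct $Q$ as $\tilde F + W$, where $\tilde F\subset\Eff(X/T)$ is a rational polyhedral lift of $F_\eta$ along $\pi$---obtained by picking effective lifts of the extremal rays of $F_\eta$ and adjusting by effective vertical divisors if needed to ensure effectivity---and $W\subset\Eff(X/T)\cap\ker\pi$ is a rational polyhedral cone whose $\Gamma_{GA}(X/T)$-orbit covers the vertical effective classes, whose existence comes from applying Theorem~\ref{thm: 3 equivalences}(3) to the vertical subcone. Given $[D]\in\mathrm{GNef}(X/T)$, the covering $\Aut(X_\eta)\cdot F_\eta\supset\Nef^e(X_\eta)$ produces $g\in\mathrm{GAut}(X/T)$ with $g_\eta^{\ast}[D_\eta]\in F_\eta$; one then writes $g^{\ast}[D]=\tilde e + v$ with $\tilde e\in\tilde F$ and $v$ vertical effective, and adjusts $v$ into $W$ by a further element of $\mathrm{GAut}(X/T)$ acting on the vertical part. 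Part (2) follows from (1) by a standard argument: intersecting the weak fundamental domain with the effective hull of $\mathrm{GNef}(X/T)$ produces a weak rational fundamental domain for $\mathrm{GNef}(X/T)_+$. For (3), non-degeneracy of $\mathrm{GNef}(X/T)$ ensures that rational effective classes are dense, forcing $\mathrm{GNef}(X/T)_+ = \mathrm{GNef}(X/T)$.

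For part (4), any map $X\dashrightarrow Y/T$ that restricts to a contraction morphism $X_U\to Y_U/U$ on some open $U\subset T$ further restricts to a contraction $X_\eta\to Y_\eta$ on the generic fiber, and is determined up to isomorphism of $Y_\eta$ by the pulled-back nef class on $X_\eta$, which cuts out a face of $\Nef(X_\eta)$. By the MKD structure on $X_\eta$ and the finiteness part of Theorem~\ref{thm: 3 equivalences}(2) applied to $X_\eta$, only finitely many such faces arise modulo $\Aut(X_\eta)$, so only finitely many $Y_\eta$ appear up to isomorphism.

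The main obstacle I expect is the transfer in the first task: rigorously descending the MKD structure from the fiber space $X/T$ to the absolute variety $X_\eta$ requires verifying that rational polyhedrality of fundamental domains, existence of good minimal models, and local factoriality of canonical models all survive restriction to the generic fiber. A secondary technical point is ensuring that the vertical effective subcone of $\Eff(X/T)$ admits a rational polyhedral $\Gamma_{GA}(X/T)$-covering cone $W$, which calls for applying the MKD machinery to the vertical part and checking the compatibility of the induced group actions.
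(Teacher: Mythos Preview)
Your approach has a genuine gap at its very first step, and the paper's proof takes a completely different route that avoids it.

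You want to transfer the MKD structure from $X/T$ to the generic fiber $X_\eta$ and then apply Theorem~\ref{thm: 3 equivalences}(2) there. The paper never establishes that $X_\eta$ (or $X_{\bar\eta}$) is an MKD space when $X/T$ is; the available result, Theorem~\ref{thm: geometric MKD space}, goes in the \emph{opposite} direction, and the converse is flagged as an open problem (Question~\ref{que: fiber to space}). Even if you could show $X_\eta$ is an MKD space, you would still need to lift elements of $\Aut(X_\eta)$ to elements of $\mathrm{GAut}(X/T)$, and there is no mechanism provided for this: an automorphism of the generic fiber need not extend to a pseudo-automorphism of $X/T$. Your decomposition $g^\ast[D]=\tilde e+v$ with $v$ vertical effective is also not justified---the difference lies in $\ker\pi$, but there is no reason it lands in $\Eff(X/T)\cap\ker\pi$ for an arbitrary choice of lift $\tilde F$. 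Finally, applying Theorem~\ref{thm: 3 equivalences}(3) ``to the vertical subcone'' does not make sense as stated: that theorem concerns the full effective cone of an MKD fiber space, not an arbitrary $\Gamma_{GA}$-invariant subcone.

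The paper's proof works entirely over $T$ and never touches the MKD structure of $X_\eta$. It starts from a rational polyhedral cone $P\subset\Eff(X/T)$ with $\PsAut(X/T)\cdot P\supset\Eff(X/T)$ (Theorem~\ref{thm: 3 equivalences}(3)), decomposes $P=\bigsqcup_i P_i$ into Shokurov chambers (Theorem~\ref{thm: Shokurov poly for minimal models}), and proves two key claims. First, if a translate $g\cdot P_i$ meets $\mathrm{GNef}(X/T)$ then $g\cdot\bar P_i\subset\mathrm{GNef}(X/T)$: this is because running a $(g\cdot D)$-MMP$/T$ with scaling (Theorem~\ref{thm: MMP for MKD}) is an isomorphism over $\eta$ when $(g\cdot D)_\eta$ is nef, so the resulting good minimal model witnesses nefness over $\eta$ for every divisor in the chamber. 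Second, if two translates $g\cdot P_i$ and $h\cdot P_i$ both meet $\Int(\mathrm{GNef}(X/T))$, then $g\circ h^{-1}\in\mathrm{GAut}(X/T)$: this is checked by comparing good minimal models over $\eta$ using Lemma~\ref{lem: common mm}. The cone $Q$ is then built from one chosen translate $g_i\cdot\bar P_i$ per chamber. Part (4) follows by decomposing $Q$ again into Shokurov chambers and noting that each chamber yields finitely many contractions over $T$, whose restrictions to $\eta$ exhaust the possible $Y_\eta$. The whole argument stays in the category of fiber spaces over $T$; the only input from $X_\eta$ is the elementary observation that an MMP step for a divisor nef on $X_\eta$ is trivial over $\eta$.
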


Finally, we discuss the organization of the paper. Section~\ref{sec: pre} sets up the notation and terminology and provides the necessary background on convex geometry. Section~\ref{sec: MKD spaces} develops the foundations of MKD fiber spaces. More precisely, Theorems~\ref{thm: Shokurov poly for minimal models} and~\ref{thm: Shokurov poly for nef} are proved in Section~\ref{subsec: shokurov polytope}, Theorem~\ref{thm: 3 equivalences} is proved in Section~\ref{subsec: cone conj for nef and eff cones}, Theorem~\ref{thm: bir contraction is MKD} is proved in Section~\ref{subsec: MKD spaces under birational contractions}, Theorem~\ref{thm: MMP for MKD} is proved in Section~\ref{subsec: MMP for MKD}, Theorem~\ref{thm: two def are same} is proved in Section~\ref{subsec: equivalence}, and Theorem~\ref{thm: geometric MKD space} is proved in Section~\ref{subsec: geometric MKD space}. Section~\ref{sec: variant of cones} introduces cones other than the usual nef, effective, and movable cones, and studies the corresponding cone conjectures. The geometric consequences are derived from these cones, and Theorem~\ref{thm: cone for GNef} is proved in this section. Section~\ref{sec: examples and questions} provides examples of MKD fiber spaces and lists open questions about them. Section~\ref{sec: deformation of cones of MKD spaces} presents applications of the theory of MKD fiber spaces; in particular, Theorems~\ref{thm: def of cone together} and~\ref{thm: bdd for rationally connected CY} are proved there.

\subsection*{Acknowledgements}  
We thank Sheng Meng for pointing out the reference \cite{Ogu00}. S. Choi is partially supported by Samsung Science and Technology Foundation under Project Number SSTF-BA2302-03. Z. Li is partially supported by the NSFC (No.12471041), the Guangdong Basic and Applied Basic Research Foundation (No.2024A1515012341). C. Zhou is supported by the NSFC (No.12501058) and a grant from Xiamen University (No.X2450214).

\section{Preliminaries}\label{sec: pre}

\subsection{Notation and terminology}
We write $X/T$, or say that $X$ is a variety over $T$, for a projective morphism $f\colon X\to T$ between normal quasi-projective varieties over $\bC$. In this case, we also say that $X$ is a fiber space over $T$. We call $X/T$, or $f$, a fibration if $f$ is surjective with connected fibers. Thus, under our convention, a fiber space need not be a fibration. This convention is chosen to be compatible with our definition of MKD fiber spaces.

By divisors, we mean Weil divisors. For $\mathbb K=\Zz, \Qq, \Rr$ and two $\mathbb K$-divisors $A, B$ on $X/T$, $A \sim_{\mathbb K} B/T$ means that $A$ and $B$ are $\mathbb K$-linearly equivalent over $T$. If $A, B$ are $\Rr$-Cartier divisors, then $A \equiv B/T$ means that $A$ and $B$ are numerically equivalent over $T$. An $\bR$-Cartier divisor $D$ on $X$ is called effective over $T$ if there exists an effective $\bR$-Cartier divisor $B$ such that $D \sim_\bR B/T$. A Cartier divisor $D$ is called movable over $T$ if the base locus of the relative linear system $|D/T|$ has codimension greater than $1$. It is called semi-ample over $T$ if there exists $m\in \Zz_{>0}$ such that $|mD/T|$ is base-point free. An $\bR$-Cartier divisor $D$ on $X$ is said to be semi-ample over $T$ if it can be written as an $\bR_{>0}$-linear combination of semi-ample Cartier divisors over $T$. In the following, we use $|D/T|_\bR$ to denote the relative $\bR$-linear system.

We use $\Supp(E)$ to denote the support of the divisor $E$. For a birational map $g: X \dto Y$, we use $\Exc(g)$ to denote the support of the exceptional divisors. If $D$ is an $\Rr$-Cartier divisor on $X$, we denote by $g_*D$ its strict transform on $Y$, defined as follows. Let $p: W \to X, q: W \to Y$ be birational morphisms such that $q \circ p^{-1}=g$, then $g_*D \coloneqq q_*(p^*D)$. This is independent of the choice of $p$ and $q$. A birational map $g: X \dto Y$ is called a birational contraction if $g^{-1}$ does not extract any divisor. 

Let $X/T$ be a normal complex variety and $\De \geq 0$ be an $\Rr$-divisor on $X$, then $(X/T, \De)$ is called a log pair. We assume that $K_X+\De$ is $\Rr$-Cartier for a log pair $(X, \De)$, where $K_X$ is the canonical divisor of $X$. A log pair $(X,\Delta)$ has klt singularities if there exists a log resolution $\pi: Y \to X$ such that in the expression
\begin{equation}\label{eq: klt}
    K_Y=\pi^*(K_X+\Delta)+D,
\end{equation}
the coefficients of $D$ are greater than $-1$. Note that in \eqref{eq: klt}, $K_Y$ is chosen to be the unique Weil divisor on $Y$ such that $\pi_*K_Y=K_X$. Similarly, if the coefficients of $D$ are greater than or equal to $-1$, then $(X,\Delta)$ is said to have lc singularities. See \cite[\S 2.3]{KM98} for a more detailed discussion. Throughout this paper, a log pair $(X/T, \De)$ is called a Calabi-Yau pair over $T$ if $(X, \De)$ has klt singularities with $K_X+\De \sim_\Rr 0/T$. We say that $X/T$ is of Calabi-Yau type over $T$ if there exists a $\De$ such that $(X/T, \De)$ is a Calabi-Yau pair. A Calabi-Yau pair $(X/T, \De)$ is called a Calabi-Yau fibration if $X \to T$ is a fibration. 

\subsection{Minimal models of $\bR$-Cartier divisors}\label{subsec: mm}

\begin{definition}[{\cite[Definition 3.6.1]{BCHM10}}] 
Let $\phi: X \dto Y$ be a proper birational contraction of normal
quasi-projective varieties and let $D$ be an $\bR$-Cartier divisor on $X$ such that $D_Y=\phi_*D$ is also $\bR$-Cartier. We say that $\phi$ is $D$-non-positive (resp. $D$-negative) if for some common resolution $p: W \to X$ and $q: W \to Y$, we may write 
\[
p^*D= q^*D_Y + E,
\]
where $E\geq 0$ is $q$-exceptional (resp. $E \geq 0$ is $q$-exceptional and the support of $E$ contains (the strict transforms of) the $\phi$-exceptional divisors).
\end{definition}

We include the following well-known lemma (for example, see \cite[Lemma 1.7]{HK00}).

\begin{lemma}\label{lem: common mm}
Let $X$ and $Y$ be varieties over $T$, and suppose that $p\colon W\to X$ and $q\colon W\to Y$ are projective birational contraction morphisms over $T$. Assume that there exist a nef$/T$ $\bR$-Cartier divisor $B$ on $X$ and a nef$/T$ $\bR$-Cartier divisor $D$ on $Y$ such that
\[
p^*B+E\equiv q^*D+F/T,
\] where $E$ is an effective $p$-exceptional divisor and $F$ is an effective $q$-exceptional divisor. Then we have $E=F$. Furthermore, 
\begin{enumerate} 
\item if $D=q_*p^*B$, then $p^*B= q^*D$, and
\item if $D$ is ample$/T$, then $q\circ p^{-1}: X \dto Y$ is a morphism.
\end{enumerate}
\end{lemma}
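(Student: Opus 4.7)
The plan is to derive all three assertions from the negativity lemma (\cite[Lemma 3.39]{KM98}) together with the rigidity lemma for morphisms. Set $G := p^*B - q^*D$ on $W$, so that the hypothesis reads $G \equiv F - E/T$. The first preparatory step is to observe that $-G$ is $p$-nef and $G$ is $q$-nef. Indeed, for any curve $C \subset W$ contracted by $p$, the image $p(C)$ is a point, so $C$ is vertical over $T$; thus $(p^*B \cdot C) = B \cdot p_*C = 0$, while $(q^*D \cdot C) = D \cdot q_*C \geq 0$ since $D$ is nef$/T$ and $q_*C$ lies over a point of $T$. Hence $((-G) \cdot C) \geq 0$, and the symmetric argument shows $G$ is $q$-nef.

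The main equality $E = F$ then follows. The identity $E - F \equiv -G/T$ together with the previous step shows that $E - F$ is $p$-nef, while the $p$-exceptionality of $E$ and the effectivity of $F$ give $p_*(E - F) = -p_*F \leq 0$. The negativity lemma then yields $E - F \leq 0$, that is, $E \leq F$. The symmetric argument applied to $F - E$ with respect to $q$ gives $F \leq E$. Hence $E = F$, and the hypothesis collapses to $p^*B \equiv q^*D /T$, equivalently $G \equiv 0/T$.

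For assertion (1), the extra assumption $D = q_*p^*B$ gives $q_*G = q_*p^*B - D = 0$, so $G$ is $q$-exceptional; combined with $G \equiv 0/T$, applying the negativity lemma to both $G$ and $-G$ relative to $q$ forces $G = 0$, i.e., $p^*B = q^*D$. For assertion (2), since $D$ is ample$/T$, every $p$-contracted curve $C$ satisfies $0 = (p^*B \cdot C) = (q^*D \cdot C) = D \cdot q_*C$; as $q_*C$ lies over a point of $T$, the relative ampleness of $D$ forces $q_*C = 0$, so every $p$-fiber is set-theoretically contained in a $q$-fiber. The rigidity lemma then produces a morphism $r : X \to Y$ with $q = r \circ p$, so $q \circ p^{-1} = r$ is a morphism. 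The only delicate point is the careful bookkeeping of signs and of $p$- versus $q$-exceptionality when invoking the negativity lemma; the positivity computations on contracted curves are automatic once one observes that such curves are vertical over $T$.
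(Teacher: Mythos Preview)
Your proof is correct and follows the standard route via the negativity lemma and the rigidity lemma. The paper does not supply its own proof of this lemma; it simply states it as well known and refers to \cite[Lemma 1.7]{HK00}, whose argument is essentially the one you give.
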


   Minimal models and ample models can also be defined analogously to those in \cite[Definition 3.6.5, Definition 3.6.7]{BCHM10}. Note that the minimal model below is referred to as an optimal model in \cite[Definition 2.3]{KKL16}.

\begin{definition}\label{def: minimal model}
Let $X \to T$ be a projective morphism of normal quasi-projective varieties and $\phi: X \dto Y/T$ be a birational contraction with $Y$ projective over $T$. Suppose that $D$ is an $\bR$-Cartier divisor on $X$ with $D_Y=\phi_*D$ an $\bR$-Cartier divisor on $Y$.
\begin{enumerate}
\item $Y/T$ (or $\phi$) is a weak minimal model$/T$ of $D$ if $\phi$ is $D$-non-positive and $D_Y$ is nef$/T$.
\item $Y/T$ (or $\phi$)  is a minimal model$/T$ of $D$ if $Y$ is $\bQ$-factorial, $\phi$ is $D$-negative and $D_Y$ is nef$/T$.
\item A minimal model $Y/T$ (or $\phi$) is called a good minimal model$/T$ of $D$ if $D_Y$ is semi-ample$/T$.
\end{enumerate}

Moreover, we say that $g\colon X \dto Z/T$ is the ample model$/T$ of $D$ if $Z$ is normal and projective$/T$, and there exists an ample$/T$ divisor $H$ on $Z$ satisfying the following property: if $p\colon W \to X$ and $q\colon W \to Z$ resolve $g$, then $p$ is a birational contraction morphism and
\[
p^*D \sim_{\bR} q^*H+E/T,
\]
where $E \geq 0$, and every divisor $B \in |p^*D/T|_\bR$ satisfies $B \geq E$.
\end{definition}

To simplify the notation, we usually omit ``$/T$'' from expressions such as ``(weak) minimal models$/T$'' when the base is clear from the context.

For a variety of Calabi-Yau type, the good minimal model conjecture predicts that every effective $\bR$-Cartier divisor admits a good minimal model. We say that such a variety satisfies the good minimal model conjecture if this prediction holds for all effective $\bR$-Cartier divisors on it. This conjecture is known to hold in dimension at most $3$.

\begin{lemma}\label{lem: iso in codim 1 preserves mm}
Let $h: X \dto X'/T$ be a birational map which is isomorphic in codimension $1$. Suppose that $D$ is an $\Rr$-Cartier divisor on $X$ and that $D' = h_*D$ is an $\Rr$-Cartier divisor on $X'$. If $g: X' \dto Y$ is a minimal model$/T$ of $D'$, then $g \circ h$ is a minimal model$/T$ of $D$.
\end{lemma}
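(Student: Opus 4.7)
The plan is to verify the three defining conditions of a minimal model for $g\circ h$ with respect to $D$. Since $g$ is a minimal model of $D'$, the target $Y$ is $\bQ$-factorial, and
\[
(g\circ h)_{*}D \;=\; g_{*}(h_{*}D) \;=\; g_{*}D' \;=\; D_Y
\]
is nef$/T$. So two of the three conditions are automatic, and only the $D$-negativity of $g\circ h$ requires work.

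Choose a smooth common resolution $W$ with projective birational morphisms $p\colon W\to X$, $p'\colon W\to X'$, $q\colon W\to Y$ such that $(p,p')$ resolves $h$, $(p',q)$ resolves $g$, and hence $(p,q)$ resolves $g\circ h$. Two structural observations drive the proof. First, since $h$ is an isomorphism in codimension $1$, one has $\Exc(p)=\Exc(p')$. Second, since $g$ is a birational contraction, every $p'$-exceptional prime divisor on $W$ maps under $q$ to a subvariety of codimension $\ge 2$ in $Y$, hence is $q$-exceptional. In particular, the $(g\circ h)$-exceptional prime divisors on $X$ correspond via $h$ to the $g$-exceptional prime divisors on $X'$, with coinciding strict transforms on $W$.

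Decompose
\[
E \;:=\; p^{*}D - q^{*}D_Y \;=\; \underbrace{\bigl(p^{*}D - (p')^{*}D'\bigr)}_{=:\,F} \;+\; \underbrace{\bigl((p')^{*}D' - q^{*}D_Y\bigr)}_{=:\,E_g}.
\]
The $D'$-negativity of $g$ yields that $E_g\ge 0$ is $q$-exceptional and contains in its support the strict transforms of all $g$-exceptional divisors. The identity $D'=h_{*}D$ together with $\Exc(p)=\Exc(p')$ forces $F$ to be $p'$-exceptional with $p'_{*}F=h_{*}D-D'=0$; in particular, $F$ is $q$-exceptional. Hence $E$ is $q$-exceptional, and the required support condition on $E$ follows from that on $E_g$ once the effectivity $E\ge 0$ is established.

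The heart of the proof is the effectivity of $E$, equivalently of $F$ (as $E_g\ge 0$). I would apply the negativity lemma \cite[Lemma~3.39]{KM98} to $p'\colon W\to X'$: since $p'_{*}F=0$, it suffices to verify that $-F$ is $p'$-nef. By the projection formula, for a $p'$-contracted curve $C\subset W$ one has $-F\cdot C=-D\cdot p_{*}(C)$, which is zero unless $C_X:=p_{*}(C)$ is an $h$-contracted curve in $X$; the task reduces to proving $D\cdot C_X\le 0$ for every such $C_X$. This last inequality is the main obstacle: I expect it to be extracted from the nefness of $D_Y$ combined with the identity $(p')^{*}D'=q^{*}D_Y+E_g$, by intersecting with a curve on $W$ lying over $C_X$, so that the effectivity of $E_g$ and the nefness of $D_Y$ on $q$-pushforwards together force the correct sign on $D\cdot C_X$. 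Once $F\ge 0$ is secured, $E=F+E_g\ge 0$ follows, completing the verification of the $D$-negativity of $g\circ h$ and thus the lemma.
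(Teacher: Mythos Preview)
Your reduction to proving $E\ge 0$ is correct, and your observations that $E$ is $q$-exceptional and that the support condition transfers from $E_g$ to $E$ are fine. The gap is in the intermediate claim $F\ge 0$: this is simply false in general, so the strategy of proving $F\ge 0$ and $E_g\ge 0$ separately cannot succeed, and the proposed negativity-lemma argument over $X'$ (requiring $D\cdot C_X\le 0$ for every $h$-contracted curve $C_X$) breaks down.

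Here is an explicit counterexample. Let $h\colon X\dashrightarrow X'$ be a simple flop with flopping curves $C\subset X$ and $C'\subset X'$, and take $g\coloneqq h^{-1}\colon X'\dashrightarrow X=Y$. Choose $D$ nef on $X$ with $D\cdot C>0$. Then $D'=h_*D$ satisfies $D'\cdot C'<0$, and one checks directly that $g$ is a minimal model of $D'$ (here $g$ contracts no divisor, and $E_g=(p')^*D'-p^*D$ is a positive multiple of the exceptional divisor of the common resolution). In this situation $F=p^*D-(p')^*D'=-E_g$ is strictly \emph{anti}-effective, while $E=F+E_g=0$. In particular $D\cdot C>0$, so $-F$ is not $p'$-nef and your proposed application of the negativity lemma over $X'$ fails; the information you hoped to extract from the effectivity of $E_g$ and the nefness of $D_Y$ only yields $E_g\cdot C_W\le 0$ for a $p'$-contracted curve $C_W$, which says nothing about the sign of $D\cdot C_X$.

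The fix is to apply the negativity lemma to $E$ itself, but over $X$ rather than over $X'$. Since $p^*D=q^*D_Y+E$, one has $-E\equiv q^*D_Y$ over $X$, which is $p$-nef because $D_Y$ is nef$/T$. Moreover $F$ is $p$-exceptional (as $h$ is an isomorphism in codimension $1$), so $p_*E=p_*E_g\ge 0$. The negativity lemma over $X$ then gives $E\ge 0$ directly, with no need to control $F$ separately. This is exactly the route the paper takes.
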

\begin{proof}
Let $p: W \to X, q: W \to X'$, and $r: W \to Y$ be projective birational morphisms such that $h=q \circ p^{-1}$ and $g=r \circ q^{-1}$. By assumption, we have
\[
q^*D'=r^*D_Y + E,
\] where $D_Y=g_*D'$ is nef$/T$ and $E$ is an $r$-exceptional divisor such that $\Supp E$ contains the strict transforms of divisors contracted by $g$. As $h$ is isomorphic in codimension $1$, we have
\[
p^*D+F=q^*D',
\] where $F$ is a $p$-exceptional divisor. Combining with the above equation, we have
\[
p^*D=r^*D_Y + (E-F).
\] As $F-E$ is nef over $X$ and $p_*(E-F)=p_*E \geq 0$, by the negativity lemma \cite[Lemma 3.39]{KM98}, we have $E-F \geq 0$. As $F$ is $p$-exceptional and $h$ is isomorphic in codimension $1$, $\Supp (E-F)$ contains the strict transforms of divisors contracted by $g\circ h$. This shows that $g \circ h$ is a minimal model$/T$ of $D$.
\end{proof}

\subsection{Cones in N\'eron-Severi spaces and the Morrison-Kawamata cone conjecture}\label{subsec: Movable cones and ample cones}

Let $X$ be a $\bQ$-factorial variety over $T$. Let $\operatorname{WDiv}(X)$ and $\operatorname{CDiv}(X)$ denote the free abelian groups generated by the Weil divisors and the Cartier divisors on $X$, respectively, and let $\Pic(X/T)$ denote the relative Picard group. Since $X$ is $\bQ$-factorial, 
\[
N^1(X/T)_\bZ \coloneqq {\rm WDiv}(X)/{\equiv}
\]
forms a lattice. For $\mathbb K = \Qq$ or $\Rr$, set  
\[
\begin{split}
&{\rm WDiv}(X)_{\mathbb K} \coloneqq {\rm WDiv}(X) \otimes_\Zz \mathbb K, \quad 
{\rm CDiv}(X)_{\mathbb K} \coloneqq {\rm CDiv}(X) \otimes_\Zz \mathbb K, \\
&\Pic(X/T)_{\mathbb K} \coloneqq \Pic(X/T) \otimes_\Zz \mathbb K, \quad N^1(X/T)_{\mathbb K} \coloneqq N^1(X/T)_\bZ \otimes_\Zz \mathbb K.
\end{split}
\]
For simplicity, we write  
\[
N^1(X/T) \coloneqq N^1(X/T)_\bR.
\]

If $D$ is an $\bR$-Cartier divisor, then $[D]\in N^1(X/T)$ denotes its numerical class. By abuse of terminology, we also refer to $[D]$ as an $\bR$-Cartier divisor. For a subset $S\subset N^1(X/T)$, let $\Conv(S)$ denote the convex hull of $S$.

We list relevant cones inside $N^1(X/T)$ which appear in the paper:

\begin{enumerate}
\item $\Eff(X/T)\coloneqq$ the cone generated by effective Cartier divisors;
\item $\bEff(X/T)\coloneqq$ the closure of $\Eff(X/T)$ (i.e., the cone generated by pseudo-effective divisors);
\item $\Eff(X/T)_+ \coloneqq \Conv(\bEff(X/T) \cap N^1(X/T)_\Qq)$;
\item $\Mov(X/T)\coloneqq$ the cone generated by movable divisors;
\item $\bMov(X/T)\coloneqq$ the closure of $\Mov(X/T)$;
\item $\bMov^e(X/T) \coloneqq \bMov(X/T) \cap \Eff(X/T)$;
\item ${\Mov(X/T)_+}\coloneqq \Conv(\bMov(X/T) \cap N^1(X/T)_\Qq)$;
\item $\Amp(X/T)\coloneqq$ the cone generated by ample divisors;
\item $\Nef(X/T)\coloneqq$ the closure of $\Amp(X/T)$  (i.e., the cone generated by nef divisors);
\item $\Nef^e(X/T) \coloneqq \Nef(X/T) \cap \Eff(X/T)$;
\item  $\Nef(X/T)_+ \coloneqq\Conv(\Nef(X/T) \cap N^1(X/T)_\Qq)$.
\end{enumerate}

If $K$ is a field of characteristic zero and $X$ is a variety over $K$, then the above cones still make sense for $X$. Let $N_1(X/T)$ be the dual vector space of $N^1(X/T)$. Then the Mori cone $\overline{\NE}(X/T)\subset N_1(X/T)$ is the dual cone of $\Nef(X/T)$.

Let $\De$ be a divisor on a $\Qq$-factorial variety $X$. We denote by $\Bir(X/T, \De)$ the birational automorphism group of $(X/T, \De)$ over $T$. More precisely, $\Bir(X/T, \De)$ consists of birational maps $g: X \dashrightarrow X/T$ such that $g_* \Supp \De = \Supp \De$. A birational map is called a pseudo-automorphism if it is an isomorphism in codimension $1$. Denote by $\PsAut(X/T, \De)$ the subgroup of $\Bir(X/T, \De)$ consisting of pseudo-automorphisms.  
Similarly, let $\Aut(X/T, \De)$ be the subgroup of $\Bir(X/T, \De)$ consisting of automorphisms of $X/T$.
 
Let $g\in \Bir(X/T, \De)$, and let $D$ be an $\Rr$-Cartier divisor on a $\Qq$-factorial variety $X$. Because the pushforward map $g_*$ preserves numerical equivalence classes, there is a linear map
\[
g_*: N^1(X/T)\to N^1(X/T), \quad [D] \mapsto [g_*D].
\] However, if $g\in \Bir(X/T)$ is not isomorphic in codimension $1$, then for $[D] \in \Mov(X/T)$, $[g_*D]$ may not be in $\Mov(X/T)$. Moreover, $(g , [D]) \mapsto [g_*D]$ may not be a group action of $\Bir(X/T, \De)$ on $N^1(X/T)$. For instance, if $D$ is a divisor contracted by $g$, then $(g^{-1})_*(g_*[D])= 0 \neq (g^{-1}\circ g)_*[D]$. On the other hand, it is straightforward to verify that
\[
\begin{split}
\PsAut(X/T, \De) \times N^1(X/T) &\to N^1(X/T),\\
(g, [D]) &\mapsto [g_*D],
\end{split}
\]
defines a natural group action. We use $g \cdot D$ and $g \cdot [D]$ to denote $g_*D$ and $[g_*D]$, respectively. Let $\Gamma_B(X/T, \De)$ and $\Gamma_A(X/T, \De)$ be the images of $\PsAut(X/T, \De)$ and $\Aut(X/T, \De)$, respectively, under the natural group homomorphism 
\[
\rho: \PsAut(X/T, \De) \to {\rm GL}(N^1(X/T)).
\] By abusing the notation, we also write $g$ for $\rho(g) \in \Gamma_B(X/T, \De)$, and denote $\rho(g)([D])$ by $g\cdot [D]$. 

The lattice $N^1(X/T)_\Zz$ is invariant under the action of $\PsAut(X/T,\De)$. Moreover, the cones $\Mov(X/T)$, $\bMov(X/T)$, $\bMov^e(X/T)$, and $\Mov(X/T)_+$ are invariant under this action. Similarly, the cones $\Amp(X/T)$, $\Nef(X/T)$, $\Nef^e(X/T)$, and $\Nef(X/T)_+$ are invariant under the action of $\Aut(X/T,\De)$. When $\De=0$, we omit $\De$ from the above notation for simplicity.

The following Morrison-Kawamata cone conjecture is one of the most important conjectures for Calabi-Yau fiber spaces, and it serves as the main motivation for defining MKD fiber spaces in this paper. It was formulated in increasing generality in \cite{Mor93, Mor96, Kaw97, Tot09}. See \cite{LOP18} for a survey of the conjecture. For the precise definitions of rational polyhedral fundamental domains and rational polyhedral cones, see Section~\ref{sec: Geometry of convex cones}.

\begin{conjecture}[Morrison-Kawamata cone conjecture]
Let $(X/T, \Delta)$ be a Calabi-Yau fiber space. 
\begin{enumerate}
    \item $\bMov^e(X/T)$ admits a rational polyhedral fundamental domain under the action of $\Gamma_B(X/T, \De)$.
    \item $\Nef^e(X/T)$ admits a rational polyhedral fundamental domain under the action of $\Gamma_A(X/T, \De)$.
\end{enumerate}
\end{conjecture}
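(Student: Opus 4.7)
Since this is stated as a conjecture rather than a theorem, there is no proof in the paper; what follows is the natural strategy suggested by the framework developed here. The plan is to reduce both parts of the conjecture for a Calabi-Yau fiber space $(X/T,\Delta)$ to the statement that $X/T$ (after a small $\bQ$-factorialization if needed) is an MKD fiber space in the sense of Definition \ref{def: MKD spaces}. Once this is done, condition (3) of that definition is precisely part (1) of the conjecture, and Theorem \ref{thm: 3 equivalences}, applied to every small $\bQ$-factorial modification $X \dto X'/T$, upgrades the corresponding information on $\Nef^e(X'/T)$ to a rational polyhedral fundamental domain under $\Gamma_A(X'/T)$, which yields part (2).

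To verify the MKD conditions I would first arrange $\bQ$-factoriality by passing to a small modification, and then use the Calabi-Yau relation $K_X+\Delta \sim_\Rr 0/T$ to reduce the good minimal model condition for an arbitrary effective $\bR$-Cartier divisor $D$ to the good minimal model conjecture for the klt pair $(X,\Delta+\epsilon D)$ via a boundary perturbation argument. The local factoriality of canonical models on $\Eff(X/T)$ should follow from combining the Shokurov polytope (Theorem \ref{thm: Shokurov poly for nef}) with finite generation of the section rings attached to ample models inside each chamber, since the Calabi-Yau structure makes every effective class behave, up to an ample perturbation, like $K_X+\Delta+A$.

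The decisive condition is the existence of a rational polyhedral cone $\Pi \subset \Mov(X/T)$ with $\PsAut(X/T)\cdot\Pi=\Mov(X/T)$, which is essentially the movable form of the conjecture itself. A direct attack would try to patch finitely many nef chambers arising from small $\bQ$-factorial modifications into a single candidate $\Pi$, and then control the orbit combinatorics of $\PsAut(X/T)$ acting on $N^1(X/T)$ through a Looijenga-type argument using a $\PsAut$-invariant quadratic form coming from the Calabi-Yau structure. The hard part will be precisely this last step: the geometric finiteness of the $\PsAut$-action on $\Mov(X/T)$ is a deep Diophantine statement, known in special cases (K3 and hyperk\"ahler manifolds via Torelli-type theorems, certain Calabi-Yau threefolds with symmetry) but wide open in general. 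The value of the MKD formalism is exactly to isolate this one statement and then transfer all further structural consequences from it cleanly.
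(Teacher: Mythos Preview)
You are right that this is stated as a conjecture and the paper gives no proof; your commentary on how the MKD framework relates to it is accurate. The paper's own remarks immediately following the conjecture match your reduction: under good minimal models (and non-degeneracy of $\bMov(X/T)$), part (1) is equivalent to the existence of a rational polyhedral $\Pi$ with $\PsAut(X/T,\Delta)\cdot\Pi \supset \Mov(X/T)$, and part (2) is derived from part (1) via \cite{Xu24} and \cite{GLSW24}, which in this paper is absorbed into the implication $(1)\Rightarrow(2)$ of Theorem \ref{thm: 3 equivalences}.

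Two small refinements. First, Definition \ref{def: MKD spaces} (3) is phrased for $\Mov(X/T)$ rather than $\bMov^e(X/T)$, so ``precisely part (1)'' is a slight overstatement; the bridge is Proposition \ref{prop: property of MKD from definition}, which gives $\Mov(X/T)=\bMov^e(X/T)$ once the other MKD axioms are in place, together with Looijenga's Proposition \ref{prop: prop-def} to pass from a covering $\Pi$ to an actual fundamental domain. Second, for the local factoriality of canonical models in the Calabi-Yau case, the paper does not go through Theorem \ref{thm: Shokurov poly for nef} as you suggest; instead Proposition \ref{prop: condition (4)} (2)(b) perturbs by $\epsilon D$ to a klt pair $(X,\Delta+\epsilon D)$ and invokes finite generation of the adjoint ring $R(X/T,K_X+\Delta+E_1,\dots,K_X+\Delta+E_l)$ from \cite{DHP13}, reducing to part (2)(a). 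Your sketch would also work in spirit, but the paper's route is more direct. Your final paragraph correctly isolates the genuinely open content: condition (3) is the conjecture itself, and the MKD formalism packages everything downstream of it.
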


Under the good minimal model conjecture in dimension $\dim(X/T)$, at least when $\bMov(X/T)$ is non-degenerate, we know that (1) is equivalent to the existence of a rational polyhedral cone $Q \subset \Eff(X/T)$ such that 
\[
\PsAut(X/T,\De) \cdot Q \supset \Mov(X/T)
\] 
(see \cite[Theorem~1.3 (2)]{LZ25}). Moreover, by \cite[Theorem~14]{Xu24} and \cite[Theorem~1.5]{GLSW26}, we know that (2) follows from (1).

The following well-known fact will be used in the sequel.

\begin{lemma}\label{lem: movable give iso in codim 1}
Let $D$ be an effective divisor on $X/T$ such that $[D] \in \bMov^e(X/T)$. If $D$ admits a minimal model$/T$ $h: X \dto Y/T$, then $h$ is isomorphic in codimension $1$.
\end{lemma}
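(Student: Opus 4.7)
The plan is to argue by contradiction: assume that $h$ contracts some prime divisor $E_0\subset X$, and use $[D]\in\bMov^e(X/T)$ to derive a contradiction. Fix a common resolution $p\colon W\to X$, $q\colon W\to Y$ of $h$, and let $\tilde E_0\subset W$ denote the strict transform of $E_0$. By the $D$-negativity of $h$,
\[
p^*D = q^*D_Y + F,
\]
where $F\ge 0$ is $q$-exceptional and $\Supp F$ contains the strict transforms of all $h$-exceptional divisors; in particular $\mult_{\tilde E_0}F>0$.

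Since $[D]\in\bMov^e(X/T)$, choose a sequence of classes $[B_n]\in\Mov(X/T)$ with $[B_n]\to[D]$ in $N^1(X/T)$. Each $[B_n]$ is a non-negative $\bR$-linear combination of classes of movable Cartier divisors $M_{n,j}$; because $|M_{n,j}/T|$ has relative base locus of codimension $>1$, there is a member $M_{n,j}'\in|M_{n,j}/T|$ whose support avoids $E_0$. Taking the corresponding combination yields an effective $\bR$-Cartier divisor $B_n'$ with $[B_n']=[B_n]$ and $\mult_{E_0}B_n'=0$. Set $B_{n,Y}'=h_*B_n'$ and $F_n'=p^*B_n'-q^*B_{n,Y}'$. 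Since $h$ is a birational contraction (so every $p$-exceptional prime on $W$ is automatically $q$-exceptional) we have the identity $q_*\circ p^*=h_*$ on divisors, whence $q_*F_n'=h_*B_n'-B_{n,Y}'=0$ and $F_n'$ is $q$-exceptional. Evaluating multiplicities along $\tilde E_0$: $\mult_{\tilde E_0}p^*B_n'=\mult_{E_0}B_n'=0$, while $\mult_{\tilde E_0}q^*B_{n,Y}'\ge 0$ by effectivity of $q^*B_{n,Y}'$. Hence $\mult_{\tilde E_0}F_n'\le 0$ for every $n$.

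Finally, $q_*\circ p^*$ descends to a continuous linear map $N^1(X/T)\to N^1(Y/T)$, so $[B_{n,Y}']\to[D_Y]$ in $N^1(Y/T)$ and consequently $[F_n']\to[F]$ in $N^1(W/T)$. By the negativity lemma, any $q$-exceptional $\bR$-Cartier divisor whose class in $N^1(W/T)$ vanishes must itself be zero, so the class map from the finite-dimensional space of $q$-exceptional $\bR$-Cartier divisors into $N^1(W/T)$ is a linear isomorphism onto its image. Convergence of classes therefore forces convergence of divisors, giving $\mult_{\tilde E_0}F_n'\to\mult_{\tilde E_0}F>0$, which contradicts $\mult_{\tilde E_0}F_n'\le 0$. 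The main technical input is the class-to-divisor rigidity for $q$-exceptional divisors furnished by the negativity lemma; once that observation is in hand, the argument reduces to a direct limit comparison between the $q$-exceptional corrections coming from movable approximants and the one coming from $D$.
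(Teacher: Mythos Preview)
Your proof is correct but takes a different route from the paper's. The paper dispatches the lemma in two lines via the Nakayama--Zariski decomposition: since $D_Y$ is nef$/T$ and $E\ge 0$ is $q$-exceptional, the equality $p^*D=q^*D_Y+E$ is automatically the $\sigma$-decomposition of $p^*D$ over $T$; because $[D]\in\bMov(X/T)$, the negative part must be $p$-exceptional, which forces $\Exc(h)=0$. Your argument instead reproves the relevant piece of that theory by hand: you approximate $[D]$ by movable classes, realise each by an effective divisor missing $E_0$, and then use the negativity lemma to show that the class map on $q$-exceptional divisors is injective, so numerical convergence of the exceptional corrections $F_n'$ forces coefficient-wise convergence to $F$. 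This makes the proof self-contained (no appeal to Nakayama's theory) at the cost of length; the paper's version is quicker but leans on the fact that $N_\sigma(p^*D/T)$ is $p$-exceptional whenever $[D]\in\bMov(X/T)$.
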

\begin{proof}
Let $p: W\to X, q: W \to Y$ be projective birational morphisms such that $h=q\circ p^{-1}$. By assumption, we have
\[
p^*D=q^*D_Y+E,
\] where $D_Y\coloneqq h_*D$ and $E \geq 0$ is a $q$-exceptional divisor whose support contains $\Exc(h)$. Then $q^*D_Y$ and $E$ are the positive and negative parts of the Nakayama–Zariski decomposition$/T$ of $p^*D$, respectively (see \cite{Nak04}). As $[D] \in \bMov^e(X/T)$, we see that $E$ is $p$-exceptional. Hence, $\Exc(h)=0$, and thus $h$ is isomorphic in codimension $1$.
\end{proof}

For a normal variety $X$ which is projective over a variety $T$, let $K \coloneqq K(T)$ be the field of rational functions on $T$ and $\bar K$ be the algebraic closure of $K$. Set $X_K \coloneqq X \times_T \Spec K$ and $X_{\bar K} \coloneqq X \times_T \Spec \bar K$. The following proposition will be used in the sequel. 

\begin{proposition}[{\cite[Proposition 4.3]{LZ25}}]\label{prop: Generic property}
Let $f: X \to T$ be a fibration with $X$ a $\Qq$-factorial variety. 
\begin{enumerate}
\item There exist natural maps
\[
\begin{split}
\iota_{\bar K}: &N^1(X/T) \to N^1(X_{\bar K}), \quad [D] \mapsto [D_{\bar K}],\\
\iota_K: &N^1(X/T) \to N^1(X_{K}), \quad [D] \mapsto [D_{K}].
\end{split}
\] Moreover, $\iota_K$ is a surjective map.
\item For any sufficiently small open set $U \subset T$, there exists a natural inclusion
\[
N^1(X_U/U) \hookrightarrow N^1(X_{\bar K}), \quad [D] \mapsto [D_{\bar K}].
\]
\end{enumerate}
\end{proposition}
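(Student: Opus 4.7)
The plan is to split the proof into three steps: constructing the base-change maps and checking they respect numerical equivalence; establishing surjectivity of $\iota_K$ by a closure argument; and proving the inclusion in (2) via a stabilization argument on kernels.

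For the construction in (1), I would first define both maps at the level of Cartier divisors by pulling back along the flat base changes $X_K \to X$ and $X_{\bar K} \to X$. To see that this descends to $N^1$, suppose $D \equiv 0/T$ and take a curve $C_K \subset X_K$. Spreading out $C_K$ gives a flat family $\mathcal{C}_U \to U$ over some open $U \subset T$ whose fibers are curves contained in fibers of $f$. By invariance of intersection numbers in flat families, $D_K \cdot C_K = D \cdot C_s = 0$ for any closed point $s \in U$, since $C_s$ is contracted by $f$. The map $\iota_{\bar K}$ is then obtained by composing $\iota_K$ with pullback along $X_{\bar K} \to X_K$.

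For the surjectivity of $\iota_K$, given a Cartier divisor $D'$ on $X_K$, standard spreading-out yields an open $U \subset T$ and a Cartier divisor $\mathcal{D}$ on $X_U$ with $\mathcal{D}|_{X_K} = D'$. Since every prime component of $\mathcal{D}$ dominates $T$, its Zariski closure in $X$ is a well-defined Weil divisor $\tilde D$; by $\bQ$-factoriality of $X$, $\tilde D$ is $\bQ$-Cartier, and by construction $\tilde D|_{X_K} = D'$. This gives surjectivity on $\bQ$-coefficients, hence on $\bR$-coefficients.

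For the inclusion in (2), I would consider the restriction maps $\iota_U \colon N^1(X/T) \to N^1(X_U/U)$, which are surjective by the same closure argument. Their kernels $K_U$ are monotone---shrinking $U$ can only enlarge $K_U$---so by finite-dimensionality of $N^1(X/T)$ they stabilize at some $U_0$; denote the common value by $K_\infty$. Factoring $\iota_{\bar K}$ as $N^1(X/T) \twoheadrightarrow N^1(X_U/U) \to N^1(X_{\bar K})$, injectivity of the second arrow for $U \subset U_0$ amounts to $K_\infty = \ker \iota_{\bar K}$. The inclusion $K_\infty \subseteq \ker \iota_{\bar K}$ is immediate, since any curve on $X_{\bar K}$ descends, after a finite extension $L/K$, to a curve which spreads to a fiber family over an open of the normalization of $T$ in $L$, and intersection numbers are preserved under flat base change. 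For the reverse inclusion, if $[D_{\bar K}] = 0$ then $D_K \equiv 0$ on $X_K$, and for $U$ sufficiently small every fiber curve of $X_U \to U$ deforms in a flat family whose generic fiber is a curve on $X_K$, giving $D_U \cdot C_s = D_K \cdot C_K = 0$. The main obstacle I foresee is precisely this reverse inclusion: passing from numerical triviality on $X_{\bar K}$ back to numerical triviality over a small open $U$ requires both the fact that base change from $K$ to $\bar K$ does not enlarge N\'eron--Severi (using characteristic zero) and a deformation argument ensuring that the Mori cone of $X_s$ is generated by curves that deform to the generic fiber, which is exactly what forces one to restrict to a sufficiently small $U$.
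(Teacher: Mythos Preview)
The paper does not prove this proposition; it is imported from \cite{LZ25}, so there is no in-paper argument to compare against.

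Your treatment of (1) and of the surjectivity of $\iota_K$ is standard and correct, as is the kernel-stabilization framework for (2) and the easy inclusion $K_\infty\subseteq\ker\iota_{\bar K}$.

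The gap is the reverse inclusion $\ker\iota_{\bar K}\subseteq K_\infty$. Your proposed mechanism---that for $U$ small enough every curve in every closed fiber $X_s$, $s\in U$, deforms in a flat family to the generic fiber (equivalently, that $\NE(X_s)$ is spanned by deformable classes)---is false, and no shrinking of $U$ repairs it. In a family of K3 surfaces of geometric generic Picard rank $1$, the Noether--Lefschetz locus is in general a countable \emph{dense} union of proper closed subsets; every nonempty open $U$ meets it, so some $X_s$ with $s\in U$ carries a $(-2)$-curve that does not deform to $X_K$. The conclusion of (2) nonetheless holds for such families, so the curve-deformation approach must be replaced rather than refined.

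A correct route works on the divisor side: numerical triviality is an open condition on the base. After shrinking $T$ so that $f$ is flat with integral fibers, the line bundle $\mO_X(D)$ determines a section of the relative Picard functor, and the subfunctor $\Pic^{\tau}$ of fiberwise numerically trivial classes is open. Since $D_{\bar K}\equiv 0$, this section lies in $\Pic^{\tau}$ at the generic point, hence over some open $U$, giving $D|_{X_s}\equiv 0$ for every $s\in U$ and thus $D\equiv 0/U$. Plugged into your stabilization framework, this finishes (2).
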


\begin{remark}\label{rmk: iota-K not need fibration}
By the same proof of \cite[Proposition 4.3]{LZ25}, the natural map 
\[
\iota_K: N^1(X/T) \to N^1(X_K), \quad [D] \mapsto [D_K],
\]
is a well-defined surjective linear map for any normal variety $X$ over $T$. Here $X\to T$ is not assumed to be a fibration.
\end{remark}

\subsection{Geometry of convex cones}\label{sec: Geometry of convex cones}

Let $V(\Zz)$ be a lattice of finite rank. Set $V(\Qq) \coloneqq V(\Zz) \otimes_\Zz \Qq$ and $V \coloneqq V(\Qq) \otimes_\Qq \Rr$. Elements of $V(\bQ)$ are called rational points.

If $S \subset V$ is a subset, then $\Conv(S)$ denotes the convex hull of $S$. A polytope $P$ in $V$ is defined as the convex hull of finitely many points in $V$. In particular, a polytope is always closed. If $P$ is the convex hull of finitely many rational points, then $P$ is called a rational polytope. By an open subset of $P$, we mean a subset that is open in the topology induced on $P$ from the minimal affine subspace containing it. We denote by $\Int(P)$ the relative interior of $P$. For a rational point $\De \in P$, by shrinking $P$ around $\De$ we mean replacing $P$ with a sufficiently small rational polytope $P' \subset P$ such that 
\[
P' \supseteq P \cap \mathbb{B}(\De, \ep),
\]
where $\mathbb{B}(\De, \ep)$ is the ball centered at $\De$ of radius $\ep \in \Rr_{>0}$.

A set $C\subset V$ is called a cone if for any $x\in C$ and $\lambda\in \Rr_{>0}$, we have $\lambda \cdot x \in C$. We use $\Int(C)$ to denote the relative interior of $C$. A cone $C$ is said to be relatively open if $C = \Int(C)$. A full-dimensional relatively open cone is simply called an open cone. By convention, the origin is a relatively open cone.  If $S \subset V$ is a subset, then $\Cone(S)$ denotes the closed convex cone generated by $S$. A cone is called a polyhedral cone (resp. rational polyhedral cone) if it is a closed convex cone generated by finitely many points (resp. rational points). As we are only concerned with convex cones in this paper, we also refer to them as cones. A cone $C \subset V$ is non-degenerate if it does not contain an affine line. This is equivalent to saying that its closure $\bar C$ does not contain a non-trivial vector space. A face of a convex cone $C$ is a convex cone $F \subset C$ such that for any closed line segment $I \subset C$ with $I \cap F \neq \emptyset$, either $I \subset F$ or $I \cap F$ consists of exactly one endpoint of $I$. A face of dimension $\dim C - 1$ is called a facet.

In the following, let $\Gamma$ be a group and let $\rho: \Gamma \to \mathrm{GL}(V)$ be a group homomorphism. Then $\Gamma$ acts on $V$ via $\rho$. For $\gamma \in \Gamma$ and $x \in V$, we write $\gamma \cdot x$ or simply $\gamma x$ for this action. For a subset $S \subset V$, set
\[
\Gamma \cdot S \coloneqq \{\gamma \cdot x \mid \gamma \in \Gamma,\, x \in S\}.
\] 
Suppose that this action preserves a convex cone $C$ and the lattice $V(\Zz)$. We further assume that $\dim C = \dim V$. The following definition slightly generalizes \cite[Proposition–Definition~4.1]{Loo14}.

\begin{definition}\label{def: polyhedral type} 
Under the above notation and assumptions, we introduce the following definitions.
\begin{enumerate}
\item Suppose that $C \subset V$ is a convex cone (possibly degenerate). Let
\[
C_+ \coloneqq \Conv(\overline{C} \cap V(\Qq))
\]
be the convex hull of the rational points in $\overline{C}$.

\item We say that $(C_+, \Gamma)$ is \emph{of polyhedral type} if there exists a polyhedral cone $\Pi \subset C_+$ such that $\Gamma \cdot \Pi \supset \Int(C)$.
\end{enumerate}
\end{definition}

\begin{proposition}[{\cite[Proposition-Definition 4.1]{Loo14}}]\label{prop: prop-def}
Under the above notation and assumptions, if $C$ is non-degenerate, then the following conditions are equivalent:
\begin{enumerate}
\item There exists a polyhedral cone $\Pi \subset C_+$ with $\Gamma \cdot  \Pi = C_+$.
\item There exists a polyhedral cone $\Pi \subset C_+$ with $\Gamma \cdot  \Pi \supset \Int(C)$.
\end{enumerate}
Moreover, in case (2), we necessarily have $\Gamma \cdot  \Pi = C_+$.
\end{proposition}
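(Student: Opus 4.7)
The easy implication (1) $\Rightarrow$ (2) is immediate: since $\dim C = \dim V$, the interior $\Int(C)$ is a non-empty open cone in which rational points are dense, and every interior point can be written as a convex combination of finitely many nearby rational points of $\bar C$; hence $\Int(C) \subset C_+$, and so $\Gamma \cdot \Pi = C_+ \supset \Int(C)$. The plan is therefore to prove the ``moreover'' statement, namely that the weaker hypothesis $\Gamma \cdot \Pi \supset \Int(C)$ already forces $\Gamma \cdot \Pi = C_+$; this simultaneously establishes (2) $\Rightarrow$ (1) with the same $\Pi$.

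First I would record the easy inclusion $\Gamma \cdot \Pi \subset C_+$: since $\Gamma$ preserves $\bar C$ and the lattice $V(\Zz)$, it preserves $\bar C \cap V(\Qq)$ and hence its convex hull $C_+$, so the inclusion follows from $\Pi \subset C_+$. The hypothesis $\Gamma \cdot \Pi \supset \Int(C)$ also forces $\Pi$ to have non-empty interior in $V$ (since $\Int(C)$ is open and non-empty), and this interior necessarily lies inside $\Int(C)$ because $\Pi \subset \bar C$; hence I may fix a rational point $p \in \Int(\Pi)$. For any $x \in C_+$, either $x \in \Int(C)$ (and there is nothing to show), or $x \in \partial \bar C$. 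In the latter case I would consider the segment from $p$ to $x$: for $0 \leq t < 1$ the point $z_t = (1-t)p + tx$ lies in $\Int(C)$ by convexity and openness of $\Int(C)$, so by hypothesis $z_t = \gamma_t \cdot y_t$ for some $\gamma_t \in \Gamma$ and $y_t \in \Pi$. The key claim is that along some sequence $t_n \to 1$ the element $\gamma_{t_n}$ may be taken constant, say equal to $\gamma$; granting this, $y_{t_n} = \gamma^{-1} z_{t_n} \to \gamma^{-1} x$, and closedness of $\Pi$ gives $\gamma^{-1} x \in \Pi$, so that $x \in \gamma \cdot \Pi$.

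The main obstacle is thus the stabilization of $\{\gamma_t\}$. To establish it, I would combine three ingredients: the discreteness of $\rho(\Gamma) \subset \mathrm{GL}(V)$ coming from preservation of the lattice $V(\Zz)$ (so each $\rho(\gamma_t)$ has determinant $\pm 1$, and the set of such lattice-preserving elements is discrete in $\mathrm{GL}(V)$); the polyhedrality of $\Pi$, which has only finitely many faces and hence restricts the combinatorial position of $\{y_t\}$ relative to those faces; and a rescaling/compactness argument on the projectivized cone $\mathbb{P}(\bar C)$. If $\{\gamma_t\}$ were unbounded, a subsequence argument on the unit sphere would produce a non-trivial ray of $\Pi$ whose image degenerates toward a face of $\bar C$ at $x$ in a manner that contradicts non-degeneracy of $C$ together with discreteness of the lattice action, so after passing to a subsequence $\gamma_t$ must lie in a bounded and hence finite subset of $\rho(\Gamma)$, which yields the stabilization. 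Making this precise is essentially the content of Looijenga's original argument in \cite[Proposition--Definition~4.1]{Loo14}, whose hypotheses on the cone, the lattice, and the group action coincide with ours; the plan is therefore to adapt that argument, with only cosmetic modifications.
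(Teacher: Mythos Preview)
The paper does not supply its own proof of this proposition; it is simply quoted from \cite[Proposition--Definition~4.1]{Loo14} and used as a black box. So there is no ``paper's proof'' to compare against beyond the citation itself.

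Your reduction is sound: $(1)\Rightarrow(2)$ is trivial, the inclusion $\Gamma\cdot\Pi\subset C_+$ follows from $\Gamma$-invariance of $C_+$, and the segment-from-an-interior-point device correctly isolates the problem at boundary points of $C_+$. The genuine content, however, is entirely in the stabilization claim, and here your justification is only a gesture. The sentence ``a subsequence argument on the unit sphere would produce a non-trivial ray of $\Pi$ whose image degenerates toward a face of $\bar C$ at $x$ in a manner that contradicts non-degeneracy of $C$ together with discreteness of the lattice action'' names the ingredients but does not explain how they combine; in particular, unboundedness of $\{\gamma_t\}$ in $\mathrm{GL}(V)$ does not by itself produce a contradiction with non-degeneracy, and discreteness only converts boundedness into finiteness once boundedness is already established. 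Making this step precise is exactly the work Looijenga does, and you acknowledge as much in your last sentence.

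In short, your proposal and the paper's treatment end up in the same place: both defer the actual argument to \cite{Loo14}. If you want a self-contained proof, the stabilization step needs to be written out in full rather than summarized.
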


\begin{definition}\label{def: fundamental domain}
Let $\rho: \Gamma \hookrightarrow {\rm GL}(V)$ be an injective group homomorphism and $C \subset V$ be a convex cone. Let $\Pi \subset C $ be a (rational) polyhedral cone. Suppose that $\Gamma$ acts on $C$. Then $\Pi$ is called a weak (rational) polyhedral fundamental domain for $C$ under the action $\Gamma$ if 
\begin{enumerate}
\item $\Gamma \cdot \Pi = C$, and
\item for each $\gamma \in \Gamma$, either $\gamma \Pi = \Pi$ or $\gamma\Pi \cap \Int(\Pi) = \emptyset$. 
\end{enumerate}

Moreover, for $\Gamma_\Pi \coloneqq \{\gamma \in \Gamma \mid \gamma \Pi = \Pi\}$, if $\Gamma_\Pi = \{{\rm id}\}$, then $\Pi$ is called a (rational) polyhedral fundamental domain.
\end{definition}

\begin{lemma}[{\cite[Theorem 3.8 \& Application 4.14]{Loo14}; see also \cite[Lemma 3.5]{LZ25}}]\label{le: existence of fun domain}
Under the notation and assumptions of Definition \ref{def: polyhedral type}, suppose that $\rho: \Gamma \hookrightarrow {\rm GL}(V)$ is injective. Let $(C_+, \Gamma)$ be of polyhedral type with $C$ non-degenerate. Then under the action of $\Gamma$, $C_+$ admits a rational polyhedral fundamental domain.
\end{lemma}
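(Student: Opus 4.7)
The plan is to reduce the statement to Looijenga's general theorem on the existence of polyhedral fundamental domains for linear actions on non-degenerate convex cones, adapting the argument as in \cite{LZ25, Loo14}.

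First, I would upgrade the polyhedral-type hypothesis to a rational-polyhedral one. By Definition~\ref{def: polyhedral type}(2) there is a polyhedral cone $\Pi_0 \subset C_+$ with $\Gamma \cdot \Pi_0 \supset \Int(C)$, and by Proposition~\ref{prop: prop-def} (applicable since $C$ is non-degenerate) one in fact has $\Gamma \cdot \Pi_0 = C_+$. Since $C_+$ is the convex hull of the rational points of $\overline{C}$, by enlarging the extremal rays of $\Pi_0$ slightly to rational ones inside $C_+$, one can replace $\Pi_0$ by a rational polyhedral cone still satisfying $\Gamma \cdot \Pi_0 = C_+$.

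Next, I would establish a discreteness property of the $\Gamma$-action on $\Int(C)$: because $C$ is non-degenerate and $\Gamma$ preserves the lattice $V(\Zz)$, the action of $\Gamma$ on $\Int(C)$ is properly discontinuous; concretely, for every compact subset $K \subset \Int(C)$ the set $\{\gamma \in \Gamma : \gamma K \cap K \neq \emptyset\}$ is finite. Applied to $\Pi_0$, this gives a finite set $\Sigma \coloneqq \{\gamma \in \Gamma : \Int(\gamma \Pi_0) \cap \Int(\Pi_0) \neq \emptyset\}$. The finitely many hyperplanes bounding the cones $\gamma \Pi_0$ for $\gamma \in \Sigma$ cut $\Pi_0$ into finitely many rational polyhedral sub-cones; by construction, any two of these sub-cones are either $\Gamma$-translates of each other or have disjoint interiors after translation.

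Finally, I would assemble a single rational polyhedral fundamental domain. The naive approach of choosing one representative per $\Gamma$-orbit among the sub-cones yields a set with the correct tiling property but is not automatically convex. The key step, following Looijenga's method (\cite[Theorem~3.8 and Application~4.14]{Loo14}), is to make a \emph{canonical} choice via a Dirichlet-type construction: fix a rational point $x_0 \in \Int(\Pi_0)$, choose a $\Gamma$-invariant ``bilinear distance'' function on $V$ (which exists by averaging over $\Sigma$ using the lattice structure), and take $\Pi$ to be the closure in $C_+$ of the points $x$ with $d(x, x_0) \leq d(x, \gamma x_0)$ for all $\gamma \in \Sigma$. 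This is a finite intersection of rational half-spaces with $C_+$, hence rational polyhedral, and by the discreteness above it satisfies both conditions (1) and (2) of Definition~\ref{def: fundamental domain}. The main obstacle is precisely this last step: extracting a convex fundamental domain rather than a mere set-theoretic one; this is where both the non-degeneracy (to guarantee properness of the action) and the lattice preservation (to enable the Dirichlet-type construction) are essential.
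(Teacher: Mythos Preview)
The paper does not prove this lemma; it is cited from \cite{Loo14} and \cite{LZ25}. Your overall strategy (a Siegel-type finiteness statement followed by a Dirichlet-type construction) is indeed the shape of Looijenga's argument, but the final step as you describe it does not work.

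The gap is in your construction of a ``$\Gamma$-invariant bilinear distance function on $V$ by averaging over $\Sigma$''. This fails for two reasons: $\Sigma$ is a finite \emph{subset} of $\Gamma$, not a subgroup, so averaging over it yields nothing $\Gamma$-invariant; and $\Gamma$ is typically infinite and non-compact, so no $\Gamma$-invariant positive-definite bilinear form on $V$ exists at all. Looijenga's actual construction in \cite[Application~4.14]{Loo14} is different: one fixes a rational point $\xi$ in the interior of the dual cone $C^{*}$ and takes
\[
\Pi \;=\; \{\, x \in C_{+} \mid \langle \xi, x\rangle \le \langle \gamma^{*}\xi, x\rangle \text{ for all } \gamma \in \Gamma \,\}.
\]
Each inequality is linear with rational coefficients, and the Siegel property \cite[Theorem~3.8]{Loo14} shows that only finitely many of them cut $\Pi$ nontrivially, so $\Pi$ is rational polyhedral.

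A secondary imprecision: you deduce the finiteness of $\Sigma$ from proper discontinuity on $\Int(C)$, which you attribute solely to lattice preservation and non-degeneracy. Even granting proper discontinuity on $\Int(C)$, this does not directly give $\Sigma$ finite, because $\Pi_{0}\subset C_{+}$ may meet $\partial C$ and hence its projectivization is not relatively compact in $\Int(C)/\Rr_{>0}$. The finiteness you need is precisely the Siegel property for polyhedral cones in $C_{+}$, and its proof in \cite{Loo14} uses the polyhedral-type hypothesis in an essential way, not just discreteness of $\Gamma$.
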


The following proposition is proved in \cite{GLSW26} and remains valid for degenerate cones by the same argument.

\begin{proposition}[{\cite[Proposition 3.6]{GLSW26}}]\label{prop: fundamental domain for surface}
Let $(C_+, \Gamma)$ be a cone of polyhedral type (possibly degenerate). Then for each face $F$ of $C_+$, $(F_+, {\rm Stab}_{F}\Gamma)$ is still of polyhedral type, where 
\[
{\rm Stab}_{F}\Gamma \coloneqq \{\gamma \in \Gamma \mid \gamma F= F\}
\] is a subgroup of $\Gamma$.
\end{proposition}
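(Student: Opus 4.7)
This is \cite[Proposition~3.6]{GLSW24}, proved there under the additional hypothesis that $C$ is non-degenerate; the plan is to replay the \cite{GLSW24} argument and check that this hypothesis is not essential.

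Fix a polyhedral cone $\Pi \subset C_+$ with $\Gamma \cdot \Pi \supset \Int(C)$ supplied by the polyhedral type assumption, a face $F$ of $C_+$, and set $H \coloneqq {\rm Stab}_F \Gamma$. The candidate polyhedral cone in $F_+$ is
\[
\Pi_F \coloneqq \Cone\Bigl( \bigcup_{\gamma \in A} (\gamma \Pi \cap F) \Bigr),
\]
where $A$ is a finite set of representatives for the left $H$-cosets in $\mathcal S \coloneqq \{\gamma \in \Gamma \mid \gamma \Pi \cap \Int(F) \neq \emptyset\}$. I would check in turn: (i) $\mathcal S$ has only finitely many $H$-cosets, so that $A$ exists; (ii) $\Pi_F$ is polyhedral and is contained in $F_+$; (iii) $H \cdot \Pi_F \supset \Int(F)$. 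For (i), the inputs are discreteness of $\Gamma$ in ${\rm GL}(V)$ (which uses only that $\Gamma$ preserves the lattice $V(\Zz)$) together with the fact that $\Pi$ has finitely many faces; each $\gamma \in \mathcal S$ is determined modulo $H$ by which face of $\Pi$ is mapped into $F$ via $\gamma$, and local finiteness of the tiling $\{\gamma \Pi\}_{\gamma}$ provides the cosetwise finiteness. Item (ii) follows because the extreme rays of each $\gamma \Pi \cap F$ are either edges of $\gamma \Pi$ or rays cut out by rational supporting half-spaces of $F$, all of which are rational rays lying in $\bar F$. For (iii), given $p \in \Int(F)$ I would approximate $p$ by $p_n \in \Int(C)$ (using density of $\Int(C)$ in $\bar C \supset F$), write $p_n = \gamma_n z_n$ with $\gamma_n \in \Gamma$ and $z_n \in \Pi$, and pass to a limit using (i) and the discreteness of $\Gamma$ to conclude $p \in \gamma \Pi \cap F$ for some $\gamma \in \mathcal S$, whence $p \in H \cdot \Pi_F$.

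The main obstacle is step (i), which in \cite{GLSW24} is established through a compact cross-section of $C$ — a tool that is unavailable when $C$ is degenerate. The natural workaround is to pass to the quotient $V/L$, where $L$ is the lineality space of $C$; since $\Gamma$ preserves the lattice $V(\Zz)$ it also preserves $L$, and the cone $C/L$ is non-degenerate in $V/L$ with an induced $\Gamma$-action still of polyhedral type. The finiteness statement can be extracted in the quotient and lifted back to the original picture. With this small adjustment, the remainder of the \cite{GLSW24} argument applies verbatim.
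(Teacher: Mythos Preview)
The paper gives no argument beyond citing \cite{GLSW24} and asserting that the same proof goes through for degenerate cones, so your plan is consistent with what the paper does. However, the specific workaround you propose---reducing to the non-degenerate case by quotienting out the lineality space $L$---has a genuine gap: to apply the non-degenerate theory in $V/L$ (Siegel property, compact cross-section, etc.), the induced $\Gamma$-action must preserve a lattice there, and for that one needs $L$ to be defined over $\Qq$. Nothing in the polyhedral-type hypothesis guarantees this, and your stated reason (``since $\Gamma$ preserves the lattice $V(\Zz)$ it also preserves $L$'') addresses invariance of $L$, not its rationality. In the paper's geometric applications the lineality spaces are rational by Proposition~\ref{prop: max vector defined over Q}, but the proposition here is stated abstractly; the paper itself flags elsewhere (Remark~\ref{rmk: siegel property}) that Siegel-type finiteness for degenerate cones is open, so the degenerate extension is more delicate than either your sketch or the paper's one-line remark makes explicit.

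Two smaller issues with the sketch itself. In step~(i), knowing that $\gamma_1,\gamma_2\in\mathcal S$ send the same face $G$ of $\Pi$ into $F$ does not imply $\gamma_1\gamma_2^{-1}\in\mathrm{Stab}_F\Gamma$: the element $\gamma_1\gamma_2^{-1}$ maps $\gamma_2 G$ to $\gamma_1 G$, both inside $F$, but this says nothing about where it sends the rest of $F$. In step~(iii), the approximants $p_n\in\Int(C)$ produce elements $\gamma_n\in\Gamma$, not elements of $\mathcal S$, so the finiteness from step~(i) does not control them; one needs an actual local-finiteness or Siegel-type input to force the $\gamma_n$ to stabilize, which again comes back to the non-degeneracy issue.
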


The following consequence of having a polyhedral fundamental domain is well-known (see \cite[Corollary 4.15]{Loo14}).

\begin{theorem}\label{thm: finite presented}
Let $\rho: \Gamma \hookrightarrow {\rm GL}(V)$ be an injective group homomorphism and $C \subset V$ be a non-degenerate cone. Suppose that $C$ is $\Gamma$-invariant. If $C$ admits a polyhedral fundamental domain under the action of $\Gamma$, then $\Gamma$ is finitely presented. 
\end{theorem}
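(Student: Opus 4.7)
The plan is to construct an explicit finite presentation of $\Gamma$ from the fundamental domain $\Pi$, following the Poincar\'e polyhedron theorem as adapted to this setting by Looijenga and Morrison. The idea is that the translates $\{\gamma \Pi\}_{\gamma \in \Gamma}$ tile the cone $C$ (with trivial stabilizer, since $\Pi$ is a genuine, not merely weak, fundamental domain), so that the combinatorics of the tiling produces both generators and relations.

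First I would define the candidate generating set
\[
S \coloneqq \bigl\{\gamma \in \Gamma \setminus \{{\rm id}\} \ \big|\ \gamma \Pi \cap \Pi \text{ has codimension } \leq 1 \text{ in } V\bigr\}.
\]
Since $\Pi$ is a polyhedral cone, it has only finitely many facets, and each facet of $\Pi$ that meets $\Int(C)$ is shared with exactly one translate $\gamma \Pi$; hence $S$ is finite. To see that $S$ generates $\Gamma$, pick any $\gamma \in \Gamma$ and join a generic point $p \in \Int(\Pi)$ to a generic point $q \in \Int(\gamma \Pi)$ by a piecewise-linear path inside $\Int(C)$ transverse to the codimension-$1$ skeleton of the tiling. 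Each crossing of a facet expresses the transition from one tile $\delta \Pi$ to an adjacent one $\delta s \Pi$ for some $s \in S$, and reading off the crossings writes $\gamma$ as a finite product of elements of $S$.

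Second, I would exhibit a finite set $R$ of relations. For each codimension-$2$ face $F$ of $\Pi$ lying in $\Int(C)$, the finitely many tiles meeting $F$ form a cyclic sequence around $F$, producing a word $w_F$ in $S$ that represents the identity. Since $\Pi$ has only finitely many codimension-$2$ faces, there are only finitely many such words; together with the trivial relations $s \cdot s^{-1}$, they form a finite set $R$. I would then verify that $\langle S \mid R \rangle \to \Gamma$ is an isomorphism. Given a word in $S$ trivial in $\Gamma$, it corresponds to a closed loop $\alpha \subset \Int(C)$ starting and ending in $\Int(\Pi)$. The non-degeneracy hypothesis on $C$ guarantees that $\Int(C)$ is a convex, hence simply connected, open subset of $V$, so $\alpha$ bounds a disk. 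After perturbing the disk to be transverse to the codimension-$2$ skeleton and decomposing it into elementary cells, each cell either witnesses a cancellation $s \cdot s^{-1}$ or a loop around a codimension-$2$ face, and the word reduces to the identity using only relations in $R$.

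The main obstacle will be this last simple-connectedness / reduction step: one must carefully keep track of how the bounding disk can be deformed across codimension-$2$ strata so that each elementary deformation corresponds to exactly one of the chosen relations in $R$. This is the combinatorial heart of Poincar\'e's theorem and is exactly what is worked out in \cite[Corollary 4.15]{Loo14} and \cite[(4.7.7) Proposition]{Mor15}; the non-degeneracy of $C$ enters crucially to ensure that $\Int(C)$ is convex and that the local structure of the tiling around codimension-$2$ faces is the standard ``fan around a vertex'' picture, rather than something more pathological that could happen along the boundary of a degenerate cone.
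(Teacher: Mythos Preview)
Your proposal is correct and follows exactly the Poincar\'e-polyhedron argument of \cite[Corollary 4.15]{Loo14} and \cite[(4.7.7) Proposition]{Mor15}, which is precisely what the paper cites; the paper itself does not supply an independent proof but simply invokes these references as a well-known consequence.
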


For a possibly degenerate open convex cone $C$, let $W \subset \bar C$ be the maximal $\Rr$-linear subspace. We say that $W$ is defined over $\Qq$ if $W = W(\Qq) \otimes_\Qq \Rr$, where $W(\Qq) \coloneqq W \cap V(\Qq)$.

\begin{proposition}[{\cite[Proposition 3.8]{LZ25}}]\label{prop: degenerate cone}
Let $(C_+, \Gamma)$ be of polyhedral type, and let $W \subset \bar C$ be the maximal subspace. Suppose that $W$ is defined over $\Qq$. Then there is a rational polyhedral cone $\Pi \subset C_+$ such that $\Gamma \cdot \Pi = C_+$, and for each $\gamma \in \Gamma$, either $\gamma \Pi\cap \Int(\Pi) = \emptyset$ or $\gamma \Pi = \Pi$. That is, $C_+$ admits a weak rational polyhedral fundamental domain under the action of $\Gamma$.
\end{proposition}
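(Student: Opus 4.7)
The plan is to reduce to the non-degenerate case treated by Lemma~\ref{le: existence of fun domain} by passing to the quotient $q \colon V \to V/W$, applying the lemma there, and lifting the resulting fundamental domain back. Since $W$ is the maximal linear subspace of $\bar C$ and $C$ is $\Gamma$-invariant, $W$ is $\Gamma$-invariant, so $\Gamma$ induces an action on $V/W$; denote the image group by $\Gamma_W$. The hypothesis that $W$ is defined over $\Qq$ ensures that $V/W$ inherits a rational structure with lattice $(V/W)(\Zz) \coloneqq q(V(\Zz))$ and that $q$ itself is a rational linear map.

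I would then verify the hypotheses of Lemma~\ref{le: existence of fun domain} for $(D_+, \Gamma_W)$, where $D \coloneqq q(C)$. Non-degeneracy of $D$ is clear: any linear subspace of $\bar D$ pulls back to a linear subspace of $\bar C$, hence lies in $W$ and projects to $0$. Using $W \subset C_+$ together with the convexity of $C_+$ under addition, one establishes the identities
\[
q(C_+) = D_+ \qquad \text{and} \qquad q^{-1}(D_+) = C_+.
\]
Polyhedral type: for a polyhedral cone $\Pi_0 \subset C_+$ with $\Gamma \cdot \Pi_0 \supset \Int(C)$, the image $q(\Pi_0) \subset D_+$ is polyhedral and satisfies $\Gamma_W \cdot q(\Pi_0) = q(\Gamma \cdot \Pi_0) \supset q(\Int(C)) = \Int(D)$. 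Thus Lemma~\ref{le: existence of fun domain} produces a rational polyhedral fundamental domain $\Pi_W \subset D_+$ for $\Gamma_W$.

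Finally, set $\Pi \coloneqq q^{-1}(\Pi_W)$. Since $q$ is rational and $\Pi_W$ is rational polyhedral, $\Pi$ is a rational polyhedral cone contained in $q^{-1}(D_+) = C_+$. The $\Gamma$-equivariance $\gamma \cdot q^{-1}(\Pi_W) = q^{-1}(\bar\gamma \cdot \Pi_W)$ (using $\gamma W = W$) yields
\[
\Gamma \cdot \Pi \;=\; q^{-1}(\Gamma_W \cdot \Pi_W) \;=\; q^{-1}(D_+) \;=\; C_+.
\]
For the weak fundamental domain property: if $\bar\gamma \cdot \Pi_W = \Pi_W$, then $\gamma \cdot \Pi = \Pi$; otherwise $\bar\gamma \cdot \Pi_W \cap \Int(\Pi_W) = \emptyset$, and applying $q^{-1}$ (using $\Int(\Pi) = q^{-1}(\Int(\Pi_W))$, which holds because $\Pi_W$ is full-dimensional in $V/W$) gives $\gamma \cdot \Pi \cap \Int(\Pi) = \emptyset$.

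The main obstacle is verifying the identities $q(C_+) = D_+$ and $q^{-1}(D_+) = C_+$, which rely essentially on the rationality of $W$ to ensure $q(V(\Qq)) = (V/W)(\Qq)$ and to lift rational points of $\bar D$ to rational points of $V$ lying in $\bar C$. Without this rationality, the convex hulls of rational points defining $(\cdot)_+$ would not transform correctly under $q$, and the argument would break. The \emph{weakness} of the resulting fundamental domain, i.e.\ the possible non-triviality of the stabilizer $\Gamma_\Pi$, corresponds precisely to $\ker(\Gamma \to \Gamma_W)$, the subgroup acting trivially on $V/W$ but possibly non-trivially along $W$.
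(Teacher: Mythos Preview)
The paper does not supply its own proof of this proposition; it is quoted verbatim from \cite[Proposition~3.8]{LZ25}. Your argument is correct and is exactly the natural approach: quotient by the lineality space $W$ to reduce to the non-degenerate case covered by Lemma~\ref{le: existence of fun domain}, then pull back the resulting fundamental domain. The rationality of $W$ is used precisely where you indicate, namely to ensure that $q$ respects the rational structures so that $q(C_+)=D_+$ and $q^{-1}(D_+)=C_+$, and that $q(V(\Zz))$ is a genuine lattice in $V/W$ (so that Lemma~\ref{le: existence of fun domain} applies to $\Gamma_W$). This is almost certainly the same argument as in \cite{LZ25}; there is essentially no other way to prove the statement.
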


\section{Foundations on Morrison-Kawamata dream spaces}\label{sec: MKD spaces}

 \subsection{Definition of Morrison-Kawamata dream spaces}\label{subsec: definition of MKD}

Before giving the definition of Morrison-Kawamata dream spaces, we introduce the following notion, which provides a weak substitute for the Cone Theorem (see \cite[Theorem~3.7]{KM98}). This notion rules out certain pathological phenomena that may occur in the general definition of minimal models.
 
\begin{definition}[Local factoriality of canonical models]\label{def: local factoriality}
Suppose that $X$ is a variety over $T$. Let $\bS \subset {\rm CDiv}(X)_\Rr$ be a set of divisors.  
We say that the local factoriality of canonical models$/T$ holds for $\bS$ if, for any effective $\bR$-Cartier divisor $D \in \bS$ and any rational polytope 
\[
P = \Conv(E_i \mid 1 \leq i \leq m)
\]
such that $D \in P$, where each $E_i$ is an effective$/T$ $\bQ$-Cartier divisor, there exists an open neighborhood $U$ of $D$ (in the topology induced on $P$) such that, for every effective $\Rr$-Cartier divisor $B \in \bS \cap U$, if
\[
f_D \colon X \dashrightarrow Z_D/T 
\quad \text{and} \quad 
f_B \colon X \dashrightarrow Z_B/T
\]
are the canonical models$/T$ of $D$ and $B$, respectively, then there exists a morphism $h \colon Z_B \to Z_D$ such that $f_D = h \circ f_B$.

Similarly, we can define the local factoriality of canonical models$/T$ for a subset $\bS\subset N^1(X/T)$ assuming that effective $\Rr$-Cartier divisors in $\bS$ admit good minimal models/$T$. To be precise, this means that for any effective $\Rr$-Cartier divisor $D$ with $[D] \in \bS$ and any rational polytope 
\[
P = \Conv([E_i] \mid 1 \leq i) \subset N^1(X/T)_\bQ
\]
such that $[D] \in P$, where each $E_i$ is an effective$/T$ $\bQ$-Cartier divisor, there exists an open neighborhood $U$ of $[D]$ (in the topology induced on $P$) such that, for every effective $\Rr$-Cartier divisor $B$ with $[B] \in \bS \cap U$, if
\[
f_D \colon X \dashrightarrow Z_D/T 
\quad \text{and} \quad 
f_B \colon X \dashrightarrow Z_B/T
\]
are the canonical models$/T$ of $D$ and $B$, respectively, then there exists a morphism $h \colon Z_B \to Z_D$ such that $f_D = h \circ f_B$.
\end{definition}

\begin{remark}
Since we assume that effective $\Rr$-Cartier divisors in $\bS$ admit good minimal models, if $B, D$ are effective $\Rr$-Cartier divisors such that $[B]=[D]\in \bS$, then the canonical models$/T$ of $B$ and $D$ coincide. Hence, the above notion is well-defined for subsets of $N^1(X/T)$.
\end{remark}

\begin{remark}\label{rmk: lf is for polyhedral open neighborhood}
We emphasize that the local factoriality of canonical models does not refer to an arbitrary open neighborhood (in the induced topology). Rather, it refers specifically to an open set within an arbitrary polytope generated by effective divisors (see Figure \ref{fig:geometry}). Hence, this requirement is weaker, and the two notions are not the same when an effective divisor lies on the boundary of $\bEff(X/T)$ that is not locally polyhedral.
\end{remark}

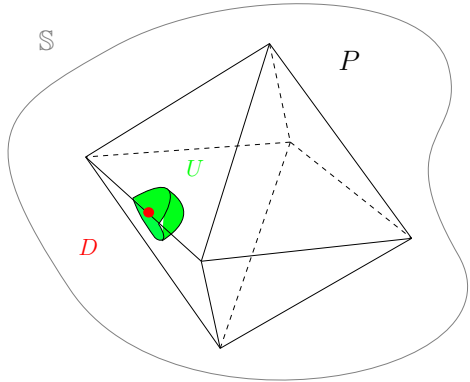
\begin{figure}[H]
\centering
\tikzset{every picture/.style={line width=0.75pt}} 
\resizebox{0.45\textwidth}{!}{
\begin{tikzpicture}[x=0.75pt,y=0.75pt,yscale=-1,xscale=1]

\draw  [dash pattern={on 4.5pt off 4.5pt}]  (390.44,188.46) -- (268,197.37) -- (262.61,197.77) -- (148.43,206.08) ;
\draw    (285.12,329.44) -- (533.85,302.32) ;
\draw  [dash pattern={on 4.5pt off 4.5pt}]  (390.44,188.46) -- (533.85,302.32) ;
\draw    (365.79,71.88) -- (285.12,329.44) ;
\draw    (365.79,71.88) -- (148.43,206.08) ;
\draw  [dash pattern={on 4.5pt off 4.5pt}]  (365.79,71.88) -- (390.44,188.46) ;
\draw    (365.79,71.88) -- (533.85,302.32) ;
\draw    (148.43,206.08) -- (307.53,432.46) ;
\draw    (307.53,432.46) -- (533.85,302.32) ;
\draw    (285.12,329.44) -- (307.53,432.46) ;
\draw  [dash pattern={on 4.5pt off 4.5pt}]  (390.44,188.46) -- (307.53,432.46) ;
\draw  [line width=0.75] [line join = round][line cap = round] (320.97,216.92) .. controls (320.97,216.92) and (320.97,216.92) .. (320.97,216.92) ;
\draw  [color={rgb, 255:red, 128; green, 128; blue, 128 }  ,draw opacity=1 ][line width=0.75] [line join = round][line cap = round] (230.33,426) .. controls (230.33,426) and (230.33,426) .. (230.33,426) ;
\draw [fill={rgb, 255:red, 0; green, 255; blue, 24 }  ,fill opacity=1 ]   (242.83,243.02) .. controls (262.15,254.96) and (281.48,277.84) .. (238.62,305.29) ;
\draw [fill={rgb, 255:red, 0; green, 255; blue, 24 }  ,fill opacity=1 ]   (204.17,254.71) .. controls (272.24,216.33) and (244.51,276.31) .. (234.42,284.96) ;
\draw [fill={rgb, 255:red, 0; green, 255; blue, 67 }  ,fill opacity=1 ]   (204.17,254.71) .. controls (213.42,280.89) and (225.18,293.6) .. (229.38,300.21) .. controls (233.58,306.82) and (249.55,308.85) .. (234.42,284.96) ;
\draw    (148.43,206.08) -- (285.12,329.44) ;
\draw  [color={rgb, 255:red, 128; green, 128; blue, 128 }  ,draw opacity=1 ] (181.23,75.95) .. controls (315.07,1.93) and (551.98,7.45) .. (576.71,96.64) .. controls (601.45,185.82) and (504.23,175.49) .. (588.85,321.85) .. controls (673.47,468.21) and (252.78,546.29) .. (130.27,363.22) .. controls (7.76,180.14) and (47.38,149.98) .. (181.23,75.95) -- cycle ;
\draw  [color={rgb, 255:red, 255; green, 0; blue, 0 }  ,draw opacity=1 ][fill={rgb, 255:red, 255; green, 0; blue, 0 }  ,fill opacity=1 ] (217.13,271.6) .. controls (217.13,268.55) and (219.74,266.08) .. (222.96,266.08) .. controls (226.17,266.08) and (228.78,268.55) .. (228.78,271.6) .. controls (228.78,274.64) and (226.17,277.11) .. (222.96,277.11) .. controls (219.74,277.11) and (217.13,274.64) .. (217.13,271.6) -- cycle ;

\draw (138.62,301.1) node [anchor=north west][inner sep=0.75pt]  [font=\LARGE,color={rgb, 255:red, 255; green, 0; blue, 0 }  ,opacity=1 ]  {$D$};
\draw (265.61,208.17) node [anchor=north west][inner sep=0.75pt]  [font=\LARGE,color={rgb, 255:red, 0; green, 255; blue, 6 }  ,opacity=1 ]  {$U$};
\draw (445.99,78.46) node [anchor=north west][inner sep=0.75pt]  [font=\Huge]  {$P$};
\draw (91.73,67.53) node [anchor=west] [inner sep=0.75pt]  [font=\Huge,color={rgb, 255:red, 128; green, 128; blue, 128 }  ,opacity=1 ]  {$\mathbb{S}$};

\end{tikzpicture}}
\caption{Neighborhood of $D$ in the context of the local factoriality of canonical models}
  \label{fig:geometry}
\end{figure}

Recall the definition of MKD fiber spaces given in the introduction.

\MKDspaces*

\begin{remark}
The terminology fiber space in the notion of an MKD fiber space is slightly more general than that used in \cite{LZ25, Li26}, where it always refers to a fibration. However, one can always pass to the Stein factorization to eliminate this difference.
\end{remark}

\begin{remark}
In Definition~\ref{def: MKD spaces}~(4), we may assume the local factoriality of canonical models only on $\Pi$. Corollary \ref{cor: local factoriality for Pi} shows that, at least in the absolute setting (in fact, it suffices for $\Eff(X/T)$ to be non-degenerate), the local factoriality of canonical models on $\Eff(X)$ can be deduced from that of $\Pi$. To establish this implication in general, one needs to prove the Siegel property (see \cite[Theorem~3.8]{Loo14}) for degenerate cones, which is currently unavailable. Moreover, there are also possible ways to slightly weaken the assumptions in Definition~\ref{def: MKD spaces} (see Remark~\ref{rmk: weaken assumption for MKD}).
\end{remark}

\begin{remark}\label{rmk: not for codition (2)}
It is possible that for two numerically equivalent divisors, one admits a good minimal model but the other does not. See \cite[Example 3.10]{KKL16} for such examples. Thus, in Definition \ref{def: MKD spaces}~(2), we require that the good minimal model exists for a divisor instead of for an arbitrary divisor in its numerical class.
\end{remark}

\begin{remark}
The notion of MKD fiber spaces still makes sense if one considers the log pair $(X/T, \Delta)$ and replaces $\PsAut(X/T)$ by $\PsAut(X/T,\Delta)$. With this modification, the discussion in the paper remains valid after making appropriate (and simple) changes.
\end{remark}

The following are immediate properties of MKD fiber spaces.

\begin{proposition}\label{prop: property of MKD from definition}
Let $X/T$ be an MKD fiber space. 
\begin{enumerate}
\item We have $\Mov(X/T) = \bMov^e(X/T)$.
\item If $\bMov(X/T)$ is non-degenerate, then we have 
\[
\Mov(X/T) =\bMov^e(X/T) = \Mov(X/T)_+.
\] 
\end{enumerate}
\end{proposition}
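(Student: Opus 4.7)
The plan is to deduce (1) from the good minimal model axiom (2) together with Lemma \ref{lem: movable give iso in codim 1}, and then to obtain (2) by applying the polyhedral-type criterion of Proposition \ref{prop: prop-def} to the cone $\bMov(X/T)$ under the action of $\PsAut(X/T)$.

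For (1), the containment $\Mov(X/T) \subseteq \bMov^e(X/T)$ is immediate, since every movable divisor is both effective and pseudo-movable. For the reverse, I would take $[D] \in \bMov^e(X/T)$ with an effective $\bR$-Cartier representative $D$. MKD axiom (2) gives a good minimal model $h \colon X \dashrightarrow Y/T$ of $D$, and Lemma \ref{lem: movable give iso in codim 1} forces $h$ to be an isomorphism in codimension $1$. Then $D_Y = h_* D$ is semi-ample$/T$, so $[D_Y] \in \Mov(Y/T)$, and since $h$ is an isomorphism in codimension $1$ between $\bQ$-factorial varieties, pushforward along $h^{-1}$ preserves the movable cone at the level of $N^1$. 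Hence $[D] = (h^{-1})_*[D_Y] \in \Mov(X/T)$.

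For (2), having (1) in hand, it suffices to prove $\bMov^e(X/T) = \Mov(X/T)_+$. I would apply Proposition \ref{prop: prop-def} with $C \coloneqq \bMov(X/T)$ and $\Gamma \coloneqq \PsAut(X/T)$: non-degeneracy of $C$ is the hypothesis, while $C$ is automatically full-dimensional in $N^1(X/T)$ because it contains $\Amp(X/T)$. The rational polyhedral cone $\Pi$ from MKD axiom (3) satisfies $\Pi \subseteq C_+ = \Conv(\bMov(X/T) \cap N^1(X/T)_\bQ) = \Mov(X/T)_+$, since its rational generators lie in $\Mov \subseteq \bMov$. Combining (1) with MKD (3) yields $\Gamma \cdot \Pi = \Mov(X/T) = \bMov^e(X/T)$, and this set contains $\Int(C)$: indeed, $\Int(\bMov) \subseteq \Int(\bEff)$ consists of $T$-big classes, all of which are effective, so $\Int(\bMov) \subseteq \bMov \cap \Eff = \bMov^e$. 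Proposition \ref{prop: prop-def} then upgrades this to $\Gamma \cdot \Pi = C_+$, giving $\bMov^e(X/T) = \Mov(X/T)_+$.

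The main technical points to verify are that isomorphism in codimension $1$ preserves the movable cone in $N^1$ (needed in (1)) and that interior points of the relative pseudo-effective cone are effective (needed in (2)); both are standard consequences of Kodaira's lemma and its relative counterpart combined with the base-locus picture for pseudo-automorphisms.
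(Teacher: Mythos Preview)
Your proposal is correct and follows essentially the same route as the paper: for (1) you combine the good minimal model axiom with Lemma~\ref{lem: movable give iso in codim 1}, and for (2) you invoke Proposition~\ref{prop: prop-def} applied to $C=\bMov(X/T)$ together with MKD axiom (3). Your version is in fact slightly more explicit than the paper's, since you spell out the verification that $\Int(\bMov(X/T)) \subset \bMov^e(X/T)$ (via bigness) needed to feed into Proposition~\ref{prop: prop-def}, whereas the paper leaves this implicit.
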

\begin{proof}
By assumption, an effective divisor $D$ with $[D] \in \bMov^e(X/T)$ admits a good minimal model$/T$, which is isomorphic to $X$ in codimension $1$ by Lemma \ref{lem: movable give iso in codim 1}. Thus the natural inclusion $\Mov(X/T) \subset \bMov^e(X/T)$ is indeed an equality. This shows (1).

If $\bMov(X/T)$ is non-degenerate, then by Proposition~\ref{prop: prop-def} and Definition~\ref{def: MKD spaces}~(3), we have 
\[
\PsAut(X/T) \cdot \Pi = \Mov(X/T)_+.
\] Hence, the natural inclusions
\[
\Mov(X/T) \subset \bMov^e(X/T) \subset \Mov(X/T)_+
\] 
are equalities. (However, if $\bMov(X/T)$ is degenerate, it may happen that $\Mov(X/T) \subsetneqq \Mov(X/T)_+$. For example, see \cite[Example~3.8 (2)]{Kaw97}.)
\end{proof}

\subsection{Local factoriality for canonical models}

The local factoriality condition for canonical models requires some clarification. We discuss why this condition is natural from the viewpoint of standard conjectures in birational geometry.

Let $\mathcal C \subset {\rm CDiv}(X)_{\bR}$ be a cone. Set
\[
R(X/T, \mathcal C) \coloneqq \bigoplus_{D \in \mathcal C \cap {\rm CDiv}(X)} f_*\mO(D)
\] as a sheaf of $\mO_T$-algebras. If $\mathcal C$ is the cone generated by effective $\bQ$-Cartier divisors $D_1, \cdots, D_l$, then we also write $R(X/T, D_1, \cdots, D_l)$ for $R(X/T, \mathcal C)$.

\begin{proposition}\label{prop: condition (4)}
Let $X$ be a variety over $T$.
\begin{enumerate}
\item If $\bS \subset \Nef(X/T)$, then the local factoriality of canonical models$/T$ holds for $\bS$.
\item  Assume that every effective $\bR$-Cartier divisor on $X/T$ admits a good minimal model$/T$. Then the following hold:
\begin{enumerate}
\item If $\mathcal C \subset {\rm CDiv}(X)_{\bR}$ is a cone generated by finitely many effective$/T$ $\bQ$-Cartier divisors and $R(X/T, \mathcal C)$ is a finitely generated sheaf of $\Oo_T$-algebras, then the local factoriality of canonical models$/T$ holds for $\mathcal C$.
\item If $X/T$ is a fiber space of Calabi-Yau type, then the local factoriality of canonical models$/T$ holds for any subset $\bS \subset {\rm CDiv}(X)_{\bR}$.
\end{enumerate}
\item Suppose that $h: X \dto Y/T$ is a small $\bQ$-factorial modification between $\bQ$-factorial varieties. If local factoriality of canonical models$/T$ holds for $\bS \subset {\rm CDiv}(X)_\bR$, then it also holds for $h_*\bS \subset {\rm CDiv}(Y)_\bR$.
\end{enumerate}
\end{proposition}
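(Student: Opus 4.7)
I would handle the three parts in the order (3), (1), (2).

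\emph{Part (3).} Since $h\colon X \dashrightarrow Y/T$ is an isomorphism in codimension one, pushforward induces a linear isomorphism $h_*\colon N^1(X/T) \to N^1(Y/T)$ identifying effective cones, and for every $\bR$-Cartier divisor $D$ on $X$ the section ring $R(X/T, D)$ agrees with $R(Y/T, h_*D)$. Hence $D$ admits a canonical model if and only if $h_*D$ does, and in that case the two canonical models coincide as varieties over $T$. Transporting the neighborhood $U$ in Definition \ref{def: local factoriality} along $h_*$ gives the conclusion.

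\emph{Part (1).} Given $D \in \bS$ effective with canonical model $f_D\colon X \dashrightarrow Z_D/T$, the assumption $D \in \Nef(X/T)$ implies that $f_D$ is a morphism and $f_{D,*}D$ is ample on $Z_D$ (this is exactly the ``ample model'' formulation in Definition \ref{def: minimal model}). For $B \in \bS \cap P$ close to $D$, both $D$ and $B$ are nef. I would look at the induced pushforward $f_{D,*}B$: it depends continuously on $B$ and equals an ample class when $B = D$, so for $B$ in a sufficiently small open neighborhood $U$ of $D$ in $P$, the class $f_{D,*}B$ remains ample on $Z_D$. Writing $p\colon W\to X$, $q\colon W\to Z_D$ for a common resolution of $f_D$, the relation $p^*B \sim_\bR q^*(f_{D,*}B) + E$ with $E$ effective and $q$-exceptional follows from the fact that $f_D$ is the ample model of $D$ and $B$ is nef. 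Applied to $f_B$, this exhibits $f_D$ as a further contraction of $f_B$; that is, we obtain the morphism $h\colon Z_B \to Z_D$ with $f_D = h \circ f_B$ from Lemma \ref{lem: common mm}(2) once we notice $f_{D,*}B$ is ample on $Z_D$.

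\emph{Part (2)(a).} Finite generation of $R(X/T, \mathcal C)$ is exactly the hypothesis needed to obtain a Mori chamber decomposition of $\mathcal C$, in the style of Hu--Keel: $\mathcal C$ decomposes into finitely many rational polyhedral chambers $\mathcal C_i$, each carrying a fixed birational contraction $f_i\colon X \dashrightarrow Z_i/T$ such that for every effective $\bR$-Cartier $D$ with $[D] \in \mathcal C_i$, the canonical model of $D$ is $f_i$. Moreover, whenever $\mathcal C_i \subset \overline{\mathcal C_j}$, there is a natural factoring morphism $Z_j \to Z_i$. Given $D \in \mathcal C$, choose $U$ small enough so that $\bS \cap U$ meets only chambers whose closure contains the chamber of $D$; local factoriality then follows from the factoring morphisms.

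\emph{Part (2)(b).} I would reduce to (2)(a). For a polytope $P = \Conv([E_i])$ containing $[D]$ with each $E_i$ effective $\bQ$-Cartier, let $\mathcal C_P$ be the cone on $P$. Since $X/T$ is of Calabi--Yau type, pick $\Delta \geq 0$ with $(X,\Delta)$ klt and $K_X + \Delta \sim_\bR 0/T$. For any $D' = \sum t_i E_i \in \mathcal C_P$ and small $\epsilon > 0$, the pair $(X, \Delta + \epsilon D')$ is klt and $K_X + \Delta + \epsilon D' \sim_\bR \epsilon D'/T$, so an MMP$/T$ for the latter is an MMP for the former. Termination at a good minimal model, granted by the existence hypothesis, gives finite generation of $R(X/T, \mathcal C_P)$ by the standard Shokurov polytope argument (compare Theorem \ref{thm: Shokurov poly for minimal models}). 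Then (2)(a) applies on $\mathcal C_P$, which suffices for the local statement at $D$.

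The main obstacle is the careful handling of exceptional divisors in (1), specifically verifying that $f_{D,*}B$ being ample on $Z_D$ really forces the canonical model of $B$ to dominate $Z_D$; this is essentially a negativity-lemma computation on a common resolution.
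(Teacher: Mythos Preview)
Your treatments of (3) and (2)(a) are correct and essentially match the paper's approach (for (3) the paper verifies the ample-model definition directly via the negativity lemma rather than identifying section rings, but your route is equally valid). For (2)(b) your reduction to (2)(a) is right, but citing Theorem~\ref{thm: Shokurov poly for minimal models} for finite generation is circular: that theorem already assumes local factoriality. The paper instead invokes \cite[Theorem~8.10]{DHP13}, which gives finite generation of the adjoint ring $R(X/T, K_X+\Delta+E_1, \dots, K_X+\Delta+E_l)$ for klt pairs directly from the existence of good minimal models, with no local-factoriality hypothesis.

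Your argument for (1) has a genuine gap. You want to push forward $B$ along $f_D$ and use that ampleness is open, but $f_{D,*}$ is not a well-defined linear map $N^1(X/T)\to N^1(Z_D/T)$: it is only defined on the subspace of classes numerically trivial on the fibers of $f_D$, and a general $B\in\bS$ close to $D$ need not lie there. Concretely, if $D$ sits on a proper face of $\Nef(X/T)$ (so $f_D$ contracts some curve $C$) and $B$ is a nearby interior point of $\bS$, then $B\cdot C>0$ and there is no class on $Z_D$ pulling back to $B$; your relation $p^*B\sim_\bR q^*(f_{D,*}B)+E$ simply fails. The paper's argument instead uses the polyhedral structure of $\bS$ directly: if $D\in\Int(F)$ for a face $F$ of $\bS$, then since every $B'\in F\subset\Nef(X/T)$ satisfies $B'\cdot C\ge 0$, one has $D\cdot C=0\iff B'\cdot C=0$ for all $B'\in F$. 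Polyhedrality guarantees that nearby $B\in\bS\cap P$ lie in faces $G\supset F$, so curves contracted by $f_B$ (those orthogonal to $G$) are automatically orthogonal to $F$, hence contracted by $f_D$. This gives the morphism $Z_B\to Z_D$ without any pushforward computation.
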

\begin{proof}
For (1), note that, by the definition of local factoriality of canonical models$/T$ for a subset $\bS\subset N^1(X/T)$, every effective divisor $D$ with $[D]\in\bS$ admits a good minimal model$/T$. Suppose that $[D]\in P$, where $P$ is a rational polytope given as the convex hull of finitely many effective divisors$/T$. Then $[D]$ lies in the relative interior of a face $F$ of $P$. Hence, the canonical model of $D$ corresponds to the contraction of curves intersecting trivially with $B$ such that $[B]\in F$. Shrink $P$ around $[D]$, we can assume that any face of $P$ containing $[D]$ must contain $F$ as a face. Therefore, locally around $[D]$, any canonical model must contract curves intersecting trivially with $B, [B]\in F$. Therefore, every canonical model of a divisor in $\bS \cap P$ maps to the canonical model of $D$.

\medskip

(2) (a) can be proved by the same argument as \cite[Theorem 4.2]{KKL16}, and we reproduce here for the reader's convenience. By  \cite[Theorem 3.2]{KKL16}, 
\[
\Supp\mathfrak R \coloneqq \{B \in \mathcal C \mid |B/T|_\bR \neq \emptyset\} \subset  {\rm CDiv}(X/T)_{\bR}\] is a union of finitely many rational polyhedral cones $\mathcal C_i, 1 \leq i \leq m$ such that  for any geometric valuation $\Gamma$ over $X$, the function
\[
\sigma_\Gamma: B \to \inf\{{\rm mult}_\Gamma B' \mid B' \in |B/T|_{\bR}\}
\] is linear on each $\mathcal C_i$. Moreover, there is a positive integer $d$ and a resolution $\pi: \ti X \to X$ such
that the movable$/T$ part of the divisor ${\rm Mob}(\pi^*(dB))$ is base-point free$/T$ for every $B\in \Supp\mathfrak R \cap  {\rm CDiv}(X)$, and ${\rm Mob}(\pi^*(kdB))=k{\rm Mob}(\pi^*(dB))$ for every positive integer $k$. Therefore, it suffices to work in a fixed rational polyhedral cone $\mathcal C_i$. By the above property, 
\[
\mathcal M_i \coloneqq \Cone \{{\rm Mob}(\pi^*(dB))\mid B\in \mathcal C_i \cap  {\rm CDiv}(X)\}
\] is a rational polyhedral cone. Moreover, for any $B\in \mathcal C_i \cap  {\rm CDiv}(X)$, the natural map
\[
X \dto \ti X \to Z_{B}/T
\] is the canonical model of $B$, where $\ti X \to Z_B$ is the contraction morphism induced by the base-point free$/T$ divisor ${\rm Mob}(\pi^*(dB))$. Thus, locally around any $\bR$-Cartier divisor $D\in \mathcal C_i$,  we can assume that $D$ lies in the interior of a face whose dimension is minimal among all the faces of $\mathcal C_i$. This implies that, locally around any rational polytope $P$ which is the convex hull of effective$/T$ divisors such that $D\in P$, every canonical model for a divisor in $\mathcal C \cap P$ maps to the canonical model of $D$.
\medskip

For (2) (b), since $X$ is of Calabi-Yau type over $T$, there exists a divisor $\Delta$ such that $(X, \Delta)$ is a klt pair with $K_X + \Delta \sim_\bR 0/T$. Let $D \in \bS$ be an effective divisor, and let $P = \Conv(E_i \mid 1 \le i \le l)$ be a rational polytope with $E_i, 1 \leq i \leq l$ effective$/T$ $\bQ$-Cartier divisors such that $D \in P$. As the canonical model$/T$ of $D$ coincides with that of $\epsilon D$ for any $\epsilon > 0$, after rescaling $D$ and $E_i$, we may assume that each pair $(X, \Delta + E_i)$ has klt singularities. Note that the canonical model$/T$ for $D$ is the same as that for $K_X + \Delta + D$. On the other hand, by the assumption on the existence of good minimal models,
\[
R(X/T, K_X + \Delta + E_1, \dots, K_X + \Delta + E_l)
\]
is a finitely generated sheaf of $\Oo_T$-algebras by \cite[Theorem~8.10]{DHP13}. Hence, (2) (b) follows from (2) (a).

\medskip

For (3), it suffices to show that if $g: X \dto Z/T$ is the ample model$/T$ of $D$, then $g\circ h^{-1}: Y \dto Z/T$ is also the ample model$/T$ of $D_Y \coloneqq h_*D$. By Definition \ref{def: minimal model}, $Z$ is normal and projective$/T$, and there exists an ample$/T$ divisor $H$ on $Z$ with the following property. If $p\colon W\to X$ and $q\colon W\to Z$ resolve $g$, then $p$ is a birational contraction morphism and
\[
p^*D\sim_{\bR} q^*H+E/T,
\]
where $E\geq 0$, and every divisor $B\in |p^*D/T|_\bR$ satisfies $B\geq E$. Note that this property holds for $W$ if and only if it holds for any birational model higher than $W$. Hence, replacing $W$ by a higher model, we may assume that there is a birational morphism $r: W \to Y$ such that $h=r \circ p^{-1}$. As $h$ is isomorphic in codimension $1$ between $\bQ$-factorial varieties, there exist effective $p$-exceptional (and hence also $r$-exceptional) divisors $F_1, F_2$ such that
\[
p^*D + F_1 = r^*D_Y + F_2.
\] Hence, we have
\[
r^*D_Y \sim_\Rr q^*H+E+F_1-F_2/T.
\] As $-(E+F_1-F_2)$ is nef over $Y$ and $r_*(E+F_1-F_2)=r_* E \geq 0$, we have $E+F_1-F_2 \geq 0$ by the negativity lemma. Moreover, if $B' \in |r^*D_Y/T|_\bR$, then
\[
B'+F_2-F_1 \sim_\bR p^*D/T.
\] Thus we have $B'+F_2-F_1 = p^*(p_*(B'+F_2-F_1))$ by the negativity lemma. As $p_*(B'+F_2-F_1)=p_*B' \geq 0$, we have $(B'+F_2-F_1) \in |p^*D/T|_\bR$. Hence, we have $(B' + F_2 - F_1) \geq E$. This implies that $B' \geq E + F_1 - F_2$. Therefore, $g \circ h^{-1}$ is the ample model$/T$ of $D_Y$.
\end{proof}

As a direct corollary of Proposition~\ref{prop: condition (4)}, we obtain the following result. For the definition of relative Mori dream spaces, see \cite{Oht22}.

\begin{corollary}\label{cor: Mori dream space, CY are MKD space}
Let $X$ be a $\bQ$-factorial variety over $T$. Then $X/T$ is an MKD fiber space in each of the following cases:
\begin{enumerate}
\item $X/T$ is a relative Mori dream space (in particular, $X$ is of Fano type over $T$).
\item $X$ is of Calabi-Yau type over $T$ such that the Morrison-Kawamata cone conjecture holds for $X/T$, and every effective $\bR$-Cartier divisor on $X$ admits a good minimal model$/T$.
\end{enumerate}
\end{corollary}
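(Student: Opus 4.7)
The plan is to verify the four defining conditions of Definition~\ref{def: MKD spaces} in each of the two cases, using Proposition~\ref{prop: condition (4)} to handle the local factoriality condition~(4), and reducing condition~(3) to known structural results: finite generation of the Cox ring for relative Mori dream spaces, and the Morrison--Kawamata cone conjecture for Calabi--Yau type varieties. In both cases $\bQ$-factoriality, i.e.\ condition~(1), is built into the hypotheses.

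For case~(1), I would begin by recalling that a relative Mori dream space satisfies: $\Eff(X/T)$ is a rational polyhedral cone, $\Mov(X/T)$ is decomposed into finitely many Mori chambers $f_i^*(\Nef(X_i/T))$ coming from small $\bQ$-factorial modifications, and the Cox ring $R(X/T,\mathcal C)$ is a finitely generated sheaf of $\Oo_T$-algebras for any finitely generated subcone $\mathcal C \subset {\rm CDiv}(X)_\Rr$ generated by effective $\bQ$-Cartier divisors. The first of these immediately gives condition~(3) by taking $\Pi = \Mov(X/T)$, so that $\PsAut(X/T)\cdot \Pi = \Pi = \Mov(X/T)$ trivially. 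Finite generation of $R(X/T,\mathcal C)$ together with the standard MMP for relative Mori dream spaces yields condition~(2) (every effective $\bR$-divisor admits a good minimal model$/T$) and, via Proposition~\ref{prop: condition (4)}(2)(a), yields condition~(4).

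For case~(2), condition~(2) is part of the hypothesis. For condition~(4), since $X/T$ is of Calabi--Yau type and every effective $\bR$-Cartier divisor admits a good minimal model$/T$, Proposition~\ref{prop: condition (4)}(2)(b) directly gives the local factoriality of canonical models$/T$ for any subset of ${\rm CDiv}(X)_\Rr$, and in particular for $\Eff(X/T)$. For condition~(3), I invoke the Morrison--Kawamata cone conjecture in the form stated in the excerpt: under the existence of good minimal models (and, strictly speaking, in the non-degenerate case) the existence of a fundamental domain for $\bMov^e(X/T)$ under $\Gamma_B(X/T,\De)$ is equivalent to the existence of a rational polyhedral cone $\Pi \subset \Eff(X/T)$ with $\PsAut(X/T,\De)\cdot \Pi \supset \Mov(X/T)$. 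Combined with Proposition~\ref{prop: property of MKD from definition}(1)'s argument (an effective movable-class divisor admits a minimal model isomorphic in codimension~$1$ by Lemma~\ref{lem: movable give iso in codim 1}), we may in fact take $\Pi \subset \Mov(X/T)$ and obtain $\PsAut(X/T)\cdot \Pi = \Mov(X/T)$.

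The only genuinely delicate point is reconciling the various formulations of condition~(3): the cone conjecture is stated for $\bMov^e(X/T)$ with $\Gamma_B$, whereas Definition~\ref{def: MKD spaces}(3) asks for a cone inside $\Mov(X/T)$ under $\PsAut$. I expect this to be the main (mild) obstacle, and I would resolve it by applying the equivalence recalled immediately after the Morrison--Kawamata cone conjecture in the excerpt, using that good minimal models of divisors in $\bMov^e(X/T)$ are isomorphisms in codimension~$1$ (Lemma~\ref{lem: movable give iso in codim 1}), so that $\Mov(X/T) = \bMov^e(X/T)$ and the $\PsAut$-translates of any rational polyhedral $\Pi \subset \Mov(X/T)$ producing $\Mov(X/T)$ are obtained from the fundamental domain of the cone conjecture by intersecting with $\Mov(X/T)$.
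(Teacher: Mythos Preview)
Your approach is correct and matches the paper's, which simply states the result as a direct corollary of Proposition~\ref{prop: condition (4)} with no explicit proof. You have spelled out exactly the details the paper leaves implicit: conditions~(1) and~(2) are either hypotheses or standard structural facts, condition~(3) comes from the polyhedrality of $\Mov(X/T)$ in case~(1) and from the cone conjecture (together with $\Mov(X/T)=\bMov^e(X/T)$ via Lemma~\ref{lem: movable give iso in codim 1}) in case~(2), and condition~(4) is supplied by Proposition~\ref{prop: condition (4)}(2)(a) and~(2)(b) respectively.
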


\subsection{Shokurov polytopes of minimal models}\label{subsec: shokurov polytope}

As a cornerstone for later applications, we establish the following result concerning Shokurov polytopes (that is, the chamber decomposition of minimal models). Results of this type for ordinary log pairs were established in \cite{Sho96, Cho08, SC11}. The proof of Theorem \ref{thm: Shokurov poly for minimal models} combines \cite[Theorems~2.4 and~2.6]{LZ25} with a simplification of the argument given in \cite[Theorem~2.4]{HPX24}. The proof follows the ideas of \cite[Lemma~7.1]{BCHM10}.

We first prepare several auxiliary lemmas.

\begin{lemma}\label{lem: factor bir contraction}
Assume that $X/T$ is a $\bQ$-factorial variety such that every effective $\bQ$-Cartier divisor on $X/T$ admits a minimal model$/T$. If $f: X \dto Y/T$ is a birational contraction, then $f$ factors into a small $\bQ$-factorial modification $h: X \dto X'/T$ followed by a birational morphism $g: X' \to Y/T$.
\end{lemma}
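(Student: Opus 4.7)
The approach is to realize the factorization by choosing an effective movable $\bQ$-Cartier divisor $D$ on $X$, taking $h \colon X \dto X'$ to be a minimal model$/T$ of $D$, and then using Lemma~\ref{lem: common mm}~(2) to force $g \coloneqq f \circ h^{-1}$ to be a morphism.

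Concretely, I would fix a general ample $\bQ$-Cartier divisor $A$ on $Y/T$ and a common resolution $p \colon W \to X$, $q \colon W \to Y$ of $f$, and define $D \coloneqq p_*(q^*A)$. This is an effective Weil divisor on $X$ that is $\bQ$-Cartier by $\bQ$-factoriality of $X$, and satisfies $f_*D = A$. To see $[D] \in \bMov^e(X/T)$, observe that for $m \gg 0$ the system $|mA/T|$ is base-point-free on $Y$, so $q^*|mA/T|$ is base-point-free on $W$, and the pushforward $p_*(q^*|mA/T|) \subset |mD/T|$ has base locus contained in $p(\Exc(p))$, which has codimension $\geq 2$ in $X$. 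By the hypothesis, $D$ admits a minimal model$/T$ $h \colon X \dto X'$, and Lemma~\ref{lem: movable give iso in codim 1} forces $h$ to be an isomorphism in codimension $1$. Hence $h$ is a small $\bQ$-factorial modification$/T$, and $D' \coloneqq h_*D$ is nef$/T$ on $X'$.

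For the final step, I take a common resolution $p' \colon W' \to X'$, $q' \colon W' \to Y$ of $g$. Since $h$ is an isomorphism in codimension $1$ and $A$ is general (so neither $q^*A$ nor $q'^*A$ has support along exceptional divisors of $q$, $q'$ respectively), tracking coefficients of prime divisors under the bijection induced by $h$ yields the identification $D' = p'_*(q'^*A)$ on $X'$. Writing $G \coloneqq p'^*D' - q'^*A$, this gives $p'_*G = 0$ and $q'_*G = 0$, so $G$ is both $p'$- and $q'$-exceptional. Applying the negativity lemma to $-G$ (which is $p'$-nef because $A$ is nef and $p'^*D'$ is trivial on $p'$-contracted curves) shows $G \geq 0$. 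The relation $p'^*D' + 0 \equiv q'^*A + G / T$ with $E = 0$ an effective $p'$-exceptional divisor and $F = G$ an effective $q'$-exceptional divisor then fits the setup of Lemma~\ref{lem: common mm}~(2); the ampleness of $A$ yields that $g = q' \circ (p')^{-1}$ is a morphism, completing the factorization $f = g \circ h$. The main obstacle is this last step: verifying that the defect divisor $G$ is simultaneously effective and exceptional for both birational morphisms from $W'$. This rests on the identification $D' = p'_*(q'^*A)$ (coming from $h$ being small together with the general choice of $A$) and the negativity lemma, and is the step most sensitive to the minimal-model hypothesis.
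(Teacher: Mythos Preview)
Your proposal is correct and follows essentially the same approach as the paper: take the strict transform $D$ of an ample divisor $A$ on $Y$, run a minimal model $h:X\dashrightarrow X'$ of $D$ (which is small since $D$ is movable), and then invoke Lemma~\ref{lem: common mm}~(2) to conclude that $g=f\circ h^{-1}$ is a morphism. The only implementation differences are that the paper works on a single common resolution $W\to X,X',Y$ (noting that $r$-exceptional $\Rightarrow p$-exceptional $\Rightarrow q$-exceptional because $h$ is small and $f$ is a contraction) and proves the smallness of $h$ inline, whereas you take a fresh resolution $W'\to X',Y$, cite Lemma~\ref{lem: movable give iso in codim 1}, and verify the effectivity of $G$ explicitly via the negativity lemma; both routes are valid and yield the same conclusion.
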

\begin{proof}
Let $A$ be an ample$/T$ divisor on $Y$ and let $A_X$ be the strict transform of $A$ on $X$. Then some positive multiple of $A_X$ is a movable divisor over $T$. By assumption, let $h: X \dto X'/T$ be a minimal model$/T$ of $A_X$ such that $X'$ is $\bQ$-factorial. Then $h$ is isomorphic in codimension $1$ by Lemma \ref{lem: movable give iso in codim 1}. 

Let $p: W \to X, q: W \to Y, r: W \to X'$ be projective birational morphisms such that $h=r \circ p^{-1}$ and $f=q \circ p^{-1}$. Since $A_{X'} = r_*(q^*A)$, we have
\[
r^*A_{X'} = q^*A + \tilde E,
\]
where $\tilde E$ is $r$-exceptional (and hence also $q$-exceptional). Since $A_{X'}$ is nef$/T$, $g\coloneqq q \circ r^{-1}: X' \dto Y$ is a morphism by Lemma \ref{lem: common mm}~(2). Hence, we have $f= g \circ h$.
\end{proof}

\begin{lemma}\label{lem: birational model preserves good minimal model and local factoriality}
Let $X/T$ be a $\Qq$-factorial variety and  $P \subset \Eff(X/T)$ be a rational polyhedral cone. Assume that
\begin{enumerate}
\item every effective $\bR$-Cartier divisor in $P$ admits a good minimal model over $T$, and
\item $P$ satisfies the local factoriality of canonical models$/T$.
\end{enumerate}
Let $f: X \dto Y/T$ be a good minimal model of an effective $\bR$-Cartier divisor $D$ with $[D] \in P$. Then, after shrinking $P$ around $[D]$, the properties (1) and (2) above are preserved for $P_Y \coloneqq f_*P \subset \Eff(Y/T)$.
\end{lemma}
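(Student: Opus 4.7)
My plan is to transfer both properties from $P$ to $P_Y$ by analyzing the behavior of the good minimal model $f$ under small perturbations of the divisor class, in the spirit of \cite[Lemma~7.1]{BCHM10}.

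The first step is to fix a common resolution $p\colon W\to X$, $q\colon W\to Y$ of $f$, and record the good-minimal-model identity $p^*D = q^*D_Y + E$, where $D_Y = f_*D$ is semi-ample$/T$ and $E \geq 0$ is $q$-exceptional with support containing the strict transforms of all $f$-exceptional divisors. I then shrink $P$ around $[D]$ in two stages. By hypothesis (2) applied on $X$, the first shrinking ensures that for every $[B]\in P$, the canonical model morphism $Z_B\to Z_D$ exists. The second shrinking exploits the linearity of $B\mapsto E_B := p^*B - q^*f_*B$ and the strict positivity of the coefficients of $E$ on the strict transforms of $f$-exceptional divisors to arrange $E_B \geq 0$ with the same support condition. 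In particular, $f$ remains $B$-negative for every $B$ with $[B]\in P$, and therefore the pushforward $f_*$ identifies the relative section rings of $B$ on $X$ and $f_*B$ on $Y$.

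For property (1) on $P_Y$, given effective $B'$ on $Y$ with $[B']\in f_*P$, I lift it to an effective $B$ on $X$ with $[B]\in P$ and $f_*B = B'$, and invoke (1) on $X$ to obtain a good minimal model $g_B\colon X\dashrightarrow W_B/T$. The candidate good minimal model of $B'$ is $\psi_B := g_B\circ f^{-1}\colon Y\dashrightarrow W_B$. Once we know $\psi_B$ is a birational contraction with $(\psi_B)_*B' = g_{B*}B = B_{W_B}$, semi-ampleness of $B_{W_B}$ is automatic from the good-minimal-model property of $g_B$, and $B'$-negativity of $\psi_B$ follows by combining the defining equations for $f$ and $g_B$ on a common resolution of both. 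For property (2) on $P_Y$, the identification of section rings from the second shrinking guarantees that the canonical model of $B'$ on $Y$ coincides with $Z_B$ on $X$, so local factoriality of $P_Y$ near $[D_Y]$ descends directly from local factoriality of $P$ near $[D]$: given a rational polytope $Q$ on $Y$ with vertices effective divisors and $[D_Y]\in Q$, pull it back to a rational polytope $Q'$ on $X$ via strict transforms, and apply (2) of $P$ to $Q'$ at $[D]$.

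The main obstacle is verifying that $\psi_B = g_B\circ f^{-1}$ is a birational contraction, which amounts to the inclusion $\Exc(f)\subseteq \Exc(g_B)$, possibly after replacing $g_B$ by another good minimal model of $B$. In the MKD setting we lack the Cone Theorem, so we cannot argue via extremal rays that every $B$-MMP first contracts the $f$-exceptional divisors. The plan is to derive the containment from the fact that every $f$-exceptional divisor appears with strictly positive coefficient in $E_B$ (by the second shrinking), and must therefore appear in the negative part of the Nakayama-Zariski decomposition$/T$ of $p^*B$; the good-minimal-model hypothesis for $B$ then forces any such divisor to be contracted by $g_B$, in the same spirit as Lemma~\ref{lem: iso in codim 1 preserves mm}. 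If a direct choice of $g_B$ does not suffice, one modifies it by a sequence of flips or a small $\bQ$-factorial modification to enforce the containment while preserving the good-minimal-model property, relying only on (1) applied to $B$ itself.
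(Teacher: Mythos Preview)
Your approach matches the paper's: establish $\Exc(f) \subseteq \Exc(g_B)$ via a Nakayama--Zariski argument so that $\tau = g_B \circ f^{-1}$ is a good minimal model of $f_*B$, whence the canonical models of $B$ on $X$ and $f_*B$ on $Y$ agree and local factoriality descends.

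The one claim that fails as stated is that shrinking $P$ forces $E_B = p^*B - q^*(f_*B) \geq 0$: a prime divisor on $W$ that is both $p$- and $q$-exceptional may have coefficient zero in $E_D$, and then its coefficient in $E_B$ changes sign under a linear perturbation of $B$. What you actually obtain after shrinking is $p^*B + E(B)^- = q^*B_Y + E(B)^+$ with $E(B)^\pm \geq 0$ sharing no components, $E(B)^-$ $p$-exceptional (hence also $r$-exceptional), and $\Supp E(B)^+$ containing the strict transforms of all $f$-exceptional divisors; this is exactly how the paper sets it up. With this refinement your NZ step applies to $p^*B + E(B)^-$ rather than $p^*B$ and yields $\Supp E(B)^+ \subset \Supp(F + E(B)^-)$, hence $\subset \Supp F$; the section-ring identification still holds since adding the $p$-exceptional effective divisor $E(B)^-$ does not change global sections; and the $B'$-negativity of $\tau$ follows from the negativity lemma over $Y$ just as you outline. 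No modification of $g_B$ by further flips is needed.
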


\begin{proof}
After shrinking $P$ around $[D]$, we claim that if $g: X \dto X'/T$ is a good minimal model$/T$ of $B$ with $[B]\in P$, then the natural map $\tau=g\circ f^{-1}: Y \dto X'/T$ is a good minimal model$/T$ of $B_Y\coloneqq f_*B$. 

Let $p: W \to X, q: W \to Y$ be projective birational morphisms such that $f=q \circ p^{-1}$. By assumption, we have 
\[
p^*D=q^*D_Y+E,
\] where $D_Y=f_*D$ and $E \geq 0$ is a $q$-exceptional divisor containing the divisors contracted by $f$. After shrinking $P$ around $[D]$, we may assume that for each $[B] \in P$, we have
\[
p^*B + E(B)^- = q^*B_Y + E(B)^+,
\]
where $E(B)^- \geq 0$ is a $p$-exceptional divisor and $E(B)^+ \geq 0$ is a $q$-exceptional divisor such that $E(B)^-$ and $E(B)^+$ have no common irreducible components, and $\Supp E(B)^+$ contains all divisors contracted by $f$. Replacing $W$ by a higher model, we can assume that there exists a birational morphism $r: W \to X'$ such that $g=r\circ p^{-1}$.

We claim that $g: X \dto X'/T$ contracts $\Exc(f)$. As $g$ is a good minimal model$/T$ of $B$, we have
\[
p^*B=q^*B_Y+E(B)^+-E(B)^-=r^*B_{X'}+F,
\] where $B_{X'}=g_*B$ and $F \geq 0$ is an $r$-exceptional divisor which contains the divisors contracted by $g$. As $B_{X'}$ is nef$/T$ and $E(B)^-$ is also $r$-exceptional, we see that $F+E(B)^-$ is the negative part of the Nakayama-Zariski decomposition$/T$ of 
\[
p^*B+E(B)^-=q^*B_Y+E(B)^+=r^*B_{X'}+F+E(B)^-.
\] Therefore, we have $\Supp (E(B)^+) \subset \Supp (F+E(B)^-)$. This implies
\[
\Supp(E(B)^+) \subset \Supp F,
\]
since $E(B)^-$ and $E(B)^+$ have no common irreducible components. As $F$ is $r$-exceptional, we see that $\Supp(\Exc(f)) \subset \Supp (E(B)^+)$ is also $r$-exceptional. Therefore, $g$ contracts $\Exc(f)$. As
\[
q_*\left(F-(E(B)^+-E(B)^-)\right)=q_*F \geq 0, 
\] we have $F-(E(B)^+-E(B)^-) \geq 0$ by the negativity lemma. As $\Supp F$ contains the divisor contracted by $\tau$ and $(E(B)^+-E(B)^-)$ is $q$-exceptional, we see that $F -(E(B)^+-E(B)^-)$ still contains the divisor contracted by $\tau$. This shows that $\tau$ is a good minimal model of $B_Y$.

By the above discussion, if $f_B: X \dto X' \to Z_B$ is the canonical model of $B$ over $T$, then 
\[
f_B \circ \tau: Y \dto X' \to Z_B
\] is the canonical model of $B_Y$ over $T$. Thus, the local factoriality of canonical models$/T$ holds for $P_Y$ as the local factoriality of canonical models$/T$ holds for $P$.
\end{proof}

The following lemma upgrades Lemma~\ref{lem: birational model preserves good minimal model and local factoriality} from local factoriality over $T$ to local factoriality over the canonical model of $D$.

\begin{lemma}\label{lem: inductive step}
Let $X/T$ be a $\Qq$-factorial variety and  $P \subset \Eff(X/T)$ be a rational polyhedral cone. Assume that
\begin{enumerate}
\item every effective $\bR$-Cartier divisor in $P$ admits a good minimal model over $T$, and
\item $P$ satisfies the local factoriality of canonical models$/T$.
\end{enumerate}
Let $f: X \dto Y/T$ be a good minimal model for an effective $\bR$-Cartier divisor $D$ with $[D] \in P$. If $Y \to Z/T$ is the contraction morphism induced by $f_*D$, then, after shrinking $P_Y \coloneqq f_*P$ around $[f_*D]$,
\begin{enumerate}
\item every effective $\bR$-Cartier divisor $B$ with $[B]\in P_Y$ admits a good minimal model over $Z$, and
\item $P_Y$ satisfies the local factoriality of canonical models$/Z$.
\end{enumerate}
\end{lemma}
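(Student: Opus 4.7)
The plan is to transfer the hypotheses from the base $T$ to the base $Z$, exploiting the fact that since $\phi \colon Y \to Z$ is the contraction induced by the semi-ample divisor $D_Y = f_*D$, we have $D_Y \sim_\bR \phi^* H/T$ for some ample $\bR$-Cartier divisor $H$ on $Z$, and in particular $D_Y \sim_\bR 0/Z$. By Lemma~\ref{lem: birational model preserves good minimal model and local factoriality}, after shrinking $P_Y$ around $[D_Y]$ I may assume that every effective $\bR$-Cartier divisor $B$ with $[B] \in P_Y$ admits a good minimal model over $T$ and that $P_Y$ satisfies the local factoriality of canonical models over $T$. Since the canonical model of $D_Y$ over $T$ is precisely $\phi \colon Y \to Z$, applying local factoriality at $[D_Y]$ to a rational polytopal cross-section of $P_Y$ and shrinking $P_Y$ further, I may additionally arrange that for every $[B] \in P_Y$ the canonical model $Y \dashrightarrow Z_B/T$ of $B$ comes equipped with a morphism $Z_B \to Z$ through which $\phi$ factors.

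For (1), given $[B] \in P_Y$, let $g \colon Y \dashrightarrow W/T$ be a good minimal model of $B$ over $T$. Then $g$ is a birational contraction which is $B$-negative (a property independent of the base), and $B_W \coloneqq g_*B$ is semi-ample over $T$. Its ample model $W \to Z_B$ factors through $Z$ by the arrangement above, so $B_W$ is the pullback of an ample $\bR$-divisor from a scheme over $Z$, hence it is semi-ample over $Z$. Therefore $g$ is a good minimal model of $B$ over $Z$.

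For (2), let $P' = \Conv([E_i])$ be a rational polytope with each $E_i$ an effective-over-$Z$ $\bQ$-Cartier divisor and $[B] \in P'$. Writing $E_i \sim_\bR B_i + \phi^* N_i$ with $B_i \geq 0$ and $N_i$ an $\bR$-Cartier divisor on $Z$, and choosing $\lambda > 0$ large enough (uniformly in $i$) that every $N_i + \lambda H$ is ample on $Z$, each $E_i + \lambda D_Y$ is $\bR$-linearly equivalent to an effective divisor, so $\tilde P' \coloneqq P' + \lambda[D_Y]$ is a rational polytope of effective-over-$T$ divisor classes containing $[B + \lambda D_Y]$. Since $P_Y$ is a convex cone containing both $[B]$ and $[D_Y]$, we have $[B + \lambda D_Y] \in P_Y$; enlarging $\lambda$ if necessary, I may further assume $[B + \lambda D_Y]$ lies in the shrunken $P_Y$. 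Applying local factoriality over $T$ at $[B + \lambda D_Y]$ to the polytope $\tilde P'$ produces a neighborhood $\tilde U$ of $[B + \lambda D_Y]$ in $\tilde P'$ in which all canonical models over $T$ map to that of $B + \lambda D_Y$. Translating back, $U \coloneqq \tilde U - \lambda[D_Y] \subset P'$ is the desired neighborhood of $[B]$: for $[B'] \in U$, the canonical model of $B'$ over $Z$ agrees with that of $B' + \lambda D_Y$ over $Z$ (since $\lambda D_Y \sim_\bR 0/Z$), which equals its canonical model over $T$ (as the latter factors through $Z$), and so the compatibility over $T$ yields compatibility over $Z$.

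The main obstacle will be the careful bookkeeping of successive shrinkings so that simultaneously (i) good minimal models over $T$ exist and their canonical models factor through $\phi$ on all of $P_Y$, and (ii) for each $[B] \in P_Y$ and each polytope $P'$ considered, the translates $[B' + \lambda D_Y]$ for $[B']$ near $[B]$ can be kept inside the shrunken $P_Y$ so that local factoriality over $T$ applies; beyond this, the argument is a direct consequence of Lemma~\ref{lem: birational model preserves good minimal model and local factoriality} together with the relation $D_Y \sim_\bR 0/Z$.
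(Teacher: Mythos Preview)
Your argument for part~(1) is correct and essentially matches the paper's: once canonical models over $T$ factor through $Z$, a good minimal model over $T$ is automatically a good minimal model over $Z$.

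For part~(2), however, your translation trick is both more complicated than necessary and has genuine gaps. First, the translated polytope $\tilde P' = P' + \lambda[D_Y]$ need not be a \emph{rational} polytope: $D$ is only an $\bR$-divisor, so $[D_Y]$ may be an irrational point of $N^1(Y/T)$, and then $\tilde P'$ cannot serve as a test polytope in the definition of local factoriality over $T$. Second, your claim that ``enlarging $\lambda$ if necessary, $[B+\lambda D_Y]$ lies in the shrunken $P_Y$'' fails as written: shrinking produces a bounded polytope around $[D_Y]$, and $[B+\lambda D_Y]$ escapes to infinity as $\lambda\to\infty$. (This could be repaired by rescaling, since canonical models are scale-invariant, but you do not do so.)

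The paper avoids all of this by a much simpler observation. After shrinking, $P_Y$ is itself a rational polytope whose vertices are effective-over-$T$ $\bQ$-Cartier classes. So for $[B]\in P_Y$ one may apply local factoriality over $T$ \emph{with $P_Y$ as the test polytope}, obtaining an open set $U=P_Y\cap V$ (for some Euclidean open $V$) on which canonical models over $T$ are compatible with that of $B$. Now given \emph{any} rational polytope $P''$ with effective-over-$Z$ vertices containing $[B]$, set $U''\coloneqq P''\cap V$: then $P_Y\cap U''\subset P_Y\cap V=U$, and since for divisors in the shrunken $P_Y$ the canonical model over $T$ coincides with the canonical model over $Z$ (both being $Y\dashrightarrow Z_{B'}$, as $Z_{B'}\to Z$ exists and ample-over-$T$ implies ample-over-$Z$), the required compatibility over $Z$ follows immediately. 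No translation is needed, and the effective-over-$Z$ versus effective-over-$T$ discrepancy never has to be confronted directly.
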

\begin{proof}
By Lemma~\ref{lem: birational model preserves good minimal model and local factoriality}, since $P$ is a polytope, the local factoriality of canonical models$/T$ implies that, after shrinking $P$ around $[D]$, the canonical model$/T$ of any effective $\bR$-Cartier divisor $B$ with $[B]\in P$ maps to $Z/T$. Hence, after this shrinking, $P$ satisfies the local factoriality of canonical models$/Z$.

By the proof of Lemma \ref{lem: birational model preserves good minimal model and local factoriality}, after shrinking $P$ around $[D]$, if $g: X \dto X'/T$ is a good minimal model of $B$ over $T$, then the natural map $\tau=g\circ f^{-1}: Y \dto X'/T$ is a good minimal model of $B_Y$ over $T$. Therefore, $\tau$ is also a good minimal model of $B_Y$ over $Z$. Moreover, $f_*P$ satisfies the local factoriality of canonical models$/Z$ as $P$ satisfies the local factoriality of canonical models$/Z$.
\end{proof}

\begin{remark}
Compared with Lemma \ref{lem: birational model preserves good minimal model and local factoriality}, in the conclusion (2) of Lemma \ref{lem: inductive step}, the local factoriality of canonical models is over $Z$ instead of $T$.
\end{remark}

With the above preparations, we are now ready to prove Theorem~\ref{thm: Shokurov poly for minimal models}.

\begin{proof}[Proof of Theorem \ref{thm: Shokurov poly for minimal models}]
Let $P\subset {\rm CDiv}(X)_\Rr$ be a rational polytope such that $\Cone(P)=\mathcal C$. It suffices to show that $P$ can be written as a disjoint union of finitely many relatively open rational polytopes
\[
P = \bigsqcup_{i=0}^{m} P_i,
\]
such that for any effective $\bR$-Cartier divisors $B, D \in P_i$, whenever $X \dto Y/T$ is a weak minimal model of $B$, it is also a weak minimal model of $D$. Indeed, the desired decomposition can be taken as $\mathcal C_i = \Cone(P_i)\setminus \{0\}$ for $1 \leq i \leq m$, together with an extra cone $\mathcal C_m = \{0\}$.

We proceed by induction on the dimension of $P$. The statement trivially holds if $\dim P=0$. Hence, we can assume that it holds for any polytope of dimension $\leq d$. Assume that we have $\dim P=d+1$.

\medskip

\noindent Step 1. If there exists some $\De_0 \in P$ such that $\De_0 \equiv 0/T$, then we show the claim. Let $P'$ be the union of facets of $P$. By the induction hypothesis, $P'= \sqcup_j \ti Q^\circ_j$ is a finite union such that each $\ti Q^\circ_j$ is a relatively open rational polytope, and for $B, D \in \ti Q^\circ_j$, if $X \dto Y/T$ is a weak log canonical model of $B$, then it is also a weak log canonical model of $D$. Note that for any facet $F$ of $P$, each $\ti Q_j^\circ$ either lies entirely in $F$ or is disjoint from $F$. For  $t, t'\in(0,1]$, we have
\[
tB+(1-t)\De_0 \equiv tB/T,\quad t'D+(1-t')\De_0 \equiv t'D/T.
\] Hence, $X \dto Y/T$ is a weak log canonical model of $tB+(1-t)\De_{0}$ if and only if it is a weak log canonical model of $B$ if and only if it is a weak log canonical model of $D$  if and only if it is a weak log canonical model of $t'D+(1-t')\De_{0}$. Therefore, if $\De_0 \in \Int(P)$, then the decomposition
\[
P=\left(\bigsqcup_j \ti Q_j^\circ\right) \bigsqcup \left(\bigsqcup_j \Int(\Conv(\ti Q_j^\circ, \De_0))\right) \bigsqcup \{\De_0\}
\] satisfies the claim. If $\De_0$ lies on the boundary of $P$, define
\[
P'' \coloneqq \bigcup_{\substack{\De_0\in F \\ F\subset P \text{~is a facet}}} F
\]
to be the union of facets of $P$ that contain $\De_0$.  Then the decomposition
\[
P=\left(\bigsqcup_{j} \ti Q_j^\circ\right) \bigsqcup \left(\bigsqcup_{\ti Q_j^\circ \not\subset P''} \Int(\Conv(\ti Q_j^\circ, \De_0))\right)
\] satisfies the claim.

\medskip

 \noindent Step 2. We show the general case. By the compactness of $P$, it suffices to show the result locally around any point $\De_0\in P$. 

Let $h: X \dto X'/T$ be a good minimal model of $\De_0$. Then there exist a contraction $\pi: X' \to Z'/T$ and an ample$/T$ $\Rr$-Cartier divisor $A$ on $Z'$ such that $\De_0' \sim_\Rr \pi^*A$, where $\De_0'$ is the strict transform of $\De_0$. In particular, we have $\De_0' \equiv 0/Z'$. 

Let $P'$ be the image of $P$ under the natural map ${\rm CDiv}(X)_\Rr \to {\rm CDiv}(X')_\Rr$. By Lemma \ref{lem: inductive step}, after shrinking $P'$, we can assume that every effective $\bR$-Cartier divisor $B\in P'$ admits a good minimal model over $Z'$, and
$P'$ satisfies the local factoriality of canonical models over $Z'$. By Step~1, there exists a finite disjoint union of relatively open rational polytopes
\[
P'=\bigsqcup_j P'_j
\]
such that the following property holds. For any effective $\bR$-Cartier divisors $B$ and $D$ with $[B],[D]\in P'_j$, if $X'\dto Y/Z'$ is a weak minimal model$/Z'$ of $B$, then it is also a weak minimal model$/Z'$ of $D$. We emphasize that the weak minimal model is over $Z'$ instead of $T$. We claim that they are also weak minimal models over $T$ after shrinking $P'$. It suffices to work with a fixed $P_j'$ containing $\De_0'$. 

Let $B$ be an effective $\bR$-Cartier divisor with $B \in P_j'$. Suppose that $h': X' \dto Y/Z'$ is a weak minimal model$/Z'$ of $B$. Let $\mu: Y \to Z_{B}$ be the canonical model$/Z'$ which is induced by the semi-ample divisor $h'_*{B}$. Thus, there exists an ample$/Z'$ $\bR$-Cartier divisor $H$ on $Z_{B}$ such that $h_*'B\sim_\bR \mu^*H$. 
Let $\theta: Z_{B} \to Z'$ be the natural morphism such that $\pi=\theta \circ \mu \circ h'$.
For $\lambda \in [0,1]$, we have
\[
h_*'(\lambda \De_0 + (1-\lambda) B)\sim_\Rr \mu^*(\lambda \theta^*A+(1-\lambda) H).
\] As $A$ is ample$/T$ and $H$ is ample$/Z'$, there exists a positive rational number $r<1$ such that whenever $\lambda \in [r,1]$, the divisor $\lambda \theta^*A+(1-\lambda) H$ is nef$/T$. That is, $h': X' \dto Y$ is also a weak minimal model of $\lambda \De_0 + (1-\lambda) B'$ over $T$. By Lemma \ref{lem: common mm} (1), not only $h'$, but also any weak minimal model of $\lambda \De_0 + (1-\lambda) B'$ over $Z'$ is a weak minimal model of $\lambda \De_0 + (1-\lambda) B'$ over $T$.

Take $B'$ to be a vertex of $\bar P_j'$ (here $\bar P_j'$ denotes the closure of $P_j'$), and replace $B'$ by $r\De_0 + (1-r)B'$ as above. Replace $P_j'$ by the relatively open polytope generated by these $B'$. The above argument shows that for any divisor $\Theta \in P_j'$, $h_*'\Theta$ is nef over $T$. Hence, $h': X' \dto Y$ is also a weak minimal model of $\Theta$ over $T$. By Lemma \ref{lem: common mm} (1) again, not only $h'$, but also any weak minimal model of $\Theta$ over $Z'$ is also a weak minimal model of $\Theta$ over $T$. This completes the inductive step.
\end{proof}

Theorem~\ref{thm: Shokurov poly for nef}, which gives the chamber structure for nef cones, is a consequence of Theorem~\ref{thm: Shokurov poly for minimal models}.

\begin{proof}[Proof of Theorem \ref{thm: Shokurov poly for nef}]
    Let $\mathcal P = \sqcup_{i=0}^m \mathcal P_i$ be a decomposition into finitely many relatively open rational polyhedral cones satisfying Theorem \ref{thm: Shokurov poly for minimal models}. We claim that if there exists a nef$/T$ divisor $D$ such that $D \in \mathcal{P}_i$, then $\bar{\mathcal{P}_i} \subset \mathcal{N}_{\mathcal P}$, where $\bar{\mathcal{P}_i}$ is the closure of $\mathcal P_i$. 
    
   By definition, ${\rm Id}: X \to X/T$ is a minimal model$/T$ of $D$. Hence, we have $\mathcal P_i \subset \mathcal{N}_{\mathcal P}$ by the property of $\mathcal P_i$ in Theorem \ref{thm: Shokurov poly for minimal models}. This also implies that $\bar{\mathcal{P}_i} \subset \mathcal N_P$. Therefore, we have
    \[
     \mathcal N_P=\Cone \{\bar{\mathcal{P}_i} \mid {\mathcal{P}_i} \text{~contains a nef$/T$ divisor}\},
    \] which is a rational polyhedral cone.
\end{proof}

\subsection{Fundamental domains for nef cones and effective cones}\label{subsec: cone conj for nef and eff cones}

For varieties of Calabi-Yau type, assuming the good minimal model conjecture, the movable cone conjecture implies the nef cone conjecture, and the effective cone conjecture is equivalent to the movable cone conjecture. The former implication was established independently in \cite{Xu24} and \cite{GLSW26}, while the latter equivalence was proved in \cite{GLSW26}. We extend these results to MKD fiber spaces.

The following can be proved by a similar argument as \cite[Proposition 3.8]{Li26}.

\begin{proposition}\label{prop: max vector defined over Q}
Suppose that $E$ and $M$ are the maximal linear subspaces contained in $\bEff(X/T)$ and $\bMov(X/T)$, respectively. Then the following hold.
\begin{enumerate}
\item $E$ is defined over $\Qq$.
\item Assume that $X$ is $\bQ$-factorial and every effective $\bR$-Cartier divisor admits a good minimal model$/T$. Then $M$ is defined over $\Qq$.
\end{enumerate}
\end{proposition}
\begin{proof}
Let $X \to S/T$ be the Stein factorization of $X \to T$. Then we have the natural map
\[
N^1(X/S) \to N^1(X/T), \quad [D] \mapsto [D]
\] which is an isomorphism as $S \to T$ is a finite map. Moreover, under this map, we have $\Eff(X/S) \simeq \Eff(X/T)$ and $\Mov(X/S) \simeq \Mov(X/T)$. Hence, replacing $X \to T$ by its Stein factorization, we can assume that $X \to T$ is a fibration. 

For (1), by Proposition \ref{prop: Generic property} (1), there exists a natural map 
\[
\iota_{\bar\eta}: N^1(X/T) \to N^1(X_{\bar\eta}), \quad [D] \mapsto [D_{\bar\eta}],
\] where $\bar\eta$ is the geometric generic point of $T$. We claim that $\Ker (\iota_{\bar\eta}) = E$. Hence, $E$ is defined over $\Qq$ as $\iota_{\bar\eta}$ is defined over $\Qq$. Indeed, for $[D] \in \Ker (\iota_{\bar\eta})$, we have $D_{\bar\eta} \equiv 0$ on $X_{\bar\eta}$, hence $D_{\bar\eta}+tA_{\bar\eta}$ is big for any ample$/T$ divisor $A$ on $X$ and $t \in \Rr_{>0}$. Thus, $D + tA$ is also big over $T$. Taking $t \to 0$, we obtain $[D] \in \bEff(X/T)$. For the same reason, $[-D] \in \bEff(X/T)$. Hence, we have $[D] \in E$. Conversely, if $[D] \in E$, then $\pm [D_{\bar\eta}] \in \bEff(X_{\eta})$. Elements in $\bEff(X_{\bar\eta})$ intersect with movable curves non-negatively (see \cite{BDPP13}). As movable curves form a full dimensional cone in $N_1(X_{\bar\eta})$, we have $D_{\bar\eta} \equiv 0$.

\medskip

For (2), suppose that $X'/T$ is $\Qq$-factorial and $X \dto X'/T$ is isomorphic in codimension $1$. If $D$ is a divisor on $X$, then let $D'$ be the strict transform of $D$ on $X'$. If $C$ is an irreducible curve on $X'$, then we say that its class covers a divisor if the Zariski closure 
\[
\overline{\bigcup_{[C']=[C] \in N_1(X'/T)} C'}
\] in $X'$  has codimension $\leq 1$. 

We claim that 
\begin{equation}\label{eq: 1}
\begin{split}
M = E ~\cap ~&\{[D] \mid D'\cdot C=0, \text{~where~} X \dto X'/T \text{~is isomorphic in codimension~}1, \\
&\qquad X' \text{~is~} \Qq\text{-factorial, and~} C \text{~is a curve whose class covers a divisor}\}.
\end{split}
\end{equation}

For ``$\subset$'', if $[D] \in M$, then $\pm [D'] \in \bMov(X'/T)$, and thus $\pm D' \cdot C \geq 0$ for any curve $C$ whose class covers a divisor in $X'$. Hence, we have $D' \cdot C=0$.

For ``$\supset$'', let $[D]$ belong to the right-hand side of \eqref{eq: 1}. As $[D] \in E$ is an $\bR$-Cartier divisor, $[D+A]\in \Eff(X/T)$ for any ample$/T$ divisor $A$. Let $B= D+A$. By assumption, let $f: X \dto Y/T$ be a good minimal model of $B$. By Lemma \ref{lem: factor bir contraction}, there exists a small $\bQ$-factorial modification $h: X \dto X'$ and a contraction morphism $g: X' \to Y$ such that $f=g \circ h$. 

We claim that $f$ is isomorphic in codimension $1$. Otherwise, we can assume that $g$ is a non-identical divisorial contraction. Let $p: W \to X, q: W \to X', r: W \to Y$ be projective birational morphisms such that $f=r\circ p^{-1}, g=r \circ q^{-1}, h=q\circ p^{-1}$. Then we have
\[
p^*B=r^*B_Y+E,
\] where $B_Y$ is the strict transform of $B$ on $Y$ and $E \geq 0$ is an $r$-exceptional divisor whose support contains $\Exc(f)$. Moreover, we have $p^*B=q^*B'+F$, where $B'$ is the strict transform of $B$ on $X'$ and $F$ is $q$-exceptional. Thus, we have $q^*B'+F=r^*B_Y+E$. Pushing forward by $q_*$, we obtain
\[
B' = g^*B_Y + q_*E,
\] where $p_*E > 0$ is a $g$-exceptional divisor which contains $\Exc(g)>0$. Hence, there exists a family of curves $l$, contracted by $g$ and covering an irreducible component of $\Supp(p_*E)$, such that $(q_*E)\cdot l < 0$. This implies that $B' \cdot l<0$, which contradicts the choice of $B$. Indeed, if $A'$ is the strict transform of $A$ on $X'$, then we have $A' \in \Mov(X'/T)$. Hence, we have $A' \cdot l \geq 0$. Since $B' = D' + A'$, we obtain $B'\cdot l = D' \cdot l + A' \cdot l \geq 0$.

 Therefore, $f$ is isomorphic in codimension $1$ and thus $[B]=[D+A] \in \Mov(X/T)$. By the arbitrariness of $A$, we have $[D]=\lim_{\ep \to 0^+} [D+\ep A] \in \bMov(X/T)$. The same argument shows that $[-D]\in \bMov(X/T)$. This implies that $[D]\in M$. 

Because $N^1(X/T) \to N^1(X'/T)$ is a linear map defined over $\Qq$, and the curve class $[C]$ in $N_1(X'/T)$ is a rational point, the vector space
\[
\begin{split}
&\{[D] \mid D'\cdot C=0, \text{~where~} X \dto X'/T \text{~is isomorphic in codimension~}1, \\
&\qquad X' \text{~is~} \Qq\text{-factorial, and~} C \text{~is a curve whose class covers a divisor}\}
\end{split}
\] is defined over $\Qq$. Since $E$ is defined over $\Qq$, $M$ is also defined over $\Qq$ by \eqref{eq: 1}.
\end{proof}

\begin{lemma}\label{lem: small Q-fact is MKD}
Let $X/T$ be an MKD fiber space. If $h: X \dto Y/T$ is a small $\bQ$-factorial modification, then $Y/T$ is still an MKD fiber space.
\end{lemma}
\begin{proof}
By assumption, $Y/T$ satisfies Definition \ref{def: MKD spaces} (1). To see that $Y/T$ satisfies Definition~\ref{def: MKD spaces}~(3), it suffices to observe that $h$ naturally identifies $\Mov(X/T)$ with $\Mov(Y/T)$, $\PsAut(X/T)$ with $\PsAut(Y/T)$, and the corresponding group actions. Definition \ref{def: MKD spaces} (2) follows from Lemma \ref{lem: iso in codim 1 preserves mm} and Definition \ref{def: MKD spaces} (4) follows from Proposition \ref{prop: condition (4)} (3).
\end{proof}

We are now ready to establish the equivalence between the existence of weak rational polyhedral fundamental domains for the movable, nef, and effective cones. The proof below follows the line of \cite[Theorem 1.5]{GLSW26} and \cite[Lemma 5.2 (1)]{LZ25}, with suitable adaptations to the MKD space setting.

\begin{proof}[Proof of Theorem \ref{thm: 3 equivalences}]
The implication $(1) \Rightarrow (2)$ can be proved essentially in the same way as in \cite[Theorem~14]{Xu24} (see also \cite[Theorem~1.5]{GLSW26}). 
Assume that $\Pi \subset \Mov(X/T)$ is a rational polyhedral cone such that $\Gamma_B(X/T) \cdot \Pi = \Mov(X/T)$. 
Note that in this step we only use the local factoriality of canonical models$/T$ for $\Pi$, instead of for $\Eff(X/T)$. This observation will be used in the proof of Corollary \ref{cor: local factoriality for Pi} below.

By Lemma~\ref{lem: small Q-fact is MKD} and Proposition \ref{prop: condition (4)} (3), if $h: X \dto X'/T$ is a small $\bQ$-factorial modification, then $X'/T$ is also an MKD fiber space and $h\cdot \Pi \coloneqq h_*\Pi$ still satisfies the local factoriality of canonical models$/T$. Without loss of generality, we may assume that $X'/T$ is $X/T$. We show the existence of a rational polyhedral fundamental domain for $\Nef^e(X/T)$ under the action of $\Gamma_A(X/T)$.

Applying Theorem \ref{thm: Shokurov poly for minimal models} to $\Pi$, we see that $\Pi = \cup_{l\in J} \Pi_{l}$ can be decomposed into finitely many relatively open rational polyhedral cones such that divisors in the same cone share the same weak minimal models$/T$. If $g\cdot \bar\Pi_{l} \cap \Amp(X/T) \neq \{0\}$ for some $g\in \PsAut(X/T)$, then 
\[
g\cdot \Pi_{l} \cap (\Amp(X/T)\setminus \{0\}) \neq \emptyset
\] as $\Amp(X/T)\setminus \{0\}$ is open. By Theorem  \ref{thm: Shokurov poly for minimal models}, we have $g\cdot \bar\Pi_{l} \subset \Nef^e(X/T)$. 

As $\Amp(X/T) \subset\Mov(X/T)$, we have
    \[
\Amp(X/T) = \bigcup_{l\in J}\bigcup_{g\in \PsAut(X/T)} \left(g\cdot \bar\Pi_{l} \cap \Amp(X/T)\right).
    \] Therefore, we have
    \begin{equation}\label{eq: inclusion}
    \Amp(X/T) \subset \bigcup_{l\in J}\bigcup_{\substack{g\in \PsAut(X/T)\\ g\cdot \bar\Pi_{l} \subset \Nef^e(X/T)}} g\cdot \bar\Pi_{l}.
    \end{equation}

    Moreover, if both $g\cdot \bar\Pi_{l}\cap \Amp (X/T) \neq \{0\}$ and $h\cdot \bar\Pi_{l}\cap \Amp (X/T) \neq \{0\}$, then 
    \begin{equation}\label{eq: in iso}
    h\circ g^{-1} \in \Aut(X/T).
    \end{equation} In fact, take an ample$/T$ divisor $A$ such that $[A] \in g\cdot \bar\Pi_{l}$, then we have
    \[
    (h\circ g^{-1}) \cdot [A] \in h\cdot \bar\Pi_{l}\subset \Nef(X/T).
    \] By Lemma \ref{lem: common mm} (2),  $g\circ h^{-1}$ is a morphism. As $g\circ h^{-1}$ is a small $\bQ$-factorial modification, $g\circ h^{-1}$ must be an isomorphism.

Therefore, for each $l \in J$, if there exists some $g \in \PsAut(X/T)$ such that $g \cdot \bar\Pi_{l} \subset \Nef^e(X/T)$, we fix one such $g$, denoted by $g_l$. Let 
\[
P' \coloneqq \Cone (g_l \cdot \bar\Pi_{l} \mid \text{~there exists~}l \in J \text{~such that~} g_l \cdot \bar\Pi_{l} \subset \Nef^e(X/T))
\] 
be the cone generated by finitely many rational polyhedral cones $g_l \cdot \bar\Pi_{l}$. In particular, $P'\subset \Nef^e(X/T)$ is a rational polyhedral cone. Moreover, we have
\[
\Aut(X/T)\cdot P' \supset \Amp(X/T)
\] by \eqref{eq: inclusion} and \eqref{eq: in iso}. Hence, $(\Nef(X/T)_+, \Gamma_A(X/T))$ is of polyhedral type. As $\Nef(X/T)$ is a non-degenerate cone, by Lemma \ref{le: existence of fun domain}, there exists a rational polyhedral cone which is the fundamental domain for $\Nef(X/T)_+$ under the action of $\Gamma_A(X/T)$. 

To show that $\Nef^e(X/T)$ admits a rational polyhedral fundamental domain under the action of $\Gamma_A(X/T)$, it suffices to prove that
\begin{equation}\label{eq: +=e}
\Nef(X/T)_+ = \Nef^e(X/T).
\end{equation}
Indeed, by Proposition \ref{prop: prop-def}, we always have $\Gamma_A(X/T) \cdot P' = \Nef(X/T)_+$. As $P'\subset \Nef^e(X/T)$, this implies that $\Nef(X/T)_+\subset \Nef^e(X/T)$. Conversely, let $[D] \in \Nef^e(X/T)\subset \Mov(X/T)$. Then there exists $g\in \PsAut(X/T)$ such that $[D] \in g\cdot \Pi$. As $\Pi$ satisfies the local factoriality of canonical models$/T$, so is $g\cdot \Pi$ by Proposition \ref{prop: condition (4)} (3). Applying Theorem \ref{thm: Shokurov poly for nef} to $g\cdot \Pi$, we see that
\[
\mathcal N_{g\cdot \Pi}\coloneqq \{[B] \in g\cdot \Pi \mid B \text{~is nef over~}T\}
\] is a rational polyhedral cone. Hence, we have $[D] \in \Nef(X/T)_+$. This shows \eqref{eq: +=e}, and thus $\Nef^e(X/T)$ admits a rational polyhedral fundamental domain under the action of $\Gamma_A(X/T)$.

Next, we show the finiteness of 
\begin{equation}\label{eq: finiteness of bir contractions}
        \{ Y/T \mid X \dto Y/T \text{~is a birational contraction}\}
\end{equation} up to isomorphism of $Y/T$. It follows from a more precise statement that there exist finitely many birational contractions 
$g_j: X \dto Y_j/T, 1 \leq j \leq n$, such that for any birational contraction 
$u: X \dto Y/T$, there exists some $\mu \in \PsAut(X/T)$ and an index $1 \leq j \leq n$ satisfying 
\begin{equation}\label{eq: decomposition of bir contraction}
    u = g_j \circ \mu.
\end{equation}

Indeed, let $\Pi$ be a weak rational polyhedral fundamental domain for $\Mov(X/T)$ under the action of $\Gamma_B(X/T)$. Let $\Pi = \cup_l \Pi_l$ be a decomposition of $\Pi$ into relatively open rational polyhedral cones satisfying Theorem~\ref{thm: Shokurov poly for minimal models}. For each $l$, we fix a birational contraction
\[
\ti g_l: X \dto \ti Y_l/T
\]
which is a minimal model$/T$ for some effective divisor in $\Pi_l$. By Lemma \ref{lem: movable give iso in codim 1}, $g_l$ is isomorphic in codimension $1$. By Theorem~\ref{thm: Shokurov poly for minimal models}, $\ti g_l$ is in fact a weak minimal model$/T$ for every effective divisor in $\Pi_l$.

Let $A_Y$ be an ample$/T$ divisor on $Y$, and let $A$ denote its strict transform on $X$. Then we have $[A]\in \Mov(X/T)$. Hence, there exist 
some $\mu\in \PsAut(X/T)$ and $l$ such that $\mu\cdot [A] \in \Pi_l$. Then $\ti g_l: X \dto \ti Y_l$ is a weak minimal model$/T$ of $\mu_*A$. 
\[
\begin{tikzcd}
X \arrow[dr,dashed, "u"'] \arrow[r, dashed, "\mu"] & X \arrow[r, dashed, "\tilde g_l"] & \tilde Y_l \arrow[dl, "\tau"] \\
&Y & 
\end{tikzcd}
\]
By Lemma \ref{lem: common mm} (2), the natural map 
\[
\tau \coloneqq u \circ \mu^{-1} \circ {\ti g_l}^{-1}: \ti Y_l \dto Y
\] is indeed a morphism and $\tau^*A_Y=\ti A$, where $\ti A =  ({\ti g_l} \circ \mu)_*A$. Moreover, $\tilde A$ lies in $(\tilde g_l)_* \Pi_l$, whose closure is a rational polyhedral cone in $\Nef(\tilde Y_l/T)$. Hence, there are only finitely many possibilities for $\tau$, which in turn implies that there are only finitely many possibilities for $g_j \coloneqq \tau \circ \ti g_l$.

\medskip

For the implication $(2) \Rightarrow (3)$, let $D$ be an effective $\bR$-Cartier divisor on $X$ with $u: X \dto Y/T$ a good minimal model$/T$ of $D$. Then Lemma \ref{lem: factor bir contraction} shows that it factors into a small $\bQ$-factorial modification $h: X \dto X'/T$ followed by a contraction morphism $X' \to Y/T$. By the assumption of (2), $X'/T$ belongs to a finite set 
\[
\{X_j/T \mid X \dto X_j/T \text{~is a small $\bQ$-factorial modification,~} 1 \leq j\leq n\}
\] up to isomorphism. Moreover, each $X_j/T$ admits a weak rational polyhedral fundamental domain $P_j \subset \Nef^e(X_j/T)$ under the action of $\Gamma_A(X_j/T)$. Thus, $u$ is a composition of natural maps
\begin{equation}\label{eq: maps}
u: X \overset{h}{\dashrightarrow}  X' \overset{\sigma}{\simeq}  X_j \xrightarrow{\theta} Y/T,
\end{equation} 
where $\sigma$ is an isomorphism$/T$ and $\theta$ is a birational contraction morphism.

By assumption, $\Nef^e(X_j/T)$ admits a rational polyhedral fundamental domain $P_j$
under the action of $\Gamma_A(X_j/T)$. In particular, we have $P_j \subset \Nef(X/T)_+$ and $\Gamma_A(X_j/T) \cdot P_j \supset \Amp(X_j/T)$. Thus, we have $\Gamma_A(X_j/T) \cdot P' = \Nef(X_j/T)_+$ by Proposition \ref{prop: prop-def}. This implies that 
\[
\Nef^e(X_j/T)=\Nef(X_j/T)_+.
\] By Lemma \ref{lem: iso in codim 1 preserves mm}, effective divisors in $\Nef^e(X_j/T)$ are semi-ample$/T$. Hence, $\theta^*\Nef^e(Y/T)$  is a face of $\Nef^e(X_j/T)=\Nef(X_j/T)_+$. By Proposition \ref{prop: fundamental domain for surface} and Lemma \ref{le: existence of fun domain}, $\theta^*\Nef^e(Y/T)$ admits a rational polyhedral fundamental domain $P'_{j,\theta}$ under the action of 
\[
{\rm Stab}_{\theta^*\Nef^e(Y/T)} \Gamma_{A}(X_j/T) = \{[g]\in \Gamma_{A}(X_j/T) \mid [g](\theta^*\Nef^e(Y/T))=\theta^*\Nef^e(Y/T) \}.
\] 

Let 
\begin{equation}\label{eq: P_j, theta}
P_{j,\theta} \coloneqq \Cone(P'_{j,\theta}, [E] \mid E \text{~is~} \theta\text{-exceptional}) \subset \Eff(X_j/T)
\end{equation}
be the rational polyhedral cone generated by $P'_{j,\theta}$ and $\theta$-exceptional divisors. By definition, for any $g$ with $[g] \in {\rm Stab}_{\theta^*\Nef^e(Y/T)} \Gamma_{A}(X_j/T)$, the map $g$ sends each $\theta$-exceptional divisor to another $\theta$-exceptional divisor. As a result, 
\begin{equation}\label{eq: small cone}
{\rm Stab}_{\theta^*\Nef^e(Y/T)} \Gamma_{A}(X_j/T) \cdot P_{j,\theta}\supset \Cone(\theta^*\Nef^e(Y/T), [E] \mid E \text{~is~} \theta\text{-exceptional}).
\end{equation}

Since $P_j$ is a rational polyhedral fundamental domain for $\Nef^e(X_j/T)$ under the action of $\Gamma_A(X_j/T)$, there exists a finite set of morphisms
\[
\{\theta_j^k\colon X_j\to Y_j^k/T \mid 1\leq k\leq m(j)\}
\]
such that, for any contraction morphism $\theta\colon X_j\to Y/T$, there exist $k$ and $\tau\in \Aut(X_j/T)$ such that $\theta=\theta_j^k\circ \tau$ up to isomorphism. By \eqref{eq: maps}, $u$ is a composition of natural maps
\[
X \overset{h}{\dashrightarrow} X' \overset{\sigma}{\simeq}  X_j \overset{\tau}{\simeq} X_j \xrightarrow{\theta^k_j} Y/T.
\]

Set $D_j = (\tau \circ \sigma \circ h)_* D$. By construction, we have
\[
[D_j] \in \Cone\big((\theta^k_j)^* \Nef^e(Y/T), \; [E] \mid E \text{ is }\theta^k_j\text{-exceptional}\big).
\]

By \eqref{eq: small cone}, as $\tau \in \Aut(X_j/T)$, we have
\begin{equation}\label{eq: second level}
[(\sigma\circ h)_*D] \in \Gamma_A(X_j/T) \cdot P_{j, \theta}.
\end{equation}

For each $X_j/T, 1 \leq j \leq n$, fixed a small $\bQ$-factorial modification $h_j: X \dto X_j/T$. Set 
\begin{equation}\label{eq: Q}
Q_{j,k} \coloneqq h_j^*P_{j, \theta^k_j}\subset \Eff(X/T), \quad 1 \leq k \leq m(j).
\end{equation} We claim that
\begin{equation}\label{eq: cover eff}
\bigcup_{j, 1 \leq k\leq m(j)}\Gamma_B(X/T) \cdot Q_{j, k} = \Eff(X/T).
\end{equation}
Indeed, as $h, \sigma$ and $h_j$ are all isomorphic in codimension $1$, we have $(\sigma\circ h)^{-1} \circ h_j\in \PsAut(X/T)$. By \eqref{eq: second level}, we have
\[
[D] \in \Gamma_B(X/T) \cdot Q_{j, k}.
\] This shows ``$\supset$'' in \eqref{eq: cover eff}. The converse inclusion follows from $Q_{j,k}\subset \Eff(X/T)$.

As there are finitely many rational polyhedral cones $Q_{j, k}$, 
\begin{equation}\label{eq: combine}
Q \coloneqq \Cone(Q_{j,k}  \mid 1 \leq j \leq n, 1 \leq k \leq m(j))
\end{equation} is a rational polyhedral cone which satisfies 
\[
\Gamma_B(X/T) \cdot Q = \Eff(X/T)
\] by \eqref{eq: cover eff}. Besides, by Proposition \ref{prop: degenerate cone} and Proposition \ref{prop: max vector defined over Q}, $\Eff(X/T)_+$ admits a weak rational polyhedral fundamental domain under the action of $\Gamma_B(X/T)$.

\medskip

The implication $(3) \Rightarrow (1)$ can be shown by the same argument as in \cite[Lemma 5.2 (1)]{LZ25}. Note that it suffices to verify Definition \ref{def: MKD spaces} (3). 

Assume that $Q\subset \Eff(X/T)$ is a rational polyhedral cone such that $\Gamma_B(X/T) \cdot Q =\Eff(X/T)$.  By Theorem \ref{thm: Shokurov poly for minimal models}, $Q = \cup_s Q_{s}$ can be decomposed into finitely many relatively open rational polyhedral cones such that divisors in the same cone share the same weak minimal model$/T$. 

We claim that $Q_{s} \cap \Mov(X/T) \neq \emptyset$ implies that $\bar Q_{s} \subset \Mov(X/T)$. Indeed, if $D$ is an effective divisor such that $[D]\in Q_{s} \cap \Mov(X/T)$, then let $h: X \dto Y/T$ be a minimal model$/T$ of $D$. Let $p: W \to X, q: W \to Y$ be projective birational morphisms such that $h=q\circ p^{-1}$. Then
\[
p^*D=q^*D_Y+E,
\] where $D_Y = h_*D$ is semi-ample$/T$ and $E\geq 0$ is a $q$-exceptional divisor whose support contains $\Exc(h)$. As $[D] \in \Mov(X/T)$, $h$ is isomorphic in codimension $1$ by Lemma \ref{lem: movable give iso in codim 1}. If $B$ is an effective divisor with $[B]\in \bar Q_s$, then $h_*B$ is nef$/T$. By assumption, $B$ admits a good minimal model$/T$. Therefore, by Lemma \ref{lem: common mm} (1), $h_*B$ is semi-ample$/T$. As $h$ is isomorphic in codimension $1$, we see that $\bar Q_{s} \subset \Mov(X/T)$. 

Let $P$ be the rational polyhedral cone defined by
\[
P \coloneqq \Cone\{\bar Q_s \mid Q_{s} \cap \Mov(X/T) \neq \emptyset\}.
\] By the above discussion, we have $P \subset \Mov(X/T)$, and thus $\Gamma_B(X/T) \cdot P \subset \Mov(X/T)$. Conversely, if $D$ is an effective $\bR$-Cartier divisor such that $[D] \in \Mov(X/T) \subset \Eff(X/T)$, then there exist $g\in \PsAut(X/T)$ and $Q_s$ such that $[g_*D] \in Q_s$. As $[g_*D] \in \Mov(X/T)$, we have $[g_*D] \in Q_s \cap \Mov(X/T)$. This shows
\[
\Gamma_B(X/T) \cdot P = \Mov(X/T),
\]
which verifies Definition \ref{def: MKD spaces} (3) for $X/T$.
\end{proof}

We record several results established in the proof of Theorem~\ref{thm: 3 equivalences}, as well as some easy consequences of it, for later use.

In the following corollary, (1) can be viewed as an SYZ-type statement, while (3) corresponds to the numerical non-vanishing for MKD fiber spaces.

\begin{corollary}\label{cor: of 3 equiv}
Assume that $X/T$ is an MKD fiber space.
\begin{enumerate} 
\item $\Nef(X/T)_+ = \Nef^e(X/T)$. That is, any nef $\Qq$-Cartier divisor on $X/T$ is numerically equivalent to a semi-ample $\Qq$-Cartier divisor over $T$.
\item There exist finitely many birational contractions 
$g_j: X \dto Y_j/T, 1 \leq j \leq n$, such that for any birational contraction 
$u: X \dto Y/T$, there exist $\mu \in \PsAut(X/T)$ and an index $1 \leq j \leq n$ satisfying $u = g_j \circ \mu$.
\item If $\Eff(X/T)$ is a non-degenerate cone, then $\Eff(X/T)_+=\Eff(X/T)$. That is, any pseudo-effective $\Qq$-Cartier divisor on $X/T$ is numerically equivalent to an effective $\Qq$-Cartier divisor over $T$.
\end{enumerate}
\end{corollary}
\begin{proof}
Note that (1) is established in \eqref{eq: +=e}, and (2) is established in \eqref{eq: decomposition of bir contraction}. By Theorem~\ref{thm: 3 equivalences}, there exists a rational polyhedral cone $P \subset \Eff(X/T)$ such that $\Gamma_B(X/T) \cdot P = \Eff(X/T)$. If $\Eff(X/T)$ is non-degenerate, then Proposition~\ref{prop: prop-def} gives $\Gamma_B(X/T)\cdot P=\Eff(X/T)_+$. This proves (3).
\end{proof}

The following corollary of the proof of Theorem~\ref{thm: 3 equivalences} shows that, at least in the absolute setting, local factoriality of canonical models on $\Eff(X)$ can be deduced from local factoriality on $\Pi$. Thus, it refines the definition of MKD spaces and better reflects their guiding philosophy: MKD spaces should be obtained by gluing Mori dream spaces as local models.

\begin{corollary}\label{cor: local factoriality for Pi}
Let $X/T$ be a fibration satisfying all the properties of an MKD fiber space in Definition \ref{def: MKD spaces}, except that in (4) we replace the condition by requiring that $\Pi$ satisfies the local factoriality of canonical models$/T$. If $\Eff(X/T)$ is a non-degenerate cone, then $\Eff(X/T)$ also satisfies the local factoriality of canonical models$/T$.
\end{corollary}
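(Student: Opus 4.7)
My plan is to propagate the local factoriality from $\Pi$ to $\Eff(X/T)$ via the $\Gamma_B(X/T)$-action, using the Siegel property furnished by the non-degeneracy of $\Eff(X/T)$. First, I would run the chain $(1)\Rightarrow(2)\Rightarrow(3)$ of Theorem~\ref{thm: 3 equivalences}; as observed in the parenthetical remark in the $(1)\Rightarrow(2)$ step of its proof, this uses only local factoriality on $\Pi$. It produces a rational polyhedral cone $Q\subset\Eff(X/T)$ with $\Gamma_B(X/T)\cdot Q\supset\Eff(X/T)$. By Proposition~\ref{prop: prop-def} and the non-degeneracy of $\Eff(X/T)$, we obtain $\Eff(X/T)_+=\Eff(X/T)$, exactly as in Corollary~\ref{cor: of 3 equiv}(3); Lemma~\ref{le: existence of fun domain} then supplies a rational polyhedral fundamental domain $\Sigma\subset Q$ for $\Eff(X/T)$ under $\Gamma_B(X/T)$, and the Siegel property of \cite[Theorem~3.8]{Loo14} applies.

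Next, using the explicit construction $Q=\Cone(Q_{j,k})$ from the proof of $(2)\Rightarrow(3)$, where $Q_{j,k}=h_j^*\Cone(P'_{j,\theta^k_j},[E]\mid E\ \theta^k_j\text{-exceptional})$ with $P'_{j,\theta^k_j}\subset\theta^*\Nef^e(Y_{j,k}/T)$, I would establish local factoriality at each $[D_0]\in Q$. The nef part is handled by Proposition~\ref{prop: condition (4)}(1) applied to $P'_{j,\theta^k_j}\subset\Nef(X_{j,k}/T)$, while the exceptional directions are harmless since the canonical model of $\theta^*H+\sum c_iE_i$ on $X_{j,k}$ factors as $X_{j,k}\xrightarrow{\theta}Y_{j,k}\xrightarrow{\pi_H}Z_H$, independently of the exceptional part. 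To pass from this to an arbitrary $[D]\in\Eff(X/T)$ with respect to a polytope $P$ containing $[D]$, I would choose $g\in\Gamma_B(X/T)$ with $g\cdot[D]\in Q$; by Proposition~\ref{prop: condition (4)}(3) applied to the pseudo-automorphism $g$ it suffices to treat $g\cdot[D]\in Q$ in the polytope $g\cdot P$. Since $g\cdot P$ generally sticks out of $Q$, I would invoke the Siegel property: a small compact neighborhood of $g\cdot[D]$ in $g\cdot P$ meets only finitely many translates $g'\cdot\Sigma$; on each, local factoriality holds by the previous step together with Proposition~\ref{prop: condition (4)}(3), and the finitely many pieces are glued together using the Shokurov chamber decomposition of Theorem~\ref{thm: Shokurov poly for minimal models} applied to a rational polyhedral cone containing the relevant translates, yielding the required compatibility of canonical models.

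The main technical obstacle is this last gluing step: verifying that when $[B]$ crosses between different $\Gamma_B$-translates of $Q$ within a small neighborhood of the base point, its canonical model still factors through that of the base point. The non-degeneracy of $\Eff(X/T)$ is indispensable here, since it is precisely what guarantees the Siegel property; as the remark preceding this corollary notes, a general proof without non-degeneracy would require a Siegel-type theorem for degenerate cones, which is currently unknown.
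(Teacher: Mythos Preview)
Your overall strategy matches the paper's: run $(1)\Rightarrow(2)\Rightarrow(3)$ of Theorem~\ref{thm: 3 equivalences} using only local factoriality on $\Pi$, show each $Q_{j,k}$ satisfies local factoriality via its nef-plus-exceptional description, and use the Siegel property to reduce any rational polytope to finitely many pieces. Two execution points need repair. First, your step~3 argument only establishes local factoriality on each individual $Q_{j,k}$, not on all of $Q=\Cone(Q_{j,k})$ or on the fundamental domain $\Sigma\subset Q$; since $\bigcup_{j,k} Q_{j,k}$ can be strictly smaller than $Q$, the claim that local factoriality holds on every translate $g'\cdot\Sigma$ in step~4 is not justified by what precedes it. The paper avoids the detour through $\Sigma$ entirely: it applies the Siegel property directly to each $Q_{j,k}$ (there are finitely many pairs $(j,k)$) against the given rational polyhedral cone $P$, so that $P$ is covered by finitely many pieces $(\gamma\cdot Q_{j,k})\cap P$, each inheriting local factoriality from $Q_{j,k}$ via Proposition~\ref{prop: condition (4)}(3).

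Second, your proposed gluing via Theorem~\ref{thm: Shokurov poly for minimal models} is circular if applied to ``a rational polyhedral cone containing the relevant translates'': that theorem takes local factoriality of the ambient cone as a hypothesis, which is precisely what you are trying to prove. The paper's gluing is elementary and needs no chamber decomposition. Given $P$ covered by finitely many closed rational polyhedral cones $C_\alpha$ each satisfying local factoriality, for $[D]\in P$ let $I=\{\alpha:[D]\in C_\alpha\}$; take the neighborhood $U$ small enough to lie inside each $U_\alpha$ (from local factoriality of $C_\alpha$) for $\alpha\in I$ and to miss each closed $C_\alpha$ for $\alpha\notin I$. Then any $[B]\in U$ lies in some $C_{\alpha_0}$ with $\alpha_0\in I$, hence in $C_{\alpha_0}\cap U_{\alpha_0}$, so its canonical model factors through that of $D$.
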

\begin{proof}
We follow the notation used in the proof of Theorem~\ref{thm: 3 equivalences}. First, as remarked in the first paragraph of the proof of Theorem~\ref{thm: 3 equivalences}, in the proof of $(1)\Rightarrow(2)$, we only use the fact that $\Pi$ satisfies local factoriality of canonical models$/T$. Hence, under the assumption of Corollary \ref{cor: local factoriality for Pi}, we still have Theorem \ref{thm: 3 equivalences} (2). 

Next, let us recall several constructions from the proof of $(2)\Rightarrow(3)$. We have the following maps as in \eqref{eq: maps}:
\[
X \overset{h}{\dashrightarrow} X' \overset{\sigma}{\simeq} X_j \xrightarrow{\theta} Y/T.
\] Moreover, we define the rational polyhedral cone
\[
P_{j,\theta} = \Cone(P'_{j,\theta}, [E] \mid E \text{~is~} \theta\text{-exceptional}) \subset \Eff(X_j/T),
\] in \eqref{eq: P_j, theta}, where $P'_{j,\theta}$ is a rational polyhedral cone inside $\theta^*\Nef^e(Y/T)$. Therefore, $P_{j,\theta}$ satisfies the local factoriality of canonical models$/T$ by Proposition \ref{prop: condition (4)} (1). As $h_j: X \dto X_j/T$ is isomorphic in codimension $1$, 
\[
Q_{j,k}= h_j^*P_{j, \theta_l^k}\subset \Eff(X/T) \quad \text{ (see \eqref{eq: Q})}
\] satisfies the local factoriality of canonical models$/T$ by Proposition \ref{prop: condition (4)} (3). By \eqref{eq: cover eff}, we have
\begin{equation}\label{eq: cover 2}
\bigcup_{j, 1 \leq k\leq m(j)}\Gamma_B(X/T) \cdot Q_{j, k} = \Eff(X/T),
\end{equation}
which is a finite union of sets $\Gamma_B(X/T) \cdot Q_{j,k}$. Moreover, we have 
\[
\Gamma_B(X/T) \cdot Q = \Eff(X/T),
\] where $Q= \Cone(Q_{j,k}  \mid 1 \leq j \leq n, 1 \leq k \leq m(j))$ is a rational polyhedral cone (see \eqref{eq: combine}). This implies that $(\Eff(X/T)_+, \Gamma_B(X/T))$ is of polyhedral type. From the above construction, we can conclude that $\Eff(X/T)$ satisfies the local factoriality of canonical models$/T$ in the following.

Let $P \subset \Eff(X/T)$ be a rational polyhedral cone. As $\Eff(X/T)$ is non-degenerate by assumption and $(\Eff(X/T)_+, \Gamma_B(X/T))$ is of polyhedral type, the set
\begin{equation}\label{eq: Siegel 1}
\{(\gamma \cdot Q_{j,k}) \cap P \mid \gamma \in \Gamma_B(X/T)\}
\end{equation} is a finite set for each $Q_{j,k}$ by \cite[Theorem 3.8]{Loo14} (this is called the Siegel property). Moreover, since each $Q_{j,k}$ satisfies the local factoriality of canonical models$/T$, so does $\gamma \cdot Q_{j, k}$ by Proposition \ref{prop: condition (4)} (3). Hence, $(\gamma \cdot Q_{j, k}) \cap P$ also satisfies the local factoriality of canonical models$/T$. By \eqref{eq: cover 2} and \eqref{eq: Siegel 1}, $P$ is a finite union of rational polyhedral cones satisfying the local factoriality of canonical models$/T$, and hence $P$ also satisfies this property. This completes the proof.
\end{proof}

\begin{remark}\label{rmk: Pi is fund of Eff is ok}
A proof similar to that of Corollary~\ref{cor: local factoriality for Pi} shows the following. Suppose that $\Pi'\subset \Eff(X/T)$ is a rational polyhedral cone satisfying local factoriality of canonical models$/T$. If $\Eff(X/T)$ is non-degenerate and $\PsAut(X/T)\cdot \Pi'=\Eff(X/T)$, then $\Eff(X/T)$ also satisfies local factoriality of canonical models$/T$.
\end{remark}

\begin{remark}\label{rmk: siegel property}
The difficulty in extending the proof of Corollary \ref{cor: local factoriality for Pi} to the case where $\Eff(X/T)$ is degenerate lies in extending the Siegel property (i.e., \cite[Theorem 3.8]{Loo14}) to the degenerate case. This extension appears plausible (at least in the above geometric setting), but it is currently unavailable.
\end{remark}

\begin{proposition}\label{prop: generic property for MKD space}
Let $X/T$ be an MKD fiber space. If $U \subset T$ is a non-empty open subset of $T$, then $X_U/U$ is still an MKD fiber space.
\end{proposition}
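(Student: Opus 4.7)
The plan is to verify the four defining conditions of Definition~\ref{def: MKD spaces} for $X_U/U$ one at a time, by lifting objects from $X_U$ to $X$ via closure, invoking the corresponding property on $X/T$, and restricting back. The key tools are the restriction map $\pi\colon N^1(X/T)\to N^1(X_U/U)$, which is surjective since the $\bQ$-factoriality of $X$ lets us extend any $\bR$-Cartier divisor on $X_U$ to an $\bR$-Cartier divisor on $X$ by taking closures, together with the restriction homomorphism $\PsAut(X/T)\to \PsAut(X_U/U)$, which is $\pi$-equivariant. Condition~(1) is immediate since $X_U$ is open in the $\bQ$-factorial $X$. For condition~(2), given an effective $\bR$-Cartier $D_U$ on $X_U$, I would extend it to $D = \overline{D_U}$ on $X$, apply~(2) for $X/T$ to obtain a good minimal model $h\colon X\dto Y/T$ of $D$, and restrict: the defining relation $p^{*}D = q^{*}D_Y + E$ on a common resolution restricts intact to the preimage of $U$, showing that $h_U\colon X_U\dto Y_U/U$ is a good minimal model of $D_U$ over $U$.

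Condition~(3) will be the main obstacle. The natural candidate is $\Pi_U := \pi(\Pi)$, a rational polyhedral cone contained in $\Mov(X_U/U)$. The inclusion $\PsAut(X_U/U)\cdot \Pi_U \subseteq \Mov(X_U/U)$ is automatic, but for the reverse inclusion, given effective $D_U$ with $[D_U]\in \Mov(X_U/U)$, the closure $D$ on $X$ need not represent a class in $\Mov(X/T)$, because the good minimal model $h\colon X\dto Y/T$ of $D$ may contract divisors. The key claim I will prove is that every divisor contracted by $h$ is disjoint from $X_U$: otherwise, its intersection with $X_U$ would be a divisor contracted by the induced good minimal model $h_U$ of $D_U$, contradicting $[D_U]\in \bMov^e(X_U/U)$ via Lemma~\ref{lem: movable give iso in codim 1}. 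It then follows that the negative Nakayama--Zariski part $N(D) = p_{*}E$ is supported in $X\setminus X_U$; setting $D' := D - N(D)$ yields an effective $\bR$-Cartier divisor with $D'|_{X_U} = D_U$ and $[D']\in \Mov(X/T)$. Applying $\PsAut(X/T)\cdot \Pi = \Mov(X/T)$ and the $\pi$-equivariance of the restriction homomorphism then produces an element of $\PsAut(X_U/U)$ sending $[D_U]$ into $\Pi_U$.

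Condition~(4) will be verified by a direct lifting argument. Given $[D_U]\in \Eff(X_U/U)$ and a rational polytope $P_U = \Conv([E_{U,i}])$ containing it, I lift $D_U$ and each $E_{U,i}$ to effective divisors on $X$ by closure, obtaining a polytope $P\ni [D]$ in $N^1(X/T)$. Applying~(4) for $X/T$ produces an open neighborhood $V$ of $[D]$ in $P$ on which every canonical model over $T$ factors through $Z_D$. Since the surjective linear map $\pi$ is open between finite-dimensional spaces, $V_U := \pi(V)\cap P_U$ is an open neighborhood of $[D_U]$ in $P_U$; any $[B_U]\in V_U$ lifts to some $[B]\in V$ by matching barycentric coordinates in the generators, and the factorization $f_D = h\circ f_B$ over $T$ restricts to the factorization $f_{D_U} = h_U\circ f_{B_U}$ over $U$, using that the canonical model commutes with base change along an open immersion of the target via the $\mathrm{Proj}$ construction. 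Together, these four steps complete the verification.
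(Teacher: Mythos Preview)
Your verification of conditions (1) and (2) matches the paper's. Your argument for condition (3) is correct but takes a genuinely different route. The paper does \emph{not} verify $\PsAut(X_U/U)\cdot\Pi_U=\Mov(X_U/U)$ directly; instead it first checks (1), (2), (4), then invokes Theorem~\ref{thm: 3 equivalences}(3): since the image $P'\subset\Eff(X_U/U)$ of a rational polyhedral fundamental cone for $\Eff(X/T)$ satisfies $\PsAut(X_U/U)\cdot P'\supset\Eff(X_U/U)$ (immediate from closure-then-restrict), the equivalence gives condition~(3) for free. Your direct approach via the Nakayama--Zariski decomposition works as well: the point that $\Exc(h)\cap X_U=\emptyset$ is exactly right, and then $D'=D-N_\sigma(D)=p_*q^*D_Y$ is effective with $[D']\in\bMov^e(X/T)=\Mov(X/T)$ by Proposition~\ref{prop: property of MKD from definition}(1), so you can bring it into $\Pi$ and restrict. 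Your route is more hands-on and avoids citing the heavier equivalence theorem, at the cost of the NZ machinery; the paper's route is shorter.

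There is one small gap in your treatment of condition~(4). You lift $D_U$ to $D=\overline{D_U}$ by closure and assert $[D]\in P=\Conv([E_i])$, but $\pi([D])=[D_U]\in P_U$ does not force $[D]\in P$ when $\ker\pi\neq 0$ (vertical divisors supported in $X\setminus X_U$ can shift $[D]$ out of $P$). The fix is the one you already use for $B_U$: lift $D_U$ by barycentric coordinates, setting $D=\sum\lambda_i E_i$ where $[D_U]=\sum\lambda_i[E_{U,i}]$. Then $D\geq 0$ and $[D]\in P$ automatically. Since $D|_{X_U}\equiv D_U/U$ rather than $D|_{X_U}=D_U$, you then need the Remark after Definition~\ref{def: local factoriality}: because effective divisors on $X_U/U$ admit good minimal models (your step~(2)), numerically equivalent effective divisors have the same canonical model over $U$, so the restricted factorization $f_{D|_{X_U}}=h_U\circ f_{B|_{X_U}}$ is indeed the required $f_{D_U}=h_U\circ f_{B_U}$. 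With this adjustment your argument goes through; the paper's own proof of (4) is a one-line assertion that canonical models commute with restriction, leaving these details implicit.
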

\begin{proof}
If $D$ is a prime divisor on $X_U$, let $\bar D$ denote its Zariski closure in $X$. Since $X$ is $\bQ$-factorial, $\bar D$ is a $\bQ$-Cartier divisor. Hence $D$ is also $\bQ$-Cartier, and therefore $X_U$ remains $\bQ$-factorial. 

If $B = \sum a_i B_i$ is an effective $\bR$-divisor on $X_U$ with prime divisors $B_i$, let 
$\bar B = \sum a_i \bar B_i$. Since $\bar B|_U = B$, we see that if $X \dto Y/T$ is a good minimal model of $\bar B$ over $T$, then $X_U \dto Y_U/U$ is a good minimal model of $B$ over $U$. Consequently, the local factoriality of canonical models over $U$ for $\Eff(X_U/U)$ follows from that over $T$ for $\Eff(X/T)$. 

Applying Theorem \ref{thm: 3 equivalences} (3) to $X/T$, there exists a rational polyhedral cone $P\subset \Eff(X/T)$ such that $\PsAut(X/T) \cdot P = \Eff(X/T)$. Let $P'$ be the image of $P$ under the natural map $N^1(X/T) \to N^1(X_U/U)$. It follows that $P'\subset \Eff(X_U/U)$ and 
\[
\PsAut(X_U/U) \cdot P' =\Eff(X_U/U).
\]
By Theorem \ref{thm: 3 equivalences}, we see that $X_U/U$ is an MKD fiber space.
\end{proof}

\subsection{MKD fiber spaces under birational contractions}\label{subsec: MKD spaces under birational contractions}

We prove Theorem \ref{thm: bir contraction is MKD}, which states that a birational contraction of an MKD fiber space is still an MKD fiber space.

\begin{proof}[Proof of Theorem \ref{thm: bir contraction is MKD}]
By Lemma \ref{lem: factor bir contraction}, there is a small $\bQ$-factorial modification $h: X \dto X'/T$ and a contraction morphism $g: X' \to Y$ such that $f= g\circ h$. By Lemma \ref{lem: small Q-fact is MKD}, $X'/T$ is still an MKD fiber space. Replacing $X'$ by $X$, we can assume that $f$ is a morphism.

First, we show that if $D$ is an effective $\bR$-Cartier divisor on $Y$, then $D$ admits a good minimal model$/T$. Let $B\coloneqq f^*D+\Exc(f)$ be an effective divisor on $X$. By assumption, let $\theta: X \dto X'/T$ be a good minimal model$/T$ of $B$. We will show that 
\[
u \coloneqq \theta \circ f^{-1}: Y \dto X'/T
\] is a good minimal model$/T$ of $D$. Let $p: W \to X, q: W \to Y, r: W \to X'$ be projective birational morphisms such that $f=q\circ p^{-1}$ and $\theta=r\circ p^{-1}$. Then, we have
\[
p^*(f^*D+\Exc(f))=r^*B_Y+E,
\] where $B_Y =q_*B$ and $E \geq 0$ is an exceptional $r$-divisor whose support contains $\Exc(\theta)$. As $B_Y$ is semi-ample$/T$, $E$ is the negative part of the Nakayama-Zariski decomposition$/T$ of $r^*B_Y+E$. As $p^*\Exc(f)$ is $(f\circ p)$-exceptional, we have $\Supp(p^*\Exc(f)) \subset \Supp E$. This implies that $p^*\Exc(f)$ is $r$-exceptional. Hence, $u: Y \dto X'/T$ is a birational contraction. We have
\[
p^*(f^*D)=r^*B_Y+E-p^*\Exc(f),
\] where $E-p^*\Exc(f)$ is $r$-exceptional. Since $(f\circ p)_*(E-p^*\Exc(f))=(f\circ p)_*E \geq 0$, we have $E-p^*\Exc(f) \geq 0$ by the negativity lemma. Moreover, since $\Supp(p^*\Exc(f))$ does not contain $\Exc(u)$, $\Supp(E-p^*\Exc(f))$ still contains  $\Exc(u)$. This shows that $u$ is a good minimal model$/T$ of $D$. 

Next, we show that $\Eff(Y/T)$ satisfies the local factoriality of canonical models$/T$. Indeed, if $Y \dto Z/T$ is the canonical model$/T$ of $D$, then the composition map $X \to Y \dto Z/T$  is the canonical model$/T$ of $f^*D$. Hence, the local factoriality of canonical models$/T$ for $\Eff(Y/T)$ follows from that of $\Eff(X/T)$.

Finally, it suffices to verify that $Y/T$ satisfies the conditions in Theorem~\ref{thm: 3 equivalences} (2) to conclude that $Y/T$ is an MKD fiber space.

As $X/T$ is an MKD fiber space, by Theorem \ref{thm: 3 equivalences}, we see that
\[
\{ Z/T \mid Y \dto Z/T \text{~is a birational contraction}\}
\] is a finite set up to isomorphism of $Z/T$. Suppose that $Y\dto Y'/T$ is a small $\bQ$-factorial modification. Since $X\dto Y'/T$ remains a birational contraction, Lemmas~\ref{lem: factor bir contraction} and~\ref{lem: small Q-fact is MKD} imply that, after replacing $Y$ by $Y'$, it suffices to show that $\Nef^e(Y/T)$ admits a rational polyhedral fundamental domain under the action of $\Gamma_A(Y/T)$.

By Corollary \ref{cor: of 3 equiv} (1), we have $\Nef^e(X/T)=\Nef(X/T)_+$. As effective divisors in $\Nef^e(X/T)$ are semi-ample$/T$, $f^*\Nef^e(Y/T)$ is a face of $\Nef^e(X/T)$. We claim that 
\begin{equation}\label{eq: e=+}
\Nef^e(Y/T)=\Nef(Y/T)_+.
\end{equation} Indeed, if $F$ is a face of the cone $C_+$, then $F = F_+$ (see \cite[Remark 3.5]{GLSW26}). Therefore, we have $f^*\Nef^e(Y/T) = (f^*\Nef(Y/T))_+$. Since $f^*: N^1(Y/T) \to N^1(X/T)$ is an injective linear map defined over $\bQ$, it follows that $(f^*\Nef(Y/T))_+ = f^*(\Nef(Y/T)_+)$. This implies $\Nef^e(Y/T) = \Nef(Y/T)_+$. 

By Theorem~\ref{thm: 3 equivalences} (2), $(\Nef^e(X/T), \Gamma_A(X/T))$ is of polyhedral type. By Proposition \ref{prop: fundamental domain for surface} and the above discussion,
\begin{equation}\label{eq: face is of polyhedral type}
(f^*\Nef^e(Y/T), ~{\rm Stab}_{f^*\Nef^e(Y/T)} \Gamma_{A}(X/T))
\end{equation} is still of polyhedral type. By Lemma \ref{lem: common mm} (2), any $g \in \Aut(X/T)$ with $[g] \in {\rm Stab}_{f^*\Nef^e(Y/T)} \Gamma_{A}(X/T)$ descends to an element $\ti g \in \Aut(Y/T)$. In other words, we have $\ti g \coloneqq f \circ g \circ f^{-1} \in \Aut(Y/T)$. As $f^*[\ti g \cdot D]=g\cdot [f^*D]$, we see that
\[
(\Nef^e(Y/T), ~\Gamma_A(Y/T))
\] is of polyhedral type. Then Lemma \ref{le: existence of fun domain} shows that there exists a rational polyhedral fundamental domain for the action of $\Gamma_A(Y/T)$ on $\Nef^e(Y/T)$. Hence, all the conditions in Theorem~\ref{thm: 3 equivalences} (2) are verified, and we can conclude that $Y/T$ is an MKD fiber space.
\end{proof}

We record the following result, established in the above proof (see \eqref{eq: e=+} and \eqref{eq: face is of polyhedral type}), for later use.

\begin{proposition}\label{prop: face is of polyhedral type}
Let $X/T$ be an MKD fiber space. If $f: X \to Y/T$ is a contraction morphism (not necessarily birational), then 
\[
(f^*\Nef^e(Y/T), \; {\rm Stab}_{f^*\Nef^e(Y/T)} \Gamma_{A}(X/T))
\]
is of polyhedral type. Moreover, we have $\Nef^e(Y/T)=\Nef(Y/T)_+$.
\end{proposition}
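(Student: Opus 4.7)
The plan is to extract the argument from the proof of Theorem~\ref{thm: bir contraction is MKD}, observing that the birationality of $f$ is nowhere used in establishing the two claims at issue. The statements are exactly \eqref{eq: e=+} and \eqref{eq: face is of polyhedral type} in that proof, and I would reorganize that material as follows.

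First I would record the preliminary observation that \emph{every effective nef$/T$ divisor on an MKD fiber space is semi-ample$/T$}. Given $[D] \in \Nef^e(X/T)$ with $D$ effective, Definition~\ref{def: MKD spaces}~(2) yields a good minimal model $h: X \dashrightarrow X'/T$ for $D$; taking a common resolution $p,q$ and writing $p^*D = q^*D_{X'} + E$, Lemma~\ref{lem: common mm} (applied with $B=D$ nef and $D_{X'}$ nef) forces $E=0$, so $D$ is semi-ample$/T$. Combined with Corollary~\ref{cor: of 3 equiv}~(1), this gives $\Nef^e(X/T) = \Nef(X/T)_+$ together with the fact that each ray of $\Nef^e(X/T)$ arises from an Iitaka fibration that factors through $f$ whenever it has trivial degree on $f$-contracted curves.

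Next I would verify that $f^*\Nef^e(Y/T)$ is a \emph{face} of $\Nef^e(X/T)$. The inclusion $f^*\Nef^e(Y/T) \subset \Nef^e(X/T)$ is immediate since $f^*$ preserves effectivity and nefness. For the face property, suppose $[A]+[B] = f^*[D]$ with $[A],[B] \in \Nef^e(X/T)$ and $[D] \in \Nef^e(Y/T)$. For any curve $C$ contracted by $f$ one has $f^*D \cdot C = 0$, and since $A\cdot C, B\cdot C \geq 0$ by nefness, both must vanish. By the preliminary observation, $A$ is semi-ample$/T$ with associated Iitaka fibration that kills every $f$-contracted curve, so it factors through $f$ via the rigidity lemma, and $[A] \in f^*\Nef^e(Y/T)$; the same argument handles $[B]$.

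With the face property in hand, the conclusion $\Nef^e(Y/T) = \Nef(Y/T)_+$ follows formally: by \cite[Remark 3.5]{GLSW24}, any face $F$ of a cone of the form $C_+$ satisfies $F = F_+$; applied to $f^*\Nef^e(Y/T) \subset \Nef(X/T)_+$ this gives $f^*\Nef^e(Y/T) = (f^*\Nef(Y/T))_+$. Because $f^* : N^1(Y/T) \to N^1(X/T)$ is an injective $\bQ$-linear map, it commutes with the $(\cdot)_+$ construction, so $\Nef^e(Y/T) = \Nef(Y/T)_+$. Finally, for the polyhedral type claim, Theorem~\ref{thm: 3 equivalences}~(2) tells us $(\Nef^e(X/T), \Gamma_A(X/T))$ is of polyhedral type, and Proposition~\ref{prop: fundamental domain for surface} applied to the face $f^*\Nef^e(Y/T)$ yields polyhedral type for the stabilizer action, as required.

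I do not anticipate a real obstacle: the entire argument is a clean extraction from the existing proof of Theorem~\ref{thm: bir contraction is MKD}, and the only place where one might worry about birationality, namely the descent $\Aut(X/T) \to \Aut(Y/T)$ via Lemma~\ref{lem: common mm}~(2), is not needed here since we are content to state polyhedral type in terms of $\Gamma_A(X/T)$ rather than $\Gamma_A(Y/T)$. The most delicate bookkeeping is in the face property of Step~2, but once the semi-ampleness of effective nef classes is in place, it reduces to a standard rigidity argument.
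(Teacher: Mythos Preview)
Your proposal is correct and follows essentially the same approach as the paper: the paper simply records this proposition as already established in the proof of Theorem~\ref{thm: bir contraction is MKD} (pointing to \eqref{eq: e=+} and \eqref{eq: face is of polyhedral type}), and your outline reproduces that argument step by step---semi-ampleness of effective nef classes, $f^*\Nef^e(Y/T)$ is a face, the $(\cdot)_+$ formalism via \cite[Remark 3.5]{GLSW24}, and then Proposition~\ref{prop: fundamental domain for surface}. Your version is more explicit about the face verification (spelling out the rigidity argument) where the paper is terse, but the content is identical.
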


\subsection{Minimal model program for MKD fiber spaces}\label{subsec: MMP for MKD}

We show that, for any effective $\bR$-Cartier divisor $D$ on an MKD fiber space $X/T$, one can always perform steps of the minimal model program (MMP), and a $D$-MMP with scaling of an ample divisor always terminates. 

\subsubsection{Extremal contractions and flips in the category of MKD fiber spaces}

The following proposition establishes the existence of extremal contractions and flips for effective divisors in the category of MKD fiber spaces.

\begin{proposition}\label{prop: extremal contractions and flips}
Let $X/T$ be an MKD fiber space with $D$ an effective $\bR$-Cartier divisor. If $D$ is not nef$/T$, then there exists a $D$-negative extremal contraction $f: X \to Y/T$ with the relative Picard number $\rho(X/Y)=1$. Moreover, 
\begin{enumerate}
\item if $f$ is a divisorial contraction, then $Y/T$ is still an MKD fiber space, and
\item  if $f$ is a small contraction, then there exists a $D$-flip $X \dto X^+/Y$ such that $X^+/T$ is still an MKD fiber space.
\end{enumerate}
\end{proposition}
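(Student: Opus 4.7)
The plan is to produce the desired $D$-negative extremal contraction as the ample model of $B := D + \lambda A$, where $A$ is an ample divisor$/T$ in general position and $\lambda := \inf\{\, t \geq 0 \mid D + tA \text{ is nef}/T\,\} > 0$; in the small case, the flip is obtained as the weak minimal model in the chamber of the Shokurov polytope adjacent to the ample chamber across the facet through $B$.

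Write $D = \sum_{i=1}^k a_i D_i$ with $D_i$ effective $\bQ$-Cartier and $a_i > 0$, and form the rational polyhedral cone $\mathcal{C} := \Cone(D_1, \dots, D_k, A) \subset \Eff(X/T)$. By Theorem~\ref{thm: Shokurov poly for nef}, $\Nef^e(X/T) \cap \mathcal{C}$ is rational polyhedral, so $\lambda$ is well-defined; by perturbing $A$ within the ample cone, I may arrange that $B$ lies in the relative interior of a facet of $\Nef^e(X/T) \cap \mathcal{C}$ that locally coincides with a facet of $\Nef(X/T)$. By Corollary~\ref{cor: of 3 equiv}(1) together with the observation in the proof of Theorem~\ref{thm: 3 equivalences} (via Lemma~\ref{lem: small Q-fact is MKD}(1)) that effective nef divisors on an MKD fiber space are semi-ample$/T$, $B$ is semi-ample$/T$ and induces a morphism $f: X \to Y/T$. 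For any curve $C$ contracted by $f$, $B \cdot C = 0$ and $A \cdot C > 0$ force $D \cdot C < 0$; the facet condition on $B$ forces $\rho(X/Y) = 1$, so $f$ is a $D$-negative extremal contraction.

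For assertion (1), if $f$ is divisorial, extremality ($\rho(X/Y) = 1$) forces $Y$ to be $\bQ$-factorial, and Theorem~\ref{thm: bir contraction is MKD} gives that $Y/T$ is an MKD fiber space. For (2), suppose $f$ is small. Apply Theorem~\ref{thm: Shokurov poly for minimal models} to $\mathcal{C}$: the chamber containing interior ample classes has $X$ as its weak minimal model. Fix $D' := D + (\lambda - \epsilon)A$ with small rational $\epsilon > 0$ in the chamber adjacent across the facet through $B$, and let $u: X \dashrightarrow X^+/T$ be the $\bQ$-factorial good minimal model of $D'$ provided by the MKD hypothesis (serving as the weak minimal model for the entire adjacent chamber by Theorem~\ref{thm: Shokurov poly for minimal models}). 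I aim to establish: (i) $u$ is an isomorphism in codimension $1$; (ii) there is a small morphism $f^+: X^+ \to Y$ with $D_{X^+}$ being $f^+$-ample, realizing $X \dashrightarrow X^+/Y$ as the $D$-flip of $f$. Granting (i), $X^+/T$ is an MKD fiber space by Lemma~\ref{lem: small Q-fact is MKD}(2); for (ii), $B_{X^+}$ is nef, effective, hence semi-ample on $X^+$, and by the local factoriality of canonical models (Proposition~\ref{prop: condition (4)}(3)) its ample model is identified with $Y$, yielding $f^+: X^+ \to Y$. Over $Y$ we have $D_{X^+} \equiv -\lambda A_{X^+}/Y$, and since $A_{X^+}$ becomes $f^+$-anti-ample across a flip of $f$, $D_{X^+}$ is $f^+$-ample.

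The main obstacle is (i): ruling out that $u$ contracts a prime divisor. The strategy is to show that any prime divisor $E$ contracted by $u$ would be dominated by a covering family of $D'$-negative curves (by $D'$-negativity of $u$); for $A$ generic and $\epsilon$ small these curves lie in the extremal ray contracted by $f$, since the relation $0 \leq B\cdot C = D'\cdot C + \epsilon A\cdot C$ combined with $D'\cdot C < 0$ pins $C$ arbitrarily close to the $B$-trivial extremal ray as $\epsilon \to 0$, and a genericity argument excludes other families. Hence a covering family of $E$ would be $f$-contracted, forcing $\dim f(E) \leq \dim X - 2$; but $f$ being small makes $f|_E$ generically finite onto the divisor $f(E) \subset Y$ (since the $f$-indeterminacy locus has codimension $\geq 2$), a contradiction. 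The delicate point in this argument is the genericity statement isolating the extremal ray of $f$ as the unique family of $D'$-negative curves, which should follow from the local polyhedrality of $\Nef(X/T)$ around $B$ (Theorem~\ref{thm: Shokurov poly for nef}) combined with the generic choice of $A$.
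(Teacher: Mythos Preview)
There are two genuine gaps.

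\textbf{In (1),} the claim that $\rho(X/Y)=1$ forces $Y$ to be $\bQ$-factorial is exactly the point where the MKD setting diverges from the adjoint one. The standard argument needs that every $\bQ$-Cartier divisor on $X$ numerically trivial over $Y$ is the pullback of a $\bQ$-Cartier divisor from $Y$; for $(K_X+\Delta)$-negative contractions this is the contraction theorem, but for an arbitrary $D$-negative contraction it is not automatic --- the paper's introduction flags this explicitly (``numerical triviality does not necessarily imply linear triviality for $D$-negative extremal contractions''). The paper supplies this step via the local factoriality of canonical models: with $B_Y$ Weil on $Y$, $B$ its strict transform, $E=\Exc(f)$, choose $c$ with $B+cE\equiv 0/Y$ and $H$ ample$/T$ on $Y$; local factoriality near $[f^*H]$ forces the canonical model of $\epsilon(B+cE)+f^*H$ to lie over $Y$, and since this divisor is $\equiv f^*H/Y$ one extracts a $\bQ$-Cartier $\Theta$ on $Y$ with $f^*\Theta\sim_\bQ \epsilon(B+cE)+f^*H$, whence $B_Y$ is $\bQ$-Cartier.

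\textbf{In (2),} your covering-family/genericity argument for (i) is, as you concede, incomplete, and your route to $f^+$ via Proposition~\ref{prop: condition (4)}(3) already presupposes (i). The paper reverses the order: apply local factoriality on $X$ near $[f^*H]$ \emph{first}. For any $D'$ in the adjacent non-nef chamber, the canonical model $X\dashrightarrow Z_{D'}$ then admits a morphism $Z_{D'}\to Y$; since $u:X\dashrightarrow X^+$ is a good minimal model of such a $D'$, composing gives $f^+:X^+\to Y$ immediately. Now (i) is free: a prime divisor $E$ contracted by $u$ would have center of codimension $\ge 2$ in $X^+$, hence image of codimension $\ge 2$ in $Y$ under $f^+$; but $f$ is small, so $f(E)\subset Y$ is a divisor --- contradiction. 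With $f^+$ in hand, the paper reuses the linearization trick from (1) to get $\rho(X^+/Y)=1$ and $D^+\not\equiv 0/Y$, and the fact that the adjacent chamber misses $\Nef(X/T)$ shows $u$ is not an isomorphism, forcing $D^+$ to be $f^+$-ample. Your sentence ``$A_{X^+}$ becomes $f^+$-anti-ample across a flip of $f$'' is circular, since it assumes $u$ is the flip.
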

\begin{proof}
Let $A$ be an ample$/T$ $\bR$-Cartier divisor on $X$. If $D$ is not nef$/T$, then there exists $r>0$ such that $[D+rA]$ lies on the boundary of $\Nef(X/T)$. Since $D+rA$ is big$/T$, there exists a full-dimensional rational polyhedral cone $P\subset \Eff(X/T)$ such that $[D+rA]\in \Int(P)$. By Theorem~\ref{thm: Shokurov poly for nef},
\[
P\cap \Nef(X/T)
\]
is a rational polyhedral cone. Hence, after perturbing $A$, we may assume that $[D+rA]$ lies in the relative interior of a facet of $P\cap \Nef(X/T)$. Since $X/T$ is an MKD fiber space, the effective nef divisor $D+rA$ is semi-ample over $T$. Let $f: X \to Y/T$ be the birational contraction induced by $D+rA$. Then $f$ is a $D$-negative extremal contraction with $\rho(X/Y)=1$.

If $f$ is a divisorial contraction, then we claim that $Y$ is still $\bQ$-factorial. Let $B_Y$ be a Weil divisor on $Y$ with $B$ the strict transform of $B_Y$ on $X$. Let $E=\Exc(f)$ be the exceptional divisor. Then there exists some $c\in \bQ$ such that $B+cE \equiv 0/Y$. Let $H$ be an ample$/T$ divisor on $Y$. By the local factoriality of canonical models$/T$, there exists a positive rational number $\ep$, such that the canonical model$/T$ of $\ep(B+cE)+f^*H$ admits a morphism to $Y$. In particular, there exists a birational contraction $h: X \dto X'/Y$ such that $h_*(\ep(B+cE)+f^*H)$ is semi-ample over $Y$. On the other hand, as $h_*(\ep(B+cE)+f^*H) \equiv 0/Y$, there exists a $\bQ$-Cartier divisor $\Theta$ on $Y$ such that 
\[
f'^*\Theta \sim_\bQ h_*(\ep(B+cE)+f^*H)/T,
\] where $f'=f\circ h^{-1}: X' \to Y$. This implies that $f^*\Theta \sim_\bQ \ep(B+cE)+f^*H/T$. Therefore, we have
\[
B_Y = f_*(B+cE)\sim_\bQ \frac 1 \ep (\Theta-H)/T,
\] which is a $\bQ$-Cartier divisor. This shows that $Y$ is still $\bQ$-factorial. By Theorem \ref{thm: bir contraction is MKD}, $Y/T$ is still an MKD space.

\medskip

Next, assume that $f$ is a small $D$-negative contraction with $\rho(X/Y)=1$. Then $f^*\Nef(Y/T)$ is a facet of $\Nef(X/T)$. Let $H$ be an ample$/T$ divisor on $Y$. Then $f^*H$ is a big$/T$ divisor on $X$ and $[f^*H] \in f^*\Nef(Y/T)$. Hence, there exists a full-dimensional rational polyhedral cone $Q \subset \Eff(X/T)$ such that $[f^*H]\in {\rm Int}(Q)$. Let 
\[
Q = \bigsqcup_{i=1}^m Q_i
\] be a disjoint union of finitely many relatively open rational polyhedral cones as in Theorem \ref{thm: Shokurov poly for minimal models}. As $[f^*H]$ lies on the boundary of $\Nef(X/T)$, there exists a full-dimensional cone $Q_i$ and a facet $F \subset \bar Q_i$ such that 
\begin{enumerate}
\item $Q_i \cap \Nef(X/T)=\emptyset$,
\item $F \subset \Nef(X/T)$, and 
\item $[f^*H] \in F$.
\end{enumerate}
By the local factoriality of canonical models$/T$, shrinking $Q_i$ around $[f^*H]$, we can assume that the canonical model$/T$ of each effective divisor in $Q_i$ admits a morphism to $Y$. Let $g: X \dto X^+/T$ be a good minimal model corresponding to a divisor in $Q_i$. We claim that $g$ is the $D$-flip. 

By the construction, $X^+/T$ is $\bQ$-factorial and there exists a morphism $f^+: X^+ \to Y/T$. Therefore, $g$ is isomorphic in codimension $1$. As $f$ is a small contraction, $Y$ is not $\bQ$-factorial. Hence, $f^+$ is not an isomorphism. If $B^+$ is a $\bQ$-Cartier divisor on $X^+$ with $B$ the strict transform on $X$, then we have $B+cD \equiv 0/Y$ for some $c\in \bQ$. By the same argument as before, we have $B+cD \sim_{\bQ} 0/Y$ and thus $B^++cD^+\sim_{\bQ} 0/Y$. This implies $\rho(X^+/Y)=1$ and $D^+ \not\equiv 0/Y$. As $Q_i \cap \Nef(X/T)=\emptyset$, $g$ is not an isomorphism. Thus $D^+$ is ample$/Y$. This implies that $g: X \dto X^+/T$ is the $D$-flip. By Lemma \ref{thm: bir contraction is MKD}, $Y/T$ is still an MKD fiber space.
\end{proof}

\begin{remark}\label{rmk: no MMP for non-effective div}
Unlike for Mori dream spaces, it is in general impossible to run MMPs for non-pseudo-effective divisors on MKD spaces. For example, let $A$ be a simple abelian variety with $\rho(A)=2$. We first recall the following general fact: if $A$ is a simple abelian variety, then every nonzero rational nef class in $N^1(A)$ is ample.

Indeed, let $L$ be a nef Cartier divisor. After replacing $L$ by a numerically equivalent divisor, we may assume that $L$ is effective by \cite[Lemma~1.1]{Bau98}. Consider the homomorphism associated to $L$,
\[
\phi_L: A\longrightarrow \widehat A,\quad
x\longmapsto t_x^*L\otimes L^{-1},
\]
where $ \widehat A$ is the dual abelian variety and $t_x$ is the translation by $x$. Then the kernel $\ker(\phi_L)$ is a Zariski closed subgroup of $A$ by \cite[Page~57]{Mum08}. Since $A$ is simple, either $\ker(\phi_L)=A$ or $\ker(\phi_L)$ is finite. If $\ker(\phi_L)=A$, then $c_1(L)=0$ in $N^1(A)$. Thus, if the numerical class of $L$ is nonzero, then $\ker(\phi_L)$ is finite. Hence $L$ is ample by \cite[Page~57]{Mum08}.

It follows that the two boundary rays of $\Nef(A)$ are not rational rays. Consequently, the two boundary rays $\bR_{\geq 0}[\ell_1]$ and $\bR_{\geq 0}[\ell_2]$ of the dual cone $\overline{NE}(A)$ are not rational rays either. In particular, for every curve $C$ on $A$, the class $[C]$ lies in the interior of $\overline{NE}(A)$. Now take a Cartier divisor $D$ such that $D\cdot \ell_1>0$ and $D\cdot \ell_2<0$. Then $D$ is not pseudo-effective. If $C$ is a curve such that $D\cdot C<0$, then any morphism contracting $C$ must contract all curves on $A$, since $[C]\in \operatorname{Int}(\overline{NE}(A))$. In particular, it must also contract curves $C'$ satisfying $D\cdot C'>0$. Hence no step of a $D$-MMP can be performed on $A$.
\end{remark}

\subsubsection{MMP with scaling for MKD fiber spaces}

Theorem \ref{thm: bir contraction is MKD} allows us to run the MMP in the category of MKD fiber spaces. Moreover, we can run a special MMP for MKD fiber spaces, called an MMP with scaling, as follows (cf. \cite[\S 3.10]{BCHM10}). 

Let $X/T$ be an MKD fiber space. Suppose that $D$ is an effective $\bR$-Cartier divisor on $X$ which is not nef$/T$. Let $A$ be a big$/T$ $\bR$-Cartier divisor such that $D+tA$ is nef$/T$ for some $t\in \Rr_{\geq 0}$. Set 
\[
r \coloneqq \inf \{t \in \Rr_{\geq 0} \mid D+tA \text{~is nef over~}T\}
\] to be the nef threshold of $D$ with respect to $A$. 

Since $D+rA$ is big over $T$, there exists a full-dimensional rational polyhedral cone $P \subset \Eff(X/T)$ such that $[D+rA] \in \Int(P)$. By Theorem \ref{thm: Shokurov poly for nef}, $P\cap \Nef(X/T)$ is a rational polyhedral cone. Note that $[D+rA]$ lies on the boundary of $\Nef(X/T)$.

Let $F_j$, $1\leq j\leq n$, be the facets of $P\cap \Nef(X/T)$ containing $[D+rA]$. Since $X/T$ is an MKD fiber space, every effective divisor whose class lies in $F_j$ is semi-ample over $T$. Then, for each $j$, there exists $[\ell_j]\in {\rm NE}(X/T)$ such that
\[
F_j \subset \{[B] \in N^1(X/T) \mid [B] \cdot \ell_j=0\}.
\]
In an open neighborhood of $[D+rA]$, the nef cone $\Nef(X/T)$ is given by
\[
\bigcap_{j=1}^n \{[B]\in N^1(X/T)\mid [B]\cdot \ell_j\geq 0\}.
\]
Since $[D+(r-\epsilon)A]\notin \Nef(X/T)$ for any $0<\epsilon\leq r$, there exists $k$ such that
\[
(D+(r-\epsilon)A)\cdot \ell_k<0.
\] As $(D+rA)\cdot \ell_k =0$, this shows $D\cdot \ell_k<0$. Let $f\colon X\to Y/T$ be the contraction morphism defined by any effective divisor whose class lies in $\operatorname{Int}(F_k)$. Then $f$ is a $D$-negative extremal contraction with $\rho(X/Y)=1$. As $D+rA$ is semi-ample$/T$, we have $D+rA \sim_\Rr 0/Y$.

If $f$ is a divisorial contraction, then $Y/T$ is an MKD fiber space by Proposition \ref{prop: extremal contractions and flips}~(1). Moreover, $f_*(D+rA)$ is nef $/T$. Hence, the nef threshold of $f_*D$ with respect to $f_*A$ is less than or equal to $r$. We can repeat the above process starting from $Y/T$. 

If $f$ is a small contraction, let $\theta: X\dto X^+/Y$ be the $D$-flip given by Proposition~\ref{prop: extremal contractions and flips}~(2). Then $X^+/T$ is still an MKD fiber space. We have $\theta_*(D+rA) \sim_\Rr 0/Y$. In particular, $\theta_*D + r \theta_*A$ is still nef over $T$. Hence, the nef threshold of $\theta_*D$ with respect to $\theta_*A$ is still less than or equal to $r$. We can repeat the above process starting from $X^+/T$. 

The above discussion implies that we can obtain a sequence of $D$-MMP$/T$,
\[
X = X_1 \dto X_2 \dto \cdots \dto X_n \dto \cdots ,
\] 
and a sequence of non-increasing non-negative real numbers 
\[
r_1 \geq r_2 \geq \cdots \geq r_n \geq \cdots,
\] 
which are the nef thresholds at each step. This process is called an MMP$/T$ with scaling of $A$. If there exists some $n$ such that $r_n = 0$, then the strict transform of $D$ on $X_n$ is nef over $T$. Hence, this MMP terminates at $X_n$. Moreover, $X \dto X_n/T$ is a good minimal model$/T$ of $D$. Indeed, set $Y = X_n$ and let $g: X \dto Y/T$ be this MMP$/T$ such that the strict transform of $D$, denoted by $D_Y$, is nef$/T$.  Since $X/T$ is an MKD fiber space, $D$ admits a good minimal model $h: X \dto X'$ over $T$. If $p: W \to X, q: W \to Y, r: W \to X'$ are projective birational morphisms such that $g=q\circ p^{-1}, h=r \circ p^{-1}$, then we have
\[
p^*D=q^*D_Y+E, \quad p^*D=r^*D'+F,
\] where $D'$ is the strict transform of $D$ on $X'$ and $E\geq 0$ (resp. $F\geq 0$) is a $q$-exceptional (resp. $r$-exceptional) divisor. By Lemma \ref{lem: common mm}, we have $q^*D_Y=r^*D'$, which implies that $D_Y$ is also semi-ample over $T$. Finally, as each step of this MMP is $D$-negative, we see that $g$ is also $D$-negative. This shows that $g$ is a good minimal model$/T$ of $D$.

\medskip

Now we establish Theorem \ref{thm: MMP for MKD}, which, combined with Proposition \ref{prop: extremal contractions and flips}, ensures the termination of the MMP for an MKD fiber space with scaling of an ample divisor.

\begin{proof}[Proof of Theorem \ref{thm: MMP for MKD}]
Based on the above discussion, we can run a $D$-MMP$/T$ with scaling of an ample$/T$ divisor $A$. Moreover, every variety appearing in this MMP$/T$ is an MKD fiber space by Proposition \ref{prop: extremal contractions and flips}.  If this MMP does not terminate, then we deduce a contradiction.

As Picard numbers strictly decrease under divisorial contractions, after finitely many steps, this MMP consists only of flips. Without loss of generality, we may assume that this MMP$/T$ involves only flips.

As $D$ is an effective $\bR$-Cartier divisor, there exists a rational polyhedral cone $Q \subset \Eff(X/T)$ such that $[D], [A] \in Q$. By Theorem \ref{thm: Shokurov poly for minimal models}, $Q$ decomposes into a finite disjoint union of relatively open rational polyhedral cones such that divisors in the same cone share the same weak minimal models$/T$. This decomposition induces a decomposition of the interval $[[D], [D+rA]]$ into finitely many closed intervals 
\begin{equation}\label{eq: decomp interval}
\{~[[D+c_iA], [D+c_{i+1}A]] ~\mid~ 1 \leq i \leq m~\},
\end{equation}
such that the divisors in each open interval $([D+c_iA], [D+c_{i+1}A])$ share the same weak minimal models$/T$. Suppose that $r_l$ is the nef threshold on $X_l$ for this MMP with scaling. Note that 
\[
g_l: X \dto X_l/T
\] is a weak minimal model$/T$ of $D+r_lA$. If $[D+r_lA] \in \left[[D+c_iA], [D+c_{i+1}A]\right)$, then we have $r_l=c_i$. Indeed, by the property of the decomposition in \eqref{eq: decomp interval}, $X \dto X_l/T$ is also a weak minimal model$/T$ of $D+c_iA$. In particular, there are finitely many possibilities for the nef thresholds.

We claim that 
\[
I \coloneqq \{i \mid r_i =r_l \text{~and~} i>l\}
\] is a finite set. 

Indeed, as $D+r_lA$ is big$/T$, there exists a rational polyhedral cone $R \subset \Eff(X/T)$ such that $[D+r_lA] \in \Int(R)$. Let $R= \cup_j R_j$ be a finite union satisfying Theorem \ref{thm: Shokurov poly for minimal models}. For each $i \in I$, as $X \dto X_i/T$ is a minimal model$/T$ of $D + r_l A$ and $[D + r_l A] \in \Int(R)$, there exists $[A_i] \in R_j$ such that the strict transform of $A_i$ on $X_i$ is ample over $T$. Since there are only finitely many $R_j$, we may assume that there exist distinct indices $i_1,i_2\in I$ such that $[A_{i_1}]$ and $[A_{i_2}]$ both lie in the same $R_j$. By the choice of $R_j$, $g_{i_1}: X \dto X_{i_1}/T$ is also a weak minimal model$/T$ of $A_{i_2}$. By Lemma \ref{lem: common mm}, the natural map $g_{i_1} \circ g_{i_2}^{-1}: X_{i_2} \dto X_{i_1}$ is a morphism. For the same reason, we see that $g_{i_2} \circ g_{i_1}^{-1}: X_{i_1} \dto X_{i_2}$ is also a morphism. Hence, there exists an isomorphism $h: X_{i_1} \simeq X_{i_2}/T$ such that $h\circ g_{i_1}=g_{i_2}$. We can assume that $i_1<i_2$. Let $p: W \to X_{i_1}, q: W \to X_{i_2}$ be projective birational morphisms such that $h=q\circ p^{-1}$. As $h: X _{i_1} \dto X_{i_2}$ consists of a sequence of $D$-flips, we have
\[
p^*D_{i_1}=q^*D_{i_2}+E
\] with $E>0$, where $D_{i_1}$ and $D_{i_2}$ are strict transforms of $D$ on $X_{i_1}$ and $X_{i_2}$, respectively. On the other hand, as $h$ is an isomorphism, we have
\[
p^*D_{i_1}=q^*D_{i_2}.
\] This leads to a contradiction, and thus the claim is verified. Therefore, the nef thresholds can occur only finitely many times for each fixed value, which contradicts the assumption that the MMP does not terminate.

Finally, by the discussion preceding the proof, this MMP terminates with a good minimal model$/T$.
\end{proof}

\subsection{Equivalence with the notion of Mori dream spaces under pseudo-automorphism actions}\label{subsec: equivalence}

In this section, we establish Theorem \ref{thm: two def are same}, which demonstrates that the MKD spaces defined in Definition \ref{def: MKD spaces} are indeed analogues of Mori dream spaces under pseudo-automorphism actions. Note that, for technical reasons, we consider the cone $\bMov^e(X)$ in Theorem \ref{thm: two def are same}. Once the theorem is established, it follows immediately that $\bMov^e(X) = \Mov(X)$ by Lemma \ref{lem: movable give iso in codim 1}.

\begin{proof}[Proof of Theorem \ref{thm: two def are same}]
First, assume that $X$ is an MKD space. Note that (a) and (d) follow directly from the definition of MKD spaces (see Definition \ref{def: MKD spaces}). By Lemma \ref{lem: movable give iso in codim 1}, we have $\bMov^e(X)=\Mov(X)$, which verifies (b). Finally, (c) follows from Theorem \ref{thm: Shokurov poly for minimal models} together with Lemma \ref{lem: movable give iso in codim 1}.

\medskip

Next, assuming that $X$ satisfies the above (a), (b), (c), and (d), we need to check (1), (2), (3), and (4) for the definition of MKD spaces (see Definition \ref{def: MKD spaces}). Note that (1) is exactly (a). 

First, we show that every effective $\bR$-Cartier divisor admits a good minimal model. By assumption, if $D$ is an effective $\bR$-Cartier divisor, then it admits a minimal model $h: X \dto X'$. Let $D'$ be the strict transform of $D$ on $X'$. We need to show that $D'$ is semi-ample. Let $p: W \to X, q: W \to X'$ be projective birational morphisms such that $h=q \circ p^{-1}$. Then we have 
\[
p^*D=q^*D'+E,
\] where $E \geq 0$ is a $q$-exceptional divisor such that $\Supp E$ contains the divisors contracted by $h$. Let $B=p_*(q^*D')$, which is an $\bR$-Cartier divisor as $X$ is $\bQ$-factorial. Moreover, we have $[B] \in \bMov^e(X)$ as $q^*(D'+\ep A)$ is a semi-ample divisor for any $\ep \in \Qq_{>0}$, where $A$ is an ample divisor on $X'$. By (b), there exists some $g \in \PsAut(X)$ such that $g_*B \in \Pi$. By (c) and (d), $g_*B$ admits a good minimal model $f: X \dto Y$ which is isomorphic in codimension $1$. Therefore,
\[
\theta: X  \stackrel{g}{\dashrightarrow} X  \stackrel{f}{\dashrightarrow} Y
\] is a good minimal model of $B$ by Lemma \ref{lem: iso in codim 1 preserves mm}. As $B=p_*(q^*D')$, we have   
\[
p^*B=q^*D'+F,
\] where $F$ is a $p$-exceptional divisor. As $-F \equiv q^*D'/X$ is nef over $X$, we have $F \geq 0$ by the negativity lemma. Replacing $W$ by a higher model, we can assume that there exists a morphism $r: W \to Y$ such that $\theta=r\circ p^{-1}$. Then we have
\[
p^*B=r^*B_Y+F_Y=q^*D'+F,
\] where $B_Y=\theta_*B$ and $F_Y \geq 0$ is an $r$-exceptional divisor.  By Lemma \ref{lem: common mm}, we have $F_Y = F$. Thus, $D'$ is semi-ample as $B_Y$ is semi-ample. This verifies (2).

By Lemma \ref{lem: movable give iso in codim 1}, we have $\bMov^e(X)=\Mov(X)$, hence (3) follows from (b).

Next, we show that $\Pi$ satisfies the local factoriality of canonical models. By (c), $\Pi$ is a finite union of $\Pi \cap f_i^*(\Nef(X_i))$, each of which is a rational polyhedral cone. Since $(f_{i,*}\Pi) \cap \Nef(X_i) $ satisfies the local factoriality of canonical models by Proposition \ref{prop: condition (4)} (1), the same holds for $\Pi \cap f_i^*(\Nef(X_i))$ by Proposition \ref{prop: condition (4)} (3). Hence, $\Pi$ satisfies the local factoriality of canonical models.

Finally, we show that $\Eff(X)$ satisfies the local factoriality of canonical models (i.e., Definition \ref{def: MKD spaces} (4)). The above discussion has checked conditions (1), (2), and (3) in Definition \ref{def: MKD spaces}. Moreover, we have shown that $\Pi$ satisfies the local factoriality of canonical models. As we are in the absolute case, $\Eff(X)$ is non-degenerate. Then the local factoriality of canonical models for $\Eff(X)$ follows from Corollary \ref{cor: local factoriality for Pi}. This verifies (4).
\end{proof}

\begin{remark}\label{rmk: existence of minimal model}
Suppose that $D$ is an effective divisor and $D = P + N$ is its Nakayama-Zariski decomposition, where $P$ is the positive part and $N$ is the fixed part. Then we have $P \in \bMov^e(X)$. By (b) and (c) of Theorem \ref{thm: two def are same}, there exists a good minimal model $X \dashrightarrow X'$ of $P$, which is isomorphic in codimension $1$. Let $D', P', N'$ denote the strict transforms of $D, P, N$ on $X'$. Then $D$ admits a good minimal model if and only if $D'$ does. Moreover, we have $D' = P' + N'$, which is the Nakayama-Zariski decomposition of $D'$, with $P'$ semi-ample as its positive part. In particular, $D$ birationally admits a Nakayama-Zariski decomposition with semi-ample positive part. 

In \cite{BH14}, it was shown that minimal models exist if a (usual) log pair birationally admits a Nakayama-Zariski decomposition with nef positive part. Thus, it is natural to ask whether the method of \cite{BH14} can be adapted to the above setting to remove the extra assumption that every effective divisor admits a minimal model. Pursuing this would likely require developing a complete theory in the present framework. Since Theorem \ref{thm: two def are same} serves only to justify that MKD spaces are analogues of Mori dream spaces, we do not pursue this direction here.
\end{remark}

\begin{remark}\label{rmk: weaken assumption for MKD}
The proof of Theorem \ref{thm: two def are same} shows that, at least in the absolute setting, the assumptions in the definition of MKD spaces can be slightly weakened by requiring only that
\begin{enumerate}
    \item effective divisors admit minimal models and divisors in $\bMov^e(X)$ admit good minimal models (instead of assuming that all effective divisors admit good minimal models), and
    \item $\Pi$ satisfies the local factoriality of canonical models (instead of requiring that $\Eff(X)$ satisfies the local factoriality of canonical models).
\end{enumerate}
\end{remark}

\subsection{Fibrations whose geometric generic fiber is an MKD space}\label{subsec: geometric MKD space}

In this section, we establish Theorem \ref{thm: geometric MKD space}, which constitutes the first step toward applying MKD fiber spaces to moduli problems.

In the sequel, we will use the following spreading-out and specialization techniques, whose underlying principle is well known (see, for example, \cite[Chapter 3.2]{Poo17}). The following result is taken from \cite[Lemma 4.2]{LZ25}.

\begin{lemma}[{\cite[Lemma 4.2]{LZ25}}]\label{lem: spread out and specialization}
Suppose that $X \to S$ is a morphism between varieties. Let $K = K(S)$ be the field of rational functions on $S$, and let $\bar K$ be the algebraic closure of $K$. If 
\[
\bar g \colon \bar Y \to X_{\bar K}
\] 
is a morphism of varieties over $\bar K$, and $\bar M$ is a coherent sheaf on $\bar Y$, then, after shrinking $S$, there exist a finite \'etale Galois base change $T \to S$, a variety $Y/T$, a morphism 
\[
g \colon Y \to X_T/T,
\] 
and a coherent sheaf $M$ on $Y$ such that $\bar Y \simeq Y_{\bar K}$, $\bar g = g_{\bar K}$, and $\bar M \simeq M_{\bar K}$.
\end{lemma}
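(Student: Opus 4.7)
The plan is to reduce the statement to the standard spreading out formalism of EGA IV. Since the variety $\bar Y$, the morphism $\bar g \colon \bar Y \to X_{\bar K}$, and the coherent sheaf $\bar M$ are all of finite type over $\bar K$, their defining data involves only finitely many coefficients in $\bar K$. These coefficients lie in some finite subextension $L \subset \bar K$ of $K = K(S)$. After replacing $L$ by its Galois closure inside $\bar K$, I may assume that $L/K$ is finite Galois.

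First I would descend the data from $\bar K$ to $L$. By Grothendieck's absolute Noetherian approximation (or concretely by clearing denominators in the finitely many defining equations and transition matrices), there exist a variety $Y_L/L$, a morphism $g_L \colon Y_L \to X_L$ over $L$, and a coherent sheaf $M_L$ on $Y_L$ whose base changes along $\mathrm{Spec}\, \bar K \to \mathrm{Spec}\, L$ recover $\bar Y$, $\bar g$, and $\bar M$ respectively.

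Next I would spread the extension $L/K$ out geometrically. Let $T$ denote the normalization of $S$ in $L$; then $T \to S$ is finite with $K(T) = L$. Since $L/K$ is separable and Galois, after shrinking $S$ to a non-empty open subset (and correspondingly shrinking $T$) the morphism $T \to S$ becomes a finite \'etale Galois cover. Finally, since $(Y_L, g_L, M_L)$ is data living over the generic point $\mathrm{Spec}\, L$ of $T$, the standard limit formalism (EGA IV$_3$, \S8) produces, after a further shrinking of $T$, a variety $Y/T$, a morphism $g \colon Y \to X_T$ over $T$, and a coherent sheaf $M$ on $Y$ whose restrictions to $\mathrm{Spec}\, K(T) = \mathrm{Spec}\, L$ recover $(Y_L, g_L, M_L)$. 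Base-changing along $\mathrm{Spec}\, \bar K \to \mathrm{Spec}\, L \to T$ then recovers $(\bar Y, \bar g, \bar M)$, as required; here one uses that $\bar K$ is simultaneously the algebraic closure of $K$ and of $L = K(T)$.

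The main obstacle is essentially bookkeeping: one must shrink $S$ (and correspondingly $T$) a finite number of times to ensure simultaneously that $T \to S$ is finite \'etale Galois, that $Y$ is a variety (integral and of finite type), that $g$ is defined on all of $T$, and that $M$ is coherent on the full $Y$. Each requirement defines a non-empty open locus in $S$ (respectively $T$), so a finite intersection of these open loci suffices, and each step along the way is a well-known application of the EGA IV limit theorems.
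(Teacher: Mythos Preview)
The paper does not give its own proof of this lemma; it simply cites it from \cite[Lemma 4.2]{LZ25} and remarks that the underlying principle is well known (referring to \cite[Chapter 3.2]{Poo17}). Your sketch via finite descent to a Galois subextension $L/K$, normalization of $S$ in $L$, and the EGA IV limit formalism is precisely the standard route and is correct as outlined.
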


In what follows, we write $\eta \coloneqq \Spec(K)$ and $\bar\eta \coloneqq \Spec(\bar K)$ for the generic point and the geometric generic point of $S$, respectively. Hence, $X_K$ and $X_{\bar K}$ are also denoted by $X_\eta$ and $X_{\bar\eta}$, respectively.

\begin{proof}[Proof of Theorem \ref{thm: geometric MKD space}]
By the standard spreading-out and specialization techniques (see Lemma \ref{lem: spread out and specialization}), there exists a generically finite morphism $T' \to T$ such that, after shrinking $T'$, the induced morphism $X_{T'} \to T'$ is a fibration and $X_{T'}$ is of klt singularities. For any generically finite morphism $T'' \to T$ factoring through $T' \to T$, after shrinking $T''$, this property remains valid.

\medskip

For (2), as $X_{T'}$ has klt singularities, there exists a small $\bQ$-factorial modification $Y \to X_{T'}$ (see \cite{BCHM10}). Thus, $Y_{\bar\eta} \to (X_{T'})_{\bar\eta}=X_{\bar\eta}$ is still a small modification. As $X_{\bar\eta}$ is a $\bQ$-factorial variety, we have $Y_{\bar\eta} \simeq (X_{T'})_{\bar\eta}$. Hence there exists a Zariski open set $T_0 \subset T'$ such that $Y_{T_0} \simeq X_{T_0}$. As $Y_{T_0}$ is still $\bQ$-factorial, after shrinking $T'$, we can assume that $X_{T'}$ is $\bQ$-factorial.

By Proposition \ref{prop: Generic property} (2), after shrinking $T'$, the natural linear map
\begin{equation}\label{eq: equal space}
\iota: N^1(X_{T'}/T') \to N^1(X_{\bar\eta}),\quad [D] \mapsto [D_{\bar\eta}]
\end{equation} is injective. Since $N^1(X_{\bar\eta})$ is a finite-dimensional vector space, by applying Lemma \ref{lem: spread out and specialization} to each of its generators, we may assume that \eqref{eq: equal space} becomes an isomorphism after a further generically finite base change of $T'$. Moreover, $\iota$ induces inclusions of the following cones
\begin{equation}\label{eq: inj for Mov and Eff}
\Mov(X_{T'}/T') \hookrightarrow \Mov(X_{\bar\eta}), \quad \Eff(X_{T'}/T') \hookrightarrow \Eff(X_{\bar\eta}).
\end{equation} Possibly after a further generically finite base change, we show that the above injections become isomorphisms.

As $X_{\bar\eta}$ is an MKD space, $\Mov(X_{\bar\eta})$ admits a rational polyhedral fundamental domain $\bar \Pi$ under the action of $\Gamma_B(X_{\bar\eta})$. By Lemma \ref{lem: spread out and specialization}, replacing $T' \to T$ by a further generically finite morphism and shrinking $T'$, we can assume that there exists a rational polyhedral cone $\Pi \subset \Mov(X_{T'}/T')$ such that $\iota(\Pi)=\bar\Pi$. As $\Mov(X_{\bar\eta})$ is a non-degenerate cone, by Theorem \ref{thm: finite presented}, $\Gamma_B(X_{\bar\eta})$ is finitely presented. Assume that $\bar \gamma_1, \cdots, \bar\gamma_l \in \PsAut(X_{\bar\eta})$ generate $\Gamma_B(X_{\bar\eta})$. By Lemma \ref{lem: spread out and specialization}, replacing $T' \to T$ by a further generically finite morphism and shrinking $T'$, there are $\gamma_1, \cdots, \gamma_l \in \PsAut(X_{T'}/T')$ such that $(\gamma_j)_{\bar\eta} = \bar\gamma_j, 1 \leq j \leq l$. We claim that 
\[
\Mov(X_{T'}/T') \hookrightarrow \Mov(X_{\bar\eta})
\] is a surjection. Let $[\bar D] \in  \Mov(X_{\bar\eta})$. Then there exists $[\bar B] \in \bar \Pi$ such that
\[
\bar\gamma \cdot [\bar B] = [\bar D],
\] where $\bar\gamma = \bar\gamma_i \cdots \bar\gamma_j \cdots \bar\gamma_k$ is a finite product of those generators. By the above construction, there exist $[B]\in \Pi$ with $\iota([B])=[\bar B]$ and elements $\gamma_j$ with $(\gamma_j)_{\bar\eta}=\bar\gamma_j$. Therefore, if 
\[
\gamma = \gamma_i \cdots \gamma_j \cdots \gamma_k
\] is the corresponding pseudo-automorphism in $\PsAut(X_{T'}/T')$, then we have 
\[
\iota(\gamma \cdot [B]) = \bar\gamma \cdot [\bar B] = [\bar D].
\] As $\PsAut(X_{T'}/T') \cdot \Pi \subset \Mov(X_{T'}/T')$, this shows the desired claim. This argument also implies
\begin{equation}\label{eq: fun for Mov}
\PsAut(X_{T'}/T') \cdot \Pi = \Mov(X_{T'}/T').
\end{equation} Note that the above argument remains valid for any generically finite morphism $T'' \to T$ factoring through $T' \to T$.

The surjection of $\Eff(X_{T'}/T') \to \Eff(X_{\bar\eta})$ can be shown by the same argument as above. In fact, the existence of a fundamental domain $\bar\Xi$ for $\Eff(X_{\bar\eta})$ under the action of $\Gamma_B(X_{\bar\eta})$ follows from Theorem \ref{thm: 3 equivalences} (3). Moreover, if $\Xi$ and $T'$ are defined similarly to the movable case above, then we still have
\begin{equation}\label{eq: cover eff1}
\PsAut(X_{T'}/T') \cdot \Xi = \Eff(X_{T'}/T').
\end{equation}

We now show that, after possibly replacing $T' \to T$ by a further generically finite morphism and shrinking $T'$, every effective $\bR$-Cartier divisor on $X_{T'}/T'$ admits a good minimal model over $T'$. By \eqref{eq: cover eff1} and Lemma \ref{lem: iso in codim 1 preserves mm}, it suffices to show the claim for effective $\bR$-Cartier divisors in $\Xi$. Let
\[
\bar\Xi=\bigsqcup_{i=1}^m \bar\Xi_i
\] be a disjoint union of finitely many relatively open rational polyhedral cones as in Theorem \ref{thm: Shokurov poly for minimal models}. Fix a birational map $\bar f_i \colon X_{\bar\eta} \dashrightarrow \bar Y_i$ associated with each $\bar\Xi_i$, such that for every effective divisor $\bar D$ with $[\bar D] \in \bar\Xi_i$, the map $\bar f_i$ is a good minimal model of $\bar D$. Let $\bar D_{i}^j, 1 \leq j\leq v$ be effective divisors such that each $[\bar D_{i}^j]$ is a vertex of the closure of $\bar\Xi_i$. Let $\bar\theta_i^j: \bar Y_i \to \bar Z_{i}^j$ be the morphism induced by $\bar D_i^j$. Then there exists an ample divisor $\bar A_{i}^j$ on $\bar Z_{i}^j$ such that $\bar D_{i}^j = (\bar\theta_i^j)^*(\bar A_{i}^j)$.

By Lemma \ref{lem: spread out and specialization}, replacing $T' \to T$ by a further generically finite morphism and shrinking $T'$, the above construction is modeled on $X_{T'}/T'$. To be precise, there exist effective divisors $D_{i}^j, 1 \leq j\leq v$ on $X_{T'}$, birational maps $f_i: X_{T'} \dto Y_i/T'$, morphisms $\theta_i^j: Y_i \to Z_{i}^j/T'$, and ample$/T'$ divisors $A_{i}^j$ on $Z_{i}^j$ such that $D_{i}^j = (\theta_i^j)^*(A_{i}^j)$, $(D_{i}^j)_{\bar\eta}=\bar D_{i}^j$, $(f_i)_{\bar\eta}=\bar f_i$, $(\theta_i^j)_{\bar\eta}=\bar\theta_i^j$, and $(A_{i}^j)_{\bar\eta}=\bar A_{i}^j$. After shrinking $T'$, this implies that $f_i$ is a good minimal model$/T'$ of any divisor of the form
\[
D=\sum_j r_i^j D_i^j, \quad \text{where~} \sum_j r_i^j = 1 \text{~and~} r_i^j > 0.
\] Note that 
\[
\Xi_i \coloneqq \left\{\sum_j r_i^j [D_i^j] ~\mid~ \sum_j r_i^j = 1 \text{~and~} r_i^j > 0, 1 \leq j \leq v\right\}
\] is the relatively open cone such that $\iota(\Xi_i)=\bar\Xi_i$. Hence, it suffices to show that if $B$ is an effective $\bR$-Cartier divisor on $X_{T'}$ such that $[B]=[D] \in \Xi_i$, then $f_i: X_{T'} \dto Y_i/T'$ is a good minimal model of $B$. Set $B_i\coloneqq (f_i)_*B$. We need to show that $B_i$ is semi-ample$/T'$.

By shrinking $T'$, we may assume that $T'$ is smooth and that there exists no very exceptional divisor on $Y_i$ over $T'$. This implies that if $E$ is a prime divisor on $Y_i$ which is vertical over $T'$, then 
\[
E=\tau_i^*(\tau_i(E)) \sim_\Qq 0/T',
\] where $\tau_i: Y_i \to T'$ is the natural morphism. As effective divisors on $X_{\bar\eta}$ admit good minimal models, we see that $(B_i)_{\bar\eta}$ is semi-ample. As $\bar\eta \to \eta$ is a flat morphism, we have the natural identification
\[
H^0(X_{\eta}, \Oo_{X_\eta}(m(B_i)_{\eta})) \otimes_{\eta} \bar\eta \simeq H^0(X_{\bar\eta}, \Oo_{X_{\bar\eta}}(m(B_i)_{\bar\eta}))
\] for any sufficiently divisible $m\in \Zz$ (see \cite[III Proposition 9.3]{Har77}). Therefore,  $(B_i)_{\eta}$ is semi-ample$/\eta$ and thus $B_i$ is semi-ample over an open subset $U \subset T'$. On the other hand, by the previous discussion, any vertical divisor is linearly trivial over $T'$. Hence, $B_i$ is semi-ample over $T'$.

The above discussion shows that $X_{T'}/T'$ is an MKD fiber space, except for the local factoriality of canonical models$/T'$ for $\Eff(X_{T'}/T')$. Since $\Eff(X_{T'}/T')$ is a non-degenerate cone, by Corollary \ref{cor: local factoriality for Pi} (see Remark \ref{rmk: Pi is fund of Eff is ok}), it remains to verify the local factoriality of canonical models$/T'$ for $\Xi$. 

By the above discussion, there exists a disjoint union of finitely many relatively open rational polyhedral cones
\[
\Xi=\bigsqcup_{i=1}^m \Xi_i,
\] 
such that each divisor $D$ with $[D] \in \Xi_i$ admits a good minimal model$/T'$ 
\[
f_i: X_{T'} \dto Y_i/T'.
\] Thus, $(f_i)_*\Xi_i \subset \Nef^e(Y_i/T')$ is a rational polyhedral cone which satisfies the local factoriality of canonical models$/T'$ by Proposition \ref{prop: condition (4)} (1). If $g: Y_i \to Z/T'$ is the canonical model$/T'$ of $(f_i)_*D$ for some $[D] \in \Xi_i$, then $g\circ f_i$ is the canonical model$/T'$ of $D$. Thus, $\Xi_i$ also satisfies the local factoriality of canonical models$/T'$ and so does $\Xi$. This completes the verification that $X_{T'}/T'$ is an MKD fiber space. Note that the above discussion remains valid for any generically finite morphism $T'' \to T$ factoring through $T' \to T$. This shows (2).

\medskip

Note that (3) has already been established in the proof of (2) (see the discussion following \eqref{eq: inj for Mov and Eff}).

\medskip

For (4), as $X_{\bar\eta}$ is projective, $\Eff(X_{\bar\eta})$ and $\Eff(X_{\bar\eta})$ are non-degenerate cones and so do $\Mov(X_{T'}/T')$ and $\Eff(X_{T'}/T')$ by (3). As shown in the proof of (2) (see \eqref{eq: fun for Mov}), there exists a rational polyhedral cone $\Pi \subset \Mov(X_{T'}/T')$ such that $\PsAut(X_{T'}/T') \cdot \Pi = \Mov(X_{T'}/T')$. By Proposition \ref{prop: prop-def}, we have $\PsAut(X_{T'}/T') \cdot \Pi = \Mov(X_{T'}/T')_+$. Hence, we have $\Mov(X_{T'}/T') = \Mov(X_{T'}/T')_+$.  
The equality $\Eff(X_{T'}/T') = \Eff(X_{T'}/T')_+$ follows by the same argument.
\end{proof}

\begin{remark}
It remains open whether the geometric generic fiber of a fibration $X \to T$ whose fibers are MKD spaces is itself an MKD space (cf. Question \ref{que: fiber to space}).
\end{remark}

\section{Variants of cones}\label{sec: variant of cones}

For a variety $X$, the nef cone encodes contraction morphisms, the movable cone encodes birational models that are isomorphic in codimension $1$, and the effective cone encodes birational contractions of $X$. From this viewpoint, various geometric properties of $X$ are reflected in different cones of divisors, and one may consider further variants of these cones. These variants exhibit especially simple behavior for MKD spaces, since good minimal models exist for every effective divisor, rather than only for adjoint divisors of the form $K_X+\Delta$.

Recall the definitions of the generic nef cone (see Definition \ref{def: generic nef cone})
\[
{\rm GNef}(X/T) \coloneqq \{[D] \in \Eff(X/T) \mid [D_\eta] \in \Nef(X_\eta)\},
\] and the generic automorphism group (see \eqref{eq: generic auto})
\[
{\rm GAut}(X/T) \coloneqq \{g\in \PsAut(X/T) \mid g_\eta \in \Aut(X_\eta)\}.
\] As before, $\Gamma_{GA}(X/T)$ is set to be the image of ${\rm GAut}(X/T)$ under the natural group homomorphism $\rho: \PsAut(X/T) \to {\rm GL}(N^1(X/T))$.

\begin{proof}[Proof of Theorem \ref{thm: cone for GNef}]
    We show (1) first. By Theorem \ref{thm: 3 equivalences}, there exists a rational polyhedral cone $P$ such that $P\subset \Eff(X/T)$ and $\PsAut(X/T) \cdot P =\Eff(X/T)$. Let $P = \sqcup_i P_i$ be a disjoint union of finitely many relatively open rational polyhedral cones satisfying Theorem~\ref{thm: Shokurov poly for minimal models}. Let 
\[
\iota_\eta: N^1(X/T) \to N^1(X_\eta), \quad [D] \mapsto [D_\eta]
\]
be the natural surjective linear map (see Proposition \ref{prop: Generic property} and Remark \ref{rmk: iota-K not need fibration}).  Set $P_{i,\eta} \coloneqq \iota_\eta(P_i)$.

If $N^1(X_\eta)=0$, then $X \to T$ is a generically finite morphism. Hence, we have ${\rm GNef}(X/T)=\Eff(X/T)$ and ${\rm GAut}(X/T)= \PsAut(X/T)$. Then (1) follows from Theorem \ref{thm: 3 equivalences}.

From now on, we assume that $N^1(X_\eta)\neq 0$. As $\Amp(X/T) \subset {\rm GNef}(X/T)$, ${\rm GNef}(X/T)$ is a full-dimensional cone. Moreover, we have 
\begin{equation}\label{eq: include in amp}
\iota_{\eta} \left(\Int({\rm GNef}(X/T))\right) \subset \Amp(X_\eta).
\end{equation}
Indeed, if $[D] \in \Int({\rm GNef}(X/T))$ and $A$ is an ample$/T$ divisor, then $[D - \ep A] \in \Int({\rm GNef}(X/T))$ for some $\ep \in \Rr_{>0}$. Hence, $[D_\eta - \ep A_\eta] \in \Nef(X_\eta)$, which implies that $[D_\eta] \in \Amp(X_\eta)$.

First, we claim that if $(g\cdot P_i)\cap {\rm GNef}(X/T) \neq \emptyset$ for some $g\in \PsAut(X/T)$, then 
\begin{equation}\label{eq: closure included}
g\cdot \bar P_i \subset {\rm GNef}(X/T),
\end{equation} 
where $\bar P_i$ is the closure of $P_i$. 

Indeed, suppose that $[g\cdot D] \in {\rm GNef}(X/T)$ such that $D$ is an effective divisor with $[D]\in P_i$. In particular, $(g\cdot D)_\eta$ is a nef divisor. By Theorem~\ref{thm: MMP for MKD}, we can run a $(g \cdot D)$-MMP$/T$, which terminates to a good minimal model$/T$ denoted by $f: X \dto Y/T$. As $(g\cdot D)_\eta$ is nef, $f$ is an isomorphism over the generic point of $T$. By Lemma \ref{lem: iso in codim 1 preserves mm}, $f\circ g$ is a good minimal model$/T$ of $D$. Therefore, by the property of $P_i$, $f\circ g$ is a good minimal model$/T$ of each effective divisor $B$ with $[B]\in P_i$. This implies that $(g \cdot B)_\eta$ is nef, and thus $g\cdot P_i \subset {\rm GNef}(X/T)$. As a limit of nef divisors is still nef, we have $g\cdot \bar P_i \subset {\rm GNef}(X/T)$.

Second, we claim that if 
    \[
    (g \cdot P_i) \cap\Int({\rm GNef}(X/T)) \neq \emptyset, \quad  (h \cdot P_i)\cap\Int({\rm GNef}(X/T)) \neq \emptyset
      \] for some $g,h \in \PsAut(X/T)$, then 
      \begin{equation}\label{eq: in GAut}
      g \circ h^{-1} \in {\rm GAut}(X/T).
      \end{equation} Indeed, there exists some $[D]\in P_i$, such that $(g\cdot D)_\eta$ is an ample divisor by \eqref{eq: include in amp}. By \eqref{eq: closure included}, we have $h\cdot P_i \subset {\rm GNef}(X/T)$. In particular, $(h\cdot D)_\eta$ is a nef divisor. Let 
    $f: X \dto Y/T$ and $f': X \dto Y'/T$ be good minimal models/$T$ of $g\cdot D$ and $h\cdot D$, respectively. By Theorem~\ref{thm: MMP for MKD}, we can assume that these good minimal models$/T$ are obtained through MMP$/T$. Hence, $f_\eta$ and $f'_\eta$ are isomorphisms. Note that $f'_*(h\cdot D)$  maps to $f_*(g\cdot D)$ through the natural map
    \[
  f \circ g \circ h^{-1} \circ f'^{-1}: Y' \dto Y/T.
    \] As $\left(f_*(g\cdot D)\right)_\eta$ is ample and $\left(f'_*(h\cdot D)\right)_\eta$ is nef, we see that $f \circ g \circ h^{-1} \circ f'^{-1}$ is a morphism over the generic point $\eta$ by Lemma \ref{lem: common mm}~(2). As $Y$ is $\bQ$-factorial (see Theorem~\ref{thm: MMP for MKD}), we see that 
$f \circ g \circ h^{-1} \circ f'^{-1}$ is an isomorphism over the generic point. 
Since $f_\eta$ and $f'_\eta$ are isomorphisms, it follows that 
$h \circ g^{-1}$ is an isomorphism over the generic point $\eta$. 
Hence, we have $g \circ h^{-1} \in {\rm GAut}(X/T)$.

For each $P_i$, fix one $g_i$ if there exists some $g_i \in \PsAut(X/T)$ such that 
\[
(g_i \cdot P_i) \cap \Int({\rm GNef}(X/T)) \neq \emptyset.
\] Let 
    \[
    Q\coloneqq \Cone(g_i\cdot \bar P_i \mid (g_i \cdot P_i)\cap\Int({\rm GNef}(X/T)) \neq \emptyset)
    \] be the cone generated by these finitely many cones. Then $Q$ is a rational polyhedral cone in ${\rm GNef}(X/T)$ by \eqref{eq: closure included}. We claim that
    \begin{equation}\label{eq: desired inclusion}
    {\rm GAut}(X/T) \cdot Q \supset \Int({\rm GNef}(X/T)).
    \end{equation} Indeed, since $\PsAut(X/T) \cdot P \supset \Eff(X/T)$, for each 
$[D] \in \Int({\rm GNef}(X/T))$, there exist some $P_i$, $[B] \in P_i$, and 
$h \in \PsAut(X/T)$ such that $h \cdot [B] = [D]$. By \eqref{eq: in GAut}, we have $h \cdot g_i^{-1}\in {\rm GAut}(X/T)$, thus
    \[
    [D]=h \cdot [B] = (h \cdot g_i^{-1} ) \cdot g_i \cdot [B] \in {\rm GAut}(X/T) \cdot Q.
    \]
    The above shows (1).

    \medskip
    
    For (2), note that if $E$ is the maximal subspace inside $\bEff(X/T)$ then it is also the maximal subspace inside the closure of ${\rm GNef}(X/T)$. By Proposition \ref{prop: max vector defined over Q}, $E$ is defined over $\bQ$. Then (2) follows from  (1) and Proposition \ref{prop: degenerate cone}.
    
    \medskip
    
    For (3), note that if ${\rm GNef}(X/T)$ is non-degenerate, then
    \[
    {\rm GNef}(X/T)_+= {\rm GAut}(X/T) \cdot Q 
    \] by Proposition \ref{prop: prop-def}. By (1), as $Q \subset {\rm GNef}(X/T)$, we have 
    ${\rm GAut}(X/T) \cdot Q ={\rm GNef}(X/T)$.
    
    \medskip
    
    For (4), let $Q = \sqcup_j Q_j$ be a disjoint union of finitely many relatively open rational polyhedral cones satisfying Theorem~\ref{thm: Shokurov poly for minimal models}. For each $j$, take $[D]\in Q_j$ and let $f_j: X \dto Y_j/T$ be a good minimal model$/T$ of $D$ which is obtained through a $D$-MMP$/T$. In particular, $f_j$ is an isomorphism over the generic point of $T$. Since $\bar Q_j$ is a rational polyhedral cone, there are only finitely many contraction morphisms
\begin{equation}\label{eq: finitely contractions}
Y_j\to Z_j^m/T,\qquad 1\leq m\leq l,
\end{equation}
corresponding to the finitely many faces of $\bar Q_j$.
    
    Suppose that $f: X \dto Y/T$ is a map which is a contraction morphism $f_U: X_U \to Y_U/U$ for some non-empty open subset $U\subset T$. Let $A$ be an ample$/T$ divisor on $Y$. Let $p: W \to X$ be a projective birational morphism and $q: W \to Y$ be a projective morphism such that $f = q \circ p^{-1}$. Set $H \coloneqq p_* q^* A$. Then we have $[H] \in {\rm GNef}(X/T)$.  By (1), there exists some $\sigma\in {\rm GAut}(X/T)$ such that $[\sigma_*H]\in Q_j$.  Then $(f_j)_*(\sigma_*H)$ is semi-ample$/T$ on $Y_j$, and over the generic point of $T$, the contraction induced by $(f_j)_*(\sigma_*H)$ is among the finitely many contractions in \eqref{eq: finitely contractions}. By construction, $\sigma_\eta$ and $(f_j)_\eta$ are both isomorphisms. Therefore, we have $Y_\eta \simeq (Z_j^m)_\eta$ for some $m$.
\end{proof}

Similarly to the generic nef cone, we can consider the following cones:
\[
\begin{split}
{\rm GMov}(X/T) &\coloneqq \{[D] \in \Eff(X/T) \mid [D_\eta] \in \Mov(X_\eta)\},\\
{\rm MNef}(X/T) &\coloneqq \{[D] \in \Mov(X/T) \mid [D_\eta] \in \Nef(X_\eta)\}.
\end{split}
\] They are equipped with group actions by $\PsAut(X/T)$ and ${\rm GAut}(X/T)$, respectively.

Since these cones are not needed in the subsequent discussions, we simply record the following theorems, which are analogous to Theorem~\ref{thm: cone for GNef} and can be proved by a similar argument.

\begin{theorem}
    Let $X/T$ be an MKD fiber space. 
    \begin{enumerate}
    \item There is a rational polyhedral cone $Q\subset {\rm GMov}(X/T)$ such that $\Gamma_{B}(X/T) \cdot Q \supset {\rm GMov}(X/T)$. 
    \item ${\rm GMov}(X/T)_+$ admits a weak rational polyhedral fundamental domain under the action of $\Gamma_{B}(X/T)$. 
    \item If ${\rm GMov}(X/T)$ is non-degenerate, then ${\rm GMov}(X/T)_+={\rm GMov}(X/T)$.
    \end{enumerate}
\end{theorem}

\begin{theorem}
    Let $X/T$ be an MKD fiber space. 
    \begin{enumerate}
    \item There is a rational polyhedral cone $Q\subset {\rm MNef}(X/T)$ such that $\Gamma_{GA}(X/T) \cdot Q \supset {\rm MNef}(X/T)$. 
    \item ${\rm MNef}(X/T)_+$ admits a weak rational polyhedral fundamental domain under the action of $\Gamma_{GA}(X/T)$. 
    \item If ${\rm MNef}(X/T)$ is non-degenerate, then ${\rm MNef}(X/T)_+={\rm MNef}(X/T)$.
    \end{enumerate}
\end{theorem}

\begin{remark}
Theorem~\ref{thm: 3 equivalences} can also be extended to incorporate the cones ${\rm GNef}(X/T)$, ${\rm GMov}(X/T)$, and ${\rm MNef}(X/T)$, yielding analogous equivalence statements.
\end{remark}

\section{Examples and open questions on MKD spaces}\label{sec: examples and questions}

\subsection{Examples of MKD spaces}\label{subsec: examples}

To construct examples of MKD spaces that are neither Mori dream spaces nor of Calabi-Yau type, one may use the following proposition. The argument draws on ideas from \cite{Nam91} and \cite[Proposition~7.1]{GLSW26}.

\begin{proposition}\label{prop: product of MKD spaces}
Let $X, Y$ be MKD spaces with natural projections $p: X \times Y \to X$ and $q: X \times Y \to Y$. Suppose that $X \times Y$ is $\bQ$-factorial and $p^*N^1(X)+q^*N^1(Y) = N^1(X \times Y)$, then $X \times Y$ is an MKD space. 
\end{proposition}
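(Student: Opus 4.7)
The plan is to verify the characterization of MKD spaces given in Theorem~\ref{thm: 3 equivalences}~(3): I will exhibit a rational polyhedral cone $P \subset \Eff(X \times Y)$ with $\Gamma_B(X \times Y) \cdot P = \Eff(X \times Y)$, and check that every effective $\bR$-Cartier divisor on $X \times Y$ admits a good minimal model, together with the local factoriality of canonical models on $\Eff(X \times Y)$.

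First I would promote the numerical hypothesis to decompositions of the relevant cones:
\[
\Nef(X \times Y) = p^*\Nef(X) + q^*\Nef(Y), \qquad \bEff(X \times Y) = p^*\bEff(X) + q^*\bEff(Y),
\]
and similarly for $\bMov$. The inclusions ``$\supseteq$'' are immediate from functoriality; for ``$\subseteq$'', a class decomposes numerically as $p^*\alpha + q^*\beta$ by hypothesis, and restricting to a general fiber $X \times \{y_0\}$ (resp.\ $\{x_0\} \times Y$) recovers $\alpha$ (resp.\ $\beta$) with its positivity preserved. The embedding $(g,h) \mapsto g \times h$ realizes $\PsAut(X) \times \PsAut(Y)$ inside $\PsAut(X \times Y)$, acting diagonally on the direct-sum decomposition $N^1(X \times Y) = p^*N^1(X) \oplus q^*N^1(Y)$.

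Applying Theorem~\ref{thm: 3 equivalences}~(3) to $X$ and to $Y$ separately yields rational polyhedral cones $P_X \subset \Eff(X)$ and $P_Y \subset \Eff(Y)$ with $\Gamma_B(X) \cdot P_X = \Eff(X)$ and $\Gamma_B(Y) \cdot P_Y = \Eff(Y)$. Set $P \coloneqq p^*P_X + q^*P_Y$, which is rational polyhedral in $\Eff(X \times Y)$. Then
\[
\Gamma_B(X \times Y) \cdot P \;\supseteq\; (\Gamma_B(X) \times \Gamma_B(Y)) \cdot P \;=\; p^*\Eff(X) + q^*\Eff(Y) \;=\; \Eff(X \times Y).
\]
For the existence of good minimal models, any effective $\bR$-Cartier $D$ on $X \times Y$ is, after transport by some element of $\PsAut(X \times Y)$, $\bR$-linearly equivalent to $p^*D_X + q^*D_Y$ with $D_X, D_Y$ effective. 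Taking good minimal models $f_X \colon X \dashrightarrow X'$ of $D_X$ and $f_Y \colon Y \dashrightarrow Y'$ of $D_Y$ (which exist by the MKD hypotheses on $X$ and $Y$), the product $f_X \times f_Y$ is $D$-negative (verified on the common resolution $W_X \times W_Y$, where the negativity decomposes factor-by-factor), and it pushes $D$ forward to $p^*(f_X)_*D_X + q^*(f_Y)_*D_Y$, which is semi-ample. For local factoriality of canonical models, Künneth identifies the section ring of $p^*D_X + q^*D_Y$ with the tensor product of those of $D_X, D_Y$, so the canonical model is $Z_{D_X} \times Z_{D_Y}$; local factoriality on each factor descends, and using Proposition~\ref{prop: condition (4)}~(3) together with the $\PsAut$-orbit covering of $\Eff(X \times Y)$ by $P$, it propagates from $P$ to all of $\Eff(X \times Y)$.

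The principal obstacle is the distinction between numerical and $\bR$-linear equivalence: the hypothesis decomposes $N^1$, whereas good minimal models and semi-ampleness depend on the $\bR$-linear class (cf.\ Remark~\ref{rmk: not for codition (2)}). To bridge this, I would argue that any effective $\bR$-Cartier $D$ on $X \times Y$ is in fact $\bR$-linearly equivalent to a sum $p^*D_X + q^*D_Y$ of pullbacks of effective divisors; a careful treatment exploits the Künneth-style decomposition $\mathrm{Pic}^0(X \times Y) \simeq \mathrm{Pic}^0(X) \oplus \mathrm{Pic}^0(Y)$, which combined with the $N^1$ hypothesis decomposes the full Picard group up to torsion. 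Once this lifting is secured, the steps above assemble into a verification of Theorem~\ref{thm: 3 equivalences}~(3) for $X \times Y$.
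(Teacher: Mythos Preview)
Your proposal is correct and follows essentially the same route as the paper: decompose $\Eff(X\times Y)=p^*\Eff(X)+q^*\Eff(Y)$ by upgrading the numerical splitting to an $\bR$-linear one via $\mathrm{Pic}^0(X\times Y)\simeq\mathrm{Pic}^0(X)\times\mathrm{Pic}^0(Y)$ (the paper phrases this dually via the Albanese map), then build the polyhedral cone, the good minimal models, and the canonical models as products, and conclude via Theorem~\ref{thm: 3 equivalences}~(3). The ``transport by an element of $\PsAut$'' in your good-minimal-model paragraph is unnecessary once the $\bR$-linear splitting is in hand; the paper obtains effectivity of the factors $A,B$ directly from $H^0(X\times Y,\mathcal O(D))\simeq H^0(X,\mathcal O(A))\otimes H^0(Y,\mathcal O(B))$ via the projection formula, which is cleaner than restricting to general fibers.
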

\begin{proof}
We claim that 
\begin{equation}\label{eq: prod of eff}
p^*\Eff(X) +q^*\Eff(Y)= \Eff(X \times Y).
\end{equation} 

It suffices to show the inclusion ``$\supset$''. Let $D$ be an effective $\bQ$-Cartier divisor on $X \times Y$. By assumption, we have $D \equiv p^*A+q^*B$. For a projective variety $Z$, let $a_Z: Z \to {\rm Alb}(Z)$ denote the Albanese morphism. 
Since 
\[
 a_{X \times Y}: X \times Y \to {\rm Alb}(X \times Y)= {\rm Alb}(X) \times  {\rm Alb}(Y)
\] is the product of $a_X$ and $a_Y$, every numerically trivial divisor on $X \times Y$ pulls back from the product of Albanese varieties. Thus, we may write
\[
D \sim_{\bQ} (p^*A + q^*B) + a_{X \times Y}^*N
    = p^*\bigl(A + a_X^*N_X\bigr) 
      + q^*\bigl(B + a_Y^*N_Y\bigr),
\]
where $N_X \equiv 0$ on $X$ and $N_Y \equiv 0$ on $Y$. Replacing $A$ and $B$ by $A+a_X^*N_X$ and $B+a_Y^*N_Y$ respectively, we have $D \sim_\bQ p^*A+q^*B$. After replacing $D, A$, and $B$ by suitable positive multiples, we may assume that 
they are all Cartier divisors, and that
\[
D \sim p^*A + q^*B
\]
with $H^0(X \times Y, \mO(D)) \neq 0$. Denote by $\phi: X \times Y \to \Spec (\bC)$, $\psi: X \to \Spec (\bC)$ and $\nu: Y \to \Spec (\bC)$ the natural morphisms. By the projection formula, we have
\[
\begin{split}
0 \neq H^0(X \times Y, \mO(D)) &\simeq \phi_*(p^*\mO(A) \otimes q^*\mO(B))\\
&=\psi_*(\mO(A) \otimes p_*q^*\mO(B))=\psi_*(\mO(A) \otimes \psi^*\nu_*\mO(B))\\
&=\psi_*\mO(A) \otimes \nu_*\mO(B),
\end{split}
\] where $p_*q^*\mO(B) = \psi^*\nu_*\mO(B)$ follows from the flat base change theorem. In particular, we have $\psi_*\mO(A) \neq 0$ and $\nu_*\mO(B) \neq 0$ and thus $A$ and $B$ are effective divisors. This shows the claim.

By Theorem \ref{thm: 3 equivalences}, as $X$ and $Y$ are MKD spaces, there exist rational polyhedral cones $P_X\subset \Eff(X)$ and $P_Y\subset \Eff(Y)$ such that $\Gamma_B(X) \cdot P_X = \Eff(X)$ and $\Gamma_B(Y) \cdot P_Y = \Eff(Y)$. Let
\[
\Pi \coloneqq \Cone(p^*P_X,\; q^*P_Y) \subset \Eff(X \times Y)
\]
be the cone generated by $p^*P_X$ and $q^*P_Y$. Then $\Pi$ is a rational polyhedral cone.  By \eqref{eq: prod of eff}, we have
 \begin{equation}\label{eq: example cono action}
\Gamma_{B}(X \times Y) \cdot \Pi = \Eff(X \times Y).
 \end{equation}

Next, we check that every effective $\bR$-Cartier divisor on $X \times Y$ admits a good minimal model.

By \eqref{eq: prod of eff}, if $D$ is an effective $\Rr$-Cartier divisor on $X \times Y$, then there exist an effective $\Rr$-Cartier divisor $A$ on $X$ and an effective $\Rr$-Cartier divisor $B$ on $Y$, such that 
\[
D \sim_\bR p^*A+q^*B.
\]
Let $f: X \dashrightarrow X'$ and $g: Y \dashrightarrow Y'$ be good minimal models of $A$ and $B$, respectively. We claim that 
\begin{equation}\label{eq: splitting of MMP}
f \times g : X \times Y \dashrightarrow X' \times Y'
\end{equation}
is a good minimal model of $D$. Let $s: W \to X$ and $s': W \to X'$ be birational morphisms such that $f = s' \circ s^{-1}$. Similarly, let $t: V \to Y$ and $t': V \to Y'$ be birational morphisms such that $g = t' \circ t^{-1}$. Then
\[
s^*A = s'^*A' + E, \qquad t^*B = t'^*B' + F,
\]
where $A', B'$ are the strict transforms of $A, B$, and $E, F$ are effective divisors whose supports contain $\Exc(f), \Exc(g)$, respectively. Hence,
\[
(s \times t)^*(p^*A + q^*B)
= (s' \times t')^*(p'^*A' + q'^*B') + \Theta,
\]
where $p' : X' \times Y' \to X'$ and $q' : X' \times Y' \to Y'$ are the natural projections, and $\Theta$ is an effective divisor whose support contains 
\[
\Supp(E) \times Y \quad \text{and} \quad X \times \Supp(F).
\]
In particular, $\Supp(\Theta)$ contains $\Exc(f \times g)$. Therefore, $f \times g$ is a good minimal model of $D$.

Finally, local factoriality of canonical models for $\Eff(X\times Y)$ follows from the corresponding properties for $\Eff(X)$ and $\Eff(Y)$, together with \eqref{eq: prod of eff} and \eqref{eq: splitting of MMP}.

By Theorem \ref{thm: 3 equivalences} (3), it follows from \eqref{eq: example cono action} that $X \times Y$ is an MKD space.
\end{proof}

By Proposition~\ref{prop: product of MKD spaces}, one obtains many MKD spaces that are neither Mori dream spaces nor of Calabi-Yau type.

\begin{example}\label{eg: product}
Let $X$ be a smooth Mori dream space of general type. Such an $X$ can be taken as a smooth variety of general type with Picard number $1$, or as a smooth hypersurface in a certain product of Mori dream spaces (see \cite{Jow11}). Let $Y$ be a K3 surface such that $\Nef(Y)$ is not a polyhedral cone. Both $X$ and $Y$ are MKD spaces. As $h^1(Y, \Oo_Y)=0$, we have 
\begin{equation}\label{eq: split NS}
p^*N^1(X)+q^*N^1(Y) = N^1(X \times Y).
\end{equation} 
By Proposition \ref{prop: product of MKD spaces}, $X \times Y$ is still an MKD space. 

It follows from \eqref{eq: split NS} that $\Nef(X \times Y)=p^*\Nef(X)+q^*\Nef(Y)$. As $\Nef(X)$ is a polyhedral cone and $\Nef(Y)$ is not a polyhedral cone, we see that $\Nef(X \times Y)=p^*\Nef(X)+q^*\Nef(Y)$ is not a polyhedral cone. Thus, $X \times Y$ cannot be a Mori dream space. As the Kodaira dimension $\kappa(X \times Y)=\kappa(X)>0$, $X \times Y$ cannot be of Calabi-Yau type either.
\end{example}

One can also consider quotients of the above products under suitable conditions. This construction is analogous to the construction of bi-elliptic surfaces.

\begin{example}
Let 
\[
X = \{x_0^6+x_1^6+x_2^6+x_3^6=0\} \subset \mathbb{P}^3
\] 
be a Fermat-type hypersurface. We have $\rho(X)=1$ and $h^1(X, \mathcal{O}_X)=0$. Let $E_1, E_2$ be two non-isogenous elliptic curves without complex multiplication. Set $Y= E_1 \times E_2$, which is an abelian surface. Let $E_1(2)$ and $E_2(2)$ be torsion points of order $2$ on $E_1$ and $E_2$, respectively. Then 
\[
G \coloneqq E_1(2) \times E_2(2)
\] 
is a finite group isomorphic to $(\mathbb{Z}/2\Zz)^4$. 
Let $\langle\xi\rangle \simeq \mathbb{Z}/2\mathbb{Z}$ be a direct factor of $G \simeq (\mathbb{Z}/2\mathbb{Z})^4$. 
Let $\langle\xi\rangle$ act on $X$ by
\[
\xi\cdot[x_0:x_1:x_2:x_3]=[-x_0:x_1:x_2:x_3].
\]
This extends to an action of $G$ on $X$ by requiring that the remaining factors act trivially. Note that $G$ naturally acts on $Y$ by translation, denoted by
\[
t_\sigma: y \mapsto y+\sigma, \quad \sigma\in G, ~y\in Y.
\] Set
\[
W \coloneqq (X \times Y)/G,
\]
where the action
\[
G \times (X \times Y) \to X \times Y, \qquad 
\sigma \cdot (x,y) = (\sigma\cdot x,~ t_\sigma(y))
\]
is the diagonal $G$-action. 

We show that $W$ is an MKD space. 

As $G$ acts freely on $Y$, the diagonal action of $G$ on $X \times Y$ is also free. Hence, the quotient $W = (X \times Y)/G$ is a smooth variety.

We claim that 
\begin{equation}\label{eq: quotient split}
\Eff(W)=\ti p^*\Eff(X/G) + \ti q^*\Eff(Y/G),
\end{equation} where $\ti p: W \to X/G$ and $\ti q: W \to Y/G$ are natural morphisms. Indeed, it suffices to show the inclusion ``$\subset$''. Let $D$ be an effective $\Qq$-Cartier divisor on $W$, and set $\ti D = \phi^*D$, where $\phi: X \times Y \to W$ is the quotient map. As $X$ and $Y$ satisfy Proposition \ref{prop: product of MKD spaces}, by \eqref{eq: prod of eff}, we have
\[
\ti D \sim_{\mathbb{Q}} p^*A + q^*B,
\]
where $A, B$ are effective $\mathbb{Q}$-Cartier divisors on $X, Y$, respectively, and
\(p: X \times Y \to X\), \(q: X \times Y \to Y\) are the natural projections. Therefore, we have
\[
\sum_{g\in G} g\cdot \tilde D 
   \sim_\Qq 
   p^*\!\left(\sum_{g\in G} g\cdot A\right)
   + 
   q^*\!\left(\sum_{g\in G} g\cdot B\right).
\]
Since $\tilde D$ is $G$-invariant, we obtain 
\[
\sum_{g\in G} g\cdot \tilde D = 16\, \tilde D.
\]
As $\sum_{g\in G} g\cdot A$ and $\sum_{g\in G} g\cdot B$ are also $G$-invariant, they descend to effective $\Qq$-Cartier divisors on $X/G$ and $Y/G$, respectively. Hence, there exist effective $\Qq$-Cartier divisors $A'$ on $X/G$ and $B'$ on $Y/G$ such that
\[
D \sim_\Qq \tilde p^*A' + \tilde q^* B'.
\]
This shows \eqref{eq: quotient split}.

As $\rho(X)=1$, we see that $\rho(X/G)=1$. Indeed, if $\nu: X \to X/G$ is the quotient map, then $\nu_*(\nu^*A) = 2 A$, where $A$ is a Cartier divisor on $X/G$. As the cone conjecture holds for the abelian surface $Y/G$, there exists a rational polyhedral cone  $\ti \Pi \subset \Eff(Y/G)$ such that $\Aut(Y/G) \cdot \ti \Pi = \Eff(Y/G)$. Note that we have $\Aut(Y/G)= \PsAut(Y/G)$. Let 
\[
\Pi \coloneqq \Cone(\ti p^*\Eff(X/G), \ti q^*\ti \Pi) \subset \Eff(W)
\] be the cone generated by the rational polyhedral cones $\ti p^*(\Eff(X/G))$ and $\ti q^*\ti \Pi$. Hence, $\Pi$ is still a rational polyhedral cone. 

We claim that
\begin{equation}\label{eq: cone on W}
\PsAut(W) \cdot \Pi = \Eff(W).
\end{equation}

Note that $G$ is the kernel of the multiplication-by-$2$ map
\[
Y \to Y, \quad y \mapsto 2y.
\] Hence, we have $Y/G \simeq Y$. The group of automorphisms of the abelian surface $Y$ is
\[
\Aut_{\rm gp}(Y) = \{\pm 1\} \times \{\pm 1\}.
\] Thus, we also have $\Aut_{\rm gp}(Y/G) = \{\pm 1\} \times \{\pm 1\}$. As $\Aut_{\rm gp}(Y)$  is invariant under $G=E_1(2) \times E_2(2)$, we see that any element of $\Aut_{\rm gp}(Y)$ descends to an element of $\Aut_{\rm gp}(Y/G)$. Since $\Aut(Y/G) = \Aut^0(Y/G) \rtimes \Aut_{\rm gp}(Y/G)$, it induces a surjective map
\begin{equation}\label{eq: surj aut}
\Aut(Y) \to \Aut(Y/G).
\end{equation}
In other words, for any $\ti g\in \Aut(Y/G)$, there exists some $g\in \Aut(Y)$ such that $\ti g \circ \mu = \mu \circ g$, where $\mu: Y \to Y/G$ is the quotient map. This $g$ induces an automorphism
\[
g': X \times Y/G \to X \times Y/G, \quad [(x,y)] \mapsto [(x, g(y))],
\] which descends to $\ti g$ on $Y/G$ under the natural morphism $\ti q: X \times Y/G \to Y/G$. Indeed, for any $\sigma\in G$ and $g\in \Aut(Y)$, we have
\[
gt_\sigma = t_{g(\sigma)} g.
\] Since $\Aut_{\rm gp}(Y) \simeq (\pm 1) \times (\pm 1)$ and $\sigma$ is of order at most $2$, if $g\in \Aut_{\rm gp}(Y)$, then we have
\[
g(\sigma)=\sigma.
\] If $g\in \Aut^0(Y)$, then we have $gt_\sigma = t_{\sigma} g$. Hence, by $\Aut(Y) = \Aut^0(Y)\rtimes \Aut_{\rm gp}(Y)$, for any $g\in \Aut(Y)$, we have
\[
gt_\sigma = t_{\sigma} g.
\] Therefore, for any $x\in X$ and $y\in Y$, we have
\[
[(\sigma \cdot x, ~g(t_\sigma(y)))]= [(\sigma \cdot x, ~t_\sigma (g(y)))]=[(x, ~g(y))] \in X \times Y/G. 
\] That is, $g'$ is well-defined. From this, we see that $g'\in \Aut(W)$ and $\ti q \circ g' = \ti g \circ \ti q$. 

To show \eqref{eq: cone on W}, by \eqref{eq: quotient split}, we can assume that an effective divisor on $W$ is of the form $\tilde p^*A' + \tilde q^* B'$, where $A'$ and $B'$ are effective divisors on $X/G$ and $Y/G$, respectively. Hence, there exists some $\ti g \in \Aut(Y/G)$ such that $\ti g \cdot [B'] \in \ti \Pi$. Let $g'\in\Aut(W)$ be the automorphism constructed above. We have
\[
g'\cdot (\tilde p^*[A'] + \tilde q^* [B'])=g'\cdot (\tilde p^*[A'])+\ti q^*(\ti g \cdot [B'])= \tilde p^*[A'] +\ti q^*(\ti g \cdot [B']),
\] which lies in $\ti p^*\Eff(X/G)+\ti q^*\ti \Pi$. Thus, \eqref{eq: cone on W} follows. 

\medskip

Finally, we show that the existence of good minimal models and the local factoriality of canonical models for $\Eff(W)$ also hold. 

As $X$ and $Y$ satisfy Proposition \ref{prop: product of MKD spaces}, if 
$f: X \dashrightarrow X'$ and $g: Y \dashrightarrow Y'$ are good minimal models of $A$ and $B$, respectively 
(indeed, $X = X'$ as $\rho(X)=1$), then 
\[
f \times g : X \times Y \dashrightarrow X' \times Y'
\]
is a good minimal model of $p^*A+q^*B$ by \eqref{eq: splitting of MMP}.

Moreover, if $A = \nu^* A'$ and $B = \mu^* B'$, then, by running a $G$-equivariant MMP, we may assume that $X' \times Y'$ admits a $G$-action compatible with that of $X \times Y$ (see \cite[\S 2.2]{KM98} for a discussion of $G$-equivariant MMP). Hence, $X' \times Y'/G$ is a good minimal model of $\ti p^* A' + \ti q^* B'$. As the strict transform of $p^*(\nu^* A') + q^*(\mu^* B')$ is $G$-equivariant, the contraction 
\[
X' \times Y' \to Z
\] 
induced by this divisor is also $G$-equivariant, which gives the canonical model 
\[
W =X' \times Y'/G\dashrightarrow Z/G
\] 
of $\ti p^* A' + \ti q^* B'$. The local factoriality of canonical models for 
\[
\Eff(W) = \Eff(X \times Y)^G
\] 
also follows from this.

By Theorem \ref{thm: 3 equivalences} (3), this shows the claim that $W$ is an MKD space.

Note that $\kappa(W)=\kappa(X \times Y)=\kappa(X)=2$, so $W$ is not of Calabi-Yau type. Since the nef cone $\Nef(Y) \subset N^1(Y) \simeq \Rr^3$ is not a polyhedral cone, by $Y/G \simeq Y$,
$\Nef(Y/G)$ is not a polyhedral cone either. From \eqref{eq: quotient split}, we have 
\[
\Nef(W)=\tilde p^*\Nef(X/G) + \tilde q^*\Nef(Y/G).
\] 
Therefore, $\Nef(W)$ is not a polyhedral cone, which implies that $W$ is not a Mori dream space.
\end{example}

\subsection{Open questions on MKD fiber spaces}\label{subsec: questions about MKD}

\begin{question}
Is there an intrinsic way to characterize an MKD space by its Cox ring?
\end{question}

One possible way to characterize an MKD space in the spirit of the Cox ring is as follows.
\begin{enumerate}
    \item Every effective divisor admits a good minimal model.
    \item There exists a rational polyhedral cone $\Pi $ such that $\Pi$ is a fundamental domain of $\Mov(X)$ under the action of $\Gamma_B(X)$.
    \item $R(X, \Pi)$ is a finitely generated $\bC$-algebra.
\end{enumerate}
However, it would be desirable to have a more intrinsic characterization, that is, one not involving the choice of $\Pi$ explicitly.

\begin{question}
Let $X \to Y$ be a fibration from an MKD space $X$. Is $Y$ still an MKD space? 
\end{question}

Note that if $X$ is of Calabi-Yau type, then $Y$ is still of Calabi-Yau type by the canonical bundle formula, and if $X$ is a Mori dream space, then $Y$ is still a Mori dream space by \cite{Oka16}.

\begin{question}\label{que: fiber to space}
Let $X/T$ be a fiber space. If $X_s$ is an MKD space for each closed point $s\in T$, then is $X/T$ an MKD fiber space after a generically finite base change of $T$?
\end{question}

It is straightforward to show that if $X_s$ is a Calabi-Yau variety (resp. Fano type variety) for every $s\in T$, then $X/T$ is a Calabi-Yau fiber space (resp. Fano type fiber space) after shrinking $T$ (cf. Corollary \ref{cor: Mori dream space, CY are MKD space}). In Question \ref{que: fiber to space}, instead of considering all $s\in T$, one can also consider a Zariski dense subset of $T$. This relates to the moduli problem of MKD spaces. For a related question for Fano type varieties, see \cite[Question 1.1]{CLZ25}. Note that \cite{Lut24} studies the Morrison-Kawamata cone conjecture under deformations.

\begin{question}
Suppose that $X/T$ is an MKD fiber space (or $X_{T'}/T'$ is an MKD fiber space for any generically finite base change $T' \to T$). Then is there a Zariski open subset $T_0 \subset T$, such that each fiber $X_s$ is an MKD space for $s\in T_0$.
\end{question}

The above question is related to Theorem \ref{thm: fiber MKD} (see Remark \ref{rk: almost MKD}).

\begin{question}
How about the moduli problem of MKD spaces? Are there natural invariants that determine the boundedness of the moduli spaces of MKD spaces? 
\end{question}

\section{Applications of MKD fiber spaces}\label{sec: deformation of cones of MKD spaces}

In this section, we apply the theory of MKD fiber spaces to the deformation of various cones and to the boundedness of moduli problems.

\subsection{Deformation of cones of MKD spaces}

Our study of the deformation of cones of MKD spaces follows the lines of \cite{CLZ25}, with simplifications provided by the general theory developed in Section~\ref{sec: variant of cones}.

\subsubsection{Collected results on the deformation of N\'eron-Severi spaces}\label{subsubsec: def of NS spaces}

Recall that for a variety $X$ over a variety $T$, $\eta$ denotes the generic point of $T$ and $\bar\eta$ denotes $\Spec \overline{k(\eta)}$, where $\overline{k(\eta)}$ is the algebraic closure of the rational function field $k(\eta)$. Then $X_{\bar \eta} \coloneqq X \times_T \Spec \overline{k(\eta)}$ is the geometric generic fiber. A point is said to be a very general point of $T$ if it lies in the complement of countably many Zariski closed subsets of $T$.

We call a sequence of birational contractions
\[
X=X_0 \dto X_1 \dto \cdots \dto X_{n}/T
\] a partial MMP$/T$ (with respect to a divisor $D$) if each step consists of $D_i$-extremal divisorial or flipping contractions, and $D_i$-flips, where $D_i$ is the strict transform of $D$ on $X_i$. In the above, the corresponding extremal contraction has relative Picard number $1$. To be precise, if $X_i \to X_{i+1}/T$ is a $D_i$-flipping contraction, then $X_{i+2} \to X_{i+1}/T$ is its $D_i$-flip. Besides, in the above partial MMP, $D_n$ is not required to be nef over $T$. On the other hand, if $D_{n}$ becomes nef and big over $T$, and $X_{n} \to X_{n+1}$ is the birational contraction induced by $D_n$ (in this case, the relative Picard number of $\rho(X_n/X_{n+1})$ may be greater than $1$), then
\[
X=X_0 \dto X_1 \dto \cdots \dto X_{n}\to X_{n+1}/T
\] is also called a partial MMP$/T$. 

In the sequel, for simplicity, when we say that a sequence $X\dto X_n/T$ is an MMP$/T$ (with respect to a divisor $D$), we mean a partial MMP$/T$ such that the strict transform $D_n$ of $D$ on $X_n$ is nef$/T$. In other words, we also allow the last birational contraction $X_{n-1}\to X_n$, induced by the possibly big and semi-ample divisor $D_{n-1}$, to be included in an MMP$/T$.

Let $f: X \to T$ be a fibration. Let ${\mP}ic_{X/T}$ be the sheaf associated to the relative Picard functor
\[
S \mapsto \Pic(X_S)_{\bZ}/\Pic(S)_{\bZ}=\Pic(X_S/S)_{\bZ},
\] where $S\subset T$ is a Zariski open subset. See \cite[\S 9.2]{Kle05} for details. Note that ${\mP}ic_{X/T}$ is denoted by $\Pic_{(X/T)(\rm{zar})}$ in \cite[Definition 9.2.2]{Kle05}. In general, ${\mP}ic_{X/T}(U)$ may not be $\Pic(X_U/U)_{\bZ}$ for an open subset $U\subset T$ because of the sheafification.  By \cite[(9.2.11.2)]{Kle05}, we always have
\[
{\mP}ic_{X/T}(U) = H^0(U, R^1f_*\mO^*_{X_U}).
\]

\begin{theorem}[{\cite[Theorem 1.3]{CLZ25}}]\label{thm: CLZ-deform pic}
Assume that $f: X \to T$ is a fibration with $(X, \Delta)$ a klt pair for some effective $\mathbb{R}$-divisor $\Delta$ on $X$. Suppose that $S\subset T$ is a Zariski dense subset such that for any $s\in S$, the fiber $X_s$ satisfies
\[
H^1(X_s, \mO_{X_s})=H^2(X_s, \mO_{X_s})=0.
\] 
\begin{enumerate}
    \item If the natural restriction map $N^1(X/T) \to N^1(X_t)$ is surjective for very general $t\in T$, then there exists a non-empty open subset $T_0\subset T$, such that ${\mP}ic_{X_{T_0}/T_0} \otimes \bR$ is a constant sheaf in the Zariski topology.
    \item Up to a generically finite base change of $T$, there exists a non-empty open subset $T_0\subset T$, such that ${\mP}ic_{X_{T_0}/T_0} \otimes \bR$ is a constant sheaf in the Zariski topology. 
\end{enumerate} 
Moreover, in both of the above cases, for any open subset $U\subset T_0$, the natural restriction maps
\[
N^1(X_{T_0}/T_0) \to N^1(X_U/U) \to N^1(X_t),\quad [D] \mapsto [D|_{X_U}] \mapsto [D|_{X_t}]
\] are isomorphisms for any $t\in U$.
\end{theorem}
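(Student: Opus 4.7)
The plan is to analyze the relative Picard functor $\mathcal{P}ic_{X/T}$ via cohomology-and-base-change, reducing statements on the possibly singular total space $X$ to statements on a smooth simultaneous resolution. Since $(X,\Delta)$ is klt, $X$ has rational singularities, and after shrinking $T$ the same holds for every fiber. Choosing a log resolution $\pi\colon\tilde X\to X$ and further shrinking $T$ so that $\tilde X\to T$ is smooth, rationality of singularities gives $H^i(X_s,\mO_{X_s})\simeq H^i(\tilde X_s,\mO_{\tilde X_s})$ for $i\ge 1$. Hence the hypothesis $H^1(X_s,\mO_{X_s})=H^2(X_s,\mO_{X_s})=0$ on the Zariski dense subset $S\subset T$ transfers to the smooth family $\tilde X\to T$, and upper semicontinuity of fiber cohomology yields a non-empty open subset $T_1\subset T$ on which $R^1 f_*\mO_X=R^2 f_*\mO_X=0$.

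With these vanishings in hand, the theory of the relative Picard functor (see \cite[\S 9.2--9.5]{Kle05}) supplies the key structural information: vanishing of $R^1 f_*\mO_X$ kills the Lie algebra of $\mathcal{P}ic^0_{X_{T_1}/T_1}$, forcing the identity component to be trivial; vanishing of $R^2 f_*\mO_X$ removes the obstructions to deforming line bundles and gives formal smoothness. Combining these, $\mathcal{P}ic_{X_{T_1}/T_1}$ is étale over $T_1$, so $\mathcal{P}ic_{X_{T_1}/T_1}\otimes\mathbb{R}$ is a locally constant sheaf of finite-dimensional $\mathbb{R}$-vector spaces whose fiber over any $t\in T_1$ recovers $N^1(X_t)_\mathbb{R}$.

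The main obstacle is upgrading this locally constant sheaf to a constant one. For (1), the surjectivity hypothesis $N^1(X/T)\to N^1(X_t)$ at a very general $t$ means that global sections over $T$ already span $N^1(X_t)_\mathbb{R}$, which forces the monodromy representation $\pi_1(T_1)\to\mathrm{GL}(N^1(X_t)_\mathbb{R})$ to act trivially; the sheaf is then constant on a further open $T_0\subset T_1$. For (2), I would instead trivialize the monodromy by a generically finite étale base change $T_0\to T_1$: the monodromy preserves the lattice $N^1(X_t)_\mathbb{Z}/\mathrm{torsion}$ together with the intersection pairing and the class of a relative polarization, and a standard argument shows that after a suitable finite cover the residual action on $N^1(X_t)_\mathbb{R}$ becomes trivial. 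In either case, once $\mathcal{P}ic_{X_{T_0}/T_0}\otimes\mathbb{R}$ is constant the desired isomorphisms $N^1(X_{T_0}/T_0)\simeq N^1(X_U/U)\simeq N^1(X_t)$ for every open $U\subset T_0$ and $t\in U$ are immediate, using that sections of a constant sheaf over a connected base are determined by any stalk and that tensoring with $\mathbb{R}$ eliminates any sheafification issues arising from torsion in $\mathcal{P}ic$.
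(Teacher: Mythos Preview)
This theorem is not proved in the present paper: it is quoted verbatim as \cite[Theorem~1.3]{CLZ25} and used as a black box (see the header of the statement and its placement in \S\ref{subsubsec: def of NS spaces} among ``Collected results''). There is therefore no in-paper proof to compare your proposal against.

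That said, your outline is a reasonable sketch of how such a result is typically established, and the first half---transferring the cohomology vanishing to a resolution via rationality of singularities, then using $R^1f_*\mO_X = R^2f_*\mO_X = 0$ to make $\mathcal{P}ic_{X/T}$ \'etale over an open subset---is essentially the standard mechanism. One point deserves more care: in your argument for (2) you assert that ``a standard argument'' produces a finite cover trivializing the monodromy on $N^1(X_t)_\bR$, appealing to preservation of the lattice, the intersection form, and an ample class. But the stabilizer of an ample class inside $\mathrm{GL}(N^1(X_t)_\bZ)$ need not be finite in general, so this step is not automatic. The cleaner route (and the one implicit in how the surrounding paper uses spreading-out, cf.\ Lemma~\ref{lem: spread out and specialization} and Proposition~\ref{prop: Generic property}) is to first make a generically finite base change so that every class in $N^1(X_{\bar\eta})$ extends to a relative class; this forces $N^1(X/T) \to N^1(X_t)$ to be surjective for very general $t$, reducing (2) to (1) directly, without a separate monodromy-finiteness argument.
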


In the above theorem, ``up to a generically finite base change of $T$'' means that we take a generically finite morphism $T' \to T$ and replace $T$ by $T'$. The non-empty open subset $T_0$ is then taken inside this new base $T'$. Hence, after possibly further shrinking $T_0$, we are indeed performing an \'etale base change over an open subset of the original base.

\begin{lemma}\label{lem: total to fiber is inj}
Let $X \to T$ be a fibration. Then there exists an open subset $T_0 \subset T$ such that for any open subset $U \subset T_0$, the natural restriction map
\[
N^1(X_{U}/U) \longrightarrow N^1(X_t), \quad [D] \longmapsto [D|_{X_t}],
\]
is injective for every $t \in U$.
\end{lemma}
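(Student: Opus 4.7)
The plan is to combine the inclusion $N^1(X_U/U) \hookrightarrow N^1(X_{\bar\eta})$ furnished by Proposition~\ref{prop: Generic property}~(2) with a spreading-out argument for curves on the geometric generic fiber. The key input is that intersection numbers in flat and proper families are locally constant, which allows one to transport the vanishing on $X_t$ back to vanishing on $X_{\bar\eta}$, and hence (by the injection just mentioned) to $N^1(X_U/U)$.

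First, I will shrink $T$ to an open subset $T_1 \subset T$ such that, for every open $U \subset T_1$, the restriction map $N^1(X_U/U) \to N^1(X_{\bar\eta})$ is injective, as permitted by Proposition~\ref{prop: Generic property}~(2). Since $N_1(X_{\bar\eta})_\Rr$ is finite-dimensional, I will then choose finitely many irreducible curves $C_1, \ldots, C_r \subset X_{\bar\eta}$ whose numerical classes span $N_1(X_{\bar\eta})_\Rr$. Fixing a relative ample line bundle on $X/T_1$, each $C_j$ defines a geometric point $\xi_j$ lying on an irreducible component $H_j$ of the relative Hilbert scheme $\mathrm{Hilb}^{p_j}_{X/T_1}$, where $p_j$ is the Hilbert polynomial of $C_j$. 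The scheme $H_j$ is proper over $T_1$, and its image contains the generic point $\eta$; by properness this image is closed, and since $T_1$ is irreducible it must equal all of $T_1$. Set $T_0 \coloneqq T_1$.

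For the main step, let $U \subset T_0$ be open (necessarily irreducible, since $T$ is) and fix $t \in U$ together with $D \in N^1(X_U/U)$ satisfying $D|_{X_t} \equiv 0$. For each $j$, the preimage $H_j^U \coloneqq H_j \times_{T_0} U$ is a non-empty open subset of the irreducible scheme $H_j$, hence itself irreducible and connected. It contains both $\xi_j$ (lying over $\bar\eta$, which sits over $\eta \in U$) and some point $h_j$ mapping to $t$. Flatness and properness of the universal family $\mathcal{C}_j \to H_j$ imply that the function
\[
h \longmapsto D|_{X_{\pi(h)}} \cdot (\mathcal{C}_j)_h
\]
is locally constant on $H_j^U$, hence globally constant. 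It vanishes at $h_j$ by hypothesis, so it also vanishes at $\xi_j$, giving $D|_{X_{\bar\eta}} \cdot [C_j] = 0$ for every $j$. Since $\{[C_j]\}$ spans $N_1(X_{\bar\eta})_\Rr$, we conclude $D|_{X_{\bar\eta}} = 0$ in $N^1(X_{\bar\eta})$, and the injection from the first step then forces $D = 0$ in $N^1(X_U/U)$.

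The main obstacle is arranging, within a single fixed $T_0$, enough flat families of curves to detect every class in $N^1(X_U/U)$ uniformly in $U \subset T_0$ and $t \in U$. This is resolved by the properness of the relative Hilbert scheme for a fixed Hilbert polynomial: each component $H_j$ surjects onto $T_0$, guaranteeing that every fiber $X_t$ over $T_0$ contains a curve from the family $\mathcal{C}_j$, and flatness then propagates the vanishing intersection with $D$ from $X_t$ all the way back to the geometric generic fiber $X_{\bar\eta}$, completing the chain.
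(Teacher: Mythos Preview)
Your argument is essentially correct, but there is one small gap: Proposition~\ref{prop: Generic property}~(2) as stated in the paper requires $X$ to be $\bQ$-factorial, while the lemma makes no such assumption. This matters because, without $\bQ$-factoriality, Cartier divisors on $X_U$ need not extend to $X_{T_1}$, so the reduction ``shrink once to $T_1$, then the injection holds for every $U\subset T_1$'' is not immediate. The fix is the same opening move the paper uses: pass to a resolution $g\colon Y\to X$, use the commutative square with the injective pullbacks $g^*$ and $g_t^*$, and prove the lemma for $Y$ (which is smooth, hence $\bQ$-factorial). Once you insert that reduction, your use of Proposition~\ref{prop: Generic property}~(2) is legitimate and the rest of the argument goes through.

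That said, your route is genuinely different from the paper's. After reducing to the smooth case, the paper argues topologically: it uses Ehresmann's theorem to identify $H^2(X_t,\bQ)$ with $H^2(X_{t'},\bQ)$ along paths, together with the inclusion $N^1(X_t)_\bQ\subset H^2(X_t,\bQ)$, to transport the vanishing $[D|_{X_t}]=0$ to every fiber and hence conclude $D\equiv 0/U$ directly. You instead work on the curve side: spread out a spanning set of curves on $X_{\bar\eta}$ via irreducible components of the relative Hilbert scheme, use constancy of intersection numbers in flat proper families to push the vanishing from $X_t$ to $X_{\bar\eta}$, and then invoke the injection into $N^1(X_{\bar\eta})$. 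Your approach is purely algebro-geometric and would survive over base fields other than $\bC$; the paper's approach is shorter once Ehresmann is available and avoids the dependence on the external Proposition~\ref{prop: Generic property}~(2).
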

\begin{proof}
Let $g: Y \to X$ be a resolution. Shrinking $T$ if necessary, we may assume that $Y \to T$ is a smooth morphism by generic smoothness. Since
\[
N^1(X/T) \to N^1(Y/T),\quad [D] \mapsto [g^*D]
\qquad\text{and}\qquad
N^1(X_t) \to N^1(Y_t),\quad [B] \mapsto [g_t^*B]
\]
are both injective, and the following diagram of natural maps is commutative:
\[
\begin{tikzcd}
N^1(Y/T) \arrow[r] & N^1(Y_t) \\
N^1(X/T) \arrow[u, hookrightarrow] \arrow[r] & N^1(X_t) \arrow[u, hookrightarrow],
\end{tikzcd}
\]
after replacing $Y$ by $X$, it suffices to assume that $X$ is smooth and $f \colon X \to T$ is a smooth morphism.

Let $D_i, 1 \leq i \leq n$ be prime divisors on $X$ such that $\{[D_i] \mid 1 \leq i \leq n\}$ spans the vector space $N^1(X/T)$. By generic flatness, there exists an open subset $T_0\subset T$, such that each $D_i$ is flat over $T_0$. Note that for any open subset $U \subset T_0$, the natural map
\[
N^1(X/T) \to N^1(X_{U}/U), \quad [D] \mapsto [D|_{X_U}]
\] is surjective as $X$ is smooth. Hence, $\{[D_i|_{X_U}] \mid 1 \leq i \leq n\}$ still spans the vector space $N^1(X_U/U)$. 

We claim that the restriction map
\[
\iota: N^1(X_U/U) \longrightarrow N^1(X_t), \qquad [D] \longmapsto [D|_{X_t}],
\]
is injective for every $t \in U$. 

Since $\iota$ is a linear map defined over $\bQ$, we may assume that there exists a divisor
\[
D = \sum_{1 \leq i \leq n} a_i D_i|_{X_U}, \qquad a_i \in \Qq
\]
such that $[D|_{X_t}] = 0$ in $N^1(X_t)$. As $N^1(X_t)_\bQ = H^{1,1}(X_t, \Cc) \cap H^2(X_t, \Qq)$, we have $[D|_{X_t}]=0 \in H^2(X_t, \Qq)$. Since $f: X \to T$ is a smooth morphism, by Ehresmann's fibration theorem, the map $f$ is a locally trivial fibration in the Euclidean topology. For any $t' \in U$, by connecting $t$ and $t'$ through a closed path, we deduce that
\[
[D|_{X_{t'}}] = 0 \in H^2(X_{t'}, \Qq).
\] Indeed, since each $D_i$ is flat over $U$, the classes $[D_i|_{X_t}]$ and $[D_i|_{X_{t'}}]$ can be identified under the natural identification of $H^2(X_t,\Qq)$ and $H^2(X_{t'},\Qq)$ along the path. This implies that $[D|_{X_{t'}}] = 0 \in N^1(X_{t'})$ for every $t' \in U$. Consequently, $[D] = 0 \in N^1(X_U/U)$. This proves the claim and completes the proof.
\end{proof}

The following result is proved in \cite[Lemma~5.3~(2)]{CLZ25}. Note that the extra assumption in \cite[Lemma~5.3~(2)]{CLZ25} that $X$ is of Fano type over $T$ guarantees the existence of a universal open subset $T_0\subset T$ for all partial MMPs over $T$. Since we only consider a fixed partial MMP$/T$, this assumption is not needed here.

\begin{lemma}\label{lem: CLZ-surj preserved for bir contractions}
Let $X \to T$ be a fibration. Let $X \dashrightarrow Y/T$ be a partial MMP$/T$. Suppose that the natural map $N^1(X/T) \to N^1(X_t)$ is surjective for very general $t \in T$. Then there exists an open subset $T_0 \subset T$ such that the natural map
\[
N^1(Y_{T_0}/T_0) \to N^1(Y_t), \quad [D] \mapsto [D|_{Y_t}]
\]
is surjective for very general $t \in T_0$.
\end{lemma}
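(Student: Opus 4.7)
The plan is to proceed by induction on the number of steps in the partial MMP $X \dashrightarrow Y/T$, so that it suffices to verify that a single step $X \dashrightarrow X'/T$ preserves the surjectivity of the fiberwise restriction map, possibly after shrinking $T$. After shrinking $T$, one may arrange (via generic flatness together with the spreading-out of Lemma~\ref{lem: spread out and specialization}) that for very general $t \in T$, the restricted map $X_t \dashrightarrow X'_t$ is a step of the same type as $X \dashrightarrow X'$, with the contracted loci restricting compatibly (exceptional divisors to exceptional divisors, extremal rays to extremal rays, etc.). The use of ``very general'' lets us exclude the requisite countable union of proper closed subsets.

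For a flip $\phi \colon X \dashrightarrow X'/T$, which is an isomorphism in codimension $1$ over $T$, the strict transform induces natural isomorphisms $N^1(X/T) \simeq N^1(X'/T)$ and, for very general $t$, $N^1(X_t) \simeq N^1(X'_t)$, both compatible with restriction to fibers; hence surjectivity transfers directly. For a divisorial contraction $\pi \colon X \to X'/T$ with exceptional divisor $E$, given $[\alpha] \in N^1(X'_t)$, lift $\pi_t^*[\alpha] \in N^1(X_t)$ to a class $[\gamma] \in N^1(X/T)$ using the assumed surjectivity, and set $[\beta] \coloneqq [\pi_*\gamma] \in N^1(X'/T)$, which is well-defined since $X'$ is $\bQ$-factorial. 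By compatibility of pushforward with restriction on general fibers,
\[
[\beta|_{X'_t}] \;=\; [\pi_{t,*}(\gamma|_{X_t})] \;=\; [\pi_{t,*}\pi_t^*\alpha] \;=\; [\alpha].
\]

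For a flipping contraction $\pi \colon X \to X'/T$, let $R \subset \overline{NE}(X/T)$ denote the extremal ray, so that $\pi^*N^1(X'/T) = R^{\perp} \subset N^1(X/T)$. If the exceptional locus $E$ fails to dominate $T$, then for very general $t$ the map $\pi_t$ is an isomorphism, and the conclusion reduces to the hypothesis on $X$. Otherwise, for very general $t$ the fiber $X_t$ contains a curve $C_t$ contracted by $\pi_t$ whose class generates $R$ under the natural inclusion $N_1(X_t) \hookrightarrow N_1(X/T)$. Lift $\pi_t^*[\alpha]$ to $[\gamma] \in N^1(X/T)$; since
\[
[\gamma] \cdot C_t \;=\; [\gamma|_{X_t}] \cdot C_t \;=\; \pi_t^*[\alpha] \cdot C_t \;=\; 0,
\]
we conclude $[\gamma] \in R^{\perp}$, so $[\gamma] = \pi^*[\beta]$ for some $[\beta] \in N^1(X'/T)$. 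Injectivity of $\pi_t^*$ on fibers gives $\beta|_{X'_t} = \alpha$.

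The main obstacle will be the flipping contraction case, where one must ensure that the extremal ray $R$ is generated by a curve class supported in a very general fiber. This requires a careful family analysis of the contracted curves over $T$: when $E$ dominates $T$, generic points of the contracted locus in $X'$ lie over generic points of $T$, and for very general $t$ the contracted curves on $X_t$ provide representatives of $R$ for which fiberwise and global intersection numbers agree. The other cases are essentially formal once the fiberwise behavior of a single MMP step is established.
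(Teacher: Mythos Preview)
The paper does not prove this lemma; it is quoted directly from \cite[Lemma~5.3~(1)]{CLZ25} and used as a black box. There is therefore no in-paper argument to compare your proposal against.

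That said, your outline is a correct and standard route to the result: induct on the length of the partial MMP, and for a single step verify that surjectivity of $N^1(X/T)\to N^1(X_t)$ descends to the target after shrinking $T$. Your treatment of flips (isomorphism in codimension~$1$ on both the total space and, for very general $t$, on fibers) and of divisorial contractions (lift via $\pi_t^*$, push forward, use compatibility of $\pi_*$ with restriction to fibers) is sound; note that the well-definedness of $\pi_{t,*}$ on $N^1(X_t)$ uses the splitting $N^1(X_t)=\pi_t^*N^1(X'_t)\oplus\bR[E_t]$, which is available once $\pi_t$ is an extremal divisorial contraction between $\bQ$-factorial varieties. For flipping contractions your key step is the identification $\pi^*N^1(X'/T)=R^{\perp}$; this is not formal and relies on the contraction theorem (e.g.\ \cite[Theorem~3.7~(4)]{KM98}), which is available in the klt setting of \cite{CLZ25} where the lemma is proved, and also in every application the present paper makes of the lemma (the ambient variety has klt singularities by Theorem~\ref{thm: geometric MKD space}). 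You should make that dependence explicit rather than leaving it implicit in the phrase ``partial MMP''.
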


\subsubsection{Deformation of nef cones}\label{subsubset: def of nef cone}

\begin{proposition}\label{prop: iso of nef}
Let $X/T$ be an MKD fiber space such that $X\to T$ is a fibration. Assume that $\Eff(X/T)$ is a non-degenerate cone. If the natural map $N^1(X/T) \to N^1(X_t)$ is an isomorphism for each $t\in T$, then there exists an open subset $T_0 \subset T$ such that for any open subset $U\subset T_0$, the induced map
\[
\Nef(X_U/U) \to \Nef(X_t), \quad [D] \mapsto [D|_{X_t}]
\] is an isomorphism for each $t\in U$.
\end{proposition}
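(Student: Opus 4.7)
The containment $\Nef(X_U/U) \subseteq \Nef(X_t)$ under the given isomorphism is immediate, since restrictions of nef$/U$ classes to fibers are nef. My plan for the reverse containment is to use the MKD structure to describe $\Nef(X/T)$ within a rational polyhedral piece of $\Eff(X/T)$ as cut out by finitely many curve classes, and then to spread these data out over a suitable open $T_0$. Since $\Eff(X/T)$ is non-degenerate, I would apply Corollary \ref{cor: of 3 equiv}(3) and Theorem \ref{thm: 3 equivalences}(3) to obtain a rational polyhedral cone $P \subseteq \Eff(X/T)$ with $\Gamma_B(X/T) \cdot P = \Eff(X/T)$, and use Theorem \ref{thm: Shokurov poly for minimal models} to decompose $P = \bigsqcup_i P_i$ into relatively open rational polyhedral chambers sharing a common weak minimal model$/T$. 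The chamber $P_0$ where $\mathrm{id}_X$ is the weak minimal model has closure $\overline{P_0} = P \cap \Nef(X/T)$, rational polyhedral by Theorem \ref{thm: Shokurov poly for nef}; its facets are cut out by finitely many curve classes $[C_1], \ldots, [C_m] \in N_1(X/T)$, arising as the families of contracted curves of the $D$-negative extremal contractions $X \to Y_i/T$ produced by the MMP with scaling (Theorem \ref{thm: MMP for MKD}) applied to any $D$ in a chamber adjacent to $P_0$.

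Next, I would apply Lemma \ref{lem: spread out and specialization} to shrink $T$ to an open $T_0$ over which all of these finitely many extremal contractions extend to morphisms whose exceptional loci specialize to flat families of $1$-cycles over $T_0$, a finite generating set of $\Gamma_B(X/T)$ (existing since the MKD structure yields a finitely presented group via Theorem \ref{thm: finite presented} applied to $\Mov(X/T)$) restricts to pseudo-automorphisms on every fiber, and $X_{T_0}/T_0$ remains MKD (Proposition \ref{prop: generic property for MKD space}). The crucial role of the hypothesis $N^1(X/T) \simeq N^1(X_t)$ enters via duality as the isomorphism $N_1(X/T) \simeq N_1(X_t)$: under this, each $[C_i]$ corresponds to the class of a fiber of the spread-out contraction $X_t \to Y_{i,t}$, i.e., to an effective $1$-cycle on $X_t$ with $D \cdot C_i = D|_{X_t} \cdot C_{i,t}$ for all $t \in T_0$. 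Consequently, a rational $[D] \in P$ with $[D|_{X_t}]$ nef on $X_t$ cannot lie in any $P_i$ with $i \neq 0$ (otherwise $D \cdot C_i < 0$ would give $D|_{X_t} \cdot C_{i,t} < 0$, violating nefness on $X_t$); hence $[D] \in \overline{P_0}$. Extending by the $\Gamma_B(X/T)$-action over rational points and taking convex hulls yields $\Nef^e(X_{T_0}/T_0) = \Nef^e(X_t)$ under the iso; taking closures, using $\Nef = \overline{\Amp} \subseteq \overline{\Nef^e}$ on each side, then gives $\Nef(X_{T_0}/T_0) = \Nef(X_t)$. For any open $U \subseteq T_0$ and $t \in U$, the chain $\Nef(X_{T_0}/T_0) \subseteq \Nef(X_U/U) \subseteq \Nef(X_t)$ collapses to equality.

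The main technical obstacle is the realization step: arranging that each curve class $[C_i]$ defining a facet of $\overline{P_0}$ corresponds, under the dual iso $N_1(X/T) \simeq N_1(X_t)$, to an actual effective $1$-cycle on every fiber $X_t$ with the expected intersection-number behavior. The MKD hypothesis is indispensable here, as it both reduces the analysis to finitely many extremal rays (via the rationality of $\overline{P_0}$ in Theorem \ref{thm: Shokurov poly for nef}) and supplies explicit contractions whose exceptional loci can be spread into flat families over a single open $T_0$. Without MKD, one could face infinitely many extremal rays localized in special fibers, and no single Zariski shrinking of $T$ would suffice.
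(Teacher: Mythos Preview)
Your local argument inside a single rational polyhedral piece $P$ is fine: once the finitely many extremal curves $C_i$ defining the facets of $\overline{P_0}=P\cap\Nef(X/T)$ have been spread out so that $D\cdot C_i = D|_{X_t}\cdot C_{i,t}$ for all $t\in T_0$, the implication ``$[D]\in P$ rational and $[D|_{X_t}]$ nef $\Rightarrow [D]\in\overline{P_0}$'' is correct. The genuine gap is the line ``Extending by the $\Gamma_B(X/T)$-action over rational points\ldots''. Given an arbitrary rational $[D]$ with $[D|_{X_t}]\in\Nef(X_t)$, you would need some $g\in\PsAut(X/T)$ with $g^{-1}\cdot[D]\in P$ \emph{and} $(g^{-1}_*D)|_{X_t}$ still nef in order to apply your local criterion. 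But elements of $\PsAut(X/T)$ restrict only to pseudo-automorphisms of $X_t$, and pseudo-automorphisms do not preserve the nef cone (only the movable cone). So $(g^{-1}_*D)|_{X_t}$ is merely movable, and your argument ``$D\cdot C_i<0$ would violate nefness on $X_t$'' no longer applies. There are infinitely many translates $g\cdot P$, so you cannot simply repeat the shrinking for each one.

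The paper's proof resolves exactly this point by replacing the pair $(\Eff(X/T),\Gamma_B(X/T))$ with the pair $({\rm GNef}(X/T),\Gamma_{GA}(X/T))$ introduced in Section~\ref{sec: variant of cones}. The key observations are: (i) if $[D|_{X_t}]$ is nef for one $t$, then by openness of ampleness $D_\eta$ is nef, so $[D]\in{\rm GNef}(X/T)$ automatically; (ii) by Theorem~\ref{thm: cone for GNef} there is a rational polyhedral $\Pi\subset{\rm GNef}(X/T)$ with $\Gamma_{GA}(X/T)\cdot\Pi={\rm GNef}(X/T)$; and (iii) generators of $\Gamma_{GA}(X/T)$ are isomorphisms over $\eta$, hence become genuine automorphisms of $X_V/V$ after a single shrinking. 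Since automorphisms \emph{do} preserve $\Nef^e$, one first shrinks so that $\Pi\subset\Nef^e(X_V/V)$ and then transports by $\Gamma_{GA}$, obtaining ${\rm GNef}(X_V/V)=\Nef^e(X_V/V)$. The passage through the generic nef cone is precisely the missing idea: it supplies a cone that (a) already contains every $[D]$ with $[D|_{X_t}]$ nef, and (b) is governed by a group whose elements preserve fiberwise nefness after shrinking.
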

\begin{proof}
First, we have the natural inclusion map $\Nef(X_U/U) \hookrightarrow \Nef(X_t)$ for any open subset $U\subset T$ by the definition of nef cones. To show the surjectivity of this map, assume that $D_t$ is a $\bQ$-Cartier divisor such that $[D_t] \in \Nef(X_t)$. By assumption, there exists a $\bQ$-Cartier divisor $D$ on $X$ such that $[D|_{X_t}]=[D_t]$. As ampleness is a Zariski open condition on the base, the set
\begin{equation}\label{eq: v.g. points are nef}
\{s \in T \mid  D|_{X_s} \text{~is nef}\}
\end{equation} consists of very general points of $T$.

Note that $D$ is a pseudo-effective divisor$/T$ as $D+\ep A$ is a big$/T$ divisor for any $\ep>0$, where $A$ is an ample$/T$ divisor on $X$. Therefore, we have $[D] \in \Eff(X/T)_+=\Eff(X/T)$ by the non-degeneracy of $\Eff(X/T)$ (see Corollary \ref{cor: of 3 equiv} (3)). Hence, we can assume that $D$ is an effective $\bQ$-Cartier divisor over $T$. 

As $X/T$ is an MKD fiber space, by Theorem \ref{thm: MMP for MKD}, we can run a $D$-MMP$/T$ with scaling of an ample divisor which terminates with a $D$-good minimal model$/T$. By \eqref{eq: v.g. points are nef}, this MMP is an isomorphism over a non-empty Zariski open subset $V \subset T_0$. In particular, this means that $D|_{X_V}$ is nef$/V$. Note that $V$ depends on $D$. 

In the sequel, we will show that there exists a universal open subset $V \subset T$ such that $D|_{X_V}$ is nef whenever $D|_{X_t}$ is a nef $\bQ$-Cartier divisor for some $t \in T$. This implies the surjectivity of 
\[
\Nef(X_U/U) \to \Nef(X_t)
\] for any open subset $U \subset V$.

Recall that in Definition \ref{def: generic nef cone}, the generic nef cone is defined as
\[
{\rm GNef}(X/T) \coloneqq \{[B] \in \Eff(X/T) \mid [B_\eta] \in \Nef(X_\eta)\}.
\]  By \eqref{eq: v.g. points are nef}, if $D|_{X_t}$ is nef for some $t\in T$, then $D_\eta$ is nef on $X_\eta$. Moreover, the above discussion shows that $[D] \in \Eff(X/T)$. Hence, we have
\[
[D] \in {\rm GNef}(X/T).
\] By Theorem \ref{thm: cone for GNef}, there exists a rational polyhedral cone $\Pi \subset {\rm GNef}(X/T)$ such that 
\[
\Gamma_{GA}(X/T) \cdot \Pi = {\rm GNef}(X/T),
\]  where $\Gamma_{GA}(X/T)$ is the image of 
\[
{\rm GAut}(X/T) = \{g\in \PsAut(X/T) \mid g_\eta \in \Aut(X_\eta)\}
\] under the natural group homomorphism $\rho_T: \PsAut(X/T) \to {\rm GL}(N^1(X/T))$. 

As $\Eff(X/T)$ is non-degenerate by assumption,  ${\rm GNef}(X/T)$ is also non-degenerate. Hence, $\Gamma_{GA}(X/T)$ is finitely presented by Theorem \ref{thm: finite presented}. Assume that $\gamma_1, \cdots, \gamma_l \in {\rm GAut}(X/T)$ are generic automorphisms such that $\rho_T(\gamma_1), \cdots, \rho_T(\gamma_l)$ generate $\Gamma_{GA}(X/T)$. As $\gamma_i \in \Aut(X_\eta)$, there exists an open subset $V_i \subset T$ and an automorphism $\tilde\gamma_i \in \Aut(X_{V_i}/V_i)$ such that $(\tilde\gamma_i)_\eta=\gamma_i$. For simplicity, we still denote $\tilde\gamma_i$ by $\gamma_i$. Replacing $T$ by $\cap_{i=1}^l V_i$, we can assume that $\gamma_i \in \Aut(X/T)$ for each $1 \leq i \leq l$. 

Next, let $D_i, 1 \leq i \leq v$ be effective $\bQ$-Cartier divisors such that 
\[
\Cone([D_i] \mid 1 \leq i \leq v) = \Pi.
\] As $(D_i)_\eta$ is nef, we see that $D_i|_{X_t}$ is nef for very general $t \in T$. By the previous discussion, there exists a non-empty Zariski open subset $U_i \subset T$ such that $D|_{X_{U_i}}$ is nef over $U_i$. Let $V=\cap_{i=1}^l U_i$. Then the image $\Pi'$ of $\Pi$ under the natural restriction map $N^1(X/T) \to N^1(X_V/V)$ lies in $\Nef^e(X_V/V)$. 

We claim that
\begin{equation}\label{eq: 3}
\langle \gamma'_i \mid 1 \leq i \leq l \rangle \cdot \Pi' = {\rm GNef}(X_V/V),
\end{equation} where $\langle \gamma'_i \mid 1 \leq i \leq l \rangle \subset \Aut(X_V/V)$ is the subgroup generated by $\gamma'_i= \gamma_i|_{X_U}, 1 \leq i \leq l$. Indeed, if $B$ is an effective $\bQ$-Cartier divisor such that $[B] \in {\rm GNef}(X_V/V)$, then let $\ti B$ be the Zariski closure of $B$ on $X$. Since $X$ is $\bQ$-factorial, $\ti B$ remains an effective $\bQ$-Cartier divisor. Moreover, we still have $[\ti B] \in {\rm GNef}(X/T)$. Hence, we have
\[
[\ti B] \in \langle \gamma_i \mid 1 \leq i \leq l \rangle \cdot \Pi.
\]
Restricting the above to $X_V/V$ yields~\eqref{eq: 3}.

(On the other hand, if $V \subset T$ is an open subset, a pseudo-automorphism of $X_V/V$ need not extend to a pseudo-automorphism of $X/T$. Consequently, $\langle \rho_V(\gamma'_i) \mid 1 \leq i \leq l \rangle$ may no longer generate $\Gamma_{GA}(X_V/V)$.)

Finally, as $\langle \gamma'_i \mid 1 \leq i \leq l \rangle \subset \Aut(X_V/V)$ and $\Pi' \subset \Nef^e(X_V/V)$, \eqref{eq: 3} implies 
\[
\Nef^e(X_V/V) = {\rm GNef}(X_V/V).
\] In particular, we have $[D] \in \Nef^e(X_V/V)$. Therefore, this $V$ is the desired universal open subset of $T$. This completes the proof.
\end{proof}

\begin{theorem}\label{thm: def of nef cone}
Let $f: X \to T$ be a fibration. Suppose that $S \subset T$ is a Zariski dense subset such that for any $s \in S$, the fiber $X_s$ satisfies
\[
H^1(X_s, \mO_{X_s}) = H^2(X_s, \mO_{X_s}) = 0.
\]
Assume further that the geometric generic fiber $X_{\bar\eta}$ is a klt MKD space. Then, after a generically finite base change of $T$, there exists a non-empty Zariski open subset $T_0 \subset T$ such that for any Zariski open subset $U \subset T_0$, the natural maps
\[
\begin{split}
& N^1(X_U/U) \to N^1(X_t), \quad [D] \mapsto [D|_{X_t}],\\
& \Nef(X/U) \to \Nef(X_t), \quad [D] \mapsto [D|_{X_t}],
\end{split}
\]
are isomorphisms for all $t \in U$. Moreover, $X_U/U$ is an MKD fiber space.
\end{theorem}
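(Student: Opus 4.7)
The plan is to combine three results already in hand: Theorem~\ref{thm: geometric MKD space} to spread the MKD structure from the geometric generic fiber, Theorem~\ref{thm: CLZ-deform pic} to identify N\'eron--Severi spaces under the $H^1,H^2$ vanishing hypothesis, and Proposition~\ref{prop: iso of nef} to upgrade the identification to nef cones.

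First I would apply Theorem~\ref{thm: geometric MKD space} to $X \to T$. This produces a generically finite morphism $T' \to T$ and a non-empty open subset of $T'$ (which I may as well call $T_1$) such that $X_{T_1}$ has klt singularities, $X_{T_1}/T_1$ is an MKD fiber space, and both $\Mov(X_{T_1}/T_1)$ and $\Eff(X_{T_1}/T_1)$ are non-degenerate and satisfy $\Eff(X_{T_1}/T_1) = \Eff(X_{T_1}/T_1)_+$. Since $T' \to T$ is generically finite and we are in characteristic zero, fibers over closed points of the preimage of $S$ in $T_1$ are isomorphic to the corresponding fibers $X_s$, so the vanishing $H^1(X_s,\mathcal{O}_{X_s}) = H^2(X_s,\mathcal{O}_{X_s}) = 0$ persists on a Zariski dense subset of $T_1$.

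Next, since $X_{T_1}$ is klt and the vanishing hypothesis still holds on a Zariski dense subset of $T_1$, I would apply Theorem~\ref{thm: CLZ-deform pic}~(2) to $X_{T_1}/T_1$. After a further generically finite base change (which by Theorem~\ref{thm: geometric MKD space} preserves the MKD structure and non-degeneracy of $\Eff$) and shrinking to an open subset $T_0$, the sheaf $\mathcal{P}ic_{X_{T_0}/T_0} \otimes \bR$ becomes constant in the Zariski topology, and the natural restriction maps
\[
N^1(X_U/U) \longrightarrow N^1(X_t), \qquad [D] \longmapsto [D|_{X_t}]
\]
are isomorphisms for every open $U \subset T_0$ and every $t \in U$. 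By Proposition~\ref{prop: generic property for MKD space}, $X_U/U$ remains an MKD fiber space for every such $U$, and its effective cone remains non-degenerate (it contains, via restriction, the non-degenerate $\Eff(X_{T_0}/T_0)$).

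Finally, Proposition~\ref{prop: iso of nef} applied to $X_{T_0}/T_0$ yields an open subset---after possibly shrinking $T_0$ I may take it to be $T_0$ itself---such that for every open $U \subset T_0$ and every $t \in U$, the induced map $\Nef(X_U/U) \to \Nef(X_t)$ is an isomorphism. Combined with the previous paragraph this gives all three assertions of the theorem. The main bookkeeping hurdle is making sure the repeated base changes and shrinkings produced by Theorem~\ref{thm: geometric MKD space}, Theorem~\ref{thm: CLZ-deform pic}, and Proposition~\ref{prop: iso of nef} can be assembled into a single $T_0$ whose conclusions hold uniformly on all open subsets $U \subset T_0$; the stated ``moreover'' clauses of Theorem~\ref{thm: geometric MKD space} (that properties are preserved under any factoring base change) and Theorem~\ref{thm: CLZ-deform pic} (that isomorphisms descend to any open $U \subset T_0$) together with Proposition~\ref{prop: generic property for MKD space} are precisely what make this compatibility automatic.
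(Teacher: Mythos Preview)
Your proposal is correct and follows essentially the same approach as the paper's proof: both apply Theorem~\ref{thm: geometric MKD space} to spread the MKD structure, then Theorem~\ref{thm: CLZ-deform pic}~(2) for the N\'eron--Severi identification, then Proposition~\ref{prop: iso of nef} for the nef cones, with Proposition~\ref{prop: generic property for MKD space} handling the ``$X_U/U$ is MKD'' assertion. Your explicit remark that $\Eff(X_{T_0}/T_0)$ is non-degenerate (needed as a hypothesis of Proposition~\ref{prop: iso of nef}) is a point the paper leaves implicit, so your write-up is arguably slightly more careful there.
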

\begin{proof}
By Theorem \ref{thm: geometric MKD space}, there exists a generically finite base change $u: T' \to T$ such that, after shrinking $T'$, the morphism $X_{T'} \to T'$ becomes a klt MKD fiber space. Moreover, these properties are preserved for any generically finite base change factor through $T' \to T$. Hence, by replacing $T$ with $T'$ and $S$ with $u^{-1}(S)$, we can assume that $X/T$ is an MKD fiber space with klt singularities. By Proposition \ref{prop: generic property for MKD space}, $X_U/U$ is still an MKD fiber space for any non-empty open subset $U \subset T$.

By Theorem \ref{thm: CLZ-deform pic} (2), after replacing $T$ by a further generically finite base change, there exists an open subset $T_0 \subset T$ such that the natural restriction maps
\begin{equation}\label{eq: iso of N^1}
N^1(X_{T_0}/T_0) \to N^1(X_U/U) \to N^1(X_t), \quad [D] \mapsto [D|_{X_U}] \mapsto [D|_{X_t}]
\end{equation} are isomorphisms for any $t\in U$, where $U\subset T_0$ is an arbitrary open subset. Shrinking $T$ further, we may assume that the conclusion of Theorem \ref{thm: geometric MKD space} still holds. Then the desired isomorphism
\[
\Nef(X/T) \simeq \Nef(X_t)
\] follows from Proposition \ref{prop: iso of nef}. 
\end{proof}

A projective variety $X$ is called a klt weak Fano variety if $X$ has klt singularities and the anti-canonical divisor $-K_X$ is nef and big. A klt weak Fano variety is of Fano type and thus it is also an MKD space.

\begin{corollary}\label{cor: CY def of nef cone}
Let $f: X \to T$ be a fibration with $X$ a $\bQ$-Gorenstein variety. 
Suppose that $S \subset T$ is a Zariski dense subset such that, for any $s \in S$, 
the fiber $X_s$ is a Calabi-Yau variety (resp. a klt weak Fano variety). 
Then $X_{\bar\eta}$ is also a Calabi-Yau variety (resp. a klt weak Fano variety).

In the case where $X_{\bar\eta}$ is a Calabi-Yau variety, assume further that 
$X_{\bar\eta}$ satisfies the Morrison-Kawamata cone conjecture and the good minimal model conjecture, and that 
\[
H^1(X_s, \mO_{X_s}) = H^2(X_s, \mO_{X_s}) = 0
\]
for each fiber $X_s$ with $s \in S$. 

Then, in both cases, after a generically finite base change of $T$, 
there exists a non-empty Zariski open subset $T_0 \subset T$ such that, 
for any Zariski open subset $U \subset T_0$, the natural maps
\[
\Nef(X/U) \to \Nef(X_t), \quad [D] \mapsto [D|_{X_t}]
\]
are isomorphisms for all $t \in U$.
\end{corollary}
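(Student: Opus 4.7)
The plan is to reduce both cases to Theorem~\ref{thm: def of nef cone} by verifying its two hypotheses: (i) the geometric generic fiber $X_{\bar\eta}$ is a klt MKD space, and (ii) the cohomological vanishing $H^1(X_s,\mO_{X_s})=H^2(X_s,\mO_{X_s})=0$ holds on a Zariski dense subset.

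First I would show that $X_{\bar\eta}$ inherits the Calabi-Yau (resp.\ klt weak Fano) property from the dense subset $S$. Since $X/T$ is $\bQ$-Gorenstein, $K_{X/T}$ is $\bQ$-Cartier and restricts to $K_{X_s}$ on each fiber. Being klt is an open condition on the base, so $X_{\bar\eta}$ is klt. In the Calabi-Yau case, the triviality $K_{X_s}\sim_\bR 0$ on a dense subset forces $K_{X_{\bar\eta}}\equiv 0$, which combined with the abundance-type argument on the klt generic fiber (or simply spreading out an $\bR$-linear equivalence $mK_{X_s}\sim 0$ via semicontinuity of $h^0$) gives $K_{X_{\bar\eta}}\sim_\bR 0$. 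In the weak Fano case, I would use that nefness is preserved under generization in a flat family of $\bQ$-Cartier divisors (via intersection with limits of curves on $X_s$) and that bigness is an open condition on the base; hence $-K_{X_{\bar\eta}}$ is nef and big.

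Second, I would verify that $X_{\bar\eta}$ is a klt MKD space. In the Calabi-Yau case, this is exactly Corollary~\ref{cor: Mori dream space, CY are MKD space}(2), since by hypothesis $X_{\bar\eta}$ satisfies both the Morrison–Kawamata cone conjecture and the good minimal model conjecture. In the klt weak Fano case, $X_{\bar\eta}$ is of Fano type, so by Corollary~\ref{cor: Mori dream space, CY are MKD space}(1) it is a Mori dream space, and hence an MKD space. Next, I would check the vanishing of $H^1$ and $H^2$ on $S$. In the Calabi-Yau case this is given, and in the klt weak Fano case it follows automatically from Kawamata–Viehweg vanishing: for each $s\in S$, writing $\mO_{X_s}\simeq \mO_{X_s}(K_{X_s}+(-K_{X_s}))$ with $-K_{X_s}$ nef and big on a klt variety, one obtains $H^i(X_s,\mO_{X_s})=0$ for all $i>0$.

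With both hypotheses verified, Theorem~\ref{thm: def of nef cone} applies directly and yields, after a generically finite base change and restriction to some non-empty open $T_0\subset T$, the required isomorphism $\Nef(X_U/U)\simeq \Nef(X_t)$ for every open $U\subset T_0$ and every $t\in U$. The main technical obstacle I anticipate is the precise spreading-out argument in the first step, particularly the upgrade from numerical triviality to $\bR$-linear triviality of $K_{X_{\bar\eta}}$ in the Calabi-Yau case; this can be handled by combining semicontinuity of $h^0(X_t, mK_{X_t})$ with the fact that $(X_s,0)$ being klt Calabi-Yau for $s\in S$ forces some multiple $mK_{X/T}$ to be effective and hence trivial over the geometric generic point. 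All other steps are routine applications of the results developed earlier in the paper.
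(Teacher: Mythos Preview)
Your proposal is correct and follows essentially the same route as the paper. The only differences are that the paper cites \cite{Gon13} for the abundance step (upgrading $K_{X_{\bar\eta}}\equiv 0$ to $K_{X_{\bar\eta}}\sim_{\bQ} 0$ on the klt geometric generic fiber) and, in the weak Fano case, invokes \cite[Theorem~1.2(i)]{CLZ25} to conclude that $X/T$ is of Fano type (hence $-K_{X_{\bar\eta}}$ big) rather than arguing bigness directly.
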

\begin{proof}
If $X_s$ is a Calabi-Yau variety for each $s\in S$, we see that $\pm K_{X_\eta}$ are nef. Hence, $\pm K_{X_{\bar\eta}}$ are also nef. This implies that  $K_{X_{\bar\eta}} \equiv 0$. As $X_s$ has klt singularities over a Zariski dense subset $S \subset T$, $X_{\bar\eta}$ also has klt singularities. Thus, $K_{X_{\bar\eta}} \sim_{\bQ} 0$ by \cite{Gon13}, which implies that $X_{\bar\eta}$ is a Calabi-Yau variety.

In the case where $X_s$, $s \in S$, are klt weak Fano varieties, by \cite[Theorem~1.2 (i)]{CLZ25}, we see that $X$ is of Fano type over $T$ after possibly shrinking $T$. Thus, $X_{\bar\eta}$ has klt singularities and $-K_{\bar\eta}$ is big. By the same reasoning as above, $-K_{X_{\bar\eta}}$ is nef, and thus $X_{\bar\eta}$ is a klt weak Fano variety.

Note that for a klt weak Fano variety $X_s$, we always have $H^1(X_s, \mO_{X_s}) = H^2(X_s, \mO_{X_s}) = 0$. Therefore, in both cases, the hypotheses of Theorem~\ref{thm: def of nef cone} are fulfilled, from which the desired statement follows.
\end{proof}

\begin{remark}
We list several remarks on Corollary~\ref{cor: CY def of nef cone}.
\begin{enumerate}
\item The assumption that $X$ is $\bQ$-Gorenstein in Corollary~\ref{cor: CY def of nef cone} can be removed by an argument similar to that in \cite[Lemma~1.12]{Kaw88}. This minor issue, however, requires a significant extension of the proof, and we therefore omit it here.
\item For the Calabi-Yau case, the assumption $H^1(X_s, \mO_{X_s}) = H^2(X_s, \mO_{X_s}) = 0$ is indispensable. Indeed, \cite{Ogu00} exhibits a family of K3 surfaces where the Picard numbers jump on a Zariski dense subset.
\item If the fibers $X_s$, $s\in S$, are only assumed to be of Fano type, then it is conjectured that $X_{\bar\eta}$ remains of Fano type (see \cite[Question~1.1]{CLZ25}). This is known in several special cases (see \cite[Theorem~1.2]{CLZ25}).
\end{enumerate}
\end{remark}

\subsubsection{Structure of birational models and birational maps}\label{subsec: structure of b(X/T) and bc(X/T)}

\begin{definition}\label{def: bc and b}
Let $X$ be a variety over $T$. We set
\[
\begin{split}
{\rm b}(X/T)& \coloneqq \{Y/T \mid X \dasharrow Y/T \text{~is a birational contraction up to isomorphism of~} Y/T\},\\
{\rm bc}(X/T) & \coloneqq \{h \mid h: X \dasharrow Y/T \text{~is a birational contraction up to isomorphism of~} Y/T\}.\\
\end{split}
\]
\end{definition}

In the above definition, for two birational contractions $h, g: X \dto Y/T$, we write $h=g$ if they agree up to an isomorphism of the target. In other words, there exists an isomorphism $\theta: Y \simeq Y/T$ such that $h=\theta \circ g$.

For completeness, we summarize the structures of ${\rm b}(X/T)$ and ${\rm bc}(X/T)$ for an MKD fiber space in the following proposition.

\begin{proposition}\label{prop: structure of b and bc}
Let $X/T$ be an MKD fiber space. 
\begin{enumerate}
\item ${\rm b}(X/T)$ is a finite set.
\item There exist finitely many birational contractions 
\[ 
f_i: X \dashrightarrow Y_i/T, \quad 1 \leq i \leq m, 
\] such that any birational contraction $h \in {\rm bc}(X/T)$ is isomorphic to $f_i \circ \mu$, where $\mu \in \PsAut(X/T)$.
\item Under the notation of (2), if $\Eff(X/T)$ is a non-degenerate cone, then there exist finitely many pseudo-automorphisms 
\[ 
\gamma_j \in \PsAut(X/T), \quad 1 \leq j \leq l, 
\] 
such that any birational contraction $h \in {\rm bc}(X/T)$ is isomorphic to $f_i\circ \gamma $, where $\gamma$ is a finite product of $\gamma_j, 1 \leq j \leq l$.
\item If $f: X \to \ti W/T$ is a  contraction morphism (not necessarily birational), then there exist finitely many contraction morphisms
\[
g_s: \ti W \to W_s/T, \quad 1 \leq s \leq p,
\] 
such that any contraction morphism $g:  \ti W \to W/T$ is isomorphic to $g_s \circ \ti\sigma$, where $\ti\sigma \in \Aut(\ti W/T)$.
\end{enumerate}
\end{proposition}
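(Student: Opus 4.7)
The plan is to derive all four statements by specializing results already proved in the paper, with only part (3) requiring a slightly new input. Parts (1) and (2) are essentially reformulations: the finiteness in (1) is literally the second bullet of Theorem~\ref{thm: 3 equivalences}~(2), and the factorization in (2) is precisely the content of Corollary~\ref{cor: of 3 equiv}~(2), so these two claims require no further work beyond citing those statements.

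For part (3), the strategy is to lift a finite generating set of $\Gamma_B(X/T)$ to $\PsAut(X/T)$ and then reduce the problem to (2). Since $\Eff(X/T)$ is non-degenerate, Corollary~\ref{cor: of 3 equiv}~(3) gives $\Eff(X/T)=\Eff(X/T)_+$, and Theorem~\ref{thm: 3 equivalences}~(3) together with Lemma~\ref{le: existence of fun domain} supply a rational polyhedral fundamental domain for $\Eff(X/T)_+$ under $\Gamma_B(X/T)$; Theorem~\ref{thm: finite presented} then yields that $\Gamma_B(X/T)$ is finitely presented, in particular finitely generated. Choose lifts $\gamma_1,\ldots,\gamma_l\in\PsAut(X/T)$ of a generating set. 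Given $h\in{\rm bc}(X/T)$, write $h=f_i\circ\mu$ modulo target isomorphism via (2), and decompose $\mu=\gamma\nu$ with $\gamma$ a word in the $\gamma_j^{\pm 1}$ and $\nu\in\ker\rho$. The main obstacle is the final step: showing that $f_i\circ\mu$ and $f_i\circ\gamma$ coincide modulo an automorphism of the target $Y_i$, i.e.\ that $f_i\circ\tilde\nu\circ f_i^{-1}\in\Aut(Y_i/T)$ for $\tilde\nu=\gamma\nu\gamma^{-1}\in\ker\rho$. A first application of Lemma~\ref{lem: common mm}~(2) to an ample $B$ with trivial numerical action identifies $\ker\rho$ with a subgroup of $\Aut(X/T)$; one then argues that such $\tilde\nu$ preserves the pullback classes $[f_i^*A]$ for $A$ ample on $Y_i$, so that both $f_i$ and $f_i\circ\tilde\nu$ are birational contractions realizing the same numerical pullback data. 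A second application of Lemma~\ref{lem: common mm}~(2) upgrades the birational self-map $f_i\tilde\nu f_i^{-1}$ of $Y_i$ to an element of $\Aut(Y_i/T)$, absorbing $\tilde\nu$ into the target-isomorphism equivalence.

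For part (4), apply Proposition~\ref{prop: face is of polyhedral type} to the given contraction $f\colon X\to\tilde W/T$: the cone $f^*\Nef^e(\tilde W/T)\subset N^1(X/T)$ is of polyhedral type under the action of $G\coloneqq{\rm Stab}_{f^*\Nef^e(\tilde W/T)}\Gamma_A(X/T)$, and $\Nef^e(\tilde W/T)=\Nef(\tilde W/T)_+$. Every contraction morphism $g\colon\tilde W\to W/T$ corresponds uniquely to a face of $\Nef^e(\tilde W/T)$ generated by $g^*[\Amp(W/T)]$, and pulling back via $f$ yields a face of $f^*\Nef^e(\tilde W/T)$. Choose a (weak) rational polyhedral fundamental domain for $f^*\Nef^e(\tilde W/T)_+$ under $G$ via Lemma~\ref{le: existence of fun domain} or Proposition~\ref{prop: degenerate cone}; such a domain has only finitely many faces, which bounds the number of $G$-orbits of faces of $f^*\Nef^e(\tilde W/T)$ and thus yields finitely many contractions $g_s$ up to the induced action. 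Finally, any element of $G$ lifts to an automorphism of $X/T$ preserving $f^*\Nef^e(\tilde W/T)$ by construction, and by Lemma~\ref{lem: common mm}~(2) descends to an element $\tilde\sigma\in\Aut(\tilde W/T)$, giving the desired factorization $g=g_s\circ\tilde\sigma$ up to target isomorphism.
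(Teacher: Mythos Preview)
Your proposal is correct and follows essentially the same approach as the paper. Parts (1), (2), and (4) match the paper's proof almost verbatim (citing Theorem~\ref{thm: 3 equivalences}~(2), Corollary~\ref{cor: of 3 equiv}~(2), and Proposition~\ref{prop: face is of polyhedral type} respectively); for (3) you unpack in detail the step the paper compresses into a single line (``$f_i\circ\gamma$ and $f_i\circ\mu$ induce the same linear map on $N^1$, hence differ by an automorphism of the target''), and your two applications of Lemma~\ref{lem: common mm}~(2) are exactly what underlies that sentence.
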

\begin{proof}
Statements (1) and (2) have already been proved in Theorem \ref{thm: 3 equivalences} (2); see \eqref{eq: finiteness of bir contractions} and the subsequent discussion for details.

\medskip

For (3), we use the same notation as in (2). By Theorem \ref{thm: finite presented}, $\Gamma_B(X/T)$ is finitely presented. Let $\gamma_j \in \PsAut(X/T)$, $1 \leq j \leq l$, be pseudo-automorphisms such that $\rho(\gamma_j)$, $1 \leq j \leq l$, generate $\Gamma_B(X/T)$. Then there exists $\gamma \in \PsAut(X/T)$, which is a finite product of the $\gamma_j$, such that $\rho(\gamma) = \rho(\mu)$, where $\mu \in \PsAut(X/T)$ satisfies $h = f_i \circ \mu$ as in (2).  As $f_i \circ \gamma$ and $f_i \circ \mu$ induce the same linear map $N^1(X/T) \to N^1(Y/T)$, there exists an isomorphism $\theta \in \Aut(Y_i/T)$ such that 
\[
\theta \circ f_i \circ \mu = f_i \circ \gamma.
\] 
This proves (3).

\medskip

For (4), by Proposition \ref{prop: face is of polyhedral type}, we see that
\[
(f^*\Nef^e(\ti W/T), \; {\rm Stab}_{f^*\Nef^e(\ti W/T)} \Gamma_{A}(X/T))
\]
is of polyhedral type and $\Nef^e(\ti W/T) = \Nef(\ti W/T)_+$. Hence, there exists a rational polyhedral cone $\Pi \subset \Nef^e(\ti W/T)$ such that ${\rm Stab}_{f^*\Nef^e(\ti W/T)} \Gamma_{A}(X/T) \cdot f^*\Pi =f^*\Nef^e(\ti W/T)$. Let 
\[
g_s: \ti W \to W_s/T, \quad 1 \leq s\leq p,
\] be finitely many contractions corresponding to the faces of $\Pi$. If $g: \ti W \to W/T$ is a contraction morphism, then let $A$ be an ample$/T$ divisor on $W$. Then there exists some $\sigma \in \Aut(X/T)$ with $\rho(\sigma)\in {\rm Stab}_{f^*\Nef^e(\ti W/T)} \Gamma_{A}(X/T)$ such that $\sigma \cdot [f^*(g^*A)] \in f^*\Pi$. Hence, there exists $\tilde \sigma \in \Aut(\tilde W/T)$ such that
\begin{equation}\label{eq: descend}
f \circ \sigma = \tilde \sigma \circ f.
\end{equation}
Let $F$ be the face of $\Pi$ such that $\sigma \cdot [f^*(g^*A)] \in f^* \Int(F)$. If $g_s: \ti W \to W_s/T$ is the contraction corresponding to $F$, then there exists an isomorphism $\theta: W \to W_s/T$ such that
\[
\theta \circ g\circ f = g_s \circ f \circ \sigma: X \to W_s/T. 
\] By \eqref{eq: descend}, we have $\theta \circ g\circ f = g_s \circ  \tilde \sigma \circ f$ which implies that
\[
\theta \circ g = g_s \circ  \tilde \sigma:  \ti W \to W_s/T.
\] In other words, $g_s \circ \ti\sigma$ is isomorphic to $g$. This shows (4).
\end{proof}

\subsubsection{Deformation invariance of $\Eff(X/T)$}\label{subsec: MMP and deformation of effective cones}

To establish the deformation invariance of cones for MKD spaces, we first prepare several lemmas.

\begin{lemma}\label{lem: b consists of MKD fiber spaces}
Let $X/T$ be an MKD fiber space. 
\begin{enumerate}
\item For any open subset $V\subset T$, each element $[h: X_V \dto Y/V ] \in {\rm bc}(X_V/V)$ with $Y$ a $\bQ$-factorial variety is still an MKD fiber space. 
\item For any $[h': X_V \dto Y'/V ] \in {\rm bc}(X_V/V)$, there exists some  $[h: X_V \dto Y/V ] \in {\rm bc}(X_V/V)$ with $Y/V$ an MKD fiber space and a birational contraction morphism $\sigma: Y \to Y'/V$ such that $h'=\sigma\circ h$.
\end{enumerate}
\end{lemma}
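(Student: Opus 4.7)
The plan is to reduce both parts of the lemma to results already established in the paper; there is essentially no new geometric input required, only a careful application of Proposition~\ref{prop: generic property for MKD space}, Theorem~\ref{thm: bir contraction is MKD}, and Lemma~\ref{lem: factor bir contraction}.

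For part (1), I would first invoke Proposition~\ref{prop: generic property for MKD space} to conclude that $X_V/V$ is an MKD fiber space for any non-empty open subset $V \subset T$. In particular, $X_V$ is $\bQ$-factorial, every effective $\bR$-Cartier divisor on $X_V$ admits a good minimal model over $V$, and $\Eff(X_V/V)$ satisfies the local factoriality of canonical models over $V$. Then, given a birational contraction $h: X_V \dashrightarrow Y/V$ with $Y$ assumed to be $\bQ$-factorial, I would apply Theorem~\ref{thm: bir contraction is MKD} directly to conclude that $Y/V$ is an MKD fiber space. This finishes (1).

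For part (2), I start again with the observation that $X_V/V$ is an MKD fiber space by Proposition~\ref{prop: generic property for MKD space}; in particular, every effective $\bR$-Cartier divisor on $X_V$ admits a minimal model over $V$, so the hypotheses of Lemma~\ref{lem: factor bir contraction} are satisfied. Applying that lemma to the given birational contraction $h': X_V \dashrightarrow Y'/V$, I obtain a factorization
\[
h' = \sigma \circ h, \qquad h: X_V \dashrightarrow Y/V, \qquad \sigma: Y \to Y'/V,
\]
where $h$ is a small $\bQ$-factorial modification and $\sigma$ is a birational morphism. In particular, $Y$ is $\bQ$-factorial, $h$ is an isomorphism in codimension one (hence a birational contraction, so $[h] \in {\rm bc}(X_V/V)$), and $\sigma$ is a birational contraction morphism as required.

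Finally, since $h: X_V \dashrightarrow Y/V$ is a birational contraction onto a $\bQ$-factorial variety, part (1) applied to $h$ shows that $Y/V$ is itself an MKD fiber space, which completes (2). The only potential obstacle would be ensuring that the hypotheses of Lemma~\ref{lem: factor bir contraction} transfer cleanly to the open base $V$, but this is immediate from Proposition~\ref{prop: generic property for MKD space}, so the argument is a direct assembly of the previously established results.
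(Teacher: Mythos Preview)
Your proposal is correct and follows essentially the same route as the paper: invoke Proposition~\ref{prop: generic property for MKD space} to make $X_V/V$ an MKD fiber space, then apply Theorem~\ref{thm: bir contraction is MKD} for (1), and for (2) use Lemma~\ref{lem: factor bir contraction} to factor $h'$ through a small $\bQ$-factorial modification before applying (1). The extra verifications you include (checking the hypotheses of Lemma~\ref{lem: factor bir contraction} and noting that $h$ is a birational contraction) are harmless elaborations of the paper's terse argument.
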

\begin{proof}
By Proposition \ref{prop: generic property for MKD space}, $X_V/V$ is an MKD fiber space. By Theorem \ref{thm: bir contraction is MKD}, $Y/V$ is an MKD fiber space. This shows (1).

\medskip

By Lemma \ref{lem: factor bir contraction}, $h'$ factors as a small $\bQ$-factorial modification $h: X_V \dashrightarrow Y/V$ followed by a birational contraction morphism $\sigma: Y \to Y'/V$. By (1), we see that $Y/V$ is an MKD fiber space. This proves (2).
\end{proof}

The following generalizes \cite[Proposition 5.2]{CLZ25} from Fano type varieties to MKD fiber spaces.

\begin{proposition}\label{prop: stable bdd}
Let $X/T$ be an MKD fiber space. Then, for any open subset $V\subset T$ and $h \in {\rm bc}(X_V/V)$, there exists an element $H \in {\rm bc}(X/T)$ such that $H|_{X_V} = h$.
\end{proposition}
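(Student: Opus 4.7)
My plan is to realize the target $Y_V$ of $h$ as the ample model over $V$ of a carefully chosen effective divisor on $X_V$, extend that divisor to $X$ by Zariski closure, and then produce $H$ as the ample model over $T$ of the extension using the MMP machinery established for MKD fiber spaces. Concretely, given $h\colon X_V \dashrightarrow Y_V/V$, pick an ample$/V$ divisor $A_V$ on $Y_V$ together with a common resolution $p\colon W\to X_V$, $q\colon W\to Y_V$ of $h=q\circ p^{-1}$, and form
\[
B_V \coloneqq p_* q^* A_V + \epsilon\sum_{i=1}^n E_i,
\]
where $\{E_i\}_{i=1}^n$ are the $h$-exceptional prime divisors on $X_V$ and $\epsilon>0$ is a small rational number. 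A direct computation, closely following the proof of Theorem \ref{thm: bir contraction is MKD}, shows that $h_* B_V = A_V$ is ample over $V$ and that $h$ is $B_V$-negative, so $h$ exhibits $Y_V$ as the ample model (in particular a good minimal model) of $B_V$ over $V$.

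Next I extend $B_V$ to an effective $\bR$-Cartier divisor $\bar B$ on $X$ by taking Zariski closure; this remains Cartier because $X$ is $\bQ$-factorial, and $\bar B|_{X_V}=B_V$. Since $B_V$ is big on the generic fiber of $X_V/V$, the divisor $\bar B$ is big over $T$. By Theorem \ref{thm: MMP for MKD}, a $\bar B$-MMP$/T$ with scaling of an ample divisor terminates at a good minimal model $H\colon X\dashrightarrow Y/T$. The semi-ample$/T$ divisor $H_*\bar B$ is big, so the induced contraction $\pi\colon Y\to Z/T$ is a birational morphism, and $H'\coloneqq \pi\circ H$ lies in ${\rm bc}(X/T)$.

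It remains to verify that $H'|_{X_V}=h$ in ${\rm bc}(X_V/V)$. The essential point is that the $\bar B$-MMP over $T$ restricts to a valid $B_V$-MMP over $V$: each step is either trivial over $V$ (when its contracted divisor or flipping locus is vertical over $T\setminus V$) or yields a genuine $B_V$-MMP step over $V$, since any covering curve of the contracted extremal ray that meets $X_V$ satisfies $\bar B\cdot C = B_V\cdot C|_{X_V}<0$. It follows that $H|_{X_V}\colon X_V\dashrightarrow Y|_V$ is a good minimal model of $B_V$ over $V$, and that $\pi|_{X_V}\colon Y|_V\to Z|_V$ is precisely the ample model morphism induced by $(H|_V)_* B_V$. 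By the uniqueness of ample models up to isomorphism of target, $Z|_V\simeq Y_V$ and $H'|_{X_V}=h$ in ${\rm bc}(X_V/V)$, as desired.

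The main obstacle is the restriction-to-$V$ analysis of the MMP, notably verifying that each non-trivial restricted step is genuinely $B_V$-negative (requiring one to exhibit a covering curve of the extremal ray inside $X_V$) and that the full restricted sequence indeed reaches a good minimal model rather than merely a non-positive weak model. A secondary subtlety is the distinction between good minimal models and ample models: passing directly to $H$ would only give $H|_{X_V}=h$ up to an additional small birational morphism, which is precisely why composing with the ample model contraction $\pi$ over $T$ is essential in order to obtain equality in ${\rm bc}(X_V/V)$ rather than mere equivalence modulo a further contraction.
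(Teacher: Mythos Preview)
Your approach shares the essential idea with the paper's proof: realize $h$ as the ample model over $V$ of an effective divisor on $X_V$, extend this divisor to $X$ by Zariski closure, and then take the ample (canonical) model over $T$. However, the paper's execution is considerably shorter, and the ``main obstacle'' you identify is not a real obstacle.

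First, the extra term $\epsilon\sum E_i$ is unnecessary. The paper simply takes $B_V\coloneqq A'=p_*q^*A$; one checks via the negativity lemma that $p^*A'=q^*A+E$ with $E\ge 0$ both $p$- and $q$-exceptional, so $h$ is already the ample model of $A'$ over $V$. You add the exceptional term to force $B_V$-negativity of $h$, but since the target notion is equality in ${\rm bc}(X_V/V)$ (i.e.\ up to isomorphism of target), the \emph{ample} model is what matters, not any particular minimal model, and negativity is irrelevant.

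Second, and more importantly, the step-by-step restriction analysis of the MMP is avoidable. The paper bypasses it entirely: since $X/T$ is an MKD fiber space, $\bar B$ admits a canonical model $H\colon X\dashrightarrow Z/T$, and the canonical model commutes with restriction to open subsets of the base (formally, $Z=\Proj_T R(X/T,\bar B)$ and $R(X_V/V,B_V)=R(X/T,\bar B)|_V$). Hence $H|_{X_V}$ is the canonical model of $B_V$ over $V$, which is $h$. Even within your framework, the worry about reaching ``merely a non-positive weak model'' is unfounded: if $H\colon X\dashrightarrow Y/T$ is a good minimal model of $\bar B$, then restricting the defining equation $p^*\bar B=q^*(H_*\bar B)+E$ to $V$ immediately shows that $H|_{X_V}$ is a good minimal model of $B_V$ over $V$ (the support condition on $E$ is preserved under restriction), and the ample model $\pi$ then restricts to the ample model over $V$. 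No curve-by-curve or step-by-step verification is needed.
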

\begin{proof}
Suppose that $h: X_V \dto Y/V$ is the birational contraction. Let $A$ be an ample divisor on $Y$ over $V$. Let $A'$ be the strict transform of $A$ on $X_V$ and let $B$ be the Zariski closure of $A'$ on $X$. Note that $B_V\coloneqq B|_{X_V}=A'$ and $B$ is a $\bQ$-Cartier divisor as $X$ is $\bQ$-factorial. Let $p: W \to X_V, q: W \to Y$ be projective birational morphisms such that $h=q\circ p^{-1}$. Then we have
\[
p^*B_V=q^*A+E,
\] where $E$ is an effective $p$-exceptional divisor. In particular, $h$ is the canonical model$/V$ of $B_V$ on $X_V$. As $X/T$ is an MKD fiber space, $B$ admits the canonical model $H: X \dto Z$ over $T$. Therefore, we have $H|_{X_V} = h$ by construction.
\end{proof}

\begin{lemma}\label{lem: iso for bir model}
Let $X/T$ be an MKD fiber space such that $X\to T$ is a fibration. Suppose that the natural map $N^1(X/T) \to N^1(X_t)$ is surjective for very general $t \in T$. Then there exists an open subset $T_0 \subset T$ such that for any open subset $U \subset T_0$ and $Y/U \in {\rm b}(X_{U}/U)$, the natural map
\[
N^1(Y/U) \to N^1(Y_t)
\]
is an isomorphism for any $t \in U$.
\end{lemma}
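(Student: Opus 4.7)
The strategy is to reduce the claim to finitely many birational models over $T$ and then invoke the deformation results of Section~\ref{subsubsec: def of NS spaces}. By Proposition~\ref{prop: structure of b and bc}~(1), the set ${\rm b}(X/T) = \{Y_1/T, \ldots, Y_m/T\}$ is finite up to isomorphism. For any $Y/U \in {\rm b}(X_U/U)$, Proposition~\ref{prop: stable bdd} extends the defining birational contraction $X_U \dasharrow Y/U$ to some $X \dasharrow Z/T$ in ${\rm b}(X/T)$ with $Z_U \simeq Y$; hence $Y \simeq (Y_i)_U$ for some $i$. This reduces the problem to proving the claim for each of the finitely many $Y_i/T$ separately, after which $T_0$ is obtained by intersecting the finitely many open subsets produced in each case.

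Fix one such $Y_i$. Since $X/T$ is an MKD fiber space, the birational contraction $X \dasharrow Y_i/T$ factors through a small $\bQ$-factorial modification followed by a birational morphism by Lemma~\ref{lem: factor bir contraction}. Using Theorem~\ref{thm: MMP for MKD} together with Proposition~\ref{prop: extremal contractions and flips}, this factorization can be further refined into a finite sequence of divisorial contractions, flipping contractions and flips over $T$, i.e., a partial MMP in the sense of Section~\ref{subsubsec: def of NS spaces}. Applying Lemma~\ref{lem: CLZ-surj preserved for bir contractions} to this partial MMP yields an open subset $T_0^{(i),\mathrm{surj}} \subset T$ on which $N^1((Y_i)_{T_0^{(i),\mathrm{surj}}}/T_0^{(i),\mathrm{surj}}) \to N^1((Y_i)_t)$ is surjective for very general $t$. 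Independently, Lemma~\ref{lem: total to fiber is inj} applied to $Y_i \to T$ produces an open subset $T_0^{(i),\mathrm{inj}} \subset T$ such that $N^1((Y_i)_U/U) \to N^1((Y_i)_t)$ is injective for every open $U \subset T_0^{(i),\mathrm{inj}}$ and every $t \in U$. Take $T_0 = \bigcap_{i=1}^m (T_0^{(i),\mathrm{surj}} \cap T_0^{(i),\mathrm{inj}})$.

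The main obstacle is upgrading surjectivity from the very general $t$ provided by Lemma~\ref{lem: CLZ-surj preserved for bir contractions} to every $t \in U$. Here the MKD structure will be essential: by Lemma~\ref{lem: b consists of MKD fiber spaces}, each $Y_i/U$ is an MKD fiber space, and by Theorem~\ref{thm: 3 equivalences} there is a fixed rational polyhedral cone $P_i \subset \Eff(Y_i/T)$ with $\Gamma_B(Y_i/T) \cdot P_i = \Eff(Y_i/T)$. Fixing finitely many effective generators of $P_i$ and combining with the pseudo-automorphism action (whose restriction to $U$ remains defined after further shrinking), one produces a finite set of divisor classes on $Y_i/U$ whose restrictions span $N^1((Y_i)_t)$ for every $t \in U$. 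Combining this with the injectivity already established forces $N^1((Y_i)_U/U) \to N^1((Y_i)_t)$ to be an isomorphism for every $t \in U$, completing the proof.
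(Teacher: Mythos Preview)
Your overall architecture---reduce to the finitely many models in ${\rm b}(X/T)$ via Proposition~\ref{prop: structure of b and bc}~(1) and Proposition~\ref{prop: stable bdd}, decompose each $X \dasharrow Y_i$ as a partial MMP, then apply Lemma~\ref{lem: CLZ-surj preserved for bir contractions} for surjectivity and Lemma~\ref{lem: total to fiber is inj} for injectivity---matches the paper's proof. The paper reaches the partial MMP slightly more directly: it takes the strict transform $A_X$ on $X$ of an ample$/T$ divisor on $Y_i$ and runs an $A_X$-MMP$/T$ using Theorem~\ref{thm: MMP for MKD}, noting that the resulting canonical model recovers $Y_i$. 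After invoking the two lemmas the paper simply writes ``the desired claim follows from Lemma~\ref{lem: total to fiber is inj}'' and stops; it does not carry out anything like your final paragraph.

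That final paragraph is where the proposal breaks. You try to pass from surjectivity for very general $t$ to surjectivity for every $t$ by producing, from generators of a fundamental domain $P_i \subset \Eff(Y_i/T)$ and their $\PsAut(Y_i/T)$-translates, a finite set of classes whose restrictions span $N^1((Y_i)_t)$. But every one of those classes lives in $N^1(Y_i/U)$ by construction, so their restrictions automatically lie in the \emph{image} of $N^1(Y_i/U) \to N^1((Y_i)_t)$. Asserting that this image is all of $N^1((Y_i)_t)$ is precisely the surjectivity you are trying to prove; the argument is circular. The MKD structure on $Y_i/U$ says nothing about divisor classes on the fiber $(Y_i)_t$ that do not come from the total space---indeed that comparison is the content of Theorem~\ref{thm: MMP}~(4), which is established downstream of this lemma. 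Two smaller points: elements of ${\rm b}(X/T)$ need not be $\bQ$-factorial, so Lemma~\ref{lem: b consists of MKD fiber spaces} and Theorem~\ref{thm: 3 equivalences} do not apply to every $Y_i$ as you assume; and restricting finitely many pseudo-automorphisms to $U$ still only moves classes within the image of the restriction map, so that step contributes nothing toward surjectivity.
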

\begin{proof}
By Proposition \ref{prop: structure of b and bc} (1), ${\rm b}(X/T)$ is a finite set. By Proposition \ref{prop: stable bdd}, for any open subset $V \subset T$, each element of ${\rm b}(X_{V}/V)$ is the restriction of some element from ${\rm b}(X/T)$. Hence, it suffices to show that for a fixed $Y/T \in {\rm b}(X/T)$, there exists an open subset $T_0 \subset T$ such that the natural map 
\[
N^1(Y_U/U) \to N^1(Y_t)
\]
is an isomorphism for any $t \in U$, where $U \subset T_0$ is an open subset.

Suppose that $f: X \dto Y/T$ is a birational contraction. Let $A$ be an ample$/T$ divisor on $Y/T$ with $A_X$ the strict transform of $A$ on $X/T$. By Theorem \ref{thm: MMP for MKD}, there exists an $A_X$-MMP $h: X \dto X'/T$ such that the strict transform of $A_X$ on $X'$, denoted by $A'$, is semi-ample over $T$. Let $g: X' \to Z/T$ be the contraction morphism induced by $A'$. Since $f$ and $g \circ h$ are both canonical models$/T$ of $A_X$, there exists an isomorphism $\theta: Z \simeq Y/T$ such that $f = \theta \circ g \circ h$. In other words, $f$ can be decomposed into a sequence of partial MMP steps. By Lemma \ref{lem: CLZ-surj preserved for bir contractions}, $N^1(Y/T) \to N^1(Y_t)$ is surjective for very general $t \in T$. Then the desired claim follows from Lemma \ref{lem: total to fiber is inj}.
\end{proof}

The following can be shown along the lines of \cite[\S 5]{CLZ25}.

 \begin{lemma}\label{lem: uniform behavior in bc}
Let $f: X \to T$ be a fibration. Suppose that $S \subset T$ is a Zariski dense subset such that for any $s \in S$, the fiber $X_s$ satisfies
\[
H^1(X_s, \mO_{X_s}) = H^2(X_s, \mO_{X_s}) = 0.
\]
Assume further that the geometric generic fiber $X_{\bar\eta}$ is a klt MKD space. Then, up to a generically finite base change of $T$, there exists an open subset $T_0\subset T$ such that the following properties hold.
\begin{enumerate}
\item For any open subset $V\subset T_0$, if $Y/V\in {\rm b}(X_{V}/V)$, then the natural maps
\[
N^1(Y/V) \to N^1(Y_t), \quad \Nef(Y/V) \to \Nef(Y_t), \quad \overline{\NE}(Y_t) \to \overline{\NE}(Y/V)
\] are isomorphisms for any $t\in V$.
\item For any $Y/T_0\in {\rm b}(X_{T_0}/T_0)$, $Y$ is flat over $T_0$, and $Y_t$ is an irreducible normal variety for each $t\in T_0$. 
\item If $V\subset T_0$ is an open subset and $h: Y \to Z/V$ is a contraction morphism (not necessarily birational) for some $Y/V \in {\rm b}(X_{V}/V)$, then $h_t: Y_t \to Z_t$ is still a contraction for each $t\in V$. Moreover, if $h$ is a divisorial contraction (resp. a small contraction, an extremal contraction, a Mori fiber space) if and only if $h_t$ for $t\in V$ is a divisorial contraction (resp. a small contraction, an extremal contraction, a Mori fiber space).
\end{enumerate}
 \end{lemma}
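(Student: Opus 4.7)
The plan is to first apply Theorem \ref{thm: geometric MKD space} to perform a generically finite base change of $T$ and shrink $T$ so that $X/T$ becomes a klt MKD fiber space with $X_{\bar\eta}$ as its geometric generic fiber (and so that the isomorphisms of cones with $X_{\bar\eta}$ hold). Since $S\subset T$ is Zariski dense and the base change is finite, the preimage of $S$ remains Zariski dense and the cohomological vanishing assumptions are preserved. By Proposition \ref{prop: structure of b and bc}(1), the set $\mathrm{b}(X/T)$ is finite, say with representatives $Y_1/T,\ldots,Y_m/T$; and by Proposition \ref{prop: stable bdd}, every element of $\mathrm{b}(X_V/V)$ arises by restriction from some $Y_i/T$. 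Hence it suffices to verify all the claims on each of the finitely many $Y_i/T$, after which we take a common shrinking $T_0$.

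For (1), apply Theorem \ref{thm: CLZ-deform pic} to $X/T$ to obtain, after a further generically finite base change, a non-empty open $T_0\subset T$ on which $N^1(X_V/V)\simeq N^1(X_t)$ for every open $V\subset T_0$ and every $t\in V$; in particular the restriction is surjective at the very general point. Each $Y_i$ is reached from $X$ by a sequence of divisorial contractions, flipping contractions and flips (via Theorem \ref{thm: MMP for MKD} applied to an ample-pullback; see the proof of Lemma \ref{lem: iso for bir model}), so Lemma \ref{lem: CLZ-surj preserved for bir contractions} propagates the surjectivity condition to each $Y_i/T$. Combining this with Lemma \ref{lem: total to fiber is inj} (applied to a resolution of $Y_i$) gives the isomorphism $N^1(Y_i/V)\simeq N^1((Y_i)_t)$. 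After passing to a small $\bQ$-factorial modification if necessary (which does not alter $\mathrm{b}(\cdot)$ essentially, by Lemma \ref{lem: b consists of MKD fiber spaces}), Lemma \ref{lem: b consists of MKD fiber spaces}(1) ensures that each $Y_i/T_0$ is an MKD fiber space, so Proposition \ref{prop: iso of nef} yields $\Nef(Y_i/V)\simeq \Nef((Y_i)_t)$. The isomorphism $\NE((Y_i)_t)\to \NE(Y_i/V)$ then follows by dualizing the $N^1$-isomorphism, since $\NE$ is the dual of $\Nef$ inside the dual real vector space $N_1$.

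For (2), generic flatness, generic normality of fibers, and generic geometric irreducibility of fibers (standard consequences of spreading out; see e.g. \cite[Ch.~III]{Har77}) applied to each of the finitely many morphisms $Y_i\to T$ allow us to shrink $T_0$ finitely many times so that every $Y_i\to T_0$ is flat with geometrically normal irreducible fibers. For (3), given a contraction $h\colon Y\to Z/V$ with $Y/V\in \mathrm{b}(X_V/V)$, extend it via Proposition \ref{prop: stable bdd} together with Proposition \ref{prop: structure of b and bc}(4) to one of finitely many contractions $H_{i,s}\colon Y_i\to Z_{i,s}/T_0$. Since $Y_i/T_0$ is MKD, the semi-ample$/T_0$ divisor inducing $H_{i,s}$ has semi-ample restriction to each fiber $(Y_i)_t$, and standard cohomology-and-base-change yields $(H_{i,s,t})_\ast \mO_{(Y_i)_t}=\mO_{(Z_{i,s})_t}$ after possibly a further finite shrinking; then $h_t$ is the corresponding fiber contraction. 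The preservation of the type of contraction (divisorial, small, extremal, or Mori fiber space) is then read off from the invariance of $\dim Y-\dim Z$, $\dim\mathrm{Exc}$, and $\rho(Y/Z)$, all of which are constant on fibers by (1) and (2).

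The main obstacle is the uniform propagation of the hypotheses of Theorem \ref{thm: CLZ-deform pic} across the finitely many birational models $Y_i$: a priori one would need the vanishings $H^1((Y_i)_s,\mO)=H^2((Y_i)_s,\mO)=0$ on a Zariski dense subset of $T$, which is not directly given. The key technical point is to circumvent this by using only the $N^1$-conclusion of Theorem \ref{thm: CLZ-deform pic} for $X/T$ and transporting it along the partial MMP via Lemma \ref{lem: CLZ-surj preserved for bir contractions} and Lemma \ref{lem: total to fiber is inj}, rather than re-running Theorem \ref{thm: CLZ-deform pic} on each $Y_i$. A secondary issue, compatibility of all the shrinkings and base changes across the $Y_i$, is automatic from the finiteness of $\mathrm{b}(X/T)$.
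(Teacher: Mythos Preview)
Your overall strategy matches the paper's: reduce to the finite set $\mathrm{b}(X/T)$ via Proposition~\ref{prop: stable bdd}, establish the $N^1$-isomorphism along birational models using Lemmas~\ref{lem: CLZ-surj preserved for bir contractions} and~\ref{lem: total to fiber is inj} (which is exactly what Lemma~\ref{lem: iso for bir model} packages), then apply Proposition~\ref{prop: iso of nef} on a $\bQ$-factorial model dominating $Y$ and chase the Nef cones down via a commutative square. For (2) you and the paper both invoke generic flatness and constructibility of the normal locus.

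There is, however, a genuine gap in your treatment of (3). You assert that the contraction $h\colon Y\to Z/V$ extends to one of finitely many contractions over $T_0$ ``via Proposition~\ref{prop: stable bdd} together with Proposition~\ref{prop: structure of b and bc}(4)'', but neither result accomplishes this: Proposition~\ref{prop: stable bdd} only extends birational contractions \emph{from $X_V$}, not contractions out of $Y$; and Proposition~\ref{prop: structure of b and bc}(4) classifies contractions already defined over $T$, not over an open $V\subset T$. What is missing is any reason why $h$ spreads out over $T_0$ at all. The paper supplies this as follows: write $Y\simeq (Y_i)_V$ for some $Y_i/T_0\in\mathrm{b}(X_{T_0}/T_0)$, and let $B$ be the Zariski closure in $Y_i$ of $h^*A'$ for $A'$ ample$/V$ on $Z$. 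Then $B$ is nef over $T_0$ precisely by the already-established Nef cone isomorphism in (1), and it is semi-ample over $T_0$ because its pullback to a $\bQ$-factorial MKD model $W\to Y_i$ (Lemma~\ref{lem: b consists of MKD fiber spaces}(2)) is effective and nef. The morphism $g\colon Y_i\to Z_i/T_0$ induced by $B$ then satisfies $g|_V=h$, after which Proposition~\ref{prop: structure of b and bc}(4) yields the finitely many $g_s$, and one shrinks $T_0$ once for each of these to match types fiberwise. A smaller omission: Proposition~\ref{prop: iso of nef} requires $\Eff$ to be non-degenerate, which the paper secures for every $Y_i/T_0$ by a further shrinking via Proposition~\ref{prop: Generic property}(2); you should record this step.
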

 \begin{proof}
By Theorem \ref{thm: def of nef cone}, there exists a generically finite base change $T' \to T$ and an open subset $T_0'\subset T'$ such that for any Zariski open subset $U \subset T_0'$, the natural maps
\[
\begin{split}
& N^1(X_U/U) \to N^1(X_t), \quad [D] \mapsto [D|_{X_t}],\\
& \Nef(X/U) \to \Nef(X_t), \quad [D] \mapsto [D|_{X_t}],
\end{split}
\]
are isomorphisms for all $t \in U$. Note that $X_U/U$ is still an MKD fiber space.

By Lemma \ref{lem: iso for bir model}, after possibly shrinking $T_0'$, for any $Y/V \in {\rm b}(X_{V}/V)$, the natural map
\begin{equation}\label{eq: iso 2}
N^1(Y/V) \to N^1(Y_t)
\end{equation}
is an isomorphism for any $t \in V$, where $V \subset T_0'$ is an open subset. By Proposition \ref{prop: structure of b and bc} (1) and Proposition \ref{prop: stable bdd}, applying Proposition \ref{prop: Generic property} (2) to each element of ${\rm b}(X_{T'_0}/T'_0)$, we may, after further shrinking $T'_0$, assume that $\Eff(Y/V)$ is non-degenerate for every $Y/V\in {\rm b}(X_{V}/V)$.

Let $V \subset T_0'$ be a non-empty open subset. Choose $Y'/V \in {\rm b}(X_V/V)$, and suppose that $f: X_V \dto Y'/V$ is a birational contraction map. Let $A'$ be an ample$/V$ divisor on $Y'/V$, and let $A$ denote the strict transform of $A'$ on $X_V$. Let $B$ be the Zariski closure of $A$ on $X_{T_0'}$. Hence, we have $B|_{X_V}=A$. By Theorem \ref{thm: MMP for MKD}, there exists a $B$-MMP $h: X_{T_0'} \dto \ti Y/T_0'$ such that the strict transform of $B$ on $\ti Y$, denoted by $\ti B$, is semi-ample over $T_0'$. Let 
\[
\ti Y \to \ti Y'/T_0'
\] be the contraction morphism induced by $\ti B$. Since the natural map restricted to $V$,
\[
X_{V} \dto \ti Y|_V \to \ti Y'|_V,
\]
is the canonical model of $A$ over $V$, it follows that this map is isomorphic to $f: X_V \dto Y'/V$. Moreover, $\ti Y/T_0'$ is an MKD fiber space by Theorem \ref{thm: MMP for MKD} and thus $\ti Y|_V$ is still an MKD fiber space by Proposition \ref{prop: generic property for MKD space}. By Proposition \ref{prop: iso of nef} and \eqref{eq: iso 2}, after shrinking $T_0'$, the natural map 
\[
\Nef(\ti Y_V/V) \to \Nef(\ti Y_t)
\] is an isomorphism for each $t\in V$. This yields the following commutative diagram
\[
\begin{tikzcd}
 \Nef(\ti Y_V/V) \arrow[r, "\simeq"] & \Nef(\ti Y_t)  \\
 \Nef(Y'/V) \arrow[u, hookrightarrow] \arrow[r] & \Nef(Y'_t) \arrow[u, hookrightarrow]
\end{tikzcd}
\] for any $t\in V$. As $N^1(\ti Y_V/V) \simeq N^1(\ti Y_t)$ and $N^1(Y'/V) \simeq N^1(Y'_t)$ for any $t\in V$ by \eqref{eq: iso 2}, the natural map
\[
\Nef(Y'/V) \to \Nef(Y'_t)
\] is an isomorphism for any $t\in V$. As $\overline{\NE}(Y'/V)$ and $\overline{\NE}(Y'_t)$ are dual to $\Nef(Y'/V)$ and $\Nef(Y'_t)$, respectively, we have
\[
\overline{\NE}(Y'/V)\simeq \overline{\NE}(Y'_t)  
\]
for any $t \in V$. This completes the proof of (1).

\medskip

For (2), by Lemma \ref{prop: stable bdd} and Proposition \ref{prop: structure of b and bc} (1), after shrinking $T$, we may assume that for any $Y/T \in {\rm b}(X/T)$, the morphism $Y \to T$ is flat by generic flatness, and each fiber $Y_t$, $t \in T$, is an irreducible normal variety, since the set 
\[
\{p \in T \mid Y_p \text{~is normal}\}
\]
is constructible. Note that in the above set, $p$ is not restricted to closed points, and the generic point of $T$ belongs to it, as $Y$ is normal.

\medskip

To show (3), first note that by (1), we have 
\begin{equation}\label{eq: for 3}
\Nef(Y/V) \simeq \Nef(Y_t)
\end{equation} 
for any $Y/V\in {\rm b}(X_{V}/{V})$, where $V\subset {T}$ is an open subset (here we replace $T_0$ by $T$).  

If $h: Y' \to Z'/V$ is a contraction for some $Y'/V \in {\rm b}(X_V/V)$, then let $A'\geq 0$ be an ample$/V$ divisor on $Z'$. By Lemma \ref{prop: stable bdd}, there exists $Y/T \in {\rm b}(X/T)$ such that $\theta: Y_V \simeq Y'/V$. Let $B$ be the Zariski closure of $\theta^*(h^*A')$ on $Y$. By \eqref{eq: for 3}, $B$ is nef over $T$. Although $Y/T$ may not be an MKD fiber space because $Y$ may not be $\bQ$-factorial, the divisor $B$ is still semi-ample over $T$. Indeed, by Lemma \ref{lem: b consists of MKD fiber spaces}~(2), there exists an MKD fiber space $W/T$ and a birational morphism $\sigma: W \to Y/T$. Hence, $\sigma^*B$ is semi-ample$/T$ by the definition of MKD fiber spaces. This implies that $B$ is also semi-ample$/T$. Let $g: Y \to Z/T$ be the contraction induced by $B$. Since $B|_V=\theta^*(h^*A')$, we have 
\begin{equation}\label{eq: morphi_V}
g|_V = h.
\end{equation}
By Lemma \ref{lem: b consists of MKD fiber spaces}~(2) and Proposition \ref{prop: structure of b and bc}~(4), for each $W/T\in {\rm b}(X/T)$, there are finitely many contraction morphisms
\[
g_s: W \to W_s/T, \quad 1 \leq s \leq p
\] 
such that any contraction morphism $g: W \to W'/T$ is isomorphic to $g_s \circ \sigma$, where $\sigma \in \Aut(W/T)$. Collecting these contraction morphisms for different $W/T$ together, and since ${\rm b}(X/T)$ is a finite set by Proposition \ref{prop: structure of b and bc} (1), there are only finitely many contraction morphisms
\[
g_s, \quad 1 \leq s \leq n,
\]
in total.

For any $Y/V \in {\rm b}(X_V/V)$, let $h: Y \to Z/V$ be a contraction morphism. By \eqref{eq: morphi_V}, we see that $h=(g_s \circ \sigma)|_V$ for some $1 \leq s \leq n$ and $\sigma \in \Aut(W/T)$, where $g_s: W \to W_s/T$. Since 
\[
{\overline\NE}(W/T) \simeq {\overline\NE}(W_V/V) \simeq {\overline\NE}(W_{t}),\quad t\in T
\] by (1), we see that $g_s$, $(g_s)_V$, and $(g_s)_t$ are extremal contractions if one of them is an extremal contraction. After further shrinking $T$, we may assume that each $g_s$, $1 \leq s \leq n$, is a divisorial contraction (resp. a small contraction, a Mori fiber space) if and only if $(g_s)_t$, $t \in T$, is a divisorial contraction (resp. a small contraction, a Mori fiber space). Since $\sigma \in \Aut(W/T)$, $g_s \circ \sigma$ is a divisorial contraction (resp. a small contraction, a Mori fiber space) if and only if $(g_s \circ \sigma)_t$, $t \in T$, is a divisorial contraction (resp. a small contraction, a Mori fiber space). This completes the proof of (3).
 \end{proof}
In what follows, for a Cartier divisor $\mathcal{D}$ on a fiber space $X/T$, on the fiber $X_t$, we write $\mathcal{D}|_{X_t}$ or $\mathcal{D}_t$ for any Cartier divisor on $X_t$ such that 
\[
\mathcal{O}_{X_t}(\mathcal{D}|_{X_t}) = \mathcal{O}_X(\mathcal{D})|_{X_t}
\] 
as line bundles. Since $\mathcal{D}$ may contain $X_t$ in its support, the restriction $\mathcal{D}|_{X_t}$ is only well-defined at the level of line bundles. We use the same notation for $\mathbb{R}$-linear combinations of Cartier divisors (i.e., $\mathbb{R}$-Cartier divisors).

\begin{theorem}\label{thm: MMP}
Let $f: X \to T$ be a fibration. Suppose that $S \subset T$ is a Zariski dense subset such that for any $s \in S$, the fiber $X_s$ satisfies
\[
H^1(X_s, \mO_{X_s}) = H^2(X_s, \mO_{X_s}) = 0.
\]
Assume further that the geometric generic fiber $X_{\bar\eta}$ is a klt MKD space. Then, after a generically finite base change of $T$, there exists a non-empty Zariski open subset $T_0 \subset T$ such that any Zariski open subset $U \subset T_0$ satisfies the following properties.
\begin{enumerate}
\item Suppose that $X'/T \in {\rm b}(X/T)$. If 
$g: X' \dashrightarrow Y/T$ is a birational contraction between $\bQ$-factorial varieties, 
then for every $t \in T_0$, the induced map 
$g_t: X'_t \dashrightarrow Y_t$ is again a birational contraction. Moreover, for any 
$\bR$-Cartier divisor $\mD$ on $X'$, we have
\[
(g_t)_*(\mD|_{X'_t}) \sim_{\bR} (g_*\mD)|_{Y_t}, \quad t \in T_0.
\]
\item For each $X'/T \in {\rm b}(X/T)$ and any effective $\bR$-divisor $\mD$ on $X'_U$, a sequence of $\mD$-MMP$/U$ induces a sequence of $\mD|_{X'_t}$-MMP of the same type for each $t\in U$. 
\item Conversely, for each $X'/T \in {\rm b}(X/T)$ with $X'$ $\bQ$-factorial, and any effective $\bR$-divisor $D$ on $X'_t$ with $t \in U$, any sequence of $D$-MMP on $X'_t$ is induced by a sequence of $\mD$-MMP$/U$ on $X'_U/U$ of the same type, where $\mD$ is an effective divisor satisfying $[\mD|_{X'_t}] = [D]$.
\item For each $X'/T \in {\rm b}(X/T)$, we have $\Eff(X'_U/U) \simeq \Eff(X'_t)$ for any $t\in U$.
\end{enumerate}
\end{theorem}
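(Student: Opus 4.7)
The plan is to combine Theorem~\ref{thm: def of nef cone} and Lemma~\ref{lem: uniform behavior in bc} to select, after a generically finite base change of $T$, a Zariski open $T_0 \subset T$ where $X/T_0$ and every $Y/V \in {\rm b}(X_V/V)$ (with $V \subset T_0$ open) become MKD fiber spaces with fiberwise isomorphisms $N^1 \simeq N^1$, $\Nef \simeq \Nef$, and $\NE \simeq \NE$, and where contraction morphisms restrict to contractions of the same type. I will shrink $T_0$ further so it is smooth (so each $X'_t$ is Cartier in $X'_{T_0}$) and so that a chosen finite generating set of $\Gamma_B(X'_{T_0}/T_0)$ consists of pseudo-automorphisms whose exceptional loci meet every fiber in codimension at least two. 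The four parts are then established in the order $(1), (2), (4), (3)$.

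Part~(1) follows by factoring $g = \tau \circ h$ via Lemma~\ref{lem: factor bir contraction} into a small $\bQ$-factorial modification $h$ and a birational contraction morphism $\tau$; both $X'$ and its small modification lie in ${\rm b}(X/T)$, so Lemma~\ref{lem: uniform behavior in bc}(3) gives that $\tau_t$ is a birational contraction morphism, while $h_t$ is isomorphic in codimension one because any fiberwise divisor exceptional for $h_t$ would, via $\NE(X'/U) \simeq \NE(X'_t)$, correspond to a divisor exceptional for $h$. The pushforward identity comes from resolving $g = q \circ p^{-1}$, writing $p^*\mD = q^*(g_*\mD) + E$ with $E$ $q$-exceptional, and restricting to a common resolution of $g_t$ using flatness over $T_0$. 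Part~(2) follows because each MMP step is driven by a $\mD$-negative extremal ray of $\NE(X'/U)$, which corresponds via the isomorphism to a $\mD|_{X'_t}$-negative extremal ray on $X'_t$ of the same contraction type by Lemma~\ref{lem: uniform behavior in bc}(3); for flips, part~(1) confirms that the fiberwise restriction is again the fiberwise flip.

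The essential content lies in part~(4). The injection $\Eff(X'_U/U) \hookrightarrow \Eff(X'_t)$ is immediate by restricting an effective representative. For surjectivity, I exploit the MKD structure of $X'_U/U$ (provided by Lemma~\ref{lem: b consists of MKD fiber spaces}): by Theorem~\ref{thm: 3 equivalences}(3) there is a rational polyhedral $P \subset \Eff(X'_U/U)$ with $\Gamma_B(X'_U/U) \cdot P = \Eff(X'_U/U)$, and by Theorem~\ref{thm: Shokurov poly for minimal models} $P$ decomposes into finitely many chambers indexed by shared good minimal models $X'_U \dashrightarrow Y_j/U$. Given $[D] \in \Eff(X'_t)$, either $[D] \in \Nef(X'_t)$, in which case the lift via $\Nef(X'/U) \simeq \Nef(X'_t)$ is effective by $\Nef^e(X'_U/U) = \Nef(X'_U/U)_+$ from Corollary~\ref{cor: of 3 equiv}(1); or $[D]$ is not nef, in which case I lift a $D$-negative extremal ray to $\NE(X'/U)$, perform the corresponding contraction or flip on $X'_U/U$ (yielding an MKD fiber space by Theorem~\ref{thm: bir contraction is MKD}), and recursively realize the transformed class on the new birational model, pulling back the effective lift along the birational contraction. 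Termination of the recursion is inherited from termination of MMP on MKD fiber spaces (Theorem~\ref{thm: MMP for MKD}) since the procedure is dominated by a genuine MMP$/U$ on $X'_U/U$. Finally, part~(3) follows by combining (4) and (2): lift $D$ to an effective $\mD$ on $X'_U/U$, then run the $\mD$-MMP$/U$ whose extremal-ray choices at each step correspond under $\NE$-identifications to the prescribed $D$-MMP on $X'_t$.

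The main obstacle is the surjectivity direction of part~(4). Effectivity of a class does not deform upward from a fiber to the total space by naïve semicontinuity, so producing an effective lift of a fiberwise effective class requires exploiting the rigid MMP-driven structure of $\Eff(X'_U/U)$: nef classes are handled by $\Nef^e = \Nef_+$, and non-nef classes are reduced inductively through the correspondence of extremal rays. The technical crux is ensuring that at each stage the extremal ray lifts coherently, that the inductive hypothesis applies to the birationally modified MKD fiber space (Theorem~\ref{thm: bir contraction is MKD}), and that the pulled-back lift together with the exceptional divisor in the divisorial case has the correct restriction class to $X'_t$.
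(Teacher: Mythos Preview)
Your overall architecture (order $(1),(2),(4),(3)$ and reducing everything to Lemma~\ref{lem: uniform behavior in bc}) matches the paper, but there is a genuine gap in your argument for~(4), and a uniformity issue in~(1) that you set up correctly but then fail to use.

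\medskip

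\textbf{The gap in (4).} Your recursive lifting argument produces, on the total space, a sequence of extremal contractions and flips, each negative for the numerical lift $[\mathcal D]\in N^1(X'_U/U)$ of $[D]$. You justify termination by saying the procedure ``is dominated by a genuine MMP$/U$'' and invoking Theorem~\ref{thm: MMP for MKD}. But Theorem~\ref{thm: MMP for MKD} (and more generally the whole MMP machinery for MKD spaces, Proposition~\ref{prop: extremal contractions and flips}) applies only to \emph{effective} divisors, and effectivity of $\mathcal D$ is precisely what you are trying to prove. Without this, the sequence on the total space is just a sequence of $[\mathcal D]$-negative extremal contractions; since $\PsAut$ can be infinite, the models could in principle cycle through isomorphic copies related by pseudo-automorphisms, and nothing you have available rules this out. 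The base case (nef $[D]$) is fine for rational classes via $\Nef^e=\Nef_+$, but you never reach it.

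The paper sidesteps this entirely with a volume argument: it first shows $\overline{\Eff}(X'/T)=\overline{\Eff}(X'_t)$ by supposing some boundary class $[\mathcal D]$ of $\overline{\Eff}(X'/T)$ has $[\mathcal D_t]$ big, approximating $[\mathcal D]$ by big (hence effective, via $\Eff=\Eff_+$) classes $[\mathcal D_i]$, running the $\mathcal D_i$-MMP from part~(2), and using that volume is preserved under MMP and continuous on $N^1$ to conclude $\operatorname{vol}(\mathcal D_\eta)>0$, a contradiction. Then $\Eff=\Eff_+$ on the total space finishes. This avoids any termination question for the unknown lift.

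\medskip

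\textbf{The uniformity issue in (1).} You correctly shrink $T_0$ so that a finite generating set $\gamma_1,\dots,\gamma_l$ of $\Gamma_B(X'_{T_0}/T_0)$ has fiberwise-small indeterminacy, but then you do not use this: instead you factor an arbitrary $g=\tau\circ h$ via Lemma~\ref{lem: factor bir contraction} and try to treat $h$ directly. The problem is that while the targets $X''\in{\rm b}(X/T)$ are finite in number, the \emph{maps} $h:X'\dashrightarrow X''$ are not, so you cannot shrink $T_0$ once for all of them; your claim that $h_t$ is small ``via $\NE(X'/U)\simeq\NE(X'_t)$'' is not an argument (the $\NE$-identification concerns curve classes, not exceptional divisors). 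The paper's proof uses exactly the generating set you prepared: by Proposition~\ref{prop: structure of b and bc}(3), every $g$ factors as $f_i\circ\gamma$ with $\gamma$ a finite word in the $\gamma_j$, so one only needs to control the finitely many $f_i$ and $\gamma_j$, and the pushforward identity then follows by induction along the word.
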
 
\begin{proof}
By Theorem \ref{thm: geometric MKD space}, there exists a generically finite base change $u: T' \to T$ such that, after shrinking $T'$, the morphism $X_{T'} \to T'$ becomes a klt MKD fiber space. Moreover, these properties are preserved under any generically finite base change that factors through $T' \to T$. Hence, by replacing $T$ with $T'$ and $S$ with $u^{-1}(S)$, we can assume that $X/T$ is a klt MKD fiber space. By Proposition \ref{prop: generic property for MKD space}, $X_U/U$ is still an MKD fiber space for any non-empty open subset $U \subset T$. After replacing $T$ by a generically finite base change and possibly shrinking it, we may assume that the properties listed in Lemma \ref{lem: uniform behavior in bc} hold. By Proposition \ref{prop: Generic property} (2), after further shrinking $T$, we may assume that $\Eff(X_U/U)$ is non-degenerate for any open subset $U\subset T$.

\medskip

For (1), since ${\rm b}(X/T)$ is a finite set by Proposition~\ref{prop: structure of b and bc}~(1), and $X'/T$ is an MKD fiber space by Theorem~\ref{thm: bir contraction is MKD}, 
we may assume that $X'/T$ is simply $X/T$. By Proposition \ref{prop: structure of b and bc}~(3), there exist finitely many birational contractions 
\[ 
f_i: X \dashrightarrow Y_i/T, \quad 1 \leq i \leq m, 
\] and finitely many pseudo-automorphisms 
\[
\gamma_j \in \PsAut(X/T), \quad 1 \leq j \leq l, 
\]
such that any birational contraction $g \in {\rm bc}(X/T)$ is isomorphic to $f_i\circ \gamma $, where $\gamma$ is a finite product of $\gamma_j, 1 \leq j \leq l$. In other words, there exists an isomorphism $\theta: Y_i \simeq Y/T$, such that $g$ can be decomposed into
\begin{equation}\label{eq: dec bir}
g: X \stackrel{\gamma_{i_1}}{\dto} X \stackrel{\gamma_{i_2}}{\dto} X \dto \cdots \dto X \stackrel{\gamma_{i_j}}{\dto}  X \stackrel{{f_i}}{\dto} Y_i  \stackrel{\theta}{\simeq} Y/T.
\end{equation} 
For each $\gamma_j$, fix projective birational morphisms $p\colon W \to X$ and $q\colon W \to X$ such that $\gamma_j = q \circ p^{-1}$. Then there exists a non-empty open subset $T_0 \subset T$ such that for every $t \in T_0$, the induced map
\[
\gamma_{j,t} \colon X_t \dashrightarrow X_t
\]
is still a birational contraction, and that $p_t$ and $q_t$ are projective birational morphisms. Moreover, we may further assume that if a divisor $E$ on $W$ is $p$-exceptional (resp. $q$-exceptional), then for every $t \in T_0$, the divisor $E|_{W_t}$ is $p_t$-exceptional (resp. $q_t$-exceptional). A similar construction also applies to each $f_i$. Since the sets $\{\gamma_j\mid 1 \leq j \leq l\}$ and $\{f_i \mid 1 \leq i \leq m\}$ are finite, there exists a non-empty open subset $T_0 \subset T$ on which the above property holds simultaneously for all $f_i$ and $\gamma_j$.

By construction, for any $\Rr$-Cartier divisor $\Dd$ on $X$, we have 
\[
p^*\mD = q^*\mD' +E,
\] where $\mD' = \gamma_{j*}\mD$ and $E$ is $q$-exceptional. Hence, we have
\[
p_t^*\mD_t \sim_\bR q_t^*\mD'_{t} + E|_{W_t}.
\] As $\gamma_{j, t}$ is a birational contraction and $E|_{W_t}$ is $q_t$-exceptional, we see 
\[
\gamma_{j,t*}(\mD_t) \sim_\Rr \mD'_{t}.
\] Since $\theta_t\colon Y_t \simeq Y_t$ is an isomorphism, repeating the above argument along the sequence \eqref{eq: dec bir} yields the desired claim.

\medskip

For (2), we show that an MMP of the fiber space is an MMP of each fiber of the same type. For any $[\mD] \in \Eff(X'/T)$, let 
\[
X'=X'_0 \dasharrow X'_1 \dasharrow \cdots \dto X'_{n-1}\dasharrow X'_n \to X'_{n+1}
\] be a $\mD$-MMP over $T$,  where $X'_i \dasharrow X'_{i+1}, i=0, \ldots, n-1$ are birational contractions, and $X'_n \to X'_{n+1}$ is a contraction induced by the semi-ample$/T$ divisor $\mD_{X'_{n}}$. Thus, the natural birational contraction $X \dto X' \dasharrow X'_i/T$ belongs to $\bc(X/T)$. By Lemma \ref{lem: uniform behavior in bc} (3), if $g: X_i \dasharrow X_{i+1}$ is a divisorial contraction (resp. a small contraction, an extremal contraction, a Mori fiber space) if and only if $g_t$ is a divisorial contraction  (resp. a small contraction, an extremal contraction, a Mori fiber space) for each $t\in T$. Hence,
\[
X'_t=X'_{0,t} \dasharrow X'_{1,t} \dasharrow \cdots \dasharrow X'_{n,t} \to X'_{n+1, t}
\] is a $\mD_t$-MMP on $X'_t$ of the same type.

\medskip

Next, we show (4) first. For simplicity, we set $U=T$. The following argument is inspired by the proof of \cite[Theorem 4.2 (3)]{FHS24}.

Let $\iota: N^1(X'/T) \simeq N^1(X'_t)$ be the natural linear map as in Lemma \ref{lem: uniform behavior in bc} (1). By the upper-semicontinuity of cohomologies \cite[Chapter III, Theorem 12.8]{Har77}, we have  
\begin{equation}\label{eq: eff inclusion}
\iota(\Eff(X'/T)) \subset \Eff(X'_t).
\end{equation} This implies $\iota(\bEff(X'/T)) \subset \bEff(X'_t)$.

In the sequel, we show the converse inclusion of \eqref{eq: eff inclusion}. 

We first show the converse inclusion of \eqref{eq: eff inclusion} when $X'/T$ is an MKD fiber space. In this case, it suffices to show that $\iota$ induces the isomorphism $\bEff(X'/T) \simeq \bEff(X'_t)$. Indeed, from this isomorphism, we have 
\[
\bEff(X'/T)_+ \simeq \bEff(X'_t)_+ \supset \Eff(X'_t).
\] As $X'/T$ is an MKD fiber space and $\Eff(X'/T)$ is non-degenerate, we have $\Eff(X'/T)=\Eff(X'/T)_+$ by Corollary \ref{cor: of 3 equiv} (3). This implies that $\iota(\Eff(X'/T)) \supset \Eff(X'_t)$. 

As we have $\iota: \Nef(X'/T) \simeq \Nef(X'_t)$ by Lemma \ref{lem: uniform behavior in bc} (1), the cone
\[
\iota(\Eff(X'/T)) \cap \Eff(X'_t)
\] contains the full-dimensional cone $\Amp(X'_t)$. If $\iota(\bEff(X'/T)) \subsetneqq \bEff(X'_t)$, then there exists some $[\mD]$ lying on the boundary of $\bEff(X'/T)$ such that $[\mD_t] \in \Int(\Eff(X'_t))$. Indeed, take 
\[
y \in \partial \bEff(X_t)\setminus \iota(\bEff(X/T)), \quad x\in \Amp(X_t),
\] then the interval $[x, y]$ intersects with $\partial \bar(\iota(\bEff(X/T)))$ which is the desired $\iota([\mD])$. Let 
\[
[\mD_i] \in \Int(\Eff(X'/T)),\quad i\in \Nn
\] be a sequence of divisors such that $[\mD_i] \to [\mD]$. By (2), for each $i$, there exists a $\mD_i$-MMP/$T$, $X' \dto Y/T$, inducing a $\mD_{i,t}$-MMP of the same type. Let $\mD_{i,Y}$ be the strict transform of $\mD_{i}$ on $Y$. Since $\mD_{i,Y}$ is nef$/T$, we have
\[
\vol((\mD_{i,Y})_\eta) = \vol(\mD_{i,Y}|_{Y_t}) \quad t\in T.
\] As volumes are preserved under the MMP, we have
\[
\vol((\mD_{i})_\eta)=\vol((\mD_{i,Y})_\eta), \quad \vol(\mD_{i}|_{X'_t})=\vol(\mD_{i,Y}|_{Y_t}).
\] Since the volume function is a continuous function on the real N\'eron-Severi space (see \cite[Corollary 2.2.45]{Laz04a}), we have
\[
0<\vol(\mD|_{X'_t}) = \lim_{i \to \infty} \vol(\mD_{i}|_{X'_t}) =  \lim_{i \to \infty}\vol((\mD_{i})_\eta) = \vol((\mD)_\eta). 
\] This implies that $(\mD)_\eta$ is a big divisor. Thus $\mD$ is a big divisor over $T$. This contradicts the choice of $[\mD]$. Hence, we have $\iota(\bEff(X'/T))= \bEff(X'_t)$.

Next, we show (4) for an arbitrary $X'/T \in {\rm b}(X/T)$. By Lemma \ref{lem: b consists of MKD fiber spaces}~(2), $X \dto X'/T$ can be factored into a birational contraction $X \dto \ti X/T$ with $\ti X/T$ an MKD fiber space followed by a birational morphism $g: \ti X \to X'/T$. This yields the following commutative diagram
\[
\begin{tikzcd}
 \Eff(\ti X/T) \arrow[r, "\simeq"] & \Eff(\ti X_t)  \\
 \Eff(X'/T) \arrow[u, hookrightarrow] \arrow[r] & \Eff(X'_t) \arrow[u, hookrightarrow]
\end{tikzcd}
\] for any $t\in T$. By chasing the diagram and using the isomorphism $N^1(X'/T)\simeq N^1(X'_t)$ from Lemma~\ref{lem: uniform behavior in bc}~(1), we see that the natural injective map
\[
\Eff(X'/T)\to \Eff(X'_t)
\]
is surjective. This completes the proof of (4).

\medskip

Finally, we show (3), that is, an MMP of the fiber is induced from an MMP$/T$ of the total fiber space. Suppose that 
\begin{equation}\label{eq: MMP fiber}
X'_t=Y_{0} \dasharrow Y_{1} \dasharrow \cdots \dasharrow Y_{n} \to W
\end{equation} 
is a $D$-MMP on $X'_t$ for some $[D]\in\Eff(X'_t)$. By Lemma \ref{lem: uniform behavior in bc}~(4), there exists $[\mD]\in \Eff(X'/T)$ such that $[\mD_t]=[D]$. Hence, after possibly replacing $\mD$ by a numerically equivalent divisor, we may assume that $\mD$ is effective.

 If $\sigma_0: Y_0 \to Y_1$ is a divisorial contraction (resp. a small contraction, an extremal contraction, a Mori fiber space), then by Lemma \ref{lem: uniform behavior in bc} (1) and Theorem \ref{thm: MMP for MKD}, there exists a contraction morphism $g_0: X' \to X'_1$ that contracts the same extremal ray as $\sigma_0$, and thus $g_{0,t}=\sigma_0$. By Lemma \ref{lem: uniform behavior in bc} (3), $g_0$ is still a divisorial contraction (resp. a small contraction, an extremal contraction, a Mori fiber space). When $\sigma_0$ is a flipping contraction, let $\sigma_1: Y_2 \to Y_1$ be its flip. Let $g_1: X'_2 \to X'_1$ be the flip of the flipping contraction $g_0: X' \to X'_1$. By Lemma \ref{lem: uniform behavior in bc} (2) (3), $g_{1,t}: X'_{2,t} \to X'_{1,t}$ is an extremal contraction between normal varieties. Then $\mD_{X'_2,t}$ is ample over $X'_{1,t}$ as $\mD_{X'_2}$ is ample over $X'_1$, where $\mD_{X'_2}$ is the strict transform of $\mD$ on $X'_2$. As
 \[
 (\sigma^{-1}_1 \circ \sigma_0)_*\mD_t \sim_\Rr \mD_{X'_2, t}
 \] by (1), $g_{1,t}$ is exactly $\sigma_1$. Repeating this process, we obtain a $\mD$-MMP on $X'/T$ whose restriction to the fiber $X'_t$ is exactly \eqref{eq: MMP fiber}. Moreover, this $\mD$-MMP terminates when \eqref{eq: MMP fiber} terminates by Lemma \ref{lem: uniform behavior in bc} (1) (3).
\end{proof}

\begin{remark}\label{rmk: model may not satisfy coh conditions}
Unlike the case where $X \to T$ is a Calabi-Yau type fibration, under the hypotheses of Theorem \ref{thm: MMP}, even if $X'/T$ is obtained from $X/T$ by an MMP$/T$, we still do not know whether 
\[
H^1(X'_s, \mathcal{O}_{X'_s}) = H^2(X'_s, \mathcal{O}_{X'_s}) = 0
\]
holds over a Zariski dense subset of $T$. Indeed, $X'/T$ and $X'_s$ may no longer have klt singularities.
\end{remark}

\subsubsection{Specialization of an MKD fiber space}\label{subsubsec: specialization}

In this section, we show that the general fibers of a geometrically generic MKD space are almost MKD spaces. This fact will be used later in the study of the Mori chamber decomposition.

We first recall the following result on fiberwise small $\bQ$-factorial modifications.

\begin{theorem}[{\cite[Theorem 1.4]{CLZ25}}]\label{thm: fiberwise q-fact}
Let $f: X \to T$ be a fibration with $(X, \Delta)$ a klt pair for some effective $\mathbb{R}$-divisor $\Delta$ on $X$. Suppose that $S\subset T$ is a Zariski dense subset such that for any $s\in S$, the fiber $X_s$ satisfies
\[
H^1(X_s, \mO_{X_s})=H^2(X_s, \mO_{X_s})=0.
\] 
Then, up to a generically finite base change of $T$, there exist a birational morphism $Y \to X$ and a non-empty open subset $T_0 \subset T$ such that $Y \to X$ and each fiber $Y_t \to X_t$ for $t \in T_0$ are small $\bQ$-factorial modifications.

Moreover, for any open subset $U\subset T_0$, the natural restriction maps
\[
\begin{split}
    &N^1(Y_{T_0}/T_0) \to N^1(Y_U/U) \to N^1(Y_t)\\
    &N^1(X_{T_0}/T_0) \to N^1(X_U/U) \to N^1(X_t)
\end{split}
\] are isomorphisms for any $t\in U$.
\end{theorem}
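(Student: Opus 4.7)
The plan is to first produce a small $\bQ$-factorial modification of the total space $X$ via \cite{BCHM10}, then spread it out so that both smallness and $\bQ$-factoriality descend to all fibers in some open $T_0 \subset T$, and finally feed both $X/T$ and $Y/T$ into Theorem~\ref{thm: CLZ-deform pic} to obtain the claimed N\'eron--Severi isomorphisms.

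More concretely, I would first invoke \cite{BCHM10}: since $(X,\Delta)$ is klt, $X$ admits a small $\bQ$-factorial modification $\pi \colon Y \to X$, and one can arrange $(Y, \pi^{-1}_*\Delta)$ to be klt as well. Because $\pi$ is small and both $X$ and $Y$ have rational singularities, $\pi_*\mathcal O_Y = \mathcal O_X$ and $R^i\pi_*\mathcal O_Y = 0$ for $i>0$. Next, applying generic flatness to $X \to T$, $Y \to T$, $\pi$, and $\Exc(\pi) \to T$, together with the constructibility of the normal locus, yields an open $T_1 \subset T$ such that $X_{T_1}$ and $Y_{T_1}$ are flat over $T_1$ with normal fibers and $\pi_t \colon Y_t \to X_t$ is small for every $t \in T_1$. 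Cohomology-and-base-change combined with the Leray spectral sequence then transfers the vanishing $H^i(X_s,\mathcal O_{X_s})=0$ for $i=1,2$ and $s \in S \cap T_1$ to the same vanishing for $Y_s$.

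With the hypothesis of Theorem~\ref{thm: CLZ-deform pic}(2) verified for both families, I would apply it simultaneously, combining the two required generically finite base changes into a single one, to obtain an open $T_0 \subset T$ over which $\mathcal{P}ic_{X_{T_0}/T_0}\otimes \bR$ and $\mathcal{P}ic_{Y_{T_0}/T_0}\otimes \bR$ are both constant Zariski sheaves. This immediately gives the two chains of N\'eron--Severi isomorphisms in the statement for every open $U \subset T_0$ and every $t \in U$.

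The main obstacle is upgrading fiberwise smallness of $\pi$ to fiberwise $\bQ$-factoriality of $Y_t$. The strategy is the following: for a prime Weil divisor $D_t$ on $Y_t$, take its Zariski closure $\overline{D}$ in $Y$; since $Y$ is $\bQ$-factorial, $\overline{D}$ is $\bQ$-Cartier, hence $\overline{D}|_{Y_t}$ is $\bQ$-Cartier on $Y_t$, and for $t$ in a suitable open subset this restriction equals $D_t$. The delicate point is uniformity in $t$ across all possible Weil divisors simultaneously. Here one exploits the vanishing $H^1(Y_s,\mathcal O_{Y_s})=0$, which forces the identity component of the Picard scheme to vanish and reduces $\mathrm{Pic}(Y_s)\otimes\bQ$ to the finite-dimensional $N^1(Y_s)\otimes\bQ$; combined with the N\'eron--Severi isomorphism just established, this lets one reduce to finitely many generating classes, after which a single shrinking of $T_0$ handles all Weil divisors uniformly. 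I expect this uniformity step, rather than the construction of $Y$ itself, to be where the bulk of the technical work lies.
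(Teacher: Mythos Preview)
The paper does not prove this result; it is quoted from \cite[Theorem 1.4]{CLZ25} and used as a black box, so there is no in-paper argument to compare against.

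Your sketch nonetheless contains a genuine gap in the fiberwise $\bQ$-factoriality step. You write: given a prime Weil divisor $D_t$ on $Y_t$, take its Zariski closure $\overline{D}$ in $Y$. But $Y_t$ is a closed fiber, hence closed in $Y$, so $D_t$ is already closed in $Y$ and $\overline{D}=D_t$. This subvariety has codimension $\dim T + 1$ in $Y$, not codimension $1$; it is not a divisor on $Y$, and the $\bQ$-factoriality of $Y$ tells you nothing about it. The ``close up, then restrict'' manoeuvre works when the divisor lives on the generic fiber $Y_\eta$ or on an open $Y_U$, because those are dense in $Y$; it fails outright for a closed fiber.

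The real task is to show that $\mathrm{Cl}(Y_t)_{\bQ}$ is generated by restrictions of $\bQ$-Cartier divisors from $Y$, uniformly over an open set of $t$. Your proposed reduction via $H^1(Y_s,\mathcal O_{Y_s})=0$ and the N\'eron--Severi isomorphism only controls $\mathrm{Pic}(Y_t)_{\bQ}=N^1(Y_t)_{\bQ}$; the gap between $\mathrm{Cl}(Y_t)_{\bQ}$ and $\mathrm{Pic}(Y_t)_{\bQ}$ is exactly the statement you are trying to prove, so appealing to finitely many generators of $N^1$ is circular. A working argument typically passes through a Hilbert-scheme or Chow-variety parametrization of Weil divisors on the fibers to get surjectivity of restriction on class groups for very general $t$, and then uses both the $H^1$ and $H^2$ vanishing together with a constructibility argument to upgrade ``very general'' to an honest Zariski open set. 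You are right that this is where the work lies, but the mechanism you propose does not function.
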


\begin{theorem}\label{thm: fiber MKD}
Let $f: X \to T$ be a fibration. Suppose that $S \subset T$ is a Zariski dense subset such that for any $s \in S$, the fiber $X_s$ satisfies
\[
H^1(X_s, \mO_{X_s}) = H^2(X_s, \mO_{X_s}) = 0.
\]
Assume further that the geometric generic fiber $X_{\bar\eta}$ is a klt MKD space. Then, there exists a non-empty Zariski open subset $T_0 \subset T$ such that each fiber $X_t$, $t \in T_0$ satisfies the following properties.    
\begin{enumerate}
    \item $X_t$ is a $\bQ$-factorial variety.  
   \item For any effective $\bR$-Cartier divisor $B$ on $X_t$, there exists an effective $\bR$-Cartier divisor $B'$ such that $B' \equiv B$ and $B'$ admits a good minimal model.
        \item There exists a rational polyhedral cone $\Pi_t \subset \Eff(X_t)$ such that $\PsAut(X_t) \cdot \Pi_t = \Eff(X_t)$.
        \item Any set of divisors $\bS \subset {\rm CDiv}(X_t)$ satisfies the local factoriality of canonical models.
    \end{enumerate}
\end{theorem}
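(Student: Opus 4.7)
The plan is to reduce the fiber-wise statements to the total-space results of Theorem \ref{thm: geometric MKD space}, Theorem \ref{thm: def of nef cone}, and Theorem \ref{thm: MMP}, and then descend the resulting structures to the individual fibers. First, by Theorem \ref{thm: geometric MKD space} (after a generically finite base change of $T$, which does not alter fibers over a further shrinking), I may assume $X/T$ is itself a klt MKD fiber space. The argument in the proof of Theorem \ref{thm: geometric MKD space} (comparing a small $\bQ$-factorialization of $X$ with the $\bQ$-factorial geometric generic fiber) makes $X/T$ itself $\bQ$-factorial after shrinking, and then Theorem \ref{thm: fiberwise q-fact} (whose conclusion is an isomorphism once $X$ is already $\bQ$-factorial) shrinks $T$ further so that each $X_t$, $t \in T_0$, is $\bQ$-factorial, giving (1). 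After still further shrinking I may invoke Lemma \ref{lem: uniform behavior in bc} and Theorem \ref{thm: MMP} (4) to obtain the restriction isomorphisms $N^1(X/T) \simeq N^1(X_t)$ and $\Eff(X/T) \simeq \Eff(X_t)$, and to ensure that $\Eff(X/T)$ is non-degenerate via Proposition \ref{prop: Generic property}.

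For (3), Theorem \ref{thm: 3 equivalences} (3) furnishes a rational polyhedral cone $P \subset \Eff(X/T)$ with $\Gamma_B(X/T) \cdot P = \Eff(X/T)$, and Theorem \ref{thm: finite presented} makes $\Gamma_B(X/T)$ finitely presented. Picking generators $\gamma_1, \ldots, \gamma_l \in \PsAut(X/T)$ and applying Theorem \ref{thm: MMP} (1) to each $\gamma_i$ and $\gamma_i^{-1}$, I shrink $T_0$ so that every $\gamma_{i,t}$ is defined on $X_t$ and is an isomorphism in codimension $1$, hence lies in $\PsAut(X_t)$. Taking $\Pi_t$ to be the image of $P$ under the restriction isomorphism and unwinding the equation $[\mathcal D] = \gamma \cdot [\mathcal B]$ on $X/T$ (for lifts $\mathcal D, \mathcal B$ and a word $\gamma$ in the generators) gives $[D] = \gamma_t \cdot [\mathcal B|_{X_t}]$ on $X_t$, proving (3). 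For (2), lift $[B] \in \Eff(X_t)$ to an effective $\mathcal B$ on $X/T$ with $[\mathcal B|_{X_t}] = [B]$; since $X/T$ is MKD, $\mathcal B$ admits a good minimal model over $T$, and Theorem \ref{thm: MMP} (2) shows this restricts to a good minimal model of $B' \coloneqq \mathcal B|_{X_t} \equiv B$ on $X_t$.

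For (4), a rational polytope $\Conv([E_i])$ on $X_t$ lifts through the restriction isomorphism to a rational polytope on $X/T$; the local factoriality of canonical models for $\Eff(X/T)$ provides a polytope neighborhood $\mathcal U$ on which every canonical model over $T$ factors through that of a lift $\mathcal B$ of $B$. Since such a canonical model is built from an MMP (restricting to an MMP on each fiber of the same type by Theorem \ref{thm: MMP} (2)) followed by the contraction induced by a semi-ample divisor (restricting to the corresponding contraction on fibers after shrinking), restriction of $\mathcal U$ to $X_t$ produces the required neighborhood there. The main obstacle throughout is the uniform fiber-wise descent of the total-space constructions of pseudo-automorphisms, MMPs, and canonical models; this is essentially packaged in Lemma \ref{lem: uniform behavior in bc} and Theorem \ref{thm: MMP}, so the proof becomes careful bookkeeping rather than new geometric input. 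The subtlety that forces condition (2) to assert only \emph{numerical} existence is that a lift $\mathcal B$ of a class in $\Eff(X_t)$ need not restrict to the given divisor $B$ as an actual divisor on the fiber—only the numerical equivalence of their restrictions is guaranteed—and this is precisely what makes the fibers \emph{almost}, but not quite, MKD spaces.
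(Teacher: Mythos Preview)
Your proposal is correct and follows essentially the same approach as the paper: reduce to the total space being a klt MKD fiber space via Theorem~\ref{thm: geometric MKD space}, use Theorem~\ref{thm: fiberwise q-fact} for fiber-wise $\bQ$-factoriality, lift effective divisors via the isomorphism $\Eff(X/T)\simeq\Eff(X_t)$ from Theorem~\ref{thm: MMP}~(4), and restrict MMPs, pseudo-automorphisms, and canonical models to fibers via Theorem~\ref{thm: MMP}~(1)--(2). One minor redundancy: your detour through Theorem~\ref{thm: finite presented} to pick finitely many generators of $\Gamma_B(X/T)$ is unnecessary, since Theorem~\ref{thm: MMP}~(1) already furnishes a single $T_0$ over which \emph{every} birational contraction (hence every $\sigma\in\PsAut(X/T)$ and its inverse) restricts well on fibers---the finite-generator argument is already absorbed into the proof of that theorem via Proposition~\ref{prop: structure of b and bc}~(3).
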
 
\begin{proof}
If there exists a generically finite morphism $T' \to T$ such that the fibers of $X_{T'} \to T'$ over a non-empty open subset of $T'$ satisfy the claim, then the desired statement follows. Hence, in the arguments below, we are allowed to take generically finite base changes.

\medskip

Replacing $T$ by a generically finite base change, we may assume that Theorem~\ref{thm: geometric MKD space}, Theorem \ref{thm: def of nef cone}, Theorem~\ref{thm: MMP}, and Theorem~\ref{thm: fiberwise q-fact} hold. In particular, $X/T$ is $\bQ$-factorial and admits a fiberwise small $\bQ$-factorial modification $Y \to X/T$ over a non-empty open subset of $T_0\subset T$. Moreover, $Y \to X/T$ is still a small $\bQ$-factorial modification (see Theorem~\ref{thm: fiberwise q-fact}). Hence $Y \simeq X/T$, which implies that the fibers of $X \to T$ are $\bQ$-factorial over $T_0$. This shows (1). 

\medskip

By Theorem~\ref{thm: MMP} (4), possibly shrinking $T_0$, we see that for any effective divisor $B$ on $X_t$, there exists an effective divisor $\mD$ on $X/T$ such that $\mD|_{X_t} \equiv B$. Let $B' = \mD|_{X_t}$. Then Theorem~\ref{thm: MMP} (2) implies that $B'$ admits a good minimal model. This shows (2). 

\medskip

As $X/T$ is an MKD fiber space, there exists a rational polyhedral cone $\Pi \subset \Eff(X/T)$ such that $\PsAut(X/T) \cdot \Pi =\Eff(X/T)$ by Theorem \ref{thm: 3 equivalences} (3). By Theorem~\ref{thm: MMP} (1), for any $\sigma \in \PsAut(X/T)$ and $t\in T_0$, we have $\sigma_t \coloneqq \sigma|_{X_t} \in \PsAut(X_t)$. Let 
\[
\Pi_t \coloneqq \Cone([D_t ]\mid [D] \in \Pi) \subset \Eff(X_t),
\] which is a rational polyhedral cone. Then we have
\[
\{\sigma_t \mid \sigma \in \PsAut(X/T)\} \cdot \Pi_t = \Eff(X_t)
\] by Theorem~\ref{thm: MMP} (1) and (4). This implies that
\[
\PsAut(X_t) \cdot \Pi_t = \Eff(X_t).
\] This shows (3).

\medskip

By Theorem \ref{thm: 3 equivalences} (2), if $X \dashrightarrow Z/T$ is the conical model of $\mathcal{D}$ over $T$, then for each $t \in T_0$, the induced map $X_t \dashrightarrow Z_t$ is the canonical model of $\mathcal{D}_t$. Since $\Eff(X/T) \simeq \Eff(X_t)$ by Theorem \ref{thm: 3 equivalences} (4), the local factoriality of canonical models for $\bS$ follows from that for $\Eff(X/T)$. This establishes (4).
\end{proof}

\begin{remark}\label{rk: almost MKD}
Theorem~\ref{thm: fiber MKD} shows that, for a fiber space $X/T$, if the geometric generic fiber is an MKD space, then the fibers over a Zariski open subset of $T$ are almost MKD spaces under certain cohomological conditions. Here, ``almost'' means that, instead of knowing that every effective divisor admits a good minimal model (see Definition~\ref{def: MKD spaces}~(2)), we only know that, for each effective divisor, there exists a numerically equivalent divisor that admits a good minimal model (see Theorem~\ref{thm: fiber MKD}~(2)). In particular, in (4), we state local factoriality of canonical models for $\bS\subset \operatorname{CDiv}(X_t)$ rather than for $\Eff(X_t)$.
\end{remark}

\subsubsection{Deformation invariance of movable cones and Mori chamber decompositions}\label{subsubsec: def mov and Mori chamber}

Suppose that $X\to T$ is a fibration. If $X/T$ is an MKD fiber space and $D$ is a movable divisor, then there exists a $D$-MMP$/T$, $\phi: X \dto Y/T$, which terminates at $Y$ by Theorem \ref{thm: MMP for MKD}. Note that $X \dto Y$ is isomorphic in codimension $1$, and if $D_Y$ is the strict transform of $D$ on $Y$, then $D_Y$ is a semi-ample divisor over $T$. Thus we have $[D_Y]\in \Nef^e(Y/T)$.

Conversely, suppose that $\phi\colon X\dto Y/T$ is an MMP$/T$ which is isomorphic in codimension $1$. By Lemma~\ref{lem: b consists of MKD fiber spaces}~(2), $\phi$ factors as a birational contraction $X\dto Y'/T$, where $Y'/T$ is an MKD fiber space, followed by a morphism $\theta\colon Y'\to Y/T$. Hence every effective nef$/T$ divisor on $Y'$ is semi-ample$/T$. This implies that every effective nef$/T$ divisor on $Y$ is also semi-ample$/T$. Thus,
\[
\Mov(X/T)\supset \phi_*^{-1}\Nef^e(Y/T).
\]

The above discussion implies the following Mori chamber decomposition 
\begin{equation}\label{eq: Mori chamber decomp}
\Mov(X/T)=\bigcup_{\substack{\phi \colon X \dto Y \text{~isom.~in codim.~} 1,\\ \phi \text{~is an MMP$/T$}}} \phi^{-1}_{*}\Nef^e(Y/T).
\end{equation}
In the above union, each subcone $\phi^{-1}_{*}\Nef^e(Y/T)$ is called a Mori chamber (cf. \cite[\S 5.4]{CLZ25}). Note that different Mori chambers are disjoint in their interiors. 

On the other hand, assume that $X/T$ is an MKD fiber space satisfying the conclusions of Theorem \ref{thm: MMP} (that is, $X/T$ is the $X'/T$ appearing in Theorem \ref{thm: MMP} (1)--(4)). We claim that, for each $t\in T_0$, $X_t$ still admits the Mori chamber decomposition as \eqref{eq: Mori chamber decomp}:
\begin{equation}\label{eq: Mori chamber on fiber}
\Mov(X_t)=\bigcup_{\substack{\phi \colon X_t \dto Y \text{~isom.~in codim.~} 1,\\ \phi \text{~is an MMP}}} \phi^{-1}_{*}\Nef^e(Y).
\end{equation}

For any $[D] \in \Mov(X_t)$, by Theorem \ref{thm: MMP} (4), there exists an effective divisor $\mathcal{D}$ on $X$ such that $[D] = [\mathcal{D}_t]$. By Theorem \ref{thm: MMP} (2), there exists a $\mathcal{D}$-MMP over $T$,
\[
h \colon X \dashrightarrow W/T,
\]
which induces a $D$-MMP on the fiber $X_t$. By the same argument as above, we may assume that $\mathcal{D}_W \coloneqq h_*\mathcal{D}$ is semi-ample over $T$. By Theorem \ref{thm: MMP}~(1), we have $\mathcal{D}_{W,t} \sim_{\mathbb{R}} (h_t)_*\mathcal{D}_t$. Hence, we have 
\[
[(h_t)_*D] = [(h_t)_*\mathcal{D}_t] \in \Nef^e(W_t).
\]
This proves the inclusion “$\subset$’’ in \eqref{eq: Mori chamber on fiber}.

Conversely, suppose that $\phi \colon X_t \dashrightarrow Y$ is an MMP which is isomorphic in codimension~$1$. We need to show that
\[
\Mov(X_t)\supset \phi_*^{-1}\Nef^e(Y).
\]
Let $B_Y$ be a divisor on $Y$ with $[B_Y]\in \Nef^e(Y)$, and let $B$ be its strict transform on $X_t$. Then $[B]\in \bMov^e(X_t)$. By Theorem~\ref{thm: MMP} (4), there exists an effective divisor $\mathcal{B}$ on $X$ such that $[\mathcal{B}_t]=[B]$. Let 
\[
X \dashrightarrow W/T
\]
be a $\mathcal{B}$-MMP$/T$ which is a good minimal model of $\mathcal B$ by Theorem \ref{thm: MMP for MKD}. By Theorem~\ref{thm: MMP} (2), the induced map
\[
\psi \colon X_t \dashrightarrow W_t
\]
is a $B$-MMP which is a good minimal model of $\mathcal B_t$. In particular, $\mathcal{B}_{W,t}$ is semi-ample and satisfies $[\mathcal{B}_{W,t}]=[B_{W_t}]$ by Theorem~\ref{thm: MMP} (1), where $B_{W_t}$ is the strict transform of $B$ on $W_t$. Since $[B]\in \bMov^e(X_t)$, the map $\psi$ is isomorphic in codimension~$1$ by Lemma \ref{lem: movable give iso in codim 1}. Thus,
\[
\phi \circ \psi^{-1} \colon W_t \dashrightarrow Y
\]
is also isomorphic in codimension~$1$. Under this map, the semi-ample divisor $\mathcal{B}_{W,t}$ is sent to a divisor $\Theta$ satisfying $[\Theta]=[B_Y]$. Hence, $\Theta$ is nef. By Lemma~\ref{lem: common mm} (1), $\Theta$ is semi-ample. This implies that
\[
[B_Y]=[\Theta]\in \Mov(Y).
\] Therefore, we obtain $\phi_*^{-1}(B_Y)\in \Mov(X_t)$, which establishes the desired inclusion.

By the above discussion, although the fibers are not known to be MKD spaces, under the assumptions of Theorem~\ref{thm: MMP}, it still makes sense to compare the Mori chamber decomposition of the total fiber space, namely \eqref{eq: Mori chamber decomp}, with the Mori chamber decompositions of the fibers, namely \eqref{eq: Mori chamber on fiber}. The following result shows that these Mori chamber decompositions coincide under the natural identification.

\begin{theorem}\label{thm: mov}
Let $f: X \to T$ be a fibration. Suppose that $S \subset T$ is a Zariski dense subset such that for any $s \in S$, the fiber $X_s$ satisfies
\[
H^1(X_s, \mO_{X_s}) = H^2(X_s, \mO_{X_s}) = 0.
\]
Assume further that the geometric generic fiber $X_{\bar\eta}$ is a klt MKD space. Then, up to a generically finite base change of $T$, there exists a Zariski open subset $T_0\subset T$ such that for any $U\subset T_0$ and each MKD fiber space $X'/T_0\in {\rm b}(X_{T_0}/T_0)$, we can identify the Mori chamber decompositions of $\Mov(X'/T_0), \Mov(X'_{U}/U)$ and $ \Mov(X'_t), t\in U$ under the natural restriction maps. In particular, we have isomorphisms among movable cones 
\[
\Mov(X'/T_0) \to \Mov(X'_U/U) \to \Mov(X'_t), \quad t\in U
\] under the natural restriction maps.
\end{theorem}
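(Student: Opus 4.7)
The plan is to match the Mori chamber decompositions \eqref{eq: Mori chamber decomp} and \eqref{eq: Mori chamber on fiber} chamber-by-chamber, invoking the results already established in this section. First, after a generically finite base change of $T$ and shrinking, I would choose an open subset $T_0 \subset T$ on which the conclusions of Theorem \ref{thm: def of nef cone}, Theorem \ref{thm: MMP}, Theorem \ref{thm: fiber MKD}, and Lemma \ref{lem: uniform behavior in bc} all hold simultaneously. Since ${\rm b}(X_{T_0}/T_0)$ is a finite set by Proposition \ref{prop: structure of b and bc} (1), a further shrinking of $T_0$ allows me to assume that all of these conclusions remain valid when $X/T_0$ is replaced by any MKD fiber space $X'/T_0 \in {\rm b}(X_{T_0}/T_0)$; such an $X'/T_0$ is itself an MKD fiber space by Theorem \ref{thm: bir contraction is MKD}. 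From now on fix such $X'/T_0$ and an open subset $U \subset T_0$.

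The central step is to set up a three-way bijection between MMPs that are isomorphisms in codimension $1$ on $X'/T_0$, on $X'_U/U$, and on $X'_t$ for each $t \in U$. The correspondence between $X'/T_0$ and $X'_t$ comes from Theorem \ref{thm: MMP} (2) and (3), which lifts and descends MMPs and preserves the type of each extremal step; Theorem \ref{thm: MMP} (1) ensures that being isomorphic in codimension $1$ is preserved in both directions, since the only steps that could destroy this property are divisorial contractions or Mori fibrations, whose types are detected on any fiber by Lemma \ref{lem: uniform behavior in bc} (3). The correspondence between $X'/T_0$ and $X'_U/U$ is handled by Proposition \ref{prop: stable bdd}. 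For each such $\phi \colon X' \dashrightarrow Y/T_0$, the target $Y/T_0$ again lies in ${\rm b}(X_{T_0}/T_0)$ and is itself an MKD fiber space; Theorem \ref{thm: def of nef cone} applied to $Y/T_0$ yields natural isomorphisms $\Nef^e(Y/T_0) \simeq \Nef^e(Y_U/U) \simeq \Nef^e(Y_t)$. Combined with the compatibility of pushforward and specialization from Theorem \ref{thm: MMP} (1), each Mori chamber $\phi^{-1}_{\ast} \Nef^e(Y/T_0)$ is carried bijectively onto the corresponding chambers in $\Mov(X'_U/U)$ and $\Mov(X'_t)$, and taking the union over all chambers yields the claimed identifications and the stated isomorphisms of movable cones.

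The main obstacle will be verifying that the fiber-wise Mori chamber decomposition \eqref{eq: Mori chamber on fiber} genuinely exhausts $\Mov(X'_t)$, since by Theorem \ref{thm: fiber MKD} the fiber $X'_t$ is only an \emph{almost} MKD space: not every effective $\bR$-Cartier divisor on $X'_t$ need admit a good minimal model intrinsically. My plan to handle this is to take an arbitrary $[D] \in \Mov(X'_t)$, lift it to an effective class $[\mathcal D]$ on $X'_U/U$ via Theorem \ref{thm: MMP} (4), and run a $\mathcal D$-MMP$/U$ to a good minimal model $h \colon X'_U \dashrightarrow W/U$ using Theorem \ref{thm: MMP for MKD}. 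By Theorem \ref{thm: MMP} (2), the fiber-wise map $h_t \colon X'_t \dashrightarrow W_t$ is an MMP of the same type; since $[D]$ is movable, no divisorial contraction can occur on $X'_t$, forcing $h_t$ to be an isomorphism in codimension $1$ with $(h_t)_{\ast} D$ nef on $W_t$. Thus $[D]$ lies in the chamber $(h_t)^{-1}_{\ast} \Nef^e(W_t)$, and the same lifted MMP realizes the corresponding chambers on $X'/T_0$ and $X'_U/U$, closing the circle of identifications.
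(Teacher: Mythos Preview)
Your proposal is correct and follows essentially the same approach as the paper: reduce to the setting where Lemma \ref{lem: uniform behavior in bc} and Theorem \ref{thm: MMP} hold, then match Mori chambers via the bijection between MMPs on the total space and on fibers, using the nef and effective cone identifications on each target $Y$. One small correction: when you identify $\Nef^e(Y/T_0) \simeq \Nef^e(Y_t)$, cite Lemma \ref{lem: uniform behavior in bc} (1) together with Theorem \ref{thm: MMP} (4) rather than Theorem \ref{thm: def of nef cone}, since the cohomology-vanishing hypotheses of the latter need not hold for the fibers of $Y$ (see Remark \ref{rmk: model may not satisfy coh conditions}); the paper's proof uses exactly this combination.
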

\begin{proof}
After performing a generically finite base change and shrinking $T$ if necessary, we may assume that Theorem \ref{thm: geometric MKD space}, Lemma \ref{lem: uniform behavior in bc}, and Theorem \ref{thm: MMP} hold over $T$, and that $T$ coincides with $T_0$. In the following argument, since we only use the conclusions of Lemma \ref{lem: uniform behavior in bc} and Theorem \ref{thm: MMP}, we may assume that $X'/T$ is exactly $X/T$ (cf.\ Remark \ref{rmk: model may not satisfy coh conditions}).

By Lemma \ref{lem: uniform behavior in bc} (1), for any open subset $V\subset T$, if $Y/V\in {\rm b}(X_{V}/V)$, then the natural maps
\[
N^1(Y/V) \to N^1(Y_t), \quad \Nef(Y/V) \to \Nef(Y_t)
\] are isomorphisms for any $t\in V$. By Theorem \ref{thm: MMP} (4), this implies the natural identification of effective nef cones
\[
\Nef^e(Y/V)= \Nef(Y/V) \cap \Eff(Y/V)\simeq \Nef(Y_t) \cap \Eff(Y_t) =\Nef^e(Y_t).
\] By Theorem \ref{thm: MMP} (2) and (3), an MMP$/T$ on $X$ restricts to an MMP of the same type on each fiber $X_t$, and the converse also holds. Moreover, by Theorem \ref{thm: MMP} (1), divisors are compatible with respect to restrictions. Therefore, $\Mov(X/T)$ and $\Mov(X_t)$ share the same chambers under the identification $N^1(X/T) \simeq N^1(X_t)$. 
\end{proof}

\subsection{Boundedness of moduli spaces}\label{subsec: moduli}

This section aims to apply the theory of MKD fiber spaces to the boundedness problem of MKD spaces via their birational boundedness. Although the overall strategy is now standard (see \cite{HMX14, HX15, HMX18, FHS24}, etc.), in the setting of MKD spaces, one must carefully lay the necessary foundations to ensure that the arguments carry over as expected.

The following result is an analogue, in the setting of MKD fiber spaces, of the main technical theorem \cite[Theorem~6.18]{FHS24} for elliptic fibrations (cf. \cite[Remark~6.19]{FHS24}). We also note that the assumption on singularities is relaxed from terminal to klt.

\begin{theorem}\label{thm: bir to bdd}
Let $\mathcal S=\{X_i \mid i\in I\}$ be a set of projective varieties and $f: \Yy \to T$ be a fibration between varieties. Suppose that
\begin{enumerate}
\item the geometric generic fiber $\Yy_{\bar\eta}$ is a klt MKD space,
\item $H^1(\Yy_s, \Oo_{\Yy_s})=H^2(\Yy_s, \Oo_{\Yy_s})=0$ for the fibers of $f$ over a Zariski open subset of $T$, and 
\item for each $X_i \in \mathcal S$, there exist some $t\in T$ and a sequence of MMP $ \Yy_t \dto X_i$ of an effective divisor.
\end{enumerate}
Then there exist a non-empty Zariski open subset $T_0 \subset T$ and a fibration between schemes of finite type
\[
g : \mathcal X \to T',
\]
such that if $X_i \in \mathcal S$ is connected to $\mathcal Y_t$ with $t \in T_0$ by an MMP as in (3), then $X_i$ is isomorphic to some fiber of $g$.
\end{theorem}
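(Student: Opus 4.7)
The plan is to reduce the boundedness of $\mathcal{S}$ to the finiteness of birational contraction models over the total space (Proposition~\ref{prop: structure of b and bc}~(1)), using Theorem~\ref{thm: MMP}~(3) to lift fiber-wise MMPs to the total space, and Proposition~\ref{prop: stable bdd} to extend them from an open subset of the base to the whole base. First, by Theorem~\ref{thm: geometric MKD space}, after replacing $T$ by a generically finite (and, after shrinking, surjective) base change $T^* \to T$, the morphism $\mathcal{Y}_{T^*} \to T^*$ is a klt MKD fiber space; the cohomology vanishing assumption~(2) persists at every fiber over the preimage of the original open subset, so the hypotheses of Theorem~\ref{thm: MMP} are met. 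Shrinking $T^*$ to a non-empty open $T_0^*$, all conclusions of Lemma~\ref{lem: uniform behavior in bc} and Theorem~\ref{thm: MMP} hold over $T_0^*$; let $T_0 \subset T$ be any non-empty open subset contained in the image of $T_0^*$.

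Second, fix an $X_i \in \mathcal{S}$ arising from a $D$-MMP $\mathcal{Y}_t \dashrightarrow X_i$ with $t \in T_0$, and pick a preimage $t^* \in T_0^*$ of $t$, so that $\mathcal{Y}^*_{t^*} \simeq \mathcal{Y}_t$ and the given MMP is a sequence of $D$-MMP on this fiber of the $\mathbb{Q}$-factorial MKD fiber space $\mathcal{Y}^*/T_0^*$. Applying Theorem~\ref{thm: MMP}~(3) with $X' = \mathcal{Y}^*$, we obtain a non-empty open $U \ni t^*$ in $T_0^*$, an effective $\mathbb{R}$-divisor $\mathcal{D}$ on $\mathcal{Y}^*_U$ with $[\mathcal{D}|_{\mathcal{Y}^*_{t^*}}] = [D]$, and a sequence of $\mathcal{D}$-MMP over $U$, say $\phi : \mathcal{Y}^*_U \dashrightarrow \mathcal{W}/U$, of the same type and inducing the given MMP on the fiber, so that $X_i \simeq \mathcal{W}_{t^*}$. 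Since each step is either a divisorial contraction or a flip, $\phi$ lies in $\mathrm{bc}(\mathcal{Y}^*_U/U)$, and Proposition~\ref{prop: stable bdd} extends it to $H : \mathcal{Y}^*_{T_0^*} \dashrightarrow \mathcal{Z}/T_0^*$ with $H|_{\mathcal{Y}^*_U} = \phi$; in particular $\mathcal{Z}_{t^*} \simeq X_i$.

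Finally, by Proposition~\ref{prop: structure of b and bc}~(1), the set $\mathrm{b}(\mathcal{Y}^*_{T_0^*}/T_0^*)$ is finite; let $\mathcal{Z}_1, \dots, \mathcal{Z}_N$ be representatives, each projective over $T_0^*$. Setting $\mathcal{X} \coloneqq \bigsqcup_{j=1}^N \mathcal{Z}_j$, $T' \coloneqq \bigsqcup_{j=1}^N T_0^*$, and $g : \mathcal{X} \to T'$ to be the disjoint union of the structure maps yields the desired projective morphism of schemes of finite type, and by the previous step every eligible $X_i$ is isomorphic to $(\mathcal{Z}_j)_{t^*}$ for some $j$ and some $t^* \in T_0^*$ lying above $t$, hence to a fiber of $g$. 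The only genuinely substantive step is invoking Theorem~\ref{thm: MMP}~(3) to lift the given MMP, which rests on the full MKD-fiber-space structure developed earlier; the rest is a packaging of finiteness results built into the MKD framework.
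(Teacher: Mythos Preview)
Your proposal is correct and follows essentially the same approach as the paper: lift the fiber-wise MMP to the total space via Theorem~\ref{thm: MMP}~(3), then invoke the finiteness of ${\rm b}(\mathcal{Y}^*_{T_0^*}/T_0^*)$ from Proposition~\ref{prop: structure of b and bc}~(1) and take the disjoint union. One minor redundancy: Theorem~\ref{thm: MMP}~(3) already lets you take $U = T_0^*$ directly (it holds for \emph{any} open $U \subset T_0$, not merely some small neighborhood of $t^*$), so the intermediate extension step via Proposition~\ref{prop: stable bdd} is unnecessary, though harmless.
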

\begin{proof}
After a generically finite base change of $T$, we may assume that Theorem \ref{thm: geometric MKD space} and Theorem \ref{thm: MMP} hold for $\Yy \to T$. It suffices to prove the statement under this assumption. By Theorem~\ref{thm: MMP}~(3), for $t\in T_0$, any sequence of $D$-MMP
\[
\Yy_t \dto X_i
\]
with $D\geq 0$ is induced by restricting a sequence of $\mD$-MMP over $T_0$ on $\Yy_{T_0}/T_0$, denoted by
\[
\Yy_{T_0}\dto \Xx'/T_0.
\]
In particular, $X_i\simeq \Xx'_t$. As $\Yy_{T_0}/T_0$ is an MKD fiber space by Proposition \ref{prop: generic property for MKD space}, the set ${\rm b}(\Yy_{T_0}/T_0)$ is finite by Proposition \ref{prop: structure of b and bc} (1). Then, the natural map
\[
g: \Xx \coloneqq \bigsqcup_{\Xx'/T_0 \in {\rm b}(\Yy_{T_0}/T_0)} \Xx' \to T_0
\] satisfies the claim.
\end{proof}

We now prove Theorem \ref{thm: bdd for rationally connected CY}, which follows directly from \cite[Theorem~1.6]{Bir23} together with Theorem \ref{thm: bir to bdd}.

\begin{proof}[Proof of Theorem \ref{thm: bdd for rationally connected CY}]
By \cite[Theorem 1.1]{HMX14}, there exists an $\ep > 0$ such that every $X \in \mathcal S_n$ has $\ep$-lc singularities; equivalently, if $E$ is any exceptional divisor over $X$, then its discrepancy $a(E,X) \geq \ep - 1$. Indeed, if there is no such $\ep > 0$, then we could choose a sequence of varieties $\{X_i\}_{i\in \Nn} \subset \mathcal S_n$ and an exceptional divisor $E_i$ over each $X_i$ such that 
\[
\lim_{i \to \infty} a(E_i, X_i)=-1.
\] Since each $X_i$ has klt singularities, we have 
$a(E_i, X_i) > -1$. After passing to a subsequence, we may assume that $0 > a(E_i, X_i) > -1$ for every $i$ and $\{a(E_i, X_i) \}_{i\in \Nn}$ is strictly decreasing.
By \cite{BCHM10}, there exists a birational morphism $f_i: Y_i \to X_i$ that extracts exactly the divisor $E_i$. 
Thus,
\[
K_{Y_i} + (-a(E_i, X_i))E_i = f_i^* K_{X_i}.
\]
Hence $\{-a(E_i, X_i)\}_{i\in\Nn}$ is strictly increasing, which contradicts \cite[Theorem 1.1]{HMX14}. 

By \cite[Theorem~1.6]{Bir23}, there exists a fibration $f \colon \mathcal Y \to T$ between schemes of finite type such that for each $X \in \mathcal S_n$, there exists some fiber
$\mathcal Y_t$ with the property that $X$ and $\mathcal Y_t$ are isomorphic in codimension~$1$. Moreover, we know that $\mathcal Y_t$ is a $\bQ$-factorial variety with klt singularities. Indeed, by the last paragraph in the proof of \cite[Theorem~1.7]{Bir23} (note that \cite[Theorem~1.6]{Bir23} is a special case of \cite[Theorem~1.7]{Bir23}), the fiber $\mathcal Y_t$ has $\frac{\epsilon}{2}$-lc singularities. Here $\mathcal Y_t$ corresponds to the variety denoted by $\tilde X$ in the proof of \cite[Theorem~1.7]{Bir23}. Note that $\mathcal Y_t$ is $\mathbb Q$-factorial since it is obtained by running an MMP from a $\mathbb Q$-factorial variety (namely, $X''$ in the proof of \cite[Theorem~1.7]{Bir23}). 

We replace $T$ by the Zariski closure of 
\[
\Tt \coloneqq \{t \in T \mid \text{$\mathcal Y_t$ and $X$ are isomorphic in codimension~$1$ for some $X \in \mathcal S_n$}\}.
\] Since $T$ has only finitely many irreducible components, by restricting to one fixed irreducible component, we may assume that $T$ is irreducible. Moreover, by Noetherian induction and repeating the above process, we will freely replace $T$ by a Zariski open subset of $T$ in the following argument. Note that for each $t\in \Tt$, $\Yy_t$ is still a rationally connected Calabi-Yau variety with klt singularities by the previous discussion. By \cite[Corollary 1.8]{Bir23}, there exists some $I\in\Zz_{>0}$ such that 
\[
IK_{\Yy_t} \sim 0 \text{~for any~} t\in \Tt.
\]
Shrinking $T$ and applying the upper-semicontinuity of cohomology to the coherent sheaves $g_*\Oo_{\Yy}(I K_{\Yy})$ and $g_*\Oo_{\Yy}(-I K_{\Yy})$, we see that $g_*\Oo_{\Yy}(I K_{\Yy}) \neq 0$ and $g_*\Oo_{\Yy}(-I K_{\Yy}) \neq 0$. This implies that
\begin{equation}\label{eq: global CY}
I K_{\Yy} \sim 0 / T.
\end{equation} As $\Tt$ is Zariski dense in $T$, shrinking $T$ further, we see that $\Yy$ has klt singularities. 

\medskip

In the following, we show that the geometric generic fiber $\Yy_{\bar\eta}$ is a klt MKD space. 

First, we show that $\Yy_{\bar\eta}$ is rationally connected. By \cite[IV, Theorem~3.5.3]{Kol96}, after shrinking $T$, every fiber $\mathcal Y_t$ for $t \in T$ is rationally connected. Let $\tilde{\mathcal Y} \to \mathcal Y$ be a resolution. Then, by \cite[Corollary~1.6]{HM07}, after further shrinking $T$, we may assume that every fiber $\tilde{\mathcal Y}_t$ for $t \in T$ is rationally connected. We claim that $\ti \Yy_{\bar\eta}$ is still rationally connected. Let $\ti \Yy_{\bar\eta} \dto \ti\Ww$ be the MRC-fibration of $\ti \Yy_{\bar\eta}$ (see \cite[IV.5]{Kol96}). In particular, $\ti\Ww$ is not uniruled by \cite[Corollary 1.4]{GHJ03}. By the standard spreading-out and specialization techniques (see, for example, Lemma~\ref{lem: spread out and specialization}), there exist a generically finite morphism $T' \to T$ and maps
\[
\mathcal Y' \dto \mathcal W' \to T',
\]
such that over the geometric generic point $\bar\eta'$ of $T'$, the natural map
\[
\mathcal Y'_{\bar\eta'} \dto \mathcal W'_{\bar\eta'}
\]
coincides with the map $\tilde{\mathcal Y}_{\bar\eta} \dto \ti{\mathcal W}$. By \cite[IV, Theorem~1.8.2]{Kol96}, there exists a closed subvariety $Z \subset T'$ such that a fiber $\mathcal W_z$ is uniruled if and only if $z \in Z$ (note that $z$ is not necessarily a closed point). By \cite[IV, Proposition (1.3.1) and (1.3.2)]{Kol96}, since $\mathcal W'_{\bar\eta'}$ is not uniruled, the generic fiber $\mathcal W'_{\eta'}$ is also not uniruled. Therefore, we see that $\eta' \not\in Z$. Replacing $T'$ by $T' \setminus Z$, we see that each fiber $\mathcal W'_t$ for $t \in T'$ is not uniruled by the property of $Z$. After shrinking $T$, this implies that each fiber $\mathcal W_t$ for $t \in T$ is not uniruled. Since $\tilde{\mathcal Y}_t$ for $t \in T$ is rationally connected, it follows that $\mathcal W'_t$ must be a point. Hence, $\ti{\mathcal W}$ is also a point, which shows that $\tilde{\mathcal Y}_{\bar\eta}$ is rationally connected. Therefore, $\mathcal Y_{\bar\eta}$ is also rationally connected.

Next, we show that $\Yy_{\bar\eta}$ is $\bQ$-factorial. By \cite{BCHM10}, there exists a small $\bQ$-factorial modification $\ti\Yy'' \to \Yy_{\bar\eta}$. Again, by the standard spreading-out and specialization techniques, there exist a generically finite morphism $T'' \to T$ and morphisms
\[
\mathcal Y'' \to \mathcal Y_{T''} \to T'',
\]
where $\mathcal Y_{T''} \coloneqq \mathcal Y \times_{T} T''$, such that over the geometric generic point $\bar\eta''$ of $T''$, the natural map
\[
\mathcal Y''_{\bar\eta''} \to \mathcal Y_{T'',\bar\eta''}
\]
coincides with the morphism $\tilde{\mathcal Y}'' \to \mathcal Y_{\bar\eta}$. Shrinking $T''$, we can assume that $\mathcal Y'' \to \Yy_{T''}$ is a fiberwise small modification. As $\Yy_t$ is $\bQ$-factorial over a Zariski dense subset of $T''$, we see that $\Yy''_t \to \Yy_{t}$ is an isomorphism over this set. Therefore, after shrinking $T''$ further, $\mathcal Y'' \to \Yy_{T''}$ is an isomorphism. This implies that $\ti\Yy'' \to \Yy_{\bar\eta}$ is an isomorphism. In particular, $\Yy_{\bar\eta}$ is $\bQ$-factorial.

Combining the above discussion with \eqref{eq: global CY}, we see that $\mathcal Y_{\bar\eta}$ is a $\bQ$-factorial $n$-dimensional rationally connected Calabi-Yau variety with klt singularities. By the assumption of the theorem, the Morrison-Kawamata cone conjecture holds for $\mathcal Y_{\bar\eta}$, and every effective $\bR$-Cartier divisor on $\mathcal Y_{\bar\eta}$ admits a good minimal model. By Corollary \ref{cor: Mori dream space, CY are MKD space} (2), $\mathcal Y_{\bar\eta}$ is a klt MKD space.

\medskip

Finally, since $\mathcal Y_s$ for any $s \in T$ is a rationally connected variety with klt singularities, we have 
\[
H^1(\mathcal Y_s, \mathcal O_{\mathcal Y_s}) = H^2(\mathcal Y_s, \mathcal O_{\mathcal Y_s}) = 0.
\] 
Moreover, for each $X \in \mathcal S_n$, there exists some $t \in T$ such that $X$ and $\mathcal Y_t$ are isomorphic in codimension $1$. Hence, $X$ can be obtained from $\mathcal Y_t$ by running an MMP with respect to some movable divisor. Therefore, all the assumptions of Theorem~\ref{thm: bir to bdd} are fulfilled. The desired result then follows from Theorem~\ref{thm: bir to bdd} by Noetherian induction.
\end{proof}

\bibliographystyle{alpha}

\bibliography{reference.bib}

\end{document}